\newtheorem{theorem}{Theorem}[section]
\newtheorem{lemma}[theorem]{Lemma}
\newtheorem{corollary}[theorem]{Corollary}
\title{{\Large \bf  The domination number and the least $Q$-eigenvalue II\thanks{Supported by NSFC
(No. 11771376), ``333" Project of  Jiangsu (2016) and JSNSFC (BK2012245).}}}
\author{Guanglong Yu$^a$\thanks{E-mail addresses:
yglong01@163.com.}
~ Mingqing Zhai$^b$\thanks{Supported by KPPAT of Anhui (JXBJZD 2016082).} ~ Chao Yan$^c$\thanks{Supported by NSFCU of Jiangsu (16KJB110011).} ~ Shu-guang Guo$^a$ ~
\\ ~ \\
{\footnotesize $^a$Department of Mathematics, Yancheng Teachers
University,}\\ {\footnotesize  Yancheng, 224002, Jiangsu, China}\\
{\footnotesize $^b$ School of Mathematics and Finance, Chuzhou University,}\\
{\footnotesize Chuzhou, 239000, Anhui, China}\\
{\footnotesize $^c$Department of Mathematics, Pujiang Institute, Nanjing Tech University,}\\
{\footnotesize Nanjing, 211101, Jiangsu, China}}
\date{}
\begin{document}
\maketitle

\begin{abstract}
Denote by $L_{g, l}$ the $lollipop$ $graph$ obtained by attaching a pendant path $\mathbb{P}=v_{g}v_{g+1}\cdots v_{g+l}$ ($l\geq 1$) to a cycle $\mathbb{C}=v_{1}v_{2}\cdots v_{g}v_{1}$ ($g\geq 3$). A $\mathcal {F}_{g, l}$-$graph$ of order $n\geq g+1$ is defined to be the graph obtained by attaching $n-g-l$ pendent vertices to some of the nonpendant vertices of $L_{g, l}$ in which each vertex other than $v_{g+l-1}$ is attached at most one pendant vertex. A $\mathcal {F}^{\circ}_{g, l}$-graph is a $\mathcal {F}_{g, l}$-$graph$ in which $v_{g}$ is attached with pendant vertex. Denote by $q_{min}$ the $least$ $Q$-$eigenvalue$ of a graph. In this paper, we proceed on considering the domination number, the least $Q$-eigenvalue of a graph as well as their relation. Further results obtained are as follows:

$\mathrm{(i)}$ some results about the changing of the domination number under the structural perturbation of a graph are represented;

$\mathrm{(ii)}$ among all nonbipartite unicyclic graphs of order $n$, with both domination number $\gamma$ and girth $g$ ($g\leq n-1$), the minimum $q_{min}$ attains at a $\mathcal {F}_{g, l}$-graph for some $l$;

$\mathrm{(iii)}$ among the  nonbipartite graphs of order $n$ and with given domination number which contain a $\mathcal {F}^{\circ}_{g, l}$-graph as a subgraph, some lower bounds for $q_{min}$ are represented;

$\mathrm{(iv)}$ among the nonbipartite graphs of order $n$ and with given domination number $\frac{n}{2}$, $\frac{n-1}{2}$, the minimum $q_{min}$ is completely determined respectively;

$\mathrm{(v)}$ among the nonbipartite graphs of order $n\geq 4$, and with both domination number $\frac{n+1}{3}<\gamma\leq \frac{n}{2}$ and odd-girth (the length of the shortest odd cycle) at most $5$, the minimum $q_{min}$ is completely determined.

\bigskip
\noindent {\bf AMS Classification:} 05C50

\noindent {\bf Keywords:} Domination number; Signless Laplacian; Nonbipartite graph; Least eigenvalue
\end{abstract}
\baselineskip 18.6pt

\section{Introduction}

\ \ \ The concept of graph dominating set has been frequently used for studying ad hoc networks \cite{YWWY}, the efficiency of multicast/broadcast routing \cite{EWB}, and the power management
\cite{YWWY}. As a result, a comprehensive
study of issues relevant to dominating set of a graph has become an active topic \cite{YWWY}.

All graphs considered in this paper are connected, undirected and
simple, i.e., no loops or multiple edges are allowed.
We denote by $\parallel S\parallel$ the $cardinality$ of a set $S$,
and denote by $G=G[V(G)$, $E(G)]$ a graph with vertex set
$V(G)=\{v_1, v_2, \ldots, v_n\}$ and edge set $E(G)$ where $\parallel V(G)\parallel= n$ is the $order$
and $\parallel E(G)\parallel= m$ is the $size$.

In a graph, if vertices $v_{i}$ and $v_{j}$ are adjacent (denoted by $v_{i}\sim v_{j}$), we say that they $dominate$ each other.
A vertex set $D$ of a graph $G$ is said to be a $dominating$ $set$ if every vertex of $V(G)\setminus D$ is
adjacent to (dominated by) at least one vertex in $D$. The $domination$ $number$ $\gamma(G)$ ($\gamma$, for short) is the
minimum cardinality of all dominating sets of $G$. For a graph $G$, a dominating set is called a $minimal$ $dominating$ $set$ if its cardinality is $\gamma(G)$. A well known result about $\gamma(G)$ is that for a graph $G$
of order $n$ containing no isolated vertex, $\gamma\leq \frac{n}{2}$ \cite{ORE}. In the study of a real-world network, what about the structure of the
network with fixed domination number and how about
the changing of the domination number under some structural perturbations of the network are very significant problems. In this paper, for a nonbipartite graph with both
order $n$ and domination number $\gamma$, we present some results
about the changing of the domination number under some structural perturbations.

Recall
that $Q(G)= D(G) + A(G)$ is called the $signless$ $Laplacian$ $matrix$ (or $Q$-$matrix$) of $G$, where $D(G)=
\mathrm{diag}(d_{1}, d_{2},
\ldots, d_{n})$ with $d_{i}= d_{G}(v_{i})$ being the degree of
vertex $v_{i}$ $(1\leq i\leq n)$, and $A(G)$ is the adjacency matrix of $G$. The signless
Laplacian has attracted the
attention of many researchers and it is
being promoted by many researchers \cite{CCRS}, \cite{D.P.S}-\cite{LOA}, \cite{WF}, \cite{ZJ2}.

The least eigenvalue of $Q(G)$,
denote by $q_{min}(G)$ or $q_{min}$, is called the $least$ $Q$-$eigenvalue$ of $G$. Noting that $Q(G)$ is positive semi-definite,
we have $q_{min}(G)\geq 0.$
From \cite{D.P.S}, we know that, for a connected
graph $G$, $q_{min}(G)= 0$ if and only if $G$ is bipartite.
Consequently, in \cite{DR}, $q_{min}$ was studied as a measure of nonbipartiteness of a graph. One can note
that there are quite a few results about $q_{min}$. In \cite{CCRS}, D.M. Cardoso et al. determined the graphs
with the the minimum $q_{min}$ among all the connected nonbipartite
graphs with a prescribed number of vertices. In \cite{LOA}, L. de Lima et al. surveyed some known results about $q_{min}$ and also presented some
new results. In \cite{FF}, S. Fallat, Y. Fan investigated the relations
between $q_{min}$ and some parameters reflecting
the graph bipartiteness. In \cite{WF}, Y. Wang, Y. Fan investigated $q_{min}$
of a graph under some perturbations, and minimized $q_{min}$ among the connected graphs with fixed order
which contains a given nonbipartite graph as an induced subgraph.

In \cite{YFT} and \cite{YGZW}, the authors first considered the relation between $q_{min}$ of a graph and its domination number. Among all the nonbipartite graphs with both order $n\geq 4$ and domination number $\gamma\leq \frac{n+1}{3}$, they characterized the graphs with the minimum $q_{min}$. A remaining open problem is that how about $q_{min}$ of the connected nonbipartite graph on $n$ vertices with domination number $\frac{n+1}{3}<\gamma\leq \frac{n}{2}$. In this paper, we proceed on considering this problem. Further results about the domination number, the least $Q$-eigenvalue of a graph as well as their relation are represented.

The layout of this paper is as follows: section 2 introduces some notations and working lemmas; section 3 represents some results to characterize the relation between the domination number and the structural perturbation of a graph; section 4 represents some results about the minimum $q_{min}$ among uncyclic graphs; section 5 represents some results about the minimum $q_{min}$ among all general graphs; section 6 poses an open problem.

\section{Preliminary}

\ \ \ \ In this section, we introduce some notations and some working lemmas.

In a graph $G$, we say that a vertex $v$ is $dominated$ by a vertex set
$S$ if $v\in S$ or $v$ is adjacent to a vertex in $S$.  A vertex is called a $p$-$dominator$ (or $support$ $vertex$) if it dominates a pendant vertex. For a vertex $v\in V(G)$, we denote by $N_{G}(v)$ its $neighbor$ $set$ in $G$, and let $N_{G}[v]=N_{G}(v)\cup\{v\}$ be the $close$ $neighbor$ $set$; for a subgraph $K$ in $G$, we denote by $N_{K}(v)$ the $neighbor$ $set$ of $v$ in $K$.

Denote by $P_n$, $C_{n}$, $K_{n}$, $S_{n}$ a $path$, a $n$-cycle (of length $n$), a $complete$ graph, a $star$ of order $n$ respectively, and denote by
 $K_{r,s}$ the $complete$ $bipartite$ graph with partite
sets of order $r$ and $s$ respectively. Denote by $S^{+}_{n}$ the graph obtained by adding an additional edge between two pendant vertices of $S_{n}$. For a path $P$ and a cycle
$C$, we denote by $L(P)$, $L(C)$ their $lengths$ respectively. If $k$ is odd, we say $C_{k}$ an $odd$ $cycle$. The $odd$-$girth$ for a nonbipartite graph $G$,
denoted by $g_{o}(G)$ or $g_{o}$, is the length of the shortest odd cycle in this graph.  $G-v_{i}v_{j}$
 denotes the graph obtained from $G$ by deleting the edge $v_{i}v_{j}\in
 E(G)$, and let $G-v_{i}$
 denote the graph obtained from $G$ by deleting the vertex $v_{i}$ and the edges incident with $v_{i}$.
 Similarly,  $G+v_{i}v_{j}$ is the graph obtained from $G$ by adding an edge $v_{i}v_{j}$ between its two nonadjacent vertices $v_{i}$
 and $v_{j}$. Given an edge set $E$, $G-E$ denotes the graph obtained by deleting all the edges in $E$ from $G$; given an vertex set $S$, $G-S$ denotes the graph obtained by deleting all the vertices in $S$ from $G$ and the edges incident with any vertex in $S$.

A connected graph $G$ of order $n$ is called a $unicyclic$ graph if
$\|E(G)\|=n$. For $S\subseteq V(G)$, let $G[S]$
denote the subgraph induced by $S$.
Denoted by $d_{G}(v_{i}, v_{j})$ the $distance$ between two vertices
$v_{i}$ and $v_{j}$ in a graph $G$.

Let $G_1$ and $G_2$ be two disjoint graphs, and let $v_1\in V(G_1),$  $v_2\in V(G_2)$. The $coalescence$ of $G_1$ and $G_2$,
denoted by $G_1(v_1)\diamond G_2(v_2)$, is obtained from $G_1$, $G_2$ by identifying $v_1$ with $v_2$ and forming a new vertex
$u$. The graph $G_1(v_1)\diamond G_2(v_2)$ can also be written as $G_1(u)\diamond G_2(u)$, where for $i=1, 2$, $G_{i}$ can be trivial (that is, $G_{i}$ is only one vertex). In $G_1(v_1)\diamond G_2(v_2)$, we say that $G_1$ is attached to $G_2$ at $u$ (or $G_1$ is attached to $u$ simply), or  $G_2$ is attached to $G_1$ at $u$ (or $G_2$ is attached to $u$ simply). If a connected
graph $G$ can be expressed
in the form $G = G_1(u)\diamond G_2(u)$, where both $G_1$ and $G_2$ are connected, then for $i=1$, $2$, $G_i$ is
called a
$branch$ of $G$ with root $u$. In $G = G_1(u)\diamond G_2(u)$, if $G_{2}$ is a path where $u$ is one end vertex of $G_{2}$, then we say that $G_{2}$ is a $pendant$ $path$ starting from $G_{1}$.

For a graph $G$ of order $n$, let $X=(x_1, x_2, \ldots, x_n)^T \in R^n$ be defined on $V(G)$, i.e.,
each vertex $v_i$ is mapped to the entry $x_i$; let $|x_i|$ denote the $absolute$ $value$ of $x_i$.
One can find
that $X^TQ(G)X =\sum_{v_iv_j\in E(G)}(x_{i} + x_{j})^2.$
In addition, for an arbitrary unit vector $X\in R^n$, $q_{min}(G) \le X^TQ(G)X$,
with equality if and only if $X$ is an eigenvector corresponding to $q_{min}(G)$.
A branch $H$ of $G$ is called a $zero$ $branch$ with respect to $X$ if $x_{i} = 0$  for all $v_{i} \in V(H)$; otherwise, it
is called a $nonzero$ $branch$ with respect to $X$.

\begin{lemma}{\bf \cite{D.R.S}} \label{le02,01} 
Let $G$ be a graph on $n$ vertices and $m$ edges, and let $e$ be an
edge of $G$. Let $q_{1}\geq q_{2}\geq \cdots \geq q_{n}$ and
$s_{1}\geq s_{2}\geq \cdots \geq s_{n}$ be the $Q$-eigenvalues of  $G$
and $G-e$ respectively. Then $0\leq s_{n}\leq q_{n}\leq \cdots \leq
s_{2}\leq q_{2}\leq s_{1}\leq q_{1}.$
\end{lemma}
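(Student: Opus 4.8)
The plan is to realize the passage from $G-e$ to $G$ as a rank-one positive semidefinite perturbation of the signless Laplacian, and then to invoke the standard spectral interlacing for such perturbations. Writing $e = v_iv_j$ and letting $\mathbf{b}_e \in R^n$ denote the vector whose entries are $1$ in positions $i$ and $j$ and $0$ elsewhere, I would first establish the identity $Q(G) = Q(G-e) + \mathbf{b}_e\mathbf{b}_e^T$. This follows at once from the quadratic-form expression recalled above: for every $X = (x_1,\ldots,x_n)^T$ one has $X^TQ(G)X = \sum_{v_kv_l\in E(G)}(x_k+x_l)^2 = X^TQ(G-e)X + (x_i+x_j)^2$, and since $(x_i+x_j)^2 = (\mathbf{b}_e^TX)^2 = X^T\mathbf{b}_e\mathbf{b}_e^TX$, the two symmetric matrices agree on all quadratic forms and hence coincide. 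Thus $Q(G)$ is obtained from $Q(G-e)$ by adding the matrix $\mathbf{b}_e\mathbf{b}_e^T$, which is positive semidefinite of rank exactly $1$, with single nonzero eigenvalue $\|\mathbf{b}_e\|^2 = 2$.

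With this decomposition in hand, both chains of inequalities follow from Weyl's inequalities for the eigenvalues of a sum of symmetric matrices. Set $A = Q(G-e)$ and $B = \mathbf{b}_e\mathbf{b}_e^T$. On the one hand, since $B \succeq 0$, the pointwise bound $X^TQ(G)X \geq X^TQ(G-e)X$ together with the Courant--Fischer max-min characterization yields $q_k \geq s_k$ for every $k$. On the other hand, Weyl's inequality $\lambda_{k+1}(A+B) \leq \lambda_k(A) + \lambda_2(B)$, applied with $\lambda_2(B)=0$ (as $B$ has rank $1$), gives $q_{k+1}\leq s_k$ for every $k$. Combining $s_k \leq q_k$ with $q_{k+1}\leq s_k$ produces the interlacing $q_{k+1}\leq s_k\leq q_k$ for $1 \le k \le n-1$.

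Finally, $Q(G-e)$ is itself a signless Laplacian and hence positive semidefinite, so $s_n\geq 0$. Chaining the interlacing relations from the bottom up then gives $0\leq s_n\leq q_n\leq s_{n-1}\leq q_{n-1}\leq\cdots\leq s_2\leq q_2\leq s_1\leq q_1$, which is the asserted inequality. As for difficulty, there is essentially no serious obstacle: the content is the classical fact that a rank-one positive semidefinite perturbation interlaces the spectrum, and the only genuinely graph-theoretic input is the clean additive identity $Q(G)=Q(G-e)+\mathbf{b}_e\mathbf{b}_e^T$. If one prefers to avoid quoting Weyl directly, the same conclusion is reachable by the Cauchy interlacing theorem applied to the compression of $Q(G)$ onto the hyperplane $\{X : x_i + x_j = 0\}$, on which $Q(G)$ and $Q(G-e)$ agree; either route keeps the argument short and self-contained.
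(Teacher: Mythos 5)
This lemma is quoted by the paper from \cite{D.R.S} with no proof supplied, so there is no internal argument to compare yours against; your proposal has to stand on its own, and it does. The identity $Q(G)=Q(G-e)+\mathbf{b}_e\mathbf{b}_e^T$ is correctly justified from the quadratic form $\sum_{v_kv_l\in E(G)}(x_k+x_l)^2$ (two symmetric matrices with equal quadratic forms coincide), the perturbation $\mathbf{b}_e\mathbf{b}_e^T$ is indeed positive semidefinite of rank one, and your two inequalities --- $s_k\le q_k$ from monotonicity of eigenvalues under a PSD perturbation, and $q_{k+1}\le s_k$ from Weyl's inequality with $\lambda_2(\mathbf{b}_e\mathbf{b}_e^T)=0$ --- combine to give exactly the asserted chain, with $0\le s_n$ supplied by positive semidefiniteness of $Q(G-e)$. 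For context, the route usually taken in the signless-Laplacian literature (including the framework of the cited source) goes through the incidence-matrix factorization $Q=RR^T$: deleting $e$ deletes a column of $R$, so the nonzero $Q$-eigenvalues of $G-e$ are those of a principal submatrix of $R^TR$, and Cauchy interlacing finishes; your rank-one-perturbation argument is an equally standard and arguably more self-contained alternative, since it avoids the detour through the $m\times m$ matrix. One small caveat: your closing remark that Cauchy interlacing on the hyperplane $\{X : x_i+x_j=0\}$ alone recovers the result is slightly too quick. That compression argument yields $q_{k+1}\le\mu_k\le s_k$ and $s_{k+1}\le\mu_k\le q_k$, hence $q_{k+1}\le s_k$ but only the weaker $s_{k+1}\le q_k$; the inequality $s_k\le q_k$ must still come from the monotonicity step. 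This does not affect your main argument, which is complete and correct.
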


\begin{lemma}{\bf \cite{WF}}\label{le02,02} 
Let $G$ be a connected graph which contains a bipartite branch $H$ with root $v_{s}$, and let $X$ be an eigenvector of $G$
corresponding
to $q_{min}(G)$.

{\normalfont (i)} If $x_{s} = 0$, then $H$ is a zero branch of G with respect to $X$;

{\normalfont (ii)} If $x_{s}\neq 0$, then $x_{p}\neq 0$ for every vertex $v_{p}\in V(H)$. Furthermore, for every vertex $v_{p}\in V(H)$,
$x_{p}x_{s}$ is either positive or negative depending on whether $v_{p}$ is or is not in the same part of the bipartite graph $H$ as
$v_{s}$; consequently, $x_{p}x_{t} < 0$ for each edge $v_{p}v_{t} \in E(H)$.
\end{lemma}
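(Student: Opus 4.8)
The plan is to combine the Rayleigh-quotient characterization of $q_{\min}$ with a sign-flipping surgery on the bipartite branch, and then finish with a positivity/propagation argument. Write $q=q_{\min}(G)$, let $X$ be the given eigenvector, so that $Q(G)X=qX$, i.e. $\sum_{v_j\sim v_i}(x_i+x_j)=q\,x_i$ for every $v_i$; I will repeatedly use $X^{T}Q(G)X=\sum_{v_iv_j\in E(G)}(x_i+x_j)^2$ together with the fact that any unit vector $Y$ obeys $Y^{T}Q(G)Y\ge q$, with equality only when $Y$ is a $q$-eigenvector. For the surgery, let $(U,W)$ be the bipartition of $H$ with $v_s\in U$, and (replacing $X$ by $-X$ if necessary) assume $x_s\ge0$. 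Define $Y$ by $y_i=x_i$ for $v_i\notin V(H)$, $y_i=|x_i|$ for $v_i\in U$, and $y_i=-|x_i|$ for $v_i\in W$; since $y_s=|x_s|=x_s$, this is well defined at the shared root. Because $H$ is a branch, every edge lies in $G_1:=G-(V(H)\setminus\{v_s\})$ or in $H$: the $G_1$-edges are untouched, while each $H$-edge joins $U$ to $W$, so $(y_i+y_j)^2=(|x_i|-|x_j|)^2\le(x_i+x_j)^2$. Hence $Y^{T}Q(G)Y\le X^{T}Q(G)X=q\|Y\|^2$, so by minimality this is an equality: $Y$ is again a $q$-eigenvector, each $H$-edge satisfies $x_ix_j\le0$, and $Y$ is nonnegative on $U$, nonpositive on $W$.

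Next I pass to the nonnegative vector $z_i=|x_i|$ on $V(H)$. Feeding the alternating signs of $Y$ into its eigen-equation at an interior vertex $v_i\in V(H)\setminus\{v_s\}$, all of whose neighbours lie in $H$ in the opposite part, converts $Q$ into the Laplacian of $H$ and gives $\sum_{v_j\sim v_i}(z_i-z_j)=q\,z_i$ for every $v_i\neq v_s$. Letting $M$ be the principal submatrix, on $V(H)\setminus\{v_s\}$, of the signed conjugate $BQ(G)B$ ($B$ flips the signs on $W$), these read $(M-qI)Z'=b$, where $Z'$ is $Z$ restricted to the interior and $b_i=z_s$ if $v_i\sim v_s$ and $b_i=0$ otherwise. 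Since $B$ is orthogonal, $BQ(G)B$ and $Q(G)$ are cospectral, so by Cauchy interlacing $\lambda_{\min}(M)\ge q$; moreover $M$ has positive diagonal and nonpositive off-diagonal entries, hence $M-qI$ is a positive semidefinite symmetric M-matrix.

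In the principal case $\lambda_{\min}(M)>q$ the matrix $M-qI$ is an invertible M-matrix, so $(M-qI)^{-1}\ge0$ entrywise and, by connectedness of $H$, it is positive on each component of $H-v_s$. For (i), $x_s=0$ forces $b=0$, hence $Z'=(M-qI)^{-1}b=0$; combined with $x_s=0$ this makes $H$ a zero branch. For (ii), $x_s\neq0$ (so $x_s>0$) gives $b\ge0$, $b\neq0$, which meets every component of $H-v_s$, whence $Z'>0$ and $x_p\neq0$ for all $v_p\in V(H)$. Using now $x_ix_j\le0$ along every edge from the surgery together with $x_p\neq0$, the signs of $X$ strictly alternate along the edges of the connected bipartite $H$, hence are constant on each part; as $x_s>0$ and $v_s\in U$ we conclude $x_p>0$ on $U$ and $x_p<0$ on $W$. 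Thus $x_px_s>0$ precisely when $v_p$ is in the part of $v_s$ and $x_px_s<0$ otherwise, and since each edge joins the two parts, $x_px_t<0$ along every edge.

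The one genuinely delicate point is the degenerate case $\lambda_{\min}(M)=q$, where $M-qI$ is singular and the inverse-positivity argument breaks down. The plan there is to exploit the rigidity of equality in Cauchy interlacing: a null vector of $M-qI$ extends, by zero on $v_s$ and on $G_1$, to a $q$-eigenvector of $G$ supported inside the interior of $H$; substituting it into the eigen-equation at $v_s$ and using connectedness of $G$ should force either $x_s=0$ with $H$ a zero branch or an outright contradiction with $X$ realizing the least eigenvalue. Making this boundary analysis airtight, rather than the main variational computation, is where I expect the real work to lie.
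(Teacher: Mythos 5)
A preliminary remark: the paper quotes this lemma from \cite{WF} without proof, so the comparison below is with the standard argument from that source. Your opening surgery is that argument's first step, and you execute it correctly: flipping signs across the bipartition of $H$ while keeping $X$ unchanged off $H$ cannot increase the Rayleigh quotient, so by minimality the modified vector is again a $q$-eigenvector and every edge of $H$ satisfies $x_ix_j\le 0$. Your nondegenerate analysis is also sound (interlacing gives $\lambda_{\min}(M)\ge q$; if strict, $M-qI$ is a nonsingular symmetric M-matrix whose irreducible blocks have entrywise positive inverses, and (i), (ii) follow). The genuine gap is the case $\lambda_{\min}(M)=q$, which you acknowledge but do not close --- and it is not a peripheral boundary case: for part (i) it is the entire problem. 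When $x_s=0$, the vector $Z'$ of interior absolute values satisfies the homogeneous system $(M-qI)Z'=0$; if $M-qI$ is singular this gives no information whatsoever, and the case you did handle (invertibility) is precisely the case in which the conclusion is immediate. So as written, (i) is established only under an unverified spectral hypothesis, and your sketch for the degenerate case (``substitute into the eigen-equation at $v_s$ and use connectedness of $G$'') omits the one ingredient that makes it work, namely the sign structure of the null vector.

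Here is how to close the gap, and why doing so makes the matrix machinery unnecessary. Suppose $M\phi=q\phi$ with $\phi\neq 0$. Extending $\phi$ by zero on $v_s$ and on $G_1$ gives a vector whose Rayleigh quotient for $BQ(G)B$ equals $q$, hence (as you correctly note) a $q$-eigenvector of $BQ(G)B$; conjugating by $B$ and re-applying your surgery, you may assume you have a $q$-eigenvector $\Psi$ of $Q(G)$ supported on $V(H)\setminus\{v_s\}$ that is nonnegative on $U$ and nonpositive on $W$. The eigen-equation at $v_s$ reads $d_s\Psi_s+\sum_{v_j\sim v_s}\Psi_j=q\Psi_s$, i.e. $\sum_{v_j\sim v_s}\Psi_j=0$, and every term of this sum has the same sign (neighbours outside $H$ carry $0$, neighbours inside $H$ all lie in $W$), so every term vanishes; repeating the argument at each newly zeroed interior vertex --- all of whose neighbours lie in $H$, in a single colour class, hence carry values of one sign summing to $0$ --- propagates the zeros through the connected graph $H$. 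Thus $\phi=0$, a contradiction, so the degenerate case never occurs. Once you have this propagation argument, however, you no longer need $M$, interlacing, or M-matrix theory at all, and this is in effect what \cite{WF} does: for (ii), an interior zero of $X$ propagates out to the root and contradicts $x_s\neq 0$; for (i), with $x_s=0$ both sign-normalizations of $X$ on $H$ are $q$-eigenvectors agreeing with $X$ outside the interior of $H$, so their difference is a $q$-eigenvector supported on that interior with alternating signs, which the propagation forces to vanish, i.e. $H$ is a zero branch. Your route, once completed, does yield a small byproduct (the Dirichlet matrix $M$ satisfies $\lambda_{\min}(M)>q_{min}(G)$ strictly), but at the price of machinery the elementary argument avoids.
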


\begin{lemma}{\bf \cite{WF}}\label{le02,03} 
Let $G$ be a connected nonbipartite graph of order $n$, and let $X$ be an eigenvector of $G$ corresponding to $q_{min}(G)$.
$T$ is a tree which is a nonzero branch of $G$ with respect to $X$ and with root $v_{s}$. Then $|x_{t}| < |x_{p}|$
 whenever $v_{p},$ $v_{t}$
are vertices of $T$ such that $v_{t}$ lies on the unique path from $v_{s}$ to $v_{p}$.
\end{lemma}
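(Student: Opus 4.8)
The plan is to root the tree $T$ at $v_s$ and to prove the sharper \emph{local} inequality $|x_{v'}| < |x_v|$ for every vertex $v \in V(T)\setminus\{v_s\}$, where $v'$ denotes the parent of $v$ in the rooted tree. Once this is established for every tree edge, the assertion follows immediately by telescoping along the path $v_s, \ldots, v_t, \ldots, v_p$: the magnitudes $|x_{\cdot}|$ are then strictly increasing as one moves away from the root, so $v_t$ lying strictly before $v_p$ on that path yields $|x_t| < |x_p|$.

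The first step is to pin down the sign pattern. Since $T$ is a nonzero branch, Lemma~\ref{le02,02}(i) forces $x_s \neq 0$, and then Lemma~\ref{le02,02}(ii) gives $x_w \neq 0$ for every $w \in V(T)$ together with $x_u x_w < 0$ for each edge $uw$ of the (bipartite) tree $T$. Writing $z_w = |x_w| > 0$, I would exploit that every vertex $v \neq v_s$ of the branch has all of its $G$-neighbours inside $T$, namely its parent $v'$ and its children $c_1,\dots,c_k$ (so $d_v = 1+k$). Because all these neighbours carry the sign opposite to $x_v$, the eigenvalue equation $\sum_{w \sim v} x_w = (q_{min} - d_v)x_v$ collapses, after dividing through by the common sign of $x_v$, to the positive recurrence $z_{v'} + \sum_{i=1}^{k} z_{c_i} = (d_v - q_{min})\,z_v$.

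With this recurrence in hand I would induct from the leaves of $T$ upward. At a leaf $v$ (no children, $d_v = 1$) the recurrence reads $z_{v'} = (1 - q_{min})\,z_v$; positivity of $z_{v'}$ forces $q_{min} < 1$, and since $q_{min} > 0$ for the connected nonbipartite graph $G$ \cite{D.P.S}, this gives $z_{v'} < z_v$. For an internal vertex $v$ with children $c_1,\dots,c_k$, the inductive hypothesis applied to the edges $(v,c_i)$ yields $z_v < z_{c_i}$ for each $i$; substituting $\sum_i z_{c_i} > k z_v$ into the recurrence gives $z_{v'} = (1+k-q_{min})z_v - \sum_i z_{c_i} < (1-q_{min})z_v < z_v$, where again $z_{v'} > 0$ guarantees $q_{min} < 1$ and $q_{min}>0$ guarantees $1 - q_{min} < 1$. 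This establishes $z_{v'} < z_v$ for every non-root $v$, completing the induction.

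The one delicate point, which I expect to be the crux, is that no a priori upper bound on $q_{min}$ is available, so one cannot simply assume $q_{min} < 1$ and read off $z_{v'}<z_v$ at the leaves. Instead, the bound $q_{min}<1$ must be \emph{extracted for free at each step} from the positivity of the entries supplied by Lemma~\ref{le02,02}: it is precisely the nonvanishing and alternating-sign structure of $X$ on the branch that turns the eigen-equation into a recurrence with strictly positive terms, forcing $d_v - q_{min} > 0$ and hence $q_{min}<1$. Getting this sign bookkeeping exactly right is the only real content; after that the induction and the final telescoping are routine.
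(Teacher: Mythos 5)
Your proof is correct. Note, however, that the paper itself contains no proof of this statement: Lemma~\ref{le02,03} is quoted verbatim from the reference \cite{WF} (Wang and Fan), so there is no internal argument to compare against. Your induction from the leaves upward --- using Lemma~\ref{le02,02} to get nonvanishing, sign-alternating entries on the branch, observing that every non-root vertex of $T$ has all of its $G$-neighbours inside $T$ so that the eigen-equation collapses to the positive recurrence $z_{v'}+\sum_i z_{c_i}=(d_v-q_{min})z_v$, extracting $q_{min}<1$ for free from positivity at a leaf, and then telescoping $|x_{v'}|<|x_v|$ along the path --- is sound in every step and is essentially the argument given in the cited source, so nothing further is needed.
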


\begin{lemma}{\bf \cite{YGX}}\label{le0,04} 
 Let $G = G_1(v_2) \diamond T(u)$ and $G^* = G_1(v_1)\diamond T(u)$, where $G_1$ is a connected nonbipartite
graph containing two distinct vertices $v_1, v_2$, and $T$ is a nontrivial tree. If there exists an
 eigenvector $X=(\,x_1$, $x_2$, $\ldots$, $x_k$, $\ldots)^T$ of $G$ corresponding to $q_{min}(G)$ such that
 $|x_1| > |x_2|$ or $|x_1| = |x_2| > 0$, then $q_{min}(G^*)<q_{min}(G)$.
\end{lemma}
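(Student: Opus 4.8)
The plan is to use the variational characterization $q_{min}(G^{*})\le \frac{Y^{T}Q(G^{*})Y}{Y^{T}Y}$ and to manufacture a test vector $Y$ on $V(G^{*})$ from the given eigenvector $X$ of $G$. Since $G^{*}$ and $G$ share the ground set $V(G_{1})\cup(V(T)\setminus\{u\})$ and differ only in whether $T$ is glued at $v_{2}$ or at $v_{1}$, I will always keep $Y=X$ on $V(G_{1})$ and only redistribute the values carried by the transplanted tree. Writing $\mu=q_{min}(G)$ (note $\mu>0$, as $G\supseteq G_{1}$ is nonbipartite, so by the criterion $q_{min}=0\Leftrightarrow$ bipartite), I split $X^{T}Q(G)X=A+B+C$, where $A=\sum_{v_iv_j\in E(G_1)}(x_i+x_j)^2$, the term $B=\sum_{v_iv_j\in E_T^{\mathrm{int}}}(x_i+x_j)^2$ collects the tree edges not meeting the root, and $C=\sum_{h=1}^{d}(x_2+x_{w_h})^2$ collects the edges from the root $v_{2}$ to its tree-neighbours $w_1,\dots,w_d$. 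The natural case split is according to whether $x_2=0$, so that $T$ is a zero branch by Lemma \ref{le02,02}, or $x_2\neq0$, so that every tree entry is nonzero.

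When $x_2\neq0$ I scale the tree: put $Y=X$ on $V(G_1)$ and $y_w=\frac{x_1}{x_2}x_w$ for $w\in V(T)\setminus\{u\}$. Using $(x_1+\frac{x_1}{x_2}x_{w_h})^2=\frac{x_1^2}{x_2^2}(x_2+x_{w_h})^2$, a short computation gives $Y^{T}Q(G^{*})Y=A+\rho(B+C)$ and $Y^{T}Y=P+\rho R$, where $\rho=x_1^2/x_2^2\ge1$, $P=\sum_{V(G_1)}x_i^2$, and $R=\sum_{V(T)\setminus\{u\}}x_w^2$. Eliminating $A$ via $A+B+C=\mu(P+R)$, the target $\frac{Y^{T}Q(G^{*})Y}{Y^{T}Y}<\mu$ reduces to $(\rho-1)\big[(B+C)-\mu R\big]<0$. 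The crux is to show $(B+C)-\mu R<0$: multiplying the eigen-equation $Q(G)X=\mu X$ at each non-root tree vertex by $x_w$ and summing, I expect the bookkeeping to telescope into $(B+C)-\mu R=x_2\big(d\,x_2+\sum_{h}x_{w_h}\big)$. Then Lemma \ref{le02,02} gives $x_2x_{w_h}<0$ and Lemma \ref{le02,03} gives $|x_{w_h}|>|x_2|$ for every neighbour $w_h$, so $x_2x_{w_h}<-x_2^{2}$ and the bracket is negative. This settles $\rho>1$.

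The boundary value $\rho=1$ (that is, $|x_1|=|x_2|>0$) needs separate care, since there the test vector only gives $q_{min}(G^{*})\le\mu$. I argue by contradiction: if $q_{min}(G^{*})=\mu$, then $Y$ achieves the minimum and is therefore an eigenvector of $Q(G^{*})$ for $\mu$, so its eigen-equation at $v_2$ must hold; but in $G^{*}$ the vertex $v_2$ keeps only its $G_1$-neighbours, so that equation reads $\mu x_2=d^{G_1}_{v_2}x_2+\sum_{k\sim v_2\,\mathrm{in}\,G_1}x_k$, which, compared with the eigen-equation of $G$ at $v_2$, is equivalent to $d\,x_2+\sum_h x_{w_h}=0$, contradicting the strict negativity just proved. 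Finally, when $x_2=0$ the hypothesis forces $x_1\neq0$, and one checks that $X|_{V(G_1)}$ is then an eigenvector of $Q(G_1)$ for $\mu$; I take $Y=X$ on $V(G_1)$ and $Y=Z$ on $T$, where $Z$ is the nowhere-zero null vector $Q(T)Z=0$ of the bipartite tree, scaled so that $z_u=x_1$. Then $Y^{T}Q(G^{*})Y=\mu\|X|_{V(G_1)}\|^2$ while $Y^{T}Y=\|X|_{V(G_1)}\|^2+\|Z\|^2-x_1^2$, and since $T$ is nontrivial we have $\|Z\|^2>z_u^2=x_1^2$, which pushes the Rayleigh quotient strictly below $\mu$.

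I expect the main obstacle to be the telescoping identity $(B+C)-\mu R=x_2(d\,x_2+\sum_h x_{w_h})$ together with its sign: it is precisely here that the two structural facts about the shape of a $q_{min}$-eigenvector on a tree branch must be combined, namely the sign alternation of Lemma \ref{le02,02} and the growth of $|x_{\cdot}|$ away from the root in Lemma \ref{le02,03}, and any slip in tracking which edges carry the root value $x_2$ would corrupt the identity. By contrast the zero-branch construction and the $\rho=1$ contradiction are routine once the generic bookkeeping is correct.
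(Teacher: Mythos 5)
This lemma is not proved in the paper at all: it is quoted from \cite{YGX} and used as a black box, so there is no internal proof to compare your argument against; I can only judge it on its own merits, and on those merits it is correct. The telescoping identity at the heart of your argument does hold: expanding $B+C=\sum_{w\neq u}d_T(w)x_w^2+dx_2^2+2\sum_{wz\in E(T),\,u\notin\{w,z\}}x_wx_z+2x_2\sum_h x_{w_h}$ and summing the eigen-equations $\mu x_w=d_T(w)x_w+\sum_{z\sim_T w}x_z$ (valid since non-root tree vertices have all their $G$-neighbours inside $T$) against $x_w$ gives $\mu R=\sum_{w\neq u}d_T(w)x_w^2+2\sum_{wz\in E(T),\,u\notin\{w,z\}}x_wx_z+x_2\sum_h x_{w_h}$, whence $(B+C)-\mu R=x_2\bigl(dx_2+\sum_h x_{w_h}\bigr)$ as you claim; the strict negativity then follows from Lemma \ref{le02,02} ($x_2x_{w_h}<0$) together with Lemma \ref{le02,03} ($|x_{w_h}|>|x_2|$, taking $v_t=v_s=v_2$ on the path to $w_h$), so $x_2x_{w_h}<-x_2^2$ for every $h$. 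Your treatment of the boundary case $|x_1|=|x_2|>0$ (equality in the Rayleigh quotient forces $Y$ to be a $q_{min}(G^*)$-eigenvector, and its equation at $v_2$ in $G^*$ would force $dx_2+\sum_h x_{w_h}=0$, contradicting the strict inequality) and of the zero-branch case $x_2=0$ (where in fact $B=C=R=0$ gives $A=\mu P$ immediately, and the alternating null vector of the bipartite tree, nontrivial since $T$ is, strictly inflates $Y^TY$ without changing $Y^TQ(G^*)Y$) are both sound. This transplant-and-rescale test-vector technique, with an eigen-equation contradiction to force strictness, is exactly the standard method used for such perturbation lemmas in this literature (cf.\ \cite{WF}, \cite{YGX}), so there is no reason to think it diverges from the original source's proof.
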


Let $k\geq 3$ be odd, and let $\mathcal {C}=v_1v_2\cdots v_kv_1$ be a cycle of length $k$. For $j=1$, $2$, $\ldots$, $t$,
each $T_{j}$ is a nontrivial tree. Let $\mathcal {C}^{(T_{1}, T_{2}, \ldots, T_{t};i_{1}, i_{2}, \ldots, i_{t})}_{k}$
denote the graph obtained by identifying the vertex $u_{j}$ of $T_{j}$ and the vertex $v_{i_{j}}$ of $\mathcal {C}$,
where $1\leq j\leq t$ and for $1\leq l< f\leq t$, $i_{l}\neq i_{f}$.

\begin{lemma}{\bf \cite{YGZW}}\label{le0,05} 
Let $k< n$ be odd and $\mathcal {C}^{(T_{1}, T_{2}, \ldots, T_{t};i_{1}, i_{2}, \ldots, i_{t})}_{k}$ be of order $n$.
$X=(\,x_1$, $x_2$, $\ldots$, $x_k$, $x_{k+1}$, $x_{k+2}$, $\ldots$, $x_{n-1}$, $x_{n}\,)^T$ is a
unit eigenvector corresponding to $q_{min}(\mathcal {C}^{(T_{1}, T_{2}, \ldots, T_{t};i_{1}, i_{2}, \ldots, i_{t})}_{k})$.
Then $\max\{|x_{i_{j}}|\, |\, 1\leq j\leq t\}>0$.
\end{lemma}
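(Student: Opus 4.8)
The plan is to argue by contradiction. Write $G:=\mathcal {C}^{(T_{1}, \ldots, T_{t};i_{1}, \ldots, i_{t})}_{k}$ and let $C_k$ denote the bare cycle $\mathcal{C}=v_1v_2\cdots v_kv_1$. Suppose, contrary to the claim, that $x_{i_j}=0$ for every $j$. Since each $T_j$ is a tree, hence a bipartite branch of $G$ with root $v_{i_j}$, and $x_{i_j}=0$, Lemma \ref{le02,02}(i) forces every entry of $X$ on $V(T_j)$ to vanish. Thus $X$ is supported on the cycle, and its restriction $Y=X|_{\mathcal{C}}\in R^{k}$ is a unit vector. I would then verify that $Y$ is an eigenvector of $Q(C_k)$ for the eigenvalue $q_{min}(G)$: at a cycle vertex carrying no tree the eigenvalue equations for $G$ and for $C_k$ coincide (degree $2$ with the same two neighbours), while at an attachment vertex $v_{i_j}$ the $G$-equation reduces to $x_{i_j-1}+x_{i_j+1}=0$ (all tree entries being $0$), which is exactly the $C_k$-equation there because $y_{i_j}=0$. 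Consequently $q_{min}(G)$ is a $Q$-eigenvalue of $C_k$, whence $q_{min}(G)\ge q_{min}(C_k)$.

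The heart of the argument, and the step I expect to be the main obstacle, is the strict inequality $q_{min}(G)<q_{min}(C_k)$, which I would establish by a test vector. First note $C_k$ is an odd cycle, so it is nonbipartite and $q_{min}(C_k)>0$. Using that the cyclic rotation commutes with $Q(C_k)$, one sees that the $q_{min}(C_k)$-eigenspace cannot vanish identically at $v_{i_1}$: otherwise, rotating an eigenvector would make it vanish at every vertex, forcing it to be zero. Hence I may choose a unit eigenvector $W$ of $Q(C_k)$ for $q_{min}(C_k)$ with $w_{i_1}\neq 0$. Extend $W$ to a vector $Z$ on $V(G)$ by keeping it on the cycle and, on each $T_j$, assigning to a vertex at distance $d$ from the root the value $(-1)^{d}w_{i_j}$ (the standard bipartition of the tree, with the root at distance $0$). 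Every tree edge, including each edge joining $v_{i_j}$ to its neighbours in $T_j$, then contributes $0$ to $Z^TQ(G)Z=\sum_{v_iv_j\in E(G)}(z_i+z_j)^2$, so $Z^TQ(G)Z=\sum_{\text{cycle edges}}(w_i+w_j)^2=q_{min}(C_k)$.

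On the other hand, only the non-root tree vertices add to the norm, so $\|Z\|^{2}=1+\sum_{j}(\|V(T_j)\|-1)\,w_{i_j}^{2}>1$, the strict inequality holding because $T_1$ is nontrivial (so $\|V(T_1)\|-1\ge 1$) and $w_{i_1}\neq 0$. The Rayleigh characterization of $q_{min}$ then gives $q_{min}(G)\le Z^TQ(G)Z/\|Z\|^{2}<q_{min}(C_k)$, using $q_{min}(C_k)>0$.

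Combining the two halves yields $q_{min}(C_k)\le q_{min}(G)<q_{min}(C_k)$, a contradiction. Hence the assumption is impossible, and $x_{i_j}\neq 0$ for some $j$, that is, $\max\{|x_{i_j}|\mid 1\le j\le t\}>0$. The only points needing genuine care are the parity/sign bookkeeping ensuring that every tree edge cancels in $Z$, and the rotation argument producing a $q_{min}(C_k)$-eigenvector nonzero at $v_{i_1}$; both are routine once set up, and the rest is the eigenvalue-equation check and the Rayleigh quotient estimate.
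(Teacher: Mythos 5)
Your proof is correct. Note that this paper does not actually prove Lemma \ref{le0,05} itself: it is quoted from \cite{YGZW}, so there is no in-paper argument to compare against. Judged on its own, your reconstruction is sound and complete: the contradiction hypothesis plus Lemma \ref{le02,02}(i) forces every attached tree to be a zero branch, the restriction of $X$ to the cycle is then a genuine unit eigenvector of $Q(C_k)$ for the eigenvalue $q_{min}(G)$ (the eigen-equations at attachment vertices degenerate correctly since $x_{i_j}=0$ and all tree entries vanish), giving $q_{min}(G)\geq q_{min}(C_k)$; and your alternating-sign extension of a cycle eigenvector $W$ with $w_{i_1}\neq 0$ makes every tree edge contribute $0$ to the quadratic form while strictly inflating the norm (using that $T_1$ is nontrivial and $q_{min}(C_k)>0$ for the odd cycle), so the Rayleigh quotient yields the strict reverse inequality $q_{min}(G)<q_{min}(C_k)$ and the contradiction. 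The two points that genuinely needed care — the rotation argument guaranteeing an eigenvector of $Q(C_k)$ nonvanishing at $v_{i_1}$, and the strictness of the norm inequality — are both handled correctly; this is precisely the kind of restriction/extension argument the cited source relies on, so your route is the expected one rather than a detour.
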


\begin{lemma}{\bf \cite{KDAS}}\label{le0,06} 
Let $G$ be a connected graph of order $n$. Then
$q_{min}<\delta$, where $\delta$ is the minimal vertex degree of $G$.
\end{lemma}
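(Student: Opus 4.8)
The plan is to use the Rayleigh characterisation of $q_{min}$ recorded in Section 2: for every unit vector $X$ one has $q_{min}(G)\le X^{T}Q(G)X=\sum_{v_{s}v_{t}\in E(G)}(x_{s}+x_{t})^{2}$, with equality exactly when $X$ is an eigenvector for $q_{min}(G)$. I would first produce a test vector whose Rayleigh quotient equals $\delta$, giving $q_{min}(G)\le\delta$, and then perturb it slightly to push the quotient strictly below $\delta$.

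First I would pick a vertex $v_{i}$ of minimum degree, so $d_{G}(v_{i})=\delta$, and take the unit vector $X$ with $x_{i}=1$ and $x_{j}=0$ for all $j\ne i$. In the quadratic form $\sum_{v_{s}v_{t}\in E(G)}(x_{s}+x_{t})^{2}$ the only edges contributing are the $\delta$ edges incident with $v_{i}$, each contributing $(1+0)^{2}=1$; hence $X^{T}Q(G)X=\delta$ and therefore $q_{min}(G)\le\delta$. It remains to upgrade this to a strict inequality.

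For strictness I would use that $G$ is connected of order $n\ge 2$, so $v_{i}$ has a neighbour $v_{j}$, and consider the family $X_{\varepsilon}=X-\varepsilon\,e_{j}$, where $e_{j}$ is the unit vector concentrated at $v_{j}$. A direct expansion of the quadratic form gives $X_{\varepsilon}^{T}Q(G)X_{\varepsilon}=\delta-2\varepsilon+d_{G}(v_{j})\varepsilon^{2}$ and $X_{\varepsilon}^{T}X_{\varepsilon}=1+\varepsilon^{2}$, so the Rayleigh quotient equals $\frac{\delta-2\varepsilon+d_{G}(v_{j})\varepsilon^{2}}{1+\varepsilon^{2}}=\delta-2\varepsilon+O(\varepsilon^{2})$. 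For all sufficiently small $\varepsilon>0$ the linear term dominates and this quotient is strictly smaller than $\delta$, whence $q_{min}(G)<\delta$. Slickly, the same conclusion follows by noting that $X$ realises the Rayleigh value $\delta$ but is not an eigenvector of $Q(G)$, since the $j$-th coordinate of $Q(G)X$ equals $A_{ji}=1\ne0$; were $q_{min}(G)=\delta$, the equality case of the Rayleigh bound would force $X$ to be an eigenvector, a contradiction.

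The only real obstacle is the strictness, as the bound $q_{min}(G)\le\delta$ is immediate from the single test vector. Strictness rests entirely on the minimum-degree vertex having a neighbour, i.e.\ on connectedness together with $n\ge2$; the degenerate case $n=1$ has $\delta=q_{min}=0$ and must be excluded, so the standing hypothesis $n\ge 2$ is exactly what the perturbation argument consumes.
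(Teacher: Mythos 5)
Your proof is correct. Note, however, that the paper itself offers no proof of this lemma at all: it is quoted verbatim from the reference [KDAS] (K.~Das), so there is no in-paper argument to compare yours against. Your two-step argument is the standard elementary one: the test vector $e_{i}$ concentrated at a minimum-degree vertex gives $q_{min}(G)\le e_{i}^{T}Q(G)e_{i}=\delta$ via the quadratic form $\sum_{v_sv_t\in E(G)}(x_s+x_t)^2$, and strictness follows either from the perturbation $e_{i}-\varepsilon e_{j}$ along an edge (your expansion $\delta-2\varepsilon+d_G(v_j)\varepsilon^{2}$ over norm $1+\varepsilon^{2}$ checks out) or, more cleanly, from the equality case of the Rayleigh bound: if $q_{min}(G)=\delta$ then $e_{i}$ would have to be an eigenvector of $Q(G)$, yet $\bigl(Q(G)e_{i}\bigr)_{j}=1\neq 0$ for any neighbour $v_{j}$ of $v_{i}$. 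Both routes consume exactly connectedness plus $n\ge 2$, and you are right that the trivial graph $n=1$ (where $q_{min}=\delta=0$) must be excluded; the paper's blanket convention that all graphs are connected does not by itself rule this case out, so flagging it is appropriate rather than pedantic.
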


\begin{lemma}{\bf \cite{YGZW}}\label{le0,07} 
Let $G$ be a nonbipartite graph with domination number $\gamma(G)$. Then $G$ contains a nonbipartite
unicyclic spanning subgraph $H$
with both $g_{o}(H)=g_{o}(G)$ and $\gamma(H)=\gamma(G)$.
\end{lemma}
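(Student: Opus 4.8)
The plan is to realize $H$ as an edge-minimal member of a carefully chosen family of spanning subgraphs, and then show that edge-minimality forces the graph to be unicyclic. Fix a minimum dominating set $D$ of $G$ and fix one shortest odd cycle $C$, so that $L(C)=g_{o}(G)$, abbreviated $g_{o}$. Consider the family $\mathcal{H}$ of all spanning connected subgraphs $H'$ of $G$ with $C\subseteq H'$ and $\gamma(H')=\gamma(G)$; this family is nonempty because $G$ itself belongs to it. Among all members of $\mathcal{H}$, let $H$ be one with the least number of edges. Since $C\subseteq H$, the graph $H$ is nonbipartite and contains an odd cycle of length $g_{o}$; because $H$ is a spanning subgraph of $G$ every odd cycle of $H$ has length at least $g_{o}$, so $g_{o}(H)=g_{o}=g_{o}(G)$, while $\gamma(H)=\gamma(G)$ holds by construction. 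Hence it remains only to prove that this edge-minimal $H$ is unicyclic.

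To do this I would suppose $H$ is not unicyclic, i.e.\ its cycle rank $\|E(H)\|-n+1$ is at least $2$, and then produce an edge $e$ whose deletion keeps the graph inside $\mathcal{H}$, contradicting minimality. First, if every non-bridge edge of $H$ lay on $C$, then the only cycle of $H$ would be $C$ and the cycle rank would equal $1$; hence there is a non-bridge edge $e\notin E(C)$. For any such $e$, the graph $H-e$ is still connected (as $e$ is not a bridge) and still contains $C$ (as $e\notin E(C)$), so $H-e$ is nonbipartite with $g_{o}(H-e)=g_{o}$. Moreover, deleting an edge never decreases the domination number, so $\gamma(H-e)\ge\gamma(H)=\gamma(G)$. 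Therefore $H-e\in\mathcal{H}$, contradicting minimality, \emph{unless} $\gamma(H-e)>\gamma(G)$. The whole difficulty is thus concentrated in a single point: among the non-bridge edges off $C$, at least one must be deletable without raising the domination number.

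To locate such an edge I would analyse $D$ locally. Call an edge $xy$ \emph{critical} if one endpoint, say $x$, lies outside $D$ and has $y\in D$ as its \emph{only} neighbour in $D$; a direct check shows that deleting any non-critical edge leaves $D$ a dominating set, so any non-critical non-bridge edge off $C$ would finish the proof at once. The key observation is that the critical edges form a forest: each vertex outside $D$ is the non-$D$ endpoint of at most one critical edge (its unique dominator), so the critical edges constitute a disjoint union of stars centred at the vertices of $D$. Consequently every cycle of $H$ must contain a non-critical edge. Running this observation against a suitable cycle---for instance by fixing a spanning tree containing all but one edge of $C$ and examining the fundamental cycle of a second non-tree edge, which necessarily lies off $C$---produces a non-critical, hence deletable, edge on a cycle.

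The step I expect to be the genuine obstacle is the residual possibility that \emph{every} non-critical edge supplied by this argument happens to lie on $C$, which we may not delete since doing so could raise the odd-girth. Here the fixed set $D$ is too rigid: as the example of $C_{5}$ with a single chord already shows, an edge may be critical for one minimum dominating set yet still be removable because $H-e$ admits a \emph{different} minimum dominating set of the same size. I would therefore resolve the obstacle by a local re-routing argument: assuming the only non-critical cycle-edges lie on $C$, I would exhibit explicitly, for a candidate non-bridge edge $e$ off $C$, a dominating set of $H-e$ of size $\gamma(G)$, obtained from $D$ by re-routing the domination of the at most one vertex that loses its unique dominator through an adjacent cycle vertex. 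Proving that this re-routing is always available---and hence that some non-bridge edge off $C$ can always be removed---is the crux; once it is in hand, the edge-minimality of $H$ is contradicted, so $H$ must be unicyclic, and $H$ is the desired spanning subgraph.
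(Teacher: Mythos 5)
Your reduction (edge\hbox{-}minimal member of $\mathcal{H}$, critical-edge analysis, star-forest observation) is fine as far as it goes, but the step you yourself flag as the crux is not merely unproven --- it is false, and the failure is fatal to the whole strategy of fixing one shortest odd cycle $C$ at the outset. (Note also that the paper does not prove this lemma; it imports it from [YGZW], so your argument has to stand alone.) Here is a concrete counterexample. Let $G$ have vertex set $\{u,v,z,w,z',p_1,p_2,q_1,q_2\}$ and edges $uv$, $vz$, $zu$ (a triangle), $uw$, $vw$, $zz'$, and the pendant edges $wp_1$, $wp_2$, $z'q_1$, $z'q_2$. Any dominating set of size $2$ must contain $w$ (otherwise it contains both $p_1,p_2$ and then cannot dominate $q_1$) and likewise $z'$, and $\{w,z'\}$ does dominate $G$; so $\gamma(G)=2$ and $\{w,z'\}$ is the \emph{unique} minimum dominating set (compare Lemma~\ref{le0,08}(ii)). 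The odd-girth is $3$ and there are two shortest odd cycles, $uvz$ and $uvw$. Fix $C=uvz$, which your construction permits. Then $G$ has cycle rank $2$, and the only non-bridge edges of $G$ off $C$ are $uw$ and $vw$. But in $G-uw$ the closed neighborhood of $u$ is $\{u,v,z\}$, disjoint from $\{w,z'\}$, while any $2$-element dominating set of $G-uw$ must still equal $\{w,z'\}$ by the same pendant argument; hence $\gamma(G-uw)=3$, and symmetrically $\gamma(G-vw)=3$. So $G$ itself is the edge-minimal member of your family $\mathcal{H}$, and it is not unicyclic: edge-minimality does not force unicyclicity.

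Moreover, no ``re-routing'' of the dominating set can repair this, because the statement you are trying to force is itself false for this fixed $C$: the unicyclic spanning connected subgraphs of $G$ containing $C=uvz$ are exactly $G-uw$ and $G-vw$, and both have domination number $3>\gamma(G)$. The lemma is still true for $G$ --- for instance $G-uz$ is unicyclic, nonbipartite, has odd-girth $3$ and $\gamma(G-uz)=2$ --- but its unique cycle is the \emph{other} triangle $uvw$, the one through the support vertex $w$. This is the missing idea: which shortest odd cycle survives into $H$ cannot be chosen arbitrarily in advance; it must be chosen after (or together with) the dominating set, so that the cycle is compatible with the domination structure, or else the argument must allow the surviving cycle to be exchanged for another shortest odd cycle mid-proof. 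With that change your critical-edge analysis could serve as a starting point, but as written the proof collapses exactly at the point you identified.
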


\begin{lemma}{\bf \cite{YGZW}}\label{le0,08} 
Suppose a graph $G$ contains pendant vertices. Then

$\mathrm{(i)}$ there must be a minimal dominating set of $G$ containing
all of its $p$-dominators but no any pendant vertex;

$\mathrm{(ii)}$ if $v$ is a $p$-dominator of $G$ and at least two pendant vertices are adjacent to $v$, then any
minimal dominating set of $G$ contains $v$ but no any pendant vertex adjacent to $v$.
\end{lemma}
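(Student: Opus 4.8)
The plan is to prove both parts by elementary local exchange (swapping) arguments on minimum dominating sets, relying throughout on the single structural fact that a pendant vertex $u$ has a unique neighbour $v$ (its $p$-dominator) and dominates nothing except itself and $v$. I will follow the paper's convention that a \emph{minimal} dominating set is one of minimum cardinality $\gamma(G)$, and I first record a harmless reduction: we may assume $G\neq K_2$, for in $K_2$ both vertices are simultaneously pendant and $p$-dominators and the statement degenerates. Since $G$ is connected, this assumption guarantees that any $p$-dominator $v$ adjacent to a pendant vertex $u$ has a second neighbour, so $\deg v\geq 2$; consequently the set of pendant vertices and the set of $p$-dominators are disjoint, and no exchange below can turn a pendant vertex into a $p$-dominator or vice versa.

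For part (i), I would take a minimal dominating set $D$ chosen, among all minimal dominating sets, to contain as few pendant vertices as possible. The first step is to show $D$ contains every $p$-dominator: if some $p$-dominator $v$ were missing, its pendant neighbour $u$ would be dominated only through $v$, forcing $u\in D$; then $D'=(D\setminus\{u\})\cup\{v\}$ is dominating (the vertex $v$ now covers $u$, $v$, and all former obligations of $u$), has the same cardinality, yet contains one fewer pendant vertex, contradicting the choice of $D$. The second step is to show $D$ contains no pendant vertex: if $u\in D$ is pendant with neighbour $v$, then $v$ is a $p$-dominator, hence $v\in D$ by the first step, so $D\setminus\{u\}$ remains dominating and is strictly smaller, contradicting minimality of $|D|$. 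The only thing to get right here is the order of the two steps, since the second uses the conclusion of the first.

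For part (ii), I would show that the hypothesis of two pendant neighbours upgrades the existence statement of (i) into a statement about \emph{every} minimal dominating set. Let $D$ be an arbitrary minimal dominating set and let $u_1,u_2$ be two pendant neighbours of $v$. If $v\notin D$, then each $u_i$ is dominated only through $v$, forcing $u_1,u_2\in D$; replacing the two vertices $u_1,u_2$ by the single vertex $v$ yields a dominating set that is strictly smaller, a contradiction. Hence $v\in D$. Then, exactly as in the second step of (i), any pendant neighbour of $v$ lying in $D$ could be deleted while preserving domination, again contradicting minimality; so $D$ contains no pendant neighbour of $v$.

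The argument is entirely routine bookkeeping, so I do not expect a serious obstacle; the one point deserving care — and the conceptual heart of the lemma — is the distinction that makes (i) only an existence claim while (ii) is universal. With a single pendant neighbour the exchange $u\mapsto v$ is cardinality-preserving, so a minimal dominating set may legitimately contain the pendant vertex in place of its support, and one can only \emph{arrange} for a good choice. With two pendant neighbours the exchange $\{u_1,u_2\}\mapsto v$ strictly \emph{decreases} cardinality, which rules out $v\notin D$ in every minimal dominating set. Keeping the size-preserving and size-decreasing exchanges clearly separated is the main thing to handle correctly.
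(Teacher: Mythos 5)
Your proposal is correct, but note that this paper contains no proof of the statement to compare against: Lemma~\ref{le0,08} is quoted from the earlier paper \cite{YGZW} and is used here as a black box. Judged on its own merits, your exchange argument is sound and complete: in (i) the extremal choice of a minimum dominating set with fewest pendant vertices, followed by the two steps in the order you specify (first absorb every $p$-dominator via the cardinality-preserving swap $u\mapsto v$, then discard pendant vertices), works exactly as you describe; and in (ii) the strictly decreasing swap $\{u_1,u_2\}\mapsto v$ correctly upgrades the conclusion to \emph{every} minimum dominating set. Your preliminary exclusion of $K_2$ is a genuine (if minor) refinement rather than pedantry: for $K_2$ both vertices are simultaneously pendant and $p$-dominators, so part (i) as literally stated fails there, and your observation that connectivity plus $G\neq K_2$ forces $\deg v\geq 2$ for every $p$-dominator $v$ is precisely what makes the pendant/support dichotomy, and hence both exchange steps, legitimate.
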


\begin{lemma}{\bf \cite{MHM}}\label{le0,09} 
(i) For a path $P_{n}$, we have
$\gamma(P_{n})=\lceil\frac{n}{3}\rceil$.

(ii) For a cycle $C_{n}$, we have
$\gamma(C_{n})=\lceil\frac{n}{3}\rceil$.
\end{lemma}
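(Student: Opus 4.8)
The plan is to prove both parts by the standard two-sided argument: a counting lower bound, together with an explicit periodic construction that realizes that bound exactly.

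For the lower bound I would exploit the fact that in either $P_n$ or $C_n$ every vertex has degree at most $2$, so each vertex dominates at most three vertices, namely itself and its (at most two) neighbours; equivalently $\|N_G[v]\|\le 3$ for every $v$. If $D$ is any dominating set, then the closed neighbourhoods $N_G[v]$ for $v\in D$ cover all of $V(G)$, whence $n\le\sum_{v\in D}\|N_G[v]\|\le 3\|D\|$. Since $\|D\|$ is an integer, this forces $\|D\|\ge\lceil\frac{n}{3}\rceil$, and minimizing over $D$ gives $\gamma(G)\ge\lceil\frac{n}{3}\rceil$ in both cases. Note that the degree-one endpoints of $P_n$ only strengthen the inequality, so the same bound holds for the path.

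For the matching upper bound I would exhibit a dominating set of size exactly $\lceil\frac{n}{3}\rceil$. Writing $n=3q+r$ with $r\in\{0,1,2\}$, for the cycle $C_n=v_1v_2\cdots v_nv_1$ I would take the vertices $v_1,v_4,v_7,\ldots$ spaced three apart, since $v_{3k+1}$ dominates the block $\{v_{3k},v_{3k+1},v_{3k+2}\}$; a short verification of the wrap-around shows that $\lceil\frac{n}{3}\rceil$ such vertices already cover the whole cycle in each residue case. For the path $P_n=v_1v_2\cdots v_n$ I would instead use the ``middle-of-block'' vertices $v_2,v_5,v_8,\ldots$, each dominating a consecutive triple, and append one extra vertex near the end to absorb the final $r$ vertices when $r\neq 0$; this again uses exactly $\lceil\frac{n}{3}\rceil$ vertices. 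Combining the two bounds yields the asserted equalities.

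The only delicate point, and hence the main obstacle, is the boundary bookkeeping: one must check that the periodic construction still dominates the last (possibly truncated) block when $n\not\equiv 0\pmod 3$, and for the cycle that the selected vertices genuinely close up without leaving a gap across the identification of $v_n$ with $v_1$. These amount to routine case distinctions on $r\in\{0,1,2\}$ rather than any conceptual difficulty, while the counting lower bound is immediate once one records that the maximum degree is $2$.
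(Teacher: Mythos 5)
Your proof is correct: the closed-neighbourhood counting bound $n\le\sum_{v\in D}\|N_G[v]\|\le 3\|D\|$ gives $\gamma\ge\lceil\frac{n}{3}\rceil$ for both $P_n$ and $C_n$, and your periodic constructions (vertices spaced three apart, with the routine residue checks for $n\not\equiv 0 \pmod 3$) realize the bound, so both equalities follow. Note that the paper offers no proof of this lemma at all — it is quoted as a known result from the cited reference — so there is nothing to compare against; your two-sided argument is the standard proof of this classical fact.
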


We define the corona $G$ of graphs $G_{1}$ and $G_{2}$ as follows. The $corona$ $G = G_{1}\circ G_{2}$ is the graph
formed from one copy of $G_{1}$ and $\parallel V (G_{1})\parallel$ copies of $G_{2}$ where the $i$th vertex of $G_{1}$ is adjacent to
every vertex in the $i$th copy of $G_{2}$.

\begin{lemma}{\bf \cite{CPNH}}\label{le0,10} 
Let $G$ be a graph of order $n$. $\gamma(G) =\frac{n}{2}$ if
and only if the components of $G$ are the cycle $C_{4}$ or the corona $H\circ K_{1}$ for any connected graph $H$.
\end{lemma}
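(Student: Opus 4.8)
The plan is to prove the two implications separately, reducing throughout to the connected case. Since the domination number is additive over connected components and, for a graph without isolated vertices, each component $C$ satisfies $\gamma(C)\le\frac{\|V(C)\|}{2}$ (Ore's bound \cite{ORE}), the hypothesis $\gamma(G)=\frac n2$ forces $\gamma(C)=\frac{\|V(C)\|}{2}$ on \emph{every} component $C$; hence it suffices to treat a connected $G$ with no isolated vertex. For the sufficiency ($\Leftarrow$) the two model graphs are checked directly: Lemma \ref{le0,09}(ii) gives $\gamma(C_4)=\lceil\frac43\rceil=2=\frac42$, while for a corona $H\circ K_1$ with $\|V(H)\|=k$ (order $2k$), the set $V(H)$ dominates everything, so $\gamma\le k$, and conversely the $k$ pairs $\{$support, its pendant$\}$ are pairwise disjoint and every dominating set must meet each such pair in order to dominate the pendant, so $\gamma\ge k$; thus $\gamma(H\circ K_1)=k=\frac{2k}{2}$, and additivity over components closes this direction.

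For the necessity ($\Rightarrow$), fix a minimum dominating set $D$ with $\|D\|=\frac n2$ and put $\overline{D}=V(G)\setminus D$. Since $D$ is minimal and $G$ has no isolated vertex, Ore's lemma \cite{ORE} shows $\overline{D}$ is also a dominating set; as $\|\overline{D}\|=\frac n2=\gamma$, it too is minimum, so every vertex of $D$ has a neighbour in $\overline{D}$ and every vertex of $\overline{D}$ has a neighbour in $D$. The first key step is to build a perfect matching between $D$ and $\overline{D}$ in the bipartite graph $B$ of edges joining the two sides. If none existed, Hall's condition would fail for some $A\subseteq D$ with $\|N_B(A)\|<\|A\|$; but then $(D\setminus A)\cup N_B(A)$ is a dominating set, because each $a\in A$ is dominated by its $\overline{D}$-neighbour in $N_B(A)$ and each $u\in\overline{D}\setminus N_B(A)$ has all its $D$-neighbours inside $D\setminus A$. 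Its size is $\|D\|-\|A\|+\|N_B(A)\|<\frac n2$, contradicting $\gamma=\frac n2$. Hence a perfect matching $M=\{v_iu_i:\ v_i\in D,\ u_i\in\overline{D}\}$ exists.

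The remaining, and main, step is to upgrade the matching to the full dichotomy: I plan to show that, unless $G=C_4$, each $u_i$ has $v_i$ as its \emph{unique} $D$-neighbour and $G[\overline{D}]$ is edgeless, whence $H:=G[D]$ is connected, each $u_i$ is a pendant at the distinct support $v_i$, and $G=H\circ K_1$. The mechanism is an exchange argument driven by minimality of both $D$ and $\overline{D}$: for instance, if some $u_i$ had a second neighbour $v_j\in D$, one deletes $v_i$ from $D$ and checks that $D\setminus\{v_i\}$ already dominates unless $v_i$ is isolated in $G[D]$ and carries no private external neighbour, so a smaller dominating set appears, contradicting $\gamma=\frac n2$; analogous deletions rule out edges inside $\overline{D}$, and the symmetric roles of $D$ and $\overline{D}$ corner the only configuration that survives every such exchange, namely the $4$-cycle. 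I expect this final case analysis to be the principal obstacle: the Hall argument producing the matching is clean, but systematically excluding every way the matched pairs could be cross-linked, and pinning down $C_4$ as the unique non-corona survivor, requires careful and repeated use of the fact that $D$ and $\overline{D}$ are both minimum dominating sets.
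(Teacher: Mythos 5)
The paper itself contains no proof of this lemma --- it is quoted verbatim from \cite{CPNH} --- so your attempt can only be judged on its own merits. The parts you actually carry out are correct: the reduction to connected components, the verification that $C_{4}$ and the coronas $H\circ K_{1}$ attain $\gamma=\frac{n}{2}$, the use of Ore's theorem \cite{ORE} to conclude that $\overline{D}$ is a second minimum dominating set, and the Hall-type argument producing a perfect matching between $D$ and $\overline{D}$ are all sound and cleanly written. But the necessity direction is not finished, and the plan you give for finishing it would fail as stated. You propose to show that, unless $G=C_{4}$, the \emph{fixed} minimum dominating set $D$ satisfies: each $u_{i}\in\overline{D}$ has $v_{i}$ as its unique $D$-neighbour and $G[\overline{D}]$ is edgeless. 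That claim is false for an arbitrary minimum dominating set. Take $G=P_{4}=v_{1}v_{2}v_{3}v_{4}=K_{2}\circ K_{1}$ and $D=\{v_{1},v_{3}\}$: this is a minimum dominating set ($\gamma=2=\frac{n}{2}$), yet $v_{2}\in\overline{D}$ has two neighbours in $D$. No exchange argument can produce a contradiction here, because there is nothing contradictory --- $P_{4}$ genuinely satisfies the hypothesis; it is only that the corona partition (supports $\{v_{2},v_{3}\}$, pendants $\{v_{1},v_{4}\}$) does not coincide with the $D$ you fixed. Indeed, in your own exchange step, deleting $v_{1}$ from $D$ fails to dominate precisely because $v_{1}$ is isolated in $G[D]$, which is the escape clause you yourself note, and the argument stops without progress.

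The missing idea, then, is that the corona structure must be exhibited with respect to a suitably \emph{chosen} minimum dominating set, not the one fixed at the outset: for instance, re-select $D$ extremally (say, among all minimum dominating sets, one maximizing the number of edges of $G[D]$) and prove the structural dichotomy for that $D$; or abandon this route in favour of the standard reductions (every spanning tree $T$ of $G$ also has $\gamma(T)=\frac{n}{2}$, one proves the characterization for trees by induction, and then controls which chords may be added without lowering $\gamma$). Either way, this step is the actual content of the Payan--Xuong theorem, and it is exactly the part your proposal leaves open --- as you candidly acknowledge in your closing sentence. A secondary point: both the lemma as stated and your proof tacitly require $G$ to have no isolated vertices; otherwise $K_{1}\cup P_{3}$ has $\gamma=2=\frac{n}{2}$ without its components being $C_{4}$ or coronas, and your appeal to Ore's bound already presupposes that assumption.
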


\setlength{\unitlength}{0.5pt}
\begin{center}
\begin{picture}(370,127)
\put(7,101){\circle*{4}}
\put(7,34){\circle*{4}}
\qbezier(7,101)(7,68)(7,34)
\put(64,62){\circle*{4}}
\qbezier(7,34)(35,48)(64,62)
\qbezier(7,101)(35,82)(64,62)
\put(168,62){\circle*{4}}
\qbezier(64,62)(116,62)(168,62)
\put(337,61){\circle*{4}}
\put(182,62){\circle*{4}}
\put(204,62){\circle*{4}}
\put(193,62){\circle*{4}}
\put(116,62){\circle*{4}}
\put(215,62){\circle*{4}}
\put(264,62){\circle*{4}}
\qbezier(215,62)(239,62)(264,62)
\put(331,112){\circle*{4}}
\qbezier(264,62)(297,87)(331,112)
\put(335,93){\circle*{4}}
\qbezier(264,62)(299,78)(335,93)
\put(337,71){\circle*{4}}
\put(336,81){\circle*{4}}
\put(340,45){\circle*{4}}
\qbezier(264,62)(302,54)(340,45)
\put(65,67){$v_{3}$}
\put(2,107){$v_{2}$}
\put(1,19){$v_{1}$}
\put(113,68){$v_{4}$}
\put(248,47){$v_{3+k}$}
\put(333,116){$v_{3+k+1}$}
\put(339,92){$v_{3+k+2}$}
\put(345,38){$v_{n}$}
\put(120,-12){Fig. 2.1. $C_{3,\, k}^*$}
\end{picture}
\end{center}

Denote by $C_{3,\, k}^*$ the graph obtained by attaching a $C_{3}$ to an end vertex of a path of length $k$ and attaching $n-3-k$ pendant vertices to the other end vertex of this path (see Fig. 2.1).

\begin{lemma}{\bf \cite{YGZW}}\label{le0,11} 
$\gamma(C_{3,\, k}^*)=\gamma(P_{k+3})$.
\end{lemma}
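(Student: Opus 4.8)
The plan is to show both sides equal $\lceil (k+3)/3\rceil$, which is the value of $\gamma(P_{k+3})$ by Lemma~\ref{le0,09}(i), and to route everything through the lollipop $L_{3,k}$ (the subgraph of $C_{3,\,k}^*$ obtained by deleting its $n-3-k$ pendant vertices), namely the triangle $v_1v_2v_3$ together with the pendant path $v_3v_4\cdots v_{3+k}$. First I would use Lemma~\ref{le0,08}(i): since $v_{3+k}$ is the unique $p$-dominator of $C_{3,\,k}^*$, there is a minimal dominating set that contains $v_{3+k}$ and avoids every pendant vertex. I would record the resulting reduction: writing $f(k)$ for the minimum size of a dominating set of $L_{3,k}$ that contains the path end $v_{3+k}$, one has $\gamma(C_{3,\,k}^*)=f(k)$. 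Indeed, such a constrained set of $L_{3,k}$ also dominates $C_{3,\,k}^*$ because $v_{3+k}$ dominates all pendants, and conversely the dominating set produced by Lemma~\ref{le0,08}(i) lies in $V(L_{3,k})$ and contains $v_{3+k}$.

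Next I would establish the recurrence $f(k)=f(k-3)+1$ for $k\ge 4$, together with the base values $f(1)=f(2)=f(3)=2$; since Lemma~\ref{le0,09}(i) gives $\gamma(P_{k+3})=\lceil(k+3)/3\rceil=\lceil k/3\rceil+1=\gamma(P_{k})+1$ with the same base values $\gamma(P_4)=\gamma(P_5)=\gamma(P_6)=2$, the two quantities satisfy identical recurrences and hence coincide. The base cases are immediate: $\{v_{3+k}\}$ alone fails to dominate $v_1$ for every $k\ge1$, so $f(k)\ge2$, while an explicit two-element set (e.g.\ $\{v_3,v_{3+k}\}$, noting that $v_3$ dominates $v_1,v_2,v_3,v_4$) works for $k\le3$.

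For the recurrence, the inequality $f(k)\le f(k-3)+1$ is the easy direction: starting from an optimal constrained dominating set $D_0$ of $L_{3,k-3}$ (so $v_{k}=v_{3+(k-3)}\in D_0$), the set $D_0\cup\{v_{3+k}\}$ dominates $L_{3,k}$, because $v_{3+k}$ covers $v_{3+k-1},v_{3+k}$ while the retained vertex $v_k\in D_0$ already covers the newly exposed vertex $v_{k+1}$; it also contains the required end $v_{3+k}$. The reverse inequality $f(k)\ge f(k-3)+1$ is the crux and is where I expect the real work. Given an optimal constrained set $D$ of $L_{3,k}$, the forced vertex $v_{3+k}$ covers only $v_{3+k-1}$ and itself, so $v_{3+k-2}$ must be dominated by some vertex of $D$ among $v_{3+k-3},v_{3+k-2},v_{3+k-1}$. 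I would split into the case $v_{3+k-3}=v_k\in D$ and the case $v_k\notin D$; in the first, $D\cap V(L_{3,k-3})$ is already a constrained dominating set of $L_{3,k-3}$ of size at most $|D|-1$ (since $v_{3+k}$ is deleted), and in the second, the vertices $v_{k+1},v_{k+2},v_{k+3}$ must contain at least two members of $D$ (one is $v_{3+k}$, and dominating $v_{k+1}$ forces another since $v_k\notin D$), so replacing them by the single vertex $v_k$ yields a constrained dominating set of $L_{3,k-3}$ of size at most $|D|-1$. Either way $f(k-3)\le f(k)-1$.

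The main obstacle is precisely this reverse inequality: the triangle lets the single vertex $v_3$ dominate four core vertices, which would, on its own, push the domination number below $\gamma(P_{k+3})$ at the triangle end; what rescues the equality is that the pendants force $v_{3+k}$ into every economical dominating set (Lemma~\ref{le0,08}), and this end vertex is as inefficient as a path endpoint. Making the ``cut off the last three path vertices'' step release exactly one dominator while preserving the end constraint---verified through the short case analysis above---is the delicate point, and I would take care that no core vertex $v_i$ with $i\le k$ was being dominated from the removed segment, which holds because only $v_k$ has a neighbor beyond $v_k$ and that case is handled explicitly.
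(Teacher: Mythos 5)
Your proof is correct, but there is nothing in the paper to compare it against: Lemma~\ref{le0,11} is imported from \cite{YGZW} as a working lemma and is stated without proof, so your argument stands as a self-contained substitute. The reduction step is sound: the pendant vertices of $C_{3,\,k}^*$ are exactly those attached to $v_{3+k}$, so $v_{3+k}$ is the unique $p$-dominator, and Lemma~\ref{le0,08}(i) yields a minimal dominating set that lies inside $V(L_{3,k})$ and contains $v_{3+k}$; this gives $\gamma(C_{3,\,k}^*)=f(k)$ with $f$ as you define it. Both directions of the recurrence $f(k)=f(k-3)+1$ check out --- in the lower bound, the case split on whether $v_{k}\in D$ is exactly what is needed, since when $v_{k}\notin D$ the set $\{v_{k+1},v_{k+2},v_{k+3}\}$ must contain two members of $D$ (namely $v_{k+3}$ and one of $v_{k+1},v_{k+2}$ to dominate $v_{k+1}$), and no vertex $v_{i}$ with $i\leq k-1$ can lose its dominator in the exchange because its closed neighborhood avoids the removed segment. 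With the base values $f(1)=f(2)=f(3)=2=\gamma(P_4)=\gamma(P_5)=\gamma(P_6)$ and Lemma~\ref{le0,09}(i), the identical recurrences force $f(k)=\lceil(k+3)/3\rceil=\gamma(P_{k+3})$. One point you should make explicit rather than implicit: the statement is true only if at least one pendant vertex is actually attached to $v_{3+k}$ (i.e., $n\geq k+4$, as Fig.~2.1 suggests); otherwise it fails, e.g., $\gamma(L_{3,1})=1\neq 2=\gamma(P_4)$. Your appeal to Lemma~\ref{le0,08} with $v_{3+k}$ as a $p$-dominator silently assumes this hypothesis, and it deserves one sentence of justification.
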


\section{Domination number and the structure of a graph}

\begin{lemma}\label{le03,01} 
Suppose $G=H(u)\diamond S_{k}(u)$ where $H$ is a connected simple graph of order at
least $2$, $S_{k}$ ($k\geq 3$) is a star with center $v$ and and $u$ is a pendant vertex of $S_{k}$ (see Fig. 3.1).
Then $\gamma(G)-1\leq\gamma(H)\leq \gamma(G)$.
\end{lemma}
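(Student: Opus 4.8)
The plan is to prove the two inequalities separately by explicit constructions that carry a dominating set back and forth across the attachment vertex $u$. First I would fix notation for the star part: write $v$ for the center of $S_k$, recall that $u$ is one of its $k-1$ leaves, and let $w_1,\ldots,w_{k-2}$ denote the remaining leaves. Since $k\geq 3$ we have $k-2\geq 1$, so $v$ is adjacent to at least one pendant vertex of $G$ and is therefore a $p$-dominator; moreover each $w_i$ has $v$ as its unique neighbour, while every vertex of $H$ other than $u$ keeps exactly its $H$-neighbours in $G$ (only $u$ gains the extra neighbour $v$). This structural observation is the engine for both bounds.

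For the lower bound $\gamma(G)\le\gamma(H)+1$, I would start from a minimum dominating set $D_H$ of $H$ and set $D_G=D_H\cup\{v\}$. Here $v$ dominates $u$ and all the leaves $w_1,\ldots,w_{k-2}$, while $D_H$ continues to dominate every vertex of $H$ inside $G$ (its $G$-neighbourhoods are no smaller than its $H$-neighbourhoods). Hence $D_G$ dominates $G$ and $\gamma(G)\le|D_H|+1=\gamma(H)+1$, which rearranges to $\gamma(G)-1\le\gamma(H)$.

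For the upper bound $\gamma(H)\le\gamma(G)$, I would invoke Lemma~\ref{le0,08}(i) to choose a minimal dominating set $D$ of $G$ containing every $p$-dominator but no pendant vertex; in particular $v\in D$ and $w_1,\ldots,w_{k-2}\notin D$, so $D$ meets the added star vertices $\{v,w_1,\ldots,w_{k-2}\}$ in exactly $v$, giving $|D\cap V(H)|=|D|-1$. Writing $D_H=D\cap V(H)$, every vertex of $H$ except possibly $u$ has all its $G$-neighbours inside $H$ and so is dominated by $D_H$; to repair the one possible gap I would pass to $D_H^{*}=D_H\cup\{u\}$, which dominates all of $H$ and satisfies $|D_H^{*}|\le|D_H|+1=|D|=\gamma(G)$. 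Therefore $\gamma(H)\le\gamma(G)$.

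The only delicate point, and the step I would watch most carefully, is the shared vertex $u$: because $u$ lies in both $H$ and $S_k$ and may be dominated in $G$ solely through the external vertex $v$, the naive restriction $D\cap V(H)$ need not dominate $u$. The small correction of adjoining $u$ (equivalently, trading the external $v$ for $u$) resolves this without pushing the count above $\gamma(G)$, and it is essential to confirm that the bookkeeping $|D\cap V(H)|=|D|-1$ holds whether or not $u$ itself lies in $D$. Everything else is routine, since all adjacencies outside the star are unaffected by the coalescence.
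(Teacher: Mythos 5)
Your proof is correct and follows essentially the same route as the paper's: the lower bound via adjoining the star center $v$ to a minimum dominating set of $H$, and the upper bound via Lemma \ref{le0,08} and swapping $v$ for $u$ (your unified set $(D\cap V(H))\cup\{u\}$ coincides with the paper's two-case construction, since it equals $(D\setminus\{v\})\cup\{u\}$ whether or not $u\in D$). The only cosmetic difference is that you prove the first inequality directly where the paper argues by contradiction.
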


\setlength{\unitlength}{0.7pt}
\begin{center}
\begin{picture}(251,105)
\qbezier(0,64)(0,79)(15,91)\qbezier(15,91)(30,102)(52,102)\qbezier(52,102)(73,102)(88,91)\qbezier(88,91)(104,79)(104,64)
\qbezier(104,64)(104,48)(88,36)\qbezier(88,36)(73,25)(52,25)\qbezier(52,26)(30,26)(15,36)\qbezier(15,36)(0,48)(0,64)
\put(103,61){\circle*{4}}
\put(178,61){\circle*{4}}
\qbezier(103,61)(140,61)(178,61)
\put(237,105){\circle*{4}}
\qbezier(178,61)(207,83)(237,105)
\put(251,80){\circle*{4}}
\qbezier(178,61)(214,71)(251,80)
\put(242,24){\circle*{4}}
\qbezier(178,61)(210,43)(242,24)
\put(247,41){\circle*{4}}
\put(249,51){\circle*{4}}
\put(250,63){\circle*{4}}
\put(171,76){$v$}
\put(109,74){$u$}
\put(49,65){$H$}
\put(101,-9){Fig. 3.1. $G$}
\end{picture}
\end{center}

\begin{proof}
Firstly, we prove that $\gamma(G)-1\leq\gamma(H)$. Otherwise, suppose $\gamma(H)\leq \gamma(G)-2$, and suppose $D$ is
a dominating set of $H$. Then $D^{'}=D\cup \{v\}$ is a dominating set of $G$. This implies that $\gamma(G)\leq |D^{'}|
\leq \gamma(G)-1$, which is a contradiction. The result follows as desired.

Secondly, we prove that $\gamma(H)\leq \gamma(G)$. By Lemma \ref{le0,08}, graph $G$ has a dominating set $D$ containing all $p$-dominators but no any pendant vertex. Because $v$ is a $p$-dominator, $v\in D$. If $u\notin D$, then $(D\setminus \{v\})\cup \{u\}$ is a dominating set of $H$; if $u\in D$, then
$D\setminus \{v\}$ is a dominating set of $H$. This implies $\gamma(H)\leq \gamma(G)$. \ \ \ \ \ $\Box$
\end{proof}

Recall that a $lollipop$ $graph$ $L_{g, l}$ is $\mathbb{P}(v_{g})\diamond \mathbb{C}(v_{g})$ where $\mathbb{P}=v_{g}v_{g+1}\cdots v_{g+l}$ is a pendant path  with
length $l\geq 1$, $\mathbb{C}=v_{1}v_{2}\cdots v_{g}v_{1}$ is a $g$-cycle. For given $g$ and $l$, a graph of order $n$ is called a $F_{g, l}$-$graph$ if it
is obtained by attaching $n-g-l$ pendant vertices to some nonpendant vertices of a $L_{g, l}$. If $l=1$, a $F_{g, l}$-$graph$ is also called a $sunlike$ graph. For example, the graph $G$ shown in Fig. 3.2 is
a $F_{7, 6}$-$graph$.

\setlength{\unitlength}{0.7pt}
\begin{center}
\begin{picture}(507,143)
\qbezier(14,74)(14,90)(32,102)\qbezier(32,102)(50,115)(76,115)\qbezier(76,115)(101,115)(119,102)
\qbezier(119,102)(138,90)(138,74)
\qbezier(138,74)(138,57)(119,45)\qbezier(119,45)(101,33)(76,33)\qbezier(76,33)(50,33)(32,45)\qbezier(32,45)(14,57)(14,74)
\put(137,72){\circle*{4}}
\put(507,72){\circle*{4}}
\qbezier(137,72)(322,72)(507,72)
\put(31,46){\circle*{4}}
\put(0,31){\circle*{4}}
\qbezier(31,46)(15,39)(0,31)
\put(72,34){\circle*{4}}
\put(56,113){\circle*{4}}
\put(35,143){\circle*{4}}
\qbezier(56,113)(45,128)(35,143)
\put(402,72){\circle*{4}}
\put(343,72){\circle*{4}}
\put(17,87){\circle*{4}}
\put(112,107){\circle*{4}}
\put(143,135){\circle*{4}}
\qbezier(112,107)(127,121)(143,135)
\put(210,72){\circle*{4}}
\put(210,103){\circle*{4}}
\qbezier(210,72)(210,88)(210,103)
\put(278,72){\circle*{4}}
\put(278,98){\circle*{4}}
\qbezier(278,72)(278,85)(278,98)
\put(444,71){\circle*{4}}
\put(444,40){\circle*{4}}
\qbezier(444,71)(444,56)(444,40)
\put(201,-9){Fig. 3.2. a $F_{7, 6}$-$graph$}
\put(116,43){\circle*{4}}
\end{picture}
\end{center}

In a $F_{g, l}$-graph if each $p$-dominator other than $v_{g+l-1}$ is  attached with exactly one pendant vertex, then this graph is called a $\mathcal {F}_{g, l}$-$graph$. In the following paper, for unity, for a $\mathcal {F}_{g, l}$-$graph$, $\mathbb{C}$ and $\mathbb{P}$ are expressed as above.

\begin{theorem}\label{th03,02} 
Among all nonbipartite unicyclic graphs of order $n$, and with both domination number $\gamma$ and girth $g$ ($g\leq n-1$), the
minimum $q_{min}$ attains at a $\mathcal {F}_{g, l}$-graph $G$ for some $l$. Moreover, for this graph $G$, suppose that $X=(x_1$, $x_2$, $x_3$, $\ldots$, $x_n)^T$ is a unit
eigenvector corresponding to $q_{min}(G)$. Then we have that $| x_{g}|> 0$, and $|x_{g+l-1}|=\max\{|x_{i}|\mid v_{i}$ is a $p$-dominator$\,\}$.
\end{theorem}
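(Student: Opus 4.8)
The plan is to start from a graph $G$ that attains the minimum $q_{min}$ within the class (nonbipartite unicyclic, order $n$, domination number $\gamma$, girth $g$ with $g\le n-1$) and to reshape it, by a sequence of perturbations that preserve $n$, $g$ and $\gamma$ while not increasing $q_{min}$, into a $\mathcal{F}_{g,l}$-graph. Since $G$ is unicyclic and nonbipartite, its unique cycle $\mathbb{C}$ is odd of length $g$, and $G$ is obtained from $\mathbb{C}$ by attaching trees at some cycle vertices. Fix a unit eigenvector $X$ of $q_{min}(G)$. Because $g\le n-1$ there is at least one pendant vertex, so by Lemma \ref{le0,06} we have $q_{min}(G)<1$; reading the eigenvalue equation at a pendant vertex $v_p$ with support $v_s$ then gives $|x_s|=(1-q_{min})|x_p|<|x_p|$, the local form of the monotonicity in Lemma \ref{le02,03}.

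The two forces to balance are spectral and combinatorial. On the spectral side, the moving lemma (Lemma \ref{le0,04}) says that transplanting a tree branch from an attachment point $v_2$ to a point $v_1$ with $|x_1|\ge |x_2|$ (and $|x_1|>0$ when equal) strictly decreases $q_{min}$; together with Lemma \ref{le02,02} (a tree branch with zero root is entirely zero) and Lemma \ref{le02,03} (on a nonzero tree branch $|x|$ strictly increases away from the root), this pushes all attached mass toward a single high-$|x|$ vertex and tends to produce a star-like configuration. On the combinatorial side, the prescribed domination number forces the attachments to stay spread out: by Lemma \ref{le0,08} each $p$-dominator lies in a minimal dominating set while its pendants do not, so collapsing several support vertices into one can lower $\gamma$. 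The resolution, which I would make precise step by step, is the lollipop shape: route the ``spread'' into one pendant path (the handle) whose length $l$ realizes $\gamma$, keep exactly one pendant on every other support vertex to hold the domination count fixed, and deposit the remaining (domination-free) pendants onto the single farthest support vertex $v_{g+l-1}$, where Lemma \ref{le0,04} wants them. Lemma \ref{le03,01} and Lemma \ref{le0,08} are the tools that certify $\gamma$ is unchanged at each pendant redistribution.

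Concretely, I would first replace every tree branch of depth at least two by a path together with pendant vertices, using Lemma \ref{le0,04} to pull leaves toward vertices of larger $|x|$ without raising $q_{min}$ and using Lemma \ref{le0,08} to keep $\gamma$ fixed; then merge all pendant paths into a single handle attached at a cycle vertex of maximum $|x|$ among those carrying a branch, which is positive by Lemma \ref{le0,05}, and relabel this vertex $v_g$ (yielding $|x_g|>0$). Finally I would consolidate the leftover pendants onto $v_{g+l-1}$, the second-to-last handle vertex, choosing $l$ so that the domination number stays equal to $\gamma$; the output is a $\mathcal{F}_{g,l}$-graph with $q_{min}$ no larger than that of $G$, hence a minimizer. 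For the eigenvector statement, $|x_g|>0$ is already secured, and the monotonicity of Lemma \ref{le02,03} along the handle gives $|x_{g+l-1}|$ strictly larger than the values at all handle support vertices closer to $\mathbb{C}$. That $|x_{g+l-1}|$ also dominates the values at any support vertex on the cycle follows from minimality: were some cycle support vertex to have strictly larger $|x|$, relocating the pendant cluster onto it — an operation I would again arrange to preserve $\gamma$ — would strictly decrease $q_{min}$ by Lemma \ref{le0,04}, contradicting the choice of $G$.

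The main obstacle is exactly the coupling of the two forces: each spectral relocation via Lemma \ref{le0,04} threatens to alter $\gamma$, so every step must be accompanied by a domination bookkeeping argument (via Lemmas \ref{le0,08} and \ref{le03,01}) showing $\gamma$ is preserved, while the handle length $l$ must be tuned to the target value of $\gamma$. Maintaining these two invariants simultaneously under a single chain of moves — rather than optimizing one at a time — is the delicate heart of the proof; the oddness of the cycle and the interlacing inequality of Lemma \ref{le02,01} supply the comparisons between neighbouring graphs that make the bookkeeping tractable.
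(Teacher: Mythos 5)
Your overall skeleton --- take a minimizer, perturb it toward the lollipop shape, and invoke Lemmas \ref{le02,02}, \ref{le02,03}, \ref{le0,04}, \ref{le0,05}, \ref{le0,08} and \ref{le03,01} to control $q_{min}$ and $\gamma$ --- is the same as the paper's, but there is a genuine gap at the heart of it: you never exhibit a perturbation that simultaneously (a) strictly decreases $q_{min}$ and (b) exactly preserves $\gamma$, and the only spectral tool you invoke for (a), Lemma \ref{le0,04}, cannot perform the moves you describe. Lemma \ref{le0,04} transplants one whole tree branch from one root to another; your moves (``replace every tree branch of depth at least two by a path together with pendant vertices'', ``merge all pendant paths into a single handle'') are not of this form, and even when a move is a transplant it generically changes the domination number --- which is precisely why the minimizer keeps one pendant on each support vertex instead of piling everything at one vertex. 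Saying that Lemma \ref{le0,08} ``keeps $\gamma$ fixed'', or that you ``would arrange'' a relocation to preserve $\gamma$, asserts the hard part rather than proving it; Lemma \ref{le0,08} only guarantees the existence of nice minimal dominating sets, it does not by itself make any relocation $\gamma$-preserving.

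The paper resolves this coupling with a device your proposal lacks. If $G$ is not an $F_{g,l}$-graph, it locates a longest path $P$ (of length at least $2$) ending at a support vertex $v_{z}$ away from the maximal $p$-dominator $v_{f-1}$, deletes $v_{z}$ together with its pendants $v_{z_{1}},\ldots,v_{z_{w}}$, and then uses the dichotomy of Lemma \ref{le03,01}, $\gamma(H)\in\{\gamma(G)-1,\gamma(G)\}$, to choose between two \emph{different} re-wirings: when $\gamma(H)=\gamma(G)-1$ the pendants are attached to the pendant vertex $v_{f}$ (turning $v_{f}$ into a support vertex and restoring $\gamma$), and when $\gamma(H)=\gamma(G)$ they are attached to $v_{f-1}$ itself. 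Neither surgery is a tree transplant, so Lemma \ref{le0,04} does not apply; instead the paper certifies the spectral decrease by a hand-built test vector $Y$ that boosts the relocated entries by $|x_{f-1}|-|x_{z-1}|$ and $|x_{f}|-|x_{z}|$, so that $Y^{T}Q(G^{'})Y=X^{T}Q(G)X$ while $Y^{T}Y>X^{T}X$, whence $q_{min}(G^{'})<q_{min}(G)$ by the Rayleigh quotient, using the monotonicity chain $|x_{z-1}|<|x_{z}|\le|x_{f-1}|<|x_{f}|$ from Lemma \ref{le02,03}. This case-dependent reattachment plus explicit vector is the missing idea; without it your chain of moves cannot be certified to stay inside the class of graphs with domination number $\gamma$, and the contradiction never gets off the ground. (Lemma \ref{le0,04} suffices in the paper only for the final, easy step --- transferring excess pendants from other support vertices to $v_{g+l-1}$, which does preserve $\gamma$ --- and that step of your proposal is fine.)
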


\begin{proof}
Suppose that graph $G$ is a nonbipartite unicyclic of order $n$ whose $q_{min}(G)$ attains the minimum.

Firstly, we prove that $G$ is a $F_{g, l}$-graph for some $l$. We give a proof by contradiction. Suppose that
$G$ is not a $F_{g, l}$-graph for any $l$. Then $G \cong \mathcal {C}^{(T_{1}, T_{2}, \ldots, T_{t};\, i_{1}, i_{2}, \ldots, i_{t})}_{g}$
for some $\mathcal {C}$ and $t$. Suppose that $X=(x_1$, $x_2$, $x_3$, $\ldots$, $x_n)^T$ is a unit eigenvector of $G$
corresponding to $q_{min}(G)$. By Lemma $\ref{le0,05}$, we know that $\max\{|x_{i_{j}}|\, |\, 1\leq j\leq t\}>0$. Without
loss of generality, suppose that $|x_{i_{1}}|=\max\{|x_{i_{j}}|\, |\, 1\leq j\leq t\}$. By Lemma $\ref{le02,02}$, with respect to $X$, we see that
among all $p$-dominators, the one with the largest corresponding entry must be in one nonzero $T_{j}$ (that is, $T_{j}$ a nonzero branch).  Without
loss of generality, we suppose that among all $p$-dominators,
the entry $x_{f-1}$ corresponding to the $p$-dominator $v_{f-1}$ in $T_{1}$ attains the maximum. Note that $|x_{i_{1}}|>0$. By Lemmas $\ref{le02,03}$, it follows that
$|x_{f-1}|>0$. Suppose that $v_{f}$ is a pendant vertex attached to $v_{f-1}$. Denote by $P_{i_{1}, f}$ the path from $v_{i_{1}}$ to $v_{f}$
in $T_{1}$.

\setlength{\unitlength}{0.7pt}
\begin{center}
\begin{picture}(616,301)
\put(73,114){\circle*{4}}
\put(60,143){\circle*{4}}
\qbezier(73,114)(66,129)(60,143)
\put(153,223){\circle*{4}}
\put(177,237){\circle*{4}}
\qbezier(153,223)(165,230)(177,237)
\put(50,164){\circle*{4}}
\put(53,158){\circle*{4}}
\put(56,152){\circle*{4}}
\put(35,194){\circle*{4}}
\put(38,188){\circle*{4}}
\put(41,182){\circle*{4}}
\put(211,253){\circle*{4}}
\put(214,246){\circle*{4}}
\put(216,239){\circle*{4}}
\put(0,273){\circle*{4}}
\put(31,202){\circle*{4}}
\qbezier(0,273)(15,238)(31,202)
\put(3,285){$v_{f}$}
\put(22,241){$v_{f-1}$}
\put(26,168){$v_{k}$}
\put(160,245){$v_{z}$}
\put(197,276){\circle*{4}}
\qbezier(177,237)(187,257)(197,276)
\put(206,264){\circle*{4}}
\qbezier(177,237)(191,251)(206,264)
\put(218,227){\circle*{4}}
\qbezier(177,237)(197,232)(218,227)
\put(186,292){$v_{z_{1}}$}
\put(210,270){$v_{z_{2}}$}
\put(224,223){$v_{z_{w}}$}
\put(111,203){\circle*{4}}
\put(91,194){\circle*{4}}
\put(101,198){\circle*{4}}
\put(46,173){\circle*{4}}
\put(77,189){\circle*{4}}
\qbezier(46,173)(61,181)(77,189)
\put(121,208){\circle*{4}}
\qbezier(153,223)(137,216)(121,208)
\put(151,209){$v_{z-1}$}
\put(118,195){$v_{z-2}$}
\put(72,101){$v_{i_{1}}$}
\qbezier(43,83)(43,98)(63,109)\qbezier(63,109)(84,120)(114,120)\qbezier(114,120)(143,120)(164,109)
\qbezier(164,109)(185,98)(185,83)\qbezier(185,83)(185,67)(164,56)\qbezier(164,56)(143,45)(114,45)
\qbezier(114,46)(84,46)(63,56)\qbezier(63,56)(42,67)(43,83)
\put(89,24){$P$ in $T_{1}$}
\qbezier(363,83)(363,98)(382,109)\qbezier(382,109)(401,120)(429,120)\qbezier(429,120)(456,120)(475,109)
\qbezier(475,109)(495,98)(495,83)\qbezier(495,83)(495,67)(475,56)\qbezier(475,56)(456,46)(429,46)
\qbezier(429,46)(401,46)(382,56)\qbezier(382,56)(363,67)(363,83)
\put(374,138){\circle*{4}}
\put(390,113){\circle*{4}}
\qbezier(374,138)(382,126)(390,113)
\put(361,161){\circle*{4}}
\put(365,154){\circle*{4}}
\put(369,147){\circle*{4}}
\put(15,240){\circle*{4}}
\put(542,197){\circle*{4}}
\put(336,207){\circle*{4}}
\put(314,245){\circle*{4}}
\put(354,175){\circle*{4}}
\qbezier(314,245)(334,210)(354,175)
\put(475,108){\circle*{4}}
\put(499,139){\circle*{4}}
\qbezier(475,108)(487,124)(499,139)
\put(505,146){\circle*{4}}
\put(510,153){\circle*{4}}
\put(516,161){\circle*{4}}
\put(522,169){\circle*{4}}
\put(568,232){\circle*{4}}
\qbezier(522,169)(545,201)(568,232)
\put(545,268){\circle*{4}}
\qbezier(568,232)(556,250)(545,268)
\put(574,268){\circle*{4}}
\qbezier(568,232)(571,250)(574,268)
\put(610,232){\circle*{4}}
\qbezier(568,232)(589,232)(610,232)
\put(583,256){\circle*{4}}
\put(590,249){\circle*{4}}
\put(598,242){\circle*{4}}
\put(308,259){$v_{f}$}
\put(342,212){$v_{f-1}$}
\put(390,101){$v_{i_{1}}$}
\put(462,95){$v_{i_{2}}$}
\put(567,218){$v_{z}$}
\put(516,205){$v_{z-1}$}
\put(492,176){$v_{z-2}$}
\put(518,274){$v_{z_{1}}$}
\put(569,282){$v_{z_{2}}$}
\put(617,227){$v_{z_{w}}$}
\put(404,24){$P$ in $T_{2}$}
\put(190,-9){Fig. 3.3. two cases for $P$}
\end{picture}
\end{center}

Because $G$ is not a $\mathcal {F}_{g, l}$-$graph$ for any $l$, there exists a path $P$ of length at least $2$ which has two cases as follows.

{\bf Case 1} $P$ is in $T_{1}$, and $P$ is a longest path from one vertex $v_{k}$ of $P_{i_{1}, f}$ to another pendant vertex of $T_{1}$ which is not dominated by $v_{f-1}$, where $v_{k}\neq v_{f-1}$ (see ``$P$ in $T_{1}$" in Fig. 3.3).
Suppose $P=v_{k}\cdots v_{z-1}v_{z}v_{z_{1}}$, where $v_{z_{1}}$ is a pendant vertex, $v_{z}\neq v_{f-1}$ and $L(P)\geq 2$. And suppose that
$v_{z_{1}}$, $v_{z_{2}}$, $\ldots$, $v_{z_{w}}$ are all the pendant vertices attached to $v_{z}$. Let $H=G-\{v_{z}$, $v_{z_{1}}$,
$v_{z_{2}}$, $\ldots$, $v_{z_{w}}\}$. Note that $L(P)\geq 2$. By Lemma \ref{le03,01}, $\gamma(G)-1\leq\gamma(H)\leq \gamma(G)$.

If $\gamma(H)=\gamma(G)-1$, let $G^{'}=H+ v_{f-1}v_{z}+\sum^{w}_{j=1} v_{f}v_{z_{j}}$. By Lemma \ref{le0,08}, we
know that there is a minimal dominating set $D_{H}$ of $H$ containing $v_{f-1}$ but no $v_{f}$.
Then $D_{H}\cup \{v_{f}\}$ is a dominating set of $G^{'}$. This implies $\gamma(G^{'})\leq \gamma(G)$. If $\gamma(G^{'})
\leq \gamma(G)-1$, then by Lemma \ref{le0,08}, we know that there is a minimal dominating set $D_{G^{'}}$ of $G^{'}$ containing $v_{f-1}$ and $v_{f}$ but no any $v_{z_{j}}$ for $j=1$, $2$, $\ldots$, $w$. Note
that $D_{G^{'}}\setminus \{v_{f}\}$ is a dominating set of $H$. This telles us that $\gamma(H)\leq\gamma(G)-2$, which
contradicts $\gamma(H)=\gamma(G)-1$. Thus, $\gamma(G^{'})= \gamma(G)$.

Now, let $Y=(y_1$, $y_2$, $y_3$, $\ldots$, $y_n)^T$ satisfies that
$$\left\{\begin{array}{cc}
                                y_{z}=-\mathrm{sgn}(x_{f-1})(|x_{z}|+(|x_{f-1}|-|x_{z-1}|));  & \\
                                y_{z_{j}}=-\mathrm{sgn}(x_{f})(|x_{z_{j}}|+(|x_{f}|-|x_{z}|)),\ \ \ \ \ \ \,  & j=1, 2, \ldots, w; \\
                                y_{i}=x_{i},\hspace{6.0cm}  \ & others.
                              \end{array}\right.$$
Note that $T_{1}$ is a nonzero branch and $|x_{f-1}|\geq |x_{z}|$. By Lemma \ref{le02,03}, it follows that $|x_{z-1}|<|x_{z}|\leq|x_{f-1}|< |x_{f}|$. Then $Y^{T}Y-X^{T}X> 0$, but $Y^{T}Q(G^{'})Y=X^{T}Q(G)X$. Using Rayleigh quotient follows that $q_{min}(G^{'})< q_{min}(G)$, which contradicts the minimality
of $q_{min}(G)$.

If $\gamma(H)=\gamma(G)$, let $G^{'}=H+ v_{f-1}v_{z}+\sum^{w}_{j=1} v_{f-1}v_{z_{j}}$. By Lemma \ref{le0,08}, we know that there is a minimal dominating set of $H$ containing all $p$-dominators but no any pendant vertices, and there is a minimal dominating set of $G^{'}$ containing all $p$-dominators but no any pendant vertex. Note that any such minimal dominating set of
$H$ which contains $v_{f-1}$ is also a dominating set of $G^{'}$,
and note that any such minimal dominating set of $G^{'}$ which contains $v_{f-1}$ is also a dominating set of $H$. This implies $\gamma(H)=\gamma(G^{'})=\gamma(G)$.

Similar to the proof for the above assumption that $\gamma(H)=\gamma(G)-1$, it is proved that $q_{min}(G^{'})<q_{min}(G)$, which contradicts the minimality of the $q_{min}$ of $G$.

{\bf Case 2} $P$ is a longest path in a $T_{j}$ ($j\neq 1$) which is from $v_{i_{j}}$ to a pendant vertex of $T_{j}$. For convenience, we let $j=2$  (see ``$P$ in $T_{2}$" in Fig. 3.3). Note that $G$ is unicyclic. $P$ has no common vertex with the path $P_{i_{1}, f}$. Suppose $P=v_{i_{2}}\cdots v_{z-1}v_{z}v_{z_{1}}$, where $v_{z_{1}}$ is a pendant vertex, $L(P)\geq 2$, and suppose
$v_{z_{1}}$, $v_{z_{2}}$, $\ldots$, $v_{z_{w}}$ are all the pendant vertices attached to $v_{z}$ (see ``$P$ in $T_{2}$" in Fig. 3.3). Let $H=G-\{v_{z}$, $v_{z_{1}}$,
$v_{z_{2}}$, $\ldots$, $v_{z_{w}}\}$. Note that $L(P)\geq 2$. By Lemma \ref{le03,01}, $\gamma(G)-1\leq\gamma(H)\leq \gamma(G)$. If $\gamma(H)=\gamma(G)-1$, let $G_{1}=H+ v_{f-1}v_{z}+
\sum^{w}_{j=1} v_{f}v_{z_{j}}$; if $\gamma(H)=\gamma(G)$, let $G_{2}=H+ v_{f-1}v_{z}+\sum^{w}_{j=1} v_{f-1}v_{z_{j}}$.
Note that $T_{1}$ is a nonzero branch, and note that if $T_{2}$ is a zero branch, then we get $|x_{z-1}|=|x_{z}|<|x_{f-1}|< |x_{f}|$; if $T_{2}$ is a nonzero branch, then we get $|x_{z-1}|<|x_{z}|\leq|x_{f-1}|< |x_{f}|$. Similar to Case 1, we can prove that for $i=1$, $2$, $\gamma(G_{i})= \gamma(G)$ and  $q_{min}(G_{i})<q_{min}(G)$ which contradicts the minimality of $q_{min}(G)$.

The above proof implies that $G$ is a $F_{g, l}$-graph for some $l$.

Next, we prove that  $G$ is a $\mathcal {F}_{g, l}$-graph. Otherwise, suppose $G$ is not a $\mathcal {F}_{g, l}$-graph. Then there exists a $p$-dominator other than $v_{g+l-1}$ whose attached pendant vertex is more than one. For such a $p$-dominator, let only one pendant vertex be kept and other pendant vertices be transferred to be attached to $v_{g+l-1}$. Then we get a $\mathcal {F}_{g, l}$-graph $\mathbb{F}$. By Lemma \ref{le0,04}, it is proved that $q_{min}(\mathbb{F})<q_{min}(G)$, which contradicts the minimality of $q_{min}(G)$. Thus, it follows that $G$ is a $\mathcal {F}_{g, l}$-graph for some $l$.

From the above proof, we see that if $X=(x_1$, $x_2$, $x_3$, $\ldots$, $x_n)^T$ is a unit
eigenvector corresponding to $q_{min}(G)$, then $| x_{g}|> 0$, and $|x_{g+l-1}|=\max\{|x_{i}|\mid v_{i}$ is a $p$-dominator$\,\}$.
The result follows as desired. \ \ \ \ \ $\Box$

\end{proof}

\setlength{\unitlength}{0.7pt}
\begin{center}
\begin{picture}(594,189)
\put(54,161){\circle*{4}}
\put(61,48){\circle*{4}}
\qbezier(54,161)(0,105)(61,48)
\put(67,161){\circle*{4}}
\put(77,161){\circle*{4}}
\put(87,161){\circle*{4}}
\put(400,110){\circle*{4}}
\put(410,110){\circle*{4}}
\put(420,110){\circle*{4}}
\put(35,136){\circle*{4}}
\put(7,150){\circle*{4}}
\qbezier(35,136)(21,143)(7,150)
\put(207,160){\circle*{4}}
\put(246,161){\circle*{4}}
\put(258,48){\circle*{4}}
\qbezier(246,161)(334,128)(258,48)
\put(338,111){\circle*{4}}
\put(293,111){\circle*{4}}
\put(566,134){\circle*{4}}
\put(570,126){\circle*{4}}
\put(574,118){\circle*{4}}
\put(29,103){\circle*{4}}
\put(40,73){\circle*{4}}
\put(109,47){\circle*{4}}
\put(218,19){\circle*{4}}
\put(103,161){\circle*{4}}
\qbezier(103,161)(174,161)(246,161)
\put(137,161){\circle*{4}}
\put(137,189){\circle*{4}}
\qbezier(137,161)(137,175)(137,189)
\put(279,142){\circle*{4}}
\put(282,78){\circle*{4}}
\put(436,110){\circle*{4}}
\put(579,110){\circle*{4}}
\qbezier(436,110)(507,110)(579,110)
\put(470,110){\circle*{4}}
\put(470,143){\circle*{4}}
\qbezier(470,110)(470,127)(470,143)
\put(524,110){\circle*{4}}
\put(564,144){\circle*{4}}
\qbezier(524,110)(544,127)(564,144)
\put(282,147){$v_{1}$}
\put(243,170){$v_{2}$}
\put(128,148){$v_{r_{1}}$}
\put(40,132){$v_{r_{j-1}}$}
\put(105,58){$v_{r_{j}}$}
\put(22,70){$v_{a}$}
\put(45,37){$v_{a+1}$}
\put(-2,101){$v_{a-1}$}
\put(274,110){$v_{g}$}
\put(329,122){$v_{g+1}$}
\put(210,-9){Fig. 3.4. $\mathcal {G}$}
\put(218,48){\circle*{4}}
\qbezier(218,48)(218,34)(218,19)
\put(384,111){\circle*{4}}
\put(288,73){$v_{g-1}$}
\put(565,95){$v_{g+l}$}
\put(109,17){\circle*{4}}
\qbezier(109,47)(109,32)(109,17)
\put(209,58){$v_{r_{t}}$}
\put(145,48){\circle*{4}}
\qbezier(61,48)(103,48)(145,48)
\put(191,48){\circle*{4}}
\qbezier(258,48)(224,48)(191,48)
\put(180,47){\circle*{4}}
\put(170,47){\circle*{4}}
\put(160,47){\circle*{4}}
\qbezier(293,111)(338,111)(384,111)
\end{picture}
\end{center}

Let $\mathcal {G}$ be a $\mathcal {F}_{g, l}$-graph of order $n$ (see Fig. 3.4). In $\mathcal {G}$, we denote by
$v_{r_{1}}$, $v_{r_{2}}$, $\ldots$, $v_{r_{t}}$ all the $p$-dominators on $\mathbb{C}$, where $1\leq r_{1}< r_{2}< \cdots< r_{t}\leq g$.
Along $\mathbb{P}$, from $v_{g}$ to $v_{g+l}$, suppose $v_{g+s}$ is the first $p$-dominator,
where $0\leq s\leq l-1$. Obviously, if $s=0$, then $r_{t}=g$. Hereafter, we use $\mathcal {F}^{\circ}_{g, l}$-graph to denote a $\mathcal {F}_{g, l}$-graph with $s=0$. If $i=1$, let $r_{i-1}=r_{t}$; if $i=t$, let $r_{t+1}=r_{1}$.  For $1\leq i\leq t-1$, suppose that $v_{\tau_{i}}$ is the pendant vertex attached to $v_{r_{i}}$; if $l\geq 2$ and $r_{t}=g$, suppose that $v_{\tau_{g}}$ is the pendant vertex attached to $v_{g}$; if $l= 1$, then $r_{t}=g$, and then suppose $v_{\tau^{1}_{g}}$, $v_{\tau^{2}_{g}}$, $\ldots$, $v_{\tau^{y}_{g}}$ are all the pendant vertices attached to vertex $v_{g}$. Let $v_{a}$ be a vertex of $\mathbb{C}$. For $1\leq i\leq g$, if $v_{a}=v_{1}$, we let $v_{a-i}=v_{g-i+1}$. For $1\leq a\leq g-2$, let $\mathcal {G}_{1}=\mathcal {G}-v_{a+1}v_{a+2}+v_{a+1}v_{a-1}$, $\mathcal {G}_{2}=\mathcal {G}_{1}-
v_{a-1}v_{a-2}+v_{a-1}v_{a+2}$ (see Fig. 3.5); for $a=g-1$, let $\mathcal {G}_{1}=\mathcal {G}-v_{g}v_{1}+v_{g}v_{g-2}$, $\mathcal {G}_{2}=\mathcal {G}_{1}-
v_{g-2}v_{g-3}+v_{g-2}v_{1}$; for $a=g$, let $\mathcal {G}_{1}=\mathcal {G}-v_{2}v_{1}+v_{1}v_{g-1}$, $\mathcal {G}_{2}=\mathcal {G}_{1}-
v_{g-1}v_{g-2}+v_{g-1}v_{2}$. If $l=1$, $r_{1}\geq 4$ and $r_{t}=g$, let $\mathscr{X}_{t}=\mathcal {G}-v_{2}v_{3}+v_{2}v_{g}-\sum\limits^{y}_{i=1}v_{g}v_{\tau^{i}_{g}}+
\sum\limits^{y}_{i=1}v_{3}v_{\tau^{i}_{g}}$; if $l\geq 2$, $r_{1}\geq 4$ and $r_{t}=g$, $\mathscr{X}_{t}=\mathcal {G}-v_{2}v_{3}+v_{2}v_{g}-v_{g}v_{\tau_{g}}+
v_{3}v_{\tau_{g}} -v_{g}v_{g+1}+ v_{g+1}v_{3}$ (see Fig. 3.5); if $r_{t}=g$, and for $1\leq i\leq t-1$, $r_{i+1}-r_{i}\geq 4$, let $\mathscr{X}_{i}=\mathcal {G}-v_{r_{i}+2}v_{r_{i}+3}+v_{r_{i}+2}v_{r_{i}}-v_{r_{i}}v_{\tau_{i}}+
v_{r_{i}+3}v_{\tau_{i}}$.
If $r_{1}\geq 4$ and $r_{t}=g$, $\mathscr{G}_{1}= \mathcal {G}-v_{r_{1}}v_{\tau_{1}}-v_{r_{1}}v_{r_{1}+1}+v_{r_{1}}v_{r_{1}-2}+
v_{1}v_{\tau_{1}}$ (see Fig. 3.5); for all $i=2$, $\ldots$, $t-1$, if $r_{i}-r_{i-1}\geq 4$, let $\mathscr{G}_{i}= \mathcal {G}-v_{r_{i}}v_{\tau_{i}}-v_{r_{i}}v_{r_{i}+1}+v_{r_{i}}v_{r_{i}-2}+
v_{r_{i-1}+1}v_{\tau_{i}}$; if $r_{t}< g$ and $r_{t}-r_{t-1}\geq 4$, $\mathscr{G}_{t}= \mathcal {G}-v_{r_{t}}v_{\tau_{t}}+
v_{r_{t-1}+1}v_{\tau_{t}}-v_{r_{t}}v_{r_{t}+1}+v_{r_{t}}v_{r_{t}-2}$; if $l=1$, $r_{t}= g$ and $g-r_{t-1}\geq 4$, $\mathscr{G}_{t}= \mathcal {G}-\sum\limits^{y}_{i=1}v_{g}v_{\tau^{i}_{g}}+
\sum\limits^{y}_{i=1}v_{r_{t-1}+1}v_{\tau^{i}_{g}}-v_{g}v_{1}+v_{g}v_{g-2}$; if $l\geq 2$, $r_{t}= g$ and $g-r_{t-1}\geq 4$, $\mathscr{G}_{t}= \mathcal {G}-v_{g}v_{\tau_{g}}+
v_{r_{t-1}+1}v_{\tau_{g}}-v_{g}v_{1}+v_{g}v_{g-2} -v_{g}v_{g+1}+ v_{g+1}v_{r_{t-1}+1}$.

\setlength{\unitlength}{0.7pt}
\begin{center}
\begin{picture}(510,309)
\put(12,259){\circle*{4}}
\put(12,214){\circle*{4}}
\qbezier(12,259)(12,237)(12,214)
\put(47,236){\circle*{4}}
\qbezier(12,259)(29,248)(47,236)
\qbezier(12,214)(29,225)(47,236)
\put(82,236){\circle*{4}}
\qbezier(47,236)(64,236)(82,236)
\put(99,235){\circle*{4}}
\put(91,235){\circle*{4}}
\put(108,235){\circle*{4}}
\put(118,235){\circle*{4}}
\put(147,235){\circle*{4}}
\qbezier(118,235)(132,235)(147,235)
\put(147,254){\circle*{4}}
\qbezier(147,235)(147,245)(147,254)
\put(147,262){\circle*{4}}
\put(147,270){\circle*{4}}
\put(147,278){\circle*{4}}
\put(147,287){\circle*{4}}
\put(129,309){\circle*{4}}
\qbezier(147,287)(138,298)(129,309)
\put(167,308){\circle*{4}}
\qbezier(147,287)(157,298)(167,308)
\put(156,308){\circle*{4}}
\put(140,308){\circle*{4}}
\put(148,308){\circle*{4}}
\put(171,235){\circle*{4}}
\qbezier(147,235)(159,235)(171,235)
\put(188,235){\circle*{4}}
\put(180,235){\circle*{4}}
\put(196,235){\circle*{4}}
\put(204,235){\circle*{4}}
\put(234,235){\circle*{4}}
\qbezier(204,235)(219,235)(234,235)
\put(510,259){\circle*{4}}
\put(510,213){\circle*{4}}
\put(478,234){\circle*{4}}
\put(446,234){\circle*{4}}
\put(340,234){\circle*{4}}
\put(332,234){\circle*{4}}
\put(349,234){\circle*{4}}
\put(359,234){\circle*{4}}
\put(388,234){\circle*{4}}
\put(388,253){\circle*{4}}
\put(388,261){\circle*{4}}
\put(388,269){\circle*{4}}
\put(388,277){\circle*{4}}
\put(388,286){\circle*{4}}
\put(409,308){\circle*{4}}
\put(397,307){\circle*{4}}
\put(381,307){\circle*{4}}
\put(389,307){\circle*{4}}
\put(412,234){\circle*{4}}
\put(429,234){\circle*{4}}
\put(421,234){\circle*{4}}
\put(437,234){\circle*{4}}
\put(292,234){\circle*{4}}
\put(322,234){\circle*{4}}
\qbezier(510,259)(510,236)(510,213)
\qbezier(510,259)(494,247)(478,234)
\qbezier(510,213)(494,224)(478,234)
\qbezier(478,234)(462,234)(446,234)
\qbezier(359,234)(373,234)(388,234)
\qbezier(388,234)(388,244)(388,253)
\put(370,308){\circle*{4}}
\qbezier(388,286)(379,297)(370,308)
\qbezier(388,286)(398,297)(409,308)
\qbezier(388,234)(400,234)(412,234)
\qbezier(292,234)(307,234)(322,234)
\put(13,99){\circle*{4}}
\put(13,54){\circle*{4}}
\put(48,76){\circle*{4}}
\put(82,76){\circle*{4}}
\put(98,75){\circle*{4}}
\put(90,75){\circle*{4}}
\put(107,75){\circle*{4}}
\put(119,75){\circle*{4}}
\put(170,75){\circle*{4}}
\put(170,94){\circle*{4}}
\put(170,102){\circle*{4}}
\put(170,110){\circle*{4}}
\put(170,118){\circle*{4}}
\put(170,127){\circle*{4}}
\put(152,149){\circle*{4}}
\put(190,148){\circle*{4}}
\put(179,148){\circle*{4}}
\put(163,148){\circle*{4}}
\put(171,148){\circle*{4}}
\put(194,75){\circle*{4}}
\put(211,75){\circle*{4}}
\put(203,75){\circle*{4}}
\put(219,75){\circle*{4}}
\put(227,75){\circle*{4}}
\put(257,75){\circle*{4}}
\qbezier(13,99)(13,77)(13,54)
\qbezier(13,99)(30,88)(48,76)
\qbezier(13,54)(30,65)(48,76)
\qbezier(48,76)(65,76)(82,76)
\qbezier(119,75)(144,75)(170,75)
\qbezier(170,75)(170,85)(170,94)
\qbezier(170,127)(161,138)(152,149)
\qbezier(170,127)(180,138)(190,148)
\qbezier(170,75)(182,75)(194,75)
\qbezier(227,75)(242,75)(257,75)
\put(308,98){\circle*{4}}
\put(308,53){\circle*{4}}
\put(355,75){\circle*{4}}
\put(483,74){\circle*{4}}
\put(483,94){\circle*{4}}
\put(483,102){\circle*{4}}
\put(483,110){\circle*{4}}
\put(483,118){\circle*{4}}
\put(483,127){\circle*{4}}
\put(465,149){\circle*{4}}
\put(503,148){\circle*{4}}
\put(492,148){\circle*{4}}
\put(476,148){\circle*{4}}
\put(484,148){\circle*{4}}
\put(398,75){\circle*{4}}
\put(423,74){\circle*{4}}
\put(415,74){\circle*{4}}
\put(431,74){\circle*{4}}
\put(449,74){\circle*{4}}
\qbezier(308,98)(308,76)(308,53)
\qbezier(308,98)(331,87)(355,75)
\qbezier(308,53)(331,64)(355,75)
\qbezier(483,74)(483,84)(483,94)
\qbezier(483,127)(474,138)(465,149)
\qbezier(483,127)(493,138)(503,148)
\put(129,251){\circle*{4}}
\qbezier(147,235)(138,243)(129,251)
\put(405,251){\circle*{4}}
\qbezier(388,234)(396,243)(405,251)
\put(191,92){\circle*{4}}
\qbezier(170,75)(180,84)(191,92)
\put(458,93){\circle*{4}}
\qbezier(483,74)(470,84)(458,93)
\qbezier(355,75)(376,75)(398,75)
\qbezier(449,74)(466,74)(483,74)
\put(8,269){$v_{a}$}
\put(1,203){$v_{a+1}$}
\put(41,245){$v_{a-1}$}
\put(66,223){$v_{a-2}$}
\put(141,221){$v_{g}$}
\put(222,223){$v_{a+2}$}
\put(192,244){$v_{a+3}$}
\put(137,181){$\mathcal {G}_{1}$}
\put(281,244){$v_{a-2}$}
\put(312,222){$v_{a-3}$}
\put(383,221){$v_{g}$}
\put(504,268){$v_{a}$}
\put(501,203){$v_{a+1}$}
\put(455,243){$v_{a-1}$}
\put(435,220){$v_{a+2}$}
\put(379,182){$\mathcal {G}_{2}$}
\put(119,98){\circle*{4}}
\qbezier(119,75)(119,87)(119,98)
\put(4,108){$v_{r_{1}-1}$}
\put(5,41){$v_{r_{1}}$}
\put(42,84){$v_{r_{1}-2}$}
\put(164,62){$v_{g}$}
\put(114,62){$v_{1}$}
\put(239,63){$v_{r_{1}+1}$}
\put(110,108){$v_{\tau_{1}}$}
\put(350,83){$v_{g}$}
\put(302,107){$v_{1}$}
\put(302,40){$v_{2}$}
\put(486,62){$v_{3}$}
\put(443,62){$v_{4}$}
\put(433,95){$v_{\tau_{g}}$}
\put(152,251){$v_{g+1}$}
\put(355,251){$v_{g+1}$}
\put(136,92){$v_{g+1}$}
\put(490,90){$v_{g+1}$}
\put(127,21){$\mathscr{G}_{1}$}
\put(404,22){$\mathscr{X}_{t}$}
\put(177,-9){Fig. 3.5. $\mathcal {G}_{1}$, $\mathcal {G}_{2}$, $\mathscr{G}_{1}$, $\mathscr{X}_{t}$}
\end{picture}
\end{center}

\begin{lemma}\label{le03,03} 
Let $\mathcal {G}$ be a $\mathcal {F}^{\circ}_{g, l}$-graph with $g\geq 5$ and order $n\geq g+1$, and let $r_{i-1}+1\leq a\leq r_{i}$ where $1\leq i\leq t$. Then

$\mathrm{(i)}$ if $a=r_{i-1}+1$, $r_{i}-r_{i-1}\geq 4$, then $\gamma(\mathcal {G})=\gamma(\mathscr{X}_{i-1})$;

$\mathrm{(ii)}$ if $r_{i-1}+2\leq a\leq r_{i}-3$, then $\gamma(\mathcal {G})\leq\max\{\gamma(\mathcal {G}_{1}), \gamma(\mathcal {G}_{2})\}$;

$\mathrm{(iii)}$ if $r_{i}\geq 4$, $a= r_{i}-1$, then $\gamma(\mathcal {G})=\gamma(\mathscr{G}_{i})$;

$\mathrm{(iv)}$ for other cases of $a$, $\gamma(\mathcal {G})\leq\gamma(\mathcal {G}_{1})$.

\end{lemma}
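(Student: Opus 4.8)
The plan is to prove all four statements purely combinatorially, by transporting a minimal dominating set across each edge-modification, organized around one unifying observation: in every case the modified graph is $\mathcal G$ altered by the single cycle rotation $-\,v_{a+1}v_{a+2}+v_{a+1}v_{a-1}$ (producing $\mathcal G_1$), possibly followed by the second rotation $-\,v_{a-1}v_{a-2}+v_{a-1}v_{a+2}$ (producing $\mathcal G_2$), and in cases (i) and (iii) also composed with relocating one pendant vertex. Concretely, under the hypotheses of (i) one checks that $\mathscr X_{i-1}$ is exactly $\mathcal G_1$ with the pendant $v_{\tau_{i-1}}$ moved from $v_{a-1}=v_{r_{i-1}}$ to $v_{a+2}$, and under (iii) that $\mathscr G_i$ is $\mathcal G_1$ with the pendant of $v_{a+1}=v_{r_i}$ moved to $v_{r_{i-1}+1}$ (to $v_1$ when $i=1$). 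Because each modification touches only edges incident to $v_{a-2},\dots,v_{a+3}$ and at most one pendant, comparing domination numbers reduces to a local transport argument.

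The main device is Lemma \ref{le0,08}: throughout I may assume every dominating set under consideration contains all $p$-dominators and no pendant vertex, which pins the membership status of the vertices whose neighbourhoods change. To obtain the upper bounds in (ii) and (iv), I would take a normalized minimal dominating set $D$ of the modified graph ($\mathcal G_1$ in (iv); $\mathcal G_1$ or $\mathcal G_2$ in (ii)) and verify that $D$ dominates $\mathcal G$. The only vertices that can fail are those whose $\mathcal G$-neighbours differ from their modified-graph neighbours, namely $v_{a+1}$ (which trades neighbour $v_{a+2}$ for $v_{a-1}$), and for $\mathcal G_2$ also $v_{a-1}$ and $v_{a+2}$. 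When such a vertex is dominated only through a newly added edge, I repair $D$ by a size-preserving swap, replacing the endpoint of the new edge lying in $D$ by the cycle-neighbour in $\mathcal G$ that covers the same vertices; the range restriction on $a$ guarantees the replaced vertex is an ordinary cycle vertex, so no previously dominated vertex is lost. For (ii) the point is that for every normalized configuration at least one of the minimal dominating sets of $\mathcal G_1$, $\mathcal G_2$ transports to $\mathcal G$ without enlarging, which already gives $\gamma(\mathcal G)\le\max\{\gamma(\mathcal G_1),\gamma(\mathcal G_2)\}$; one cannot in general guarantee that the smaller of the two transports, which is why only the max-bound is asserted.

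For the equalities in (i) and (iii) I would prove both inequalities by running the transport in each direction. Here the extra ingredient is the pendant relocation: in the normalized dominating set the original support vertex is present and its pendant absent, and one may likewise assume the new support vertex of the modified graph is present with its pendant absent, so moving the pendant changes neither side's cardinality. The separation hypotheses ($r_i-r_{i-1}\ge 4$ for (i), $r_i\ge 4$ for (iii)) ensure there are enough non-support cycle vertices between consecutive $p$-dominators that the rotation and the relocation never collide with another support vertex or its pendant; this is precisely what makes the two transports mutually inverse and forces equality rather than a one-sided bound. Lemma \ref{le03,01} supplies the framing estimate $\gamma(\mathcal G)-1\le\gamma(H)\le\gamma(\mathcal G)$ when a support star is detached, and Lemma \ref{le0,09} fixes the domination numbers of the affected path segments when an explicit count is needed.

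I expect the main obstacle to be the size-preserving repair swaps: in each sub-case one must check that the local replacement simultaneously restores domination of the offending vertex in the target graph and leaves every other vertex dominated. This is a finite but delicate case analysis driven by which of $v_{a-1},v_a,v_{a+1}$ and their immediate neighbours lie in the dominating set, and it is exactly where the range restrictions on $a$ and the gap conditions are used; handling the wrap-around conventions ($i=1$ giving $r_{i-1}=r_t=g$, and the on-cycle successor of $v_g$ being $v_1$) and the special case $l=1$ (where $v_g$ carries several pendants, so Lemma \ref{le0,08}(ii) must be invoked) will require the most care.
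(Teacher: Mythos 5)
Your structural identifications are correct (in cases (i) and (iii) the modified graphs are indeed $\mathcal G_1$ composed with one pendant relocation), and for the easy cases in (iv) your transport-plus-normalization argument is essentially what the paper does (its Cases 1 and 3: a normalized minimal dominating set of $\mathcal G_1$ containing all $p$-dominators already dominates $\mathcal G$). But there is a genuine gap in your treatment of (ii), and it is precisely the part of the lemma that the paper does \emph{not} prove by transport. The paper's core device is an exact additive decomposition: using Lemma \ref{le0,08} it normalizes dominating sets, then splits $\mathcal G$, $\mathcal G_1$, $\mathcal G_2$ along the path segments between support vertices and computes each domination number exactly via Lemmas \ref{le0,09} and \ref{le0,11}, e.g. $\gamma(\mathcal G)=\lceil\frac{u+w}{3}\rceil+\gamma(\mathcal G^{\ast})$, $\gamma(\mathcal G_1)=\lceil\frac{w}{3}\rceil+\lceil\frac{u-1}{3}\rceil+\gamma(\mathcal G^{\ast})$, $\gamma(\mathcal G_2)=\lceil\frac{w-2}{3}\rceil+\lceil\frac{u+1}{3}\rceil+\gamma(\mathcal G^{\ast})$ with $u=r_i-2-a$, $w=a-1-r_{i-1}$; statement (ii) then reduces to the arithmetic fact $\lceil\frac{u+w}{3}\rceil\leq\max\{\lceil\frac{u-1}{3}\rceil+\lceil\frac{w}{3}\rceil,\ \lceil\frac{u+1}{3}\rceil+\lceil\frac{w-2}{3}\rceil\}$. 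Your proposal replaces this by the assertion that ``at least one of the minimal dominating sets of $\mathcal G_1$, $\mathcal G_2$ transports to $\mathcal G$ without enlarging,'' verified by a case analysis on which of $v_{a-1},v_a,v_{a+1}$ and their neighbours lie in the dominating set. That assertion \emph{is} statement (ii); it is not something your local analysis can deliver, because whether a size-preserving transport from $\mathcal G_i$ is even possible is a cardinality question governed by the residues mod $3$ of the two arc lengths, which no bounded-radius membership pattern detects.

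Concretely, take $i=1$ (so $r_{i-1}=r_t=g$) and $r_1=7$. For $a=4$ one computes $\gamma(\mathcal G_1)=1+\gamma(\mathcal G^{\ast})$ while $\gamma(\mathcal G)=\gamma(\mathcal G_2)=2+\gamma(\mathcal G^{\ast})$: every minimum dominating set of $\mathcal G_1$ is strictly too small, so no repair of it can dominate $\mathcal G$, and the transport must come from $\mathcal G_2$. For $a=3$ the situation reverses: $\gamma(\mathcal G_2)=1+\gamma(\mathcal G^{\ast})<\gamma(\mathcal G)=\gamma(\mathcal G_1)$, so only $\mathcal G_1$ can work. In both examples every vertex within distance two of $v_a$ is a non-support cycle vertex, so the local picture around the rotation is identical; what changes is only the lengths of the arcs to the flanking $p$-dominators modulo $3$. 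Hence any correct proof of (ii) must carry out the mod-$3$ segment counting that you relegate to a side remark (``when an explicit count is needed''), and your repair-swap machinery cannot substitute for it. The same criticism applies, more mildly, to your equalities in (i) and (iii): the paper obtains them by the same exact counting, and your claim that the gap hypotheses make the two transports ``mutually inverse'' is unsubstantiated --- the backward transport from $\mathcal G$ into the modified graph faces the identical cardinality obstruction. One further slip: in the wrap-around case of (i) (that is $i=1$, $\mathscr X_t$, $l\geq 2$), the modification relocates not only the pendant $v_{\tau_g}$ but the entire pendant path $\mathbb P$ from $v_g$ to $v_3$, so your unified description of the modified graphs is incomplete there.
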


\begin{proof} Note that $r_{t}= g$ now. Without loss of generality, we suppose that $1\leq a\leq r_{1}$.
By Lemma \ref{le0,08}, there is a minimal dominating set $D$ of $\mathcal {G}$
which contains all $p$-dominators but no any pendant vertex. And further, we can get a minimal dominating set $\mathcal {D}$ of $\mathcal {G}$ from $D$, which not only satisfies that it contains all $p$-dominators but no any pendant vertex, and for all $i=1$, $2$, $\ldots$, $t$, it contains no $v_{r_{i}+1}$ if it is not a $p$-dominator, no $v_{r_{i}-1}$ if it is not a $p$-dominator either. If $D$ already satisfies this, we let $\mathcal {D}=D$. If $D$ does not satisfy that,
then may as well, we suppose that $v_{1}$ is not a $p$-dominator but it is in $D$. We say that $v_{2}\notin D$. Otherwise, if $v_{2}\in D$, then $D\setminus \{v_{1}\}$ is also
a dominating set of $\mathcal {G}$, but its cardinality is less than $D$, which contradicts the choice of $D$. Thus,
let $D_{1}=(D\setminus \{v_{1}\})\cup \{v_{2}\}$, which is also a dominating set of $\mathcal {G}$ with cardinality
$\gamma(\mathcal {G})$. Proceeding like this, we can get such a minimal dominating set $\mathcal {D}$ of $\mathcal {G}$
from $D_{1}$.

If $r_{1}\geq 3$, then we let
$P=v_{2}v_{3}\cdots v_{r_{1}-2}$ (if $r_{1}=1, 2$, then $V(P)=\emptyset$). Here, $\mathcal {D}^{1}=\mathcal {D}\cap V(P)$ is a dominating set of $P$.
Thus, $\parallel\mathcal {D}^{1}\parallel\geq \gamma(P)$. Note that by Lemma \ref{le0,08}, for $P$, there is a
minimal dominating set $\mathcal {R}$ of $P$, which contains no any of $v_{2}$ and $v_{r_{1}-2}$.
Note that $\mathcal {D}\setminus \mathcal {D}^{1}$ is a dominating set of $\mathcal {G}-V(P)$. So, $\mathcal {D}^{'}=\mathcal {R}\cup (\mathcal {D}\setminus \mathcal {D}^{1})$ is also a dominating set of $\mathcal {G}$. Then
$\parallel\mathcal {D}\parallel\geq\parallel\mathcal {D}^{'}\parallel$.
Note that $\parallel\mathcal {D}\parallel=\gamma(\mathcal {G})$. It follows that $\parallel\mathcal {D}\parallel=\parallel\mathcal {D}^{'}\parallel$ and
$\parallel \mathcal {R}\parallel=\parallel\mathcal {D}^{1}\parallel=\gamma(P)$. Let $\mathcal {S}=V(\mathcal {G})\setminus V(P)$, $\mathcal {G}^{\ast}=\mathcal {G}-V(P)$. Like $\parallel\mathcal {D}^{1}\parallel$, we can prove that $\parallel\mathcal {D}\setminus \mathcal {D}^{1}\parallel=\gamma(\mathcal {G}^{\ast})$.
Then combined with Lemma \ref{le0,11}, it follows that
$\parallel\mathcal {D}\parallel=\gamma(P)+\gamma(\mathcal {G}^{\ast})=\lceil\frac{r_{1}-3}{3}\rceil+\gamma(\mathcal {G}^{\ast})$.

To prove the lemma, next, we consider 5 cases as follows.

{\bf Case 1} $a= r_{1}$. For $\mathcal {G}_{1}$, by Lemma \ref{le0,08}, there is a minimal dominating set $D$ of $\mathcal {G}_{1}$
which contains all $p$-dominators but no any pendant vertex. Thus, $v_{r_{1}}\in D$, and both $v_{r_{1}+1}$ and $v_{r_{1}-1}$ can be dominated by $v_{r_{1}}$. Therefore, $D$ is also a dominating set of $\mathcal {G}$, and then $\gamma(\mathcal {G})\leq\gamma(\mathcal {G}_{1})$.

{\bf Case 2} $r_{1}\geq2$, $a= r_{1}-1$.

If $r_{1}=2$, for $\mathcal {G}_{1}$, by Lemma \ref{le0,08}, there is a minimal dominating set $D$ of $\mathcal {G}_{1}$ which contains all $p$-dominators but no any pendant vertex. Thus, $v_{r_{1}}\in D$, $v_{g}\in D$, and $v_{1}$ can be dominated by both $v_{g}$ and $v_{r_{1}}$. Therefore, $D$ is also a dominating set of $\mathcal {G}$, and then $\gamma(\mathcal {G})\leq\gamma(\mathcal {G}_{1})$.

If $r_{1}=3$, for $\mathcal {G}_{1}$, by Lemma \ref{le0,08}, there is a minimal dominating set $D$ of $\mathcal {G}_{1}$ which contains all $p$-dominators but no any pendant vertex. Thus, $v_{r_{1}}\in D$, $v_{g}\in D$, $v_{1}$ can be dominated by $v_{g}$, and $v_{2}$ can be dominated by $v_{r_{1}}$. Therefore, $D$ is also a dominating set of $\mathcal {G}$, and then $\gamma(\mathcal {G})\leq\gamma(\mathcal {G}_{1})$.

Next we consider the case that $r_{1}\geq 4$.

If $r_{2}=r_{1}+1$, we let $S=\{v_{2}$, $v_{3}$, $\ldots$, $v_{r_{1}}$, $v_{\tau_{1}}\}$, $S^{'}=V(\mathcal {G})\setminus S$. As the narrations in the first two paragraphs for $\mathcal {G}$, we get a minimal dominating set $D$ of $\mathcal {G}$ with $D=D_{1}\cup D_{2}$, where $D_{1}$ is a minimal dominating set of $\mathcal {G}[S]$ with cardinality $\gamma(\mathcal {G}[S])=1+\lceil\frac{r_{1}-3}{3}\rceil$, and $D_{2}$ is a minimal dominating set of $\mathcal {G}[S^{'}]$. Thus, we get that $\gamma(\mathcal {G})=1+\lceil\frac{r_{1}-3}{3}\rceil+\gamma(\mathcal {G}[S^{'}])$. Let $B=\{v_{3}$, $\ldots$, $v_{r_{1}-1}$, $v_{r_{1}}\}$, $B^{'}=V(\mathcal {G})\setminus (B\cup\{v_{1}$, $v_{2}$, $v_{\tau_{1}}\})$. Similarly,
for $\mathscr{G}_{1}$, we can get a minimal dominating set $\mathbb{D}$ of $\mathscr{G}_{1}$
with $\mathbb{D}=\mathbb{D}_{1}\cup \{v_{1}\}\cup\mathbb{D}_{2}$, where $\mathbb{D}_{1}\cup \{v_{1}\}$ is a minimal dominating set of $\mathscr{G}_{1}[B\cup\{v_{1}$, $v_{2}$, $v_{\tau_{1}}\}]$, and $\mathbb{D}_{2}$ is a minimal dominating set of $\mathscr{G}_{1}[B^{'}]$ which containing all of its $p$-dominators but no any pendant vertex. Note that any minimal dominating set of $\mathscr{G}_{1}[B^{'}]$ containing all $p$-dominators but no any pendant vertex is also a dominating set of $\mathcal {G}[S^{'}]$, and conversely, any minimal dominating set of $\mathcal {G}[S^{'}]$ containing all $p$-dominators but no any pendant vertex is also a dominating set of $\mathscr{G}_{1}[B^{'}]$. So,  $\gamma(\mathscr{G}_{1}[B^{'}])=\gamma(\mathcal {G}[S^{'}])$. By Lemma \ref{le0,11}, it follows that $\parallel\mathbb{D}_{1}\cup \{v_{1}\}\parallel=1+\lceil\frac{r_{1}-3}{3}\rceil$. Thus $\gamma(\mathcal {G})=\gamma(\mathscr{G}_{1})$.

Similar to the case that $r_{2}=r_{1}+1$, we can prove that if $r_{2}=r_{1}+2$, then $\gamma(\mathcal {G})=\gamma(\mathscr{G}_{1})$; if $r_{2}=r_{1}+3$, then $\gamma(\mathcal {G})+1=\gamma(\mathscr{G}_{1})$.

If $r_{2}\geq r_{1}+4$, we let $S=\{v_{2}$, $v_{3}$, $\ldots$, $v_{r_{2}-2}$, $v_{\tau_{1}}\}$, $S^{'}=V(\mathcal {G})\setminus S$. Similar to the above case that $r_{2}=r_{1}+1$, for $\mathcal {G}$, we get that $\gamma(\mathcal {G})=1+\lceil\frac{r_{1}-3}{3}\rceil+\lceil\frac{r_{2}-r_{1}-3}{3}\rceil+\gamma(\mathcal {G}[S^{'}])$. Let $B=\{v_{1}$, $v_{2}$, $v_{3}$, $\ldots$, $v_{r_{2}-2}$, $v_{\tau_{1}}\}$, $B^{'}=V(\mathcal {G})\setminus B$. Similarly, for $\mathscr{G}_{1}$, we get that $\gamma(\mathscr{G}_{1}[B^{'}])=\gamma(\mathcal {G}[S^{'}])$, and $\gamma(\mathscr{G}_{1})=1+\lceil\frac{r_{1}-3}{3}\rceil+\lceil\frac{r_{2}-r_{1}-2}{3}\rceil+\gamma(\mathcal {G}_{1}[B^{'}])$. It follows that $\gamma(\mathcal {G})\leq\gamma(\mathscr{G}_{1})$.

{\bf Case 3} $a= r_{1}-2$. For $\mathcal {G}_{1}$, we claim that any minimal dominating set $D$ of $\mathcal {G}_{1}$ contains at most one of $v_{r_{1}-2}$ and $v_{r_{1}-1}$. Otherwise,
suppose both $v_{r_{1}-2}$ and $v_{r_{1}-1}$ are in $D$. Then $D\setminus \{v_{r_{1}-1}\}$ is also a dominating
set of $\mathcal {G}_{1}$, but its cardinality is less than that of $D$, which contradicts the choice of
$D$. So, our claim holds.

Similar to Case 2,
we can get a minimal dominating set $\mathbb{D}$ of $\mathcal {G}_{1}$, which contains all $p$-dominators, but no $v_{1}$ and no any pendant vertex. We suppose that $\mathbb{D}$ contains no $v_{r_{1}-1}$. Note that $v_{r_{1}-1}$ can be dominated by $v_{r_{1}}$. Therefore, $\mathbb{D}$ is also a dominating set of $\mathcal {G}$ (if $\mathbb{D}$ contains $v_{r_{1}-1}$, we let $\mathbb{D}^{'}=(\mathbb{D}\setminus \{v_{r_{1}-1}\})\cup \{v_{r_{1}-2}\}$. Then $\mathbb{D}^{'}$ is also a dominating set of $\mathcal {G}$), and then $\gamma(\mathcal {G})\leq\gamma(\mathcal {G}_{1})$.

{\bf Case 4} $2\leq a\leq r_{1}-3$. For $\mathcal {G}_{1}$, let $B=\{v_{2}$, $v_{3}$, $\cdots$, $v_{a}$, $v_{a+1}\}$ and $\mathbb{P}=v_{a+2}v_{a+3}\cdots v_{r_{1}-2}$. Similar to the narrations for $\mathcal {G}$  in the first two paraphs and Case 2, we get
$\gamma(\mathcal {G}_{1})=\gamma(\mathcal {G}[B])+\gamma(\mathbb{P})+\gamma(\mathcal {G}^{\ast})=\lceil\frac{a-1}{3}\rceil+\lceil\frac{r_{1}-a-3}{3}\rceil+\gamma(\mathcal {G}^{\ast})$, where $\mathcal {G}^{\ast}=\mathcal {G}-(B\cup V(\mathbb{P}))$.

For $\mathcal {G}_{2}$, if $4\leq a\leq r_{1}-3$, let $\mathbb{P}=v_{2}v_{3}\cdots v_{a-2}$ and $B=
\{v_{a-1}$, $v_{a}$, $v_{a+1}$, $v_{a+2}$, $\cdots$ $v_{r_{1}-2}\}$. Similarly,
we get that $\gamma(\mathcal {G}_{2})=\gamma(\mathbb{P})+\gamma(\mathcal {G}[B])+\gamma(\mathcal {G}^{\ast})=\lceil\frac{a-3}{3}\rceil+\lceil\frac{r_{1}-a-1}{3}\rceil+\gamma(\mathcal {G}^{\ast})$.

For $\mathcal {G}_{2}$, if $a=2, 3$, let $B=
\{v_{a-1}$, $v_{a}$, $v_{a+1}$, $v_{a+2}$, $\cdots$ $v_{r_{1}-2}\}$, let $B^{'}=V(\mathcal {G}_{2})\setminus B$. As Case 2, we get $\gamma(\mathcal {G}^{\ast})=\gamma(\mathcal {G}_{2}[B^{'}])$, and get that
$\gamma(\mathcal {G}_{2})=\gamma(\mathcal {G}[B])+\gamma(\mathcal {G}^{\ast})=\lceil\frac{r_{1}-1-a}{3}\rceil+\gamma(\mathcal {G}^{\ast})$.

From above discussions, for $4\leq a\leq r_{1}-3$, $$\gamma(\mathcal {G})=\lceil\frac{r_{1}-2-1}{3}\rceil+\gamma(\mathcal {G}^{\ast}),$$
$$\gamma(\mathcal {G}_{1})=\lceil\frac{a-1}{3}\rceil+\lceil\frac{r_{1}-a-3}{3}\rceil+\gamma(\mathcal {G}^{\ast}),\ \gamma(\mathcal {G}_{2})=\lceil\frac{a-3}{3}\rceil+\lceil\frac{r_{1}-a-1}{3}\rceil+\gamma(\mathcal {G}^{\ast}).$$
Note that for positive integers $u\geq 1$, $w\geq 2$, $\lceil\frac{u+w}{3}\rceil\leq \max\{\lceil\frac{u-1}{3}\rceil+\lceil\frac{w}{3}\rceil$,
$\lceil\frac{u+1}{3}\rceil+\lceil\frac{w-2}{3}\rceil\}$. Let $u=r_{1}-2-a$, $w=a-1$. Then we get that $\gamma(\mathcal {G})
\leq\max\{\gamma(\mathcal {G}_{1}), \gamma(\mathcal {G}_{2})\}$.

For $a=3$, $$\gamma(\mathcal {G})=\lceil\frac{r_{1}-3}{3}\rceil+\gamma(\mathcal {G}^{\ast}),\ \gamma(\mathcal {G}_{1})=
\lceil\frac{2}{3}\rceil+\lceil\frac{r_{1}-6}{3}\rceil+\gamma(\mathcal {G}^{\ast}),\ \gamma(\mathcal {G}_{2})=\lceil\frac{r_{1}-4}{3}\rceil+\gamma(\mathcal {G}^{\ast}).$$
As above proof, we let $u=r_{1}-5$, $w=2$. Then we get that $\gamma(\mathcal {G})\leq\max\{\gamma(\mathcal {G}_{1}),
\gamma(\mathcal {G}_{2})\}$.

For $a=2$, noting that $\gamma(\mathcal {G})=\lceil\frac{r_{1}-3}{3}\rceil+
\gamma(\mathcal {G}^{\ast}),$ $$\gamma(\mathcal {G}_{1})=\lceil\frac{1}{3}\rceil+\lceil\frac{r_{1}-5}{3}\rceil+
\gamma(\mathcal {G}^{\ast})=1+\lceil\frac{r_{1}-5}{3}\rceil+\gamma(\mathcal {G}^{\ast}),\ \gamma(\mathcal {G}_{2})=\lceil\frac{r_{1}-3}{3}\rceil+\gamma(\mathcal {G}^{\ast}),$$ and comparing with the results about the above case that $a=3$,
we get that $\gamma(\mathcal {G})\leq\max\{\gamma(\mathcal {G}_{1}), \gamma(\mathcal {G}_{2})\}$.

{\bf Case 5} $a=1$. If $r_{1}=1$, $2$, $3$, then for any minimal dominating set of $\mathcal {G}_{1}$
which contains all $p$-dominators but no any pendant vertex is also a dominating set of $\mathcal {G}$. Combining with Lemma \ref{le0,08} get that $\gamma(\mathcal {G})\leq\gamma(\mathcal {G}_{1})$.

Next, suppose $r_{1}\geq 4$. Let $\mathbb{P}_{1}=v_{2}v_{3}\cdots v_{r_{1}-2}$ and $\mathbb{P}_{2}=
v_{r_{t-1}+2}v_{r_{t-1}+3}\cdots v_{g-2}$, where if $g-3-r_{t-1}\leq 0$, then $V(\mathbb{P}_{2})=\emptyset$. Without loss of generality, we suppose $w=g-3-r_{t-1}\geq 1$. Let $u=r_{1}-3$, $B=V(\mathcal {G})\setminus (V(\mathbb{P}_{1})\cup V(\mathbb{P}_{2}))$, $B^{'}=(B\setminus\{v_{1}$, $v_{g-1}$, $v_{g}\})\cup \{v_{3}$, $v_{4}\}$ if $r_{1}\geq 5$; $B^{'}=(B\setminus\{v_{1}$, $v_{g-1}$, $v_{g}\})\cup \{v_{3}\}$ if $r_{1}= 4$. Let $\mathcal {A}=
V(\mathbb{P}_{2})\cup \{v_{g-1}$, $v_{g}$, $v_{1}$, $v_{2}\}$. For $r_{1}=4, 5, 6$, as Case 2, we can prove that $\gamma(\mathcal {G})=\gamma(\mathbb{P}_{1})+\gamma(\mathbb{P}_{2})+\gamma(\mathcal {G}[B])=\lceil\frac{u}{3}\rceil+\lceil\frac{w}{3}\rceil+\gamma(\mathcal {G}[B])=1+\lceil\frac{w}{3}\rceil+\gamma(\mathcal {G}[B])$, and $\gamma(\mathscr{X}_{t})=\gamma(\mathscr{X}_{t}[\mathcal {A}])+\gamma(\mathscr{X}_{t}[B^{'}])=
\lceil\frac{w+3}{3}\rceil+\gamma(\mathscr{X}_{t}[B^{'}])$. Note that $\gamma(\mathscr{X}_{t}[B^{'}])=\gamma(\mathcal {G}[B])$ and $1+\lceil\frac{w}{3}\rceil=\lceil\frac{w+3}{3}\rceil$. Therefore, $\gamma(\mathcal {G})=\gamma(\mathscr{X}_{t})$. For $r_{1}\geq 7$, we let $\mathbb{P}=v_{5}v_{6}\cdots v_{r_{1}-2}$. In a same way, we can prove that $\gamma(\mathcal {G})=\gamma(\mathscr{X}_{t})$, where $\gamma(\mathcal {G})=\gamma(\mathbb{P}_{1})+\gamma(\mathbb{P}_{2})+\gamma(\mathcal {G}[B])=\lceil\frac{u}{3}\rceil+\lceil\frac{w}{3}\rceil+\gamma(\mathcal {G}[B])$, $\gamma(\mathscr{X}_{t})=\gamma(\mathbb{P})+\gamma(\mathscr{X}_{t}[\mathcal {A}])+\gamma(\mathscr{X}_{t}[B^{'}])=
\lceil\frac{u-3}{3}\rceil+\lceil\frac{w+3}{3}\rceil+\gamma(\mathscr{X}_{t}[B^{'}])$, and $\gamma(\mathscr{X}_{t}[B^{'}])=\gamma(\mathcal {G}[B])$.

From the above proof, it follows that if $1\leq a\leq r_{1}$, the lemma holds.
Similarly, we can prove the lemma holds for any case that $r_{i-1}+1\leq a\leq r_{i}$.
This completes the proof. \ \ \ \ \ $\Box$
\end{proof}

\setlength{\unitlength}{0.7pt}
\begin{center}
\begin{picture}(633,315)
\put(5,290){\circle*{4}}
\put(5,213){\circle*{4}}
\qbezier(5,290)(5,252)(5,213)
\put(26,252){\circle*{4}}
\qbezier(5,290)(15,271)(26,252)
\qbezier(5,213)(15,233)(26,252)
\put(80,252){\circle*{4}}
\put(48,252){\circle*{4}}
\put(363,253){\circle*{4}}
\put(105,252){\circle*{4}}
\put(96,252){\circle*{4}}
\put(88,252){\circle*{4}}
\put(184,252){\circle*{4}}
\put(249,253){\circle*{4}}
\put(355,253){\circle*{4}}
\qbezier(249,253)(302,253)(355,253)
\qbezier(105,252)(144,252)(184,252)
\put(130,252){\circle*{4}}
\put(152,252){\circle*{4}}
\put(152,279){\circle*{4}}
\qbezier(152,252)(152,266)(152,279)
\put(213,252){\circle*{4}}
\qbezier(184,252)(198,252)(213,252)
\put(224,253){\circle*{4}}
\put(233,253){\circle*{4}}
\put(242,253){\circle*{4}}
\put(329,253){\circle*{4}}
\put(301,252){\circle*{4}}
\put(275,253){\circle*{4}}
\put(301,281){\circle*{4}}
\qbezier(301,252)(301,267)(301,281)
\put(373,253){\circle*{4}}
\put(382,253){\circle*{4}}
\put(466,253){\circle*{4}}
\put(439,253){\circle*{4}}
\put(416,253){\circle*{4}}
\put(439,283){\circle*{4}}
\put(390,253){\circle*{4}}
\put(492,253){\circle*{4}}
\qbezier(390,253)(441,253)(492,253)
\qbezier(439,253)(439,268)(439,283)
\put(71,252){\circle*{4}}
\put(594,253){\circle*{4}}
\put(555,253){\circle*{4}}
\put(633,253){\circle*{4}}
\qbezier(555,253)(594,253)(633,253)
\put(502,253){\circle*{4}}
\put(511,253){\circle*{4}}
\put(520,253){\circle*{4}}
\put(528,253){\circle*{4}}
\put(536,253){\circle*{4}}
\put(544,253){\circle*{4}}
\qbezier(26,252)(48,252)(71,252)
\put(39,128){\circle*{4}}
\put(39,51){\circle*{4}}
\put(63,90){\circle*{4}}
\put(113,90){\circle*{4}}
\put(158,90){\circle*{4}}
\put(284,90){\circle*{4}}
\put(571,111){\circle*{4}}
\put(575,105){\circle*{4}}
\put(578,99){\circle*{4}}
\put(229,90){\circle*{4}}
\put(407,90){\circle*{4}}
\put(171,90){\circle*{4}}
\put(180,90){\circle*{4}}
\put(189,90){\circle*{4}}
\put(199,90){\circle*{4}}
\put(209,90){\circle*{4}}
\put(218,90){\circle*{4}}
\qbezier(39,128)(39,90)(39,51)
\qbezier(39,128)(51,109)(63,90)
\qbezier(39,51)(51,71)(63,90)
\qbezier(229,90)(318,90)(407,90)
\qbezier(63,90)(110,90)(158,90)
\put(1,298){$v_{2}$}
\put(8,250){$v_{3}$}
\put(40,240){$v_{4}$}
\put(1,200){$v_{1}$}
\put(149,240){$v_{a_{1}}$}
\put(290,240){$v_{a_{2}}$}
\put(410,240){$v_{a_{k}}$}
\put(330,75){$v_{\varepsilon-k-1}$}
\put(266,76){$v_{\varepsilon-k-2}$}
\put(32,137){$v_{2}$}
\put(64,98){$v_{3}$}
\put(30,39){$v_{1}$}
\put(242,-9){Fig. 3.6. $\mathcal {H}^{k}_{1}$, $\mathcal {H}^{k}_{2}$}
\put(288,176){$\mathcal {H}^{k}_{1}$}
\put(292,19){$\mathcal {H}^{k}_{2}$}
\put(627,239){$v_{\varepsilon}$}
\put(577,239){$v_{\varepsilon-1}$}
\put(343,90){\circle*{4}}
\put(536,90){\circle*{4}}
\put(472,90){\circle*{4}}
\put(584,90){\circle*{4}}
\put(419,90){\circle*{4}}
\put(428,90){\circle*{4}}
\put(437,90){\circle*{4}}
\put(445,90){\circle*{4}}
\put(453,90){\circle*{4}}
\put(461,90){\circle*{4}}
\qbezier(472,90)(528,90)(584,90)
\put(528,74){$v_{\varepsilon-1}$}
\put(582,76){$v_{\varepsilon}$}
\put(606,282){\circle*{4}}
\qbezier(594,253)(600,268)(606,282)
\put(623,261){\circle*{4}}
\put(618,267){\circle*{4}}
\put(613,272){\circle*{4}}
\put(5,213){\circle*{4}}
\qbezier(5,213)(5,252)(5,290)
\qbezier(5,213)(5,252)(5,290)
\qbezier(5,213)(5,252)(5,290)
\qbezier(5,213)(5,252)(5,290)
\put(5,213){\circle*{4}}
\qbezier(5,213)(5,252)(5,290)
\qbezier(5,213)(5,252)(5,290)
\put(568,122){\circle*{4}}
\qbezier(536,90)(552,106)(568,122)
\put(472,119){\circle*{4}}
\qbezier(472,90)(472,105)(472,119)
\put(407,119){\circle*{4}}
\qbezier(407,90)(407,105)(407,119)
\put(343,119){\circle*{4}}
\qbezier(343,90)(343,105)(343,119)
\put(392,75){$v_{\varepsilon-k}$}
\put(461,75){$v_{\varepsilon-2}$}
\end{picture}
\end{center}

Let $\mathcal {H}^{k}_{1}$ be a $\mathcal {F}_{3, \varepsilon-3}$-graph of order $n\geq 4$ where there are $k\geq 0$ $p$-dominators among $v_{1}$, $v_{2}$, $\ldots$, $\varepsilon-2$ ($\varepsilon\geq3$. see Fig. 3.6). If $k\geq 1$, in $\mathcal {H}^{k}_{1}$, suppose $v_{a_{j}}s$ are $p$-dominators where $1\leq j\leq k$, $1\leq a_{1}<a_{2}<\cdots <a_{k}\leq \varepsilon-2$, and suppose $v_{\tau_{j}}$ is the pendant vertex attached to $v_{a_{j}}$. Let $\mathcal {H}^{k}_{2}=\mathcal {H}^{k}_{1}-\sum\limits_{j=1}^{k} v_{\tau_{j}}v_{a_{j}}+\sum\limits_{j=1}^{k} v_{\tau_{j}}v_{\varepsilon-2-k+j}$ (see Fig. 3.6). If $k= 0$, then $\mathcal {H}^{0}_{1}=\mathcal {H}^{0}_{2}$.

\begin{theorem}\label{th03,04} 
$\gamma(\mathcal {H}^{k}_{1})\leq\gamma(\mathcal {H}^{k}_{2})$.
\end{theorem}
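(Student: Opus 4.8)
The plan is to strip the pendant vertices off both graphs with Lemma~\ref{le0,08} and reduce everything to a dominator count on the ``backbone'' $v_1v_2\cdots v_\varepsilon$ (the triangle $v_1v_2v_3$ together with the path $v_3v_4\cdots v_\varepsilon$). By Lemma~\ref{le0,08} each of $\gamma(\mathcal{H}^k_1)$ and $\gamma(\mathcal{H}^k_2)$ is attained by a minimal dominating set containing every $p$-dominator and no pendant vertex; the pendants are then dominated for free, so in either graph $\gamma$ equals $(k+1)$ — the $k$ single-pendant $p$-dominators together with $v_{\varepsilon-1}$ — plus the least number of extra backbone vertices needed to dominate the backbone vertices not yet covered. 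A fact I will first record is that a \emph{tadpole} $T_m$ (the triangle $v_1v_2v_3$ with a path $v_3v_4\cdots v_m$ attached) satisfies $\gamma(T_m)=\lceil\frac{m-1}{3}\rceil$: the upper bound comes from placing a vertex at $v_3$ and then at every third vertex, and the lower bound follows by deleting $v_1$, which turns $T_m$ into the path $v_2v_3\cdots v_m=P_{m-1}$, so that after replacing a possible $v_1\in D$ by $v_3$ one may invoke Lemma~\ref{le0,09}(i).

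Next I would evaluate $\gamma(\mathcal{H}^k_2)$. There the single-pendant $p$-dominators occupy the consecutive vertices $v_{\varepsilon-k-1},\ldots,v_{\varepsilon-2}$ adjacent to $v_{\varepsilon-1}$; including these (and $v_{\varepsilon-1}$) dominates the whole tail $v_{\varepsilon-k-2},\ldots,v_\varepsilon$ and all pendants, leaving precisely the tadpole on $v_1,\ldots,v_{\varepsilon-k-3}$ to be dominated. Since a dominator placed at the boundary vertex $v_{\varepsilon-k-2}$ can never beat placing one inside the tadpole, the preliminary fact yields $\gamma(\mathcal{H}^k_2)=(k+1)+\lceil\frac{\varepsilon-k-4}{3}\rceil$ (the same value whether or not $v_{\varepsilon-1}$ carries a pendant, the missing support in the pendant-free case being offset by the one extra vertex needed to dominate $v_\varepsilon$).

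Then I would bound $\gamma(\mathcal{H}^k_1)$ from above by cutting the backbone at its $p$-dominators $v_{a_1},\ldots,v_{a_k}$ ($a_1<\cdots<a_k\le\varepsilon-2$) and $v_{\varepsilon-1}$, and dominating each resulting gap separately. This gives the explicit dominating-set estimate
\[
\gamma(\mathcal{H}^k_1)\le (k+1)+\lceil\frac{a_1-3}{3}\rceil+\sum_{i=1}^{k-1}\lceil\frac{a_{i+1}-a_i-3}{3}\rceil+\lceil\frac{\varepsilon-a_k-4}{3}\rceil,
\]
where the first ceiling dominates the initial tadpole and the rest dominate the intermediate paths $v_{a_i+2}\cdots v_{a_{i+1}-2}$ and $v_{a_k+2}\cdots v_{\varepsilon-3}$. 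This is only an inequality, because two nearby $p$-dominators have overlapping neighbourhoods and the corresponding ceiling then over-counts; but over-counting inflates the bound in the harmless direction, so $\le$ is all I need. Writing $S$ for the triple sum, each term satisfies $\lceil\frac{N}{3}\rceil\le\frac{N+2}{3}$, and summing while telescoping the $a_i$ gives $S\le\frac{\varepsilon-k-2}{3}$; since $S$ is an integer this forces $S\le\lfloor\frac{\varepsilon-k-2}{3}\rfloor=\lceil\frac{\varepsilon-k-4}{3}\rceil$ (the latter identity holding for every integer argument). Hence $\gamma(\mathcal{H}^k_1)\le(k+1)+\lceil\frac{\varepsilon-k-4}{3}\rceil=\gamma(\mathcal{H}^k_2)$, as required.

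The main obstacle will be the boundary bookkeeping rather than any single hard estimate: one must treat $p$-dominators lying on or beside the triangle (where one vertex dominates four backbone vertices, not three), separate the cases according to whether $v_{\varepsilon-1}$ bears a pendant, and absorb the small-$\varepsilon$ or large-$k$ degeneracies in which some gaps are empty. I expect all of these to be routine once the tadpole formula and the identity $\lfloor\frac{s+2}{3}\rfloor=\lceil\frac{s}{3}\rceil$ are available, because the decomposition is used only as an upper bound for $\mathcal{H}^k_1$ (so over-counting is safe) while the exact evaluation is needed only for the tidy consecutive configuration $\mathcal{H}^k_2$.
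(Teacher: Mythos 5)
Your proposal is correct, and it takes a genuinely different route from the paper's proof. The paper argues by iterated local perturbation: it splits into cases according to how many of $v_1,v_2,v_3$ are $p$-dominators, moves the pendant at the largest misplaced $p$-dominator $v_{a_k}$ to $v_{\varepsilon-2}$ to form an intermediate graph $K$, proves $\gamma(\mathcal{H}^k_1)\le\gamma(K)$ by comparing path decompositions of the two graphs (via Lemmas \ref{le0,08}, \ref{le0,09} and \ref{le0,11}), and then repeats this single-pendant move until $\mathcal{H}^k_2$ is reached. You instead compare the two endpoint graphs directly: an exact evaluation $\gamma(\mathcal{H}^k_2)=(k+1)+\lceil\frac{\varepsilon-k-4}{3}\rceil$ (which agrees with the paper's own Theorem \ref{th03,05}(i)--(ii), so this step is independently confirmed), one explicit dominating set of $\mathcal{H}^k_1$ of size $(k+1)+S$, and the telescoping estimate $S\le\frac{\varepsilon-k-2}{3}$, whence $S\le\lfloor\frac{\varepsilon-k-2}{3}\rfloor=\lceil\frac{\varepsilon-k-4}{3}\rceil$ by integrality. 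The pieces check out: the tadpole formula $\gamma(T_m)=\lceil\frac{m-1}{3}\rceil$ is right (both the greedy upper bound and the delete-$v_1$/swap-$v_1$-for-$v_3$ reduction to Lemma \ref{le0,09}); the lower bound for $\gamma(\mathcal{H}^k_2)$ correctly combines Lemma \ref{le0,08} (a $\gamma$-set containing every $p$-dominator and no pendant) with the boundary swap of $v_{\varepsilon-k-2}$ for $v_{\varepsilon-k-3}$; and since $a_1\ge 1$, $a_{i+1}>a_i$ and $a_k\le\varepsilon-2$, every ceiling term in $S$ is nonnegative, so no gap is undercounted. What your route buys is brevity and robustness: only an upper bound is ever needed for $\mathcal{H}^k_1$ and only a lower bound for $\mathcal{H}^k_2$, so all over-counting is harmless, no induction over pendant moves is needed, and the heart of the proof becomes a one-line ceiling inequality; what it gives up is the paper's finer (implicitly reusable) fact that $\gamma$ is monotone under each individual pendant transfer. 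The boundary cases you flagged are real but genuinely routine; the two worth writing out are $a_1=3$, where your first ceiling is $0$ although the piece $\{v_1\}$ is nonempty — the constructed set still dominates only because of the triangle edge $v_1v_3$ — and $k=0$, which is trivial since $\mathcal{H}^0_1=\mathcal{H}^0_2$ by definition.
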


\begin{proof}
For $k= 0$, the theorem holds clearly.
So, we suppose $k\geq 1$ next.

For $\varepsilon=3$, $4$, it is clear that the result holds because $\mathcal {H}^{k}_{1}\cong\mathcal {H}^{k}_{2}$. Next, we suppose that $\varepsilon\geq 5$.

{\bf Case 1} $v_{1}$, $v_{2}$, $v_{3}$ are all $p$-dominators. Then $a_{1}=1$, $a_{2}=2$, $a_{3}=3$.

{\bf Subcase 1.1}  $a_{k}=3$. If $\varepsilon=5$, $6$, by Lemma \ref{le0,08}, it is proved that $\gamma(\mathcal {H}^{k}_{1})=\gamma(\mathcal {H}_{2})$ as Lemma \ref{le03,03}. So, we suppose that $\varepsilon\geq 7$ next. Let $\mathcal {P}=v_{5}v_{6}\cdots v_{\varepsilon-3}$, $S_{1}=\{v_{1}$, $v_{2}$, $v_{3}$, $v_{\tau_{1}}$, $v_{\tau_{2}}$, $v_{\tau_{3}}\}$, $S_{2}=V(\mathcal {H}^{k}_{1})\setminus (S_{1}\cup V(\mathcal {P}))$. As the proof of Lemma \ref{le03,03}, we get that $\gamma(\mathcal {H}^{k}_{1})=\gamma(\mathcal {H}^{k}_{1}[S_{1}])+\gamma(\mathcal {P})+\gamma(\mathcal {H}^{k}_{1}[S_{2}])=3+\lceil\frac{\varepsilon-7}{3}\rceil+1=4+\lceil\frac{\varepsilon-7}{3}\rceil$. In $\mathcal {H}^{k}_{2}$, let $B_{1}=\{v_{1}$, $v_{2}$, $v_{3}$, $\ldots$, $v_{\varepsilon-6}\}$, $B_{2}=V(\mathcal {H}^{k}_{2})\setminus B_{1}$. Similarly, we get $\gamma(\mathcal {H}^{k}_{2})=\gamma(\mathcal {H}^{k}_{2}[B_{1}])+\gamma(\mathcal {H}^{k}_{2}[B_{2}])=\lceil\frac{\varepsilon-7}{3}\rceil+4$. It follows that
$\gamma(\mathcal {H}^{k}_{1})= \gamma(\mathcal {H}^{k}_{2})$ immediately.

{\bf Subcase 1.2}  $a_{k}\geq 4$.

From $4$ to $k$, suppose $a_{z}$ is the largest one such that $a_{z}< \varepsilon-2-k+z$. Without loss of generality, we suppose $a_{k}<\varepsilon-2$. Let $S_{1}=\{v_{1}$, $v_{2}$, $v_{3}$, $\ldots$, $v_{a_{k-1}+1}\}$, $S_{2}=\{v_{\tau_{1}}$, $v_{\tau_{2}}$, $\ldots$, $v_{\tau_{k-1}}\}$, $\mathcal {P}_{1}=v_{a_{k-1}+2}v_{a_{k-1}+3}\ldots v_{a_{k}-2}$ if $a_{k-1}+2\leq a_{k}-2$ (if $a_{k-1}+2> a_{k}-2$, then $V(\mathcal {P}_{1})=\emptyset$), $\mathcal {P}_{2}=v_{a_{k}+2}v_{a_{k}+3}\ldots v_{\varepsilon-3}$ if $a_{k}+2\leq \varepsilon-3$ (if $a_{k}+2> \varepsilon-3$, then $V(\mathcal {P}_{2})=\emptyset$), $B=S_{1}\cup S_{2}$. Let $G_{1}=\mathcal {H}^{k}_{1}[B]$, $G_{2}=\mathcal {H}^{k}_{1}[N_{\mathcal {H}^{k}_{1}}[v_{a_{k}}]]$, $G_{3}=\mathcal {H}^{k}_{1}[N_{\mathcal {H}^{k}_{1}}[v_{\varepsilon-1}]]$.
As the proof of Lemma \ref{le03,03}, we get that $\gamma(\mathcal {H}^{k}_{1})=\gamma(G_{1})+\gamma(\mathcal {P}_{1})+\gamma(G_{2})+\gamma(\mathcal {P}_{2})+\gamma(G_{3})$
where if $V(\mathcal {P}_{1})=\emptyset$, then $\gamma(\mathcal {P}_{1})=0$, and if $V(\mathcal {P}_{2})=\emptyset$, then $\gamma(\mathcal {P}_{2})=0$. Let $K=\mathcal {H}^{k}_{1}- v_{a_{k}}v_{\tau_{k}}+v_{\varepsilon-2}v_{\tau_{k}}$. Let $\mathcal {P}=v_{a_{k-1}+2}v_{a_{k-1}+3}\ldots v_{\varepsilon-4}$, $\mathcal {R}=N_{K}[v_{\varepsilon-1}]\cup N_{K}[v_{\varepsilon-2}]$. Similarly, for $K$, we get that $\gamma(K)=\gamma(G_{1})+\gamma(\mathcal {P})+\gamma(K[\mathcal {R}])$.

If one of $V(\mathcal {P}_{1})$, $V(\mathcal {P}_{2})$ is empty, then $\max \{|V(\mathcal {P}_{1})|$, $|V(\mathcal {P}_{2})|\}\leq |V(\mathcal {P})|$. Note that $\gamma(G_{2})+\gamma(G_{3})=2=\gamma(K[\mathcal {R}])$, $\gamma(\mathcal {P}_{1})+\gamma(\mathcal {P}_{2})\leq \gamma(\mathcal {P})$. So, $\gamma(\mathcal {H}^{k}_{1})\leq \gamma(K)$.

If neither $V(\mathcal {P}_{1})$ nor $V(\mathcal {P}_{2})$ is empty, then $|V(\mathcal {P}_{1})|+|V(\mathcal {P}_{2})|+2= |V(\mathcal {P})|$. Note that $\gamma(G_{2})+\gamma(G_{3})=2=\gamma(K[\mathcal {R}])$, and note that for two nonnegative integer numbers $a$ and $b$, we have $\lceil\frac{a+b+2}{3}\rceil\geq \lceil\frac{a}{3}\rceil+\lceil\frac{b}{3}\rceil$. Then $\gamma(\mathcal {P}_{1})+\gamma(\mathcal {P}_{2})\leq \gamma(\mathcal {P})$. So, $\gamma(\mathcal {H}^{k}_{1})\leq \gamma(K)$.

Proceeding like this, we get graph $\mathcal {H}^{k}_{2}$ from $\mathcal {H}^{k}_{1}$ satisfying that $\gamma(\mathcal {H}^{k}_{1})\leq\gamma(\mathcal {H}^{k}_{2})$.

In a same way, we can prove that $\gamma(\mathcal {H}^{k}_{1})\leq \gamma(\mathcal {H}^{k}_{2})$ for the following cases that:
{\bf Case 2} there are two $p$-dominators among $v_{1}$, $v_{2}$, $v_{3}$;
{\bf Case 3} there is only one $p$-dominator among $v_{1}$, $v_{2}$, $v_{3}$;
{\bf Case 4} there is no $p$-dominator among $v_{1}$, $v_{2}$, $v_{3}$.
\ \ \ \ \ $\Box$
\end{proof}

In $\mathcal {H}^{k}_{2}$, for $j=1$, $2$, $\ldots$, $k$, suppose $v_{\tau_{\varepsilon-2-k+j}}$ is the pendant vertex attached to vertex $v_{\varepsilon-2-k+j}$. Suppose $v_{\omega_{1}}$, $v_{\omega_{2}}$, $\ldots$, $v_{\omega_{s}}$ are the pendant vertices attached to vertex $v_{\varepsilon-1}$. If $s\geq 2$, let $\mathcal {H}^{k}_{3}=\mathcal {H}_{2}^{k}-v_{\varepsilon-1-k}v_{\tau_{\varepsilon-1-k}}+
v_{\varepsilon-1}v_{\tau_{\varepsilon-1-k}}-\sum\limits_{j=2}^{s}v_{\varepsilon-1}v_{\omega_{j}}+
\sum\limits_{j=2}^{s}v_{\omega_{1}}v_{\omega_{j}}$.  Let
$\mathcal {H}^{k-1}_{4}=\mathcal {H}^{k}_{2}-v_{\varepsilon-1-k}v_{\tau_{\varepsilon-1-k}}+v_{\varepsilon-1}v_{\tau_{\varepsilon-1-k}}$, $\mathcal {H}^{k-2}_{5}=\mathcal {H}^{k-1}_{4}-v_{\varepsilon-k}v_{\tau_{\varepsilon-k}}+v_{\varepsilon-1}v_{\tau_{\varepsilon-k}}$.
If $\alpha\geq 1$, we denoted by $\mathscr{H}_{3,\alpha}$ the graph $\mathcal {H}^{\alpha-1}_{2}$ of order $n$ in which there are $\alpha$ $p$-dominators and $v_{\varepsilon-1}$ has only one pendant vertex; if $\alpha= 0$, we let $\mathscr{H}_{3,0}=C_{3}=v_{1}v_{2}v_{3}v_{1}$.

\begin{theorem}\label{th03,05} 
\

$\mathrm{(i)}$ If $\varepsilon-k-1\leq 2$, then $\gamma(\mathcal {H}^{k}_{2})=k+1$ and $\gamma(\mathcal {H}^{k-1}_{4})= \gamma(\mathcal {H}^{k}_{2})-1$;

$\mathrm{(ii)}$ If $\varepsilon-k-1\geq 3$, then $\gamma(\mathcal {H}^{k}_{2})=\lceil\frac{\varepsilon-k-4}{3}\rceil+k+1$;

$\mathrm{(iii)}$ $\gamma(\mathcal {H}^{k}_{2})\leq \gamma(\mathcal {H}^{k}_{3})$;

$\mathrm{(iv)}$ If $\varepsilon-k-1\geq 3$, $\frac{\varepsilon-k-4}{3}\neq t$ where $t$ is a nonnegative integral number, then $\gamma(\mathcal {H}^{k-1}_{4})= \gamma(\mathcal {H}^{k}_{2})-1$;

$\mathrm{(v)}$ If $\varepsilon-k-1\geq 3$, $\frac{\varepsilon-k-4}{3}= t$ where $t$ is a nonnegative integral number, $\gamma(\mathcal {H}^{k-1}_{4})= \gamma(\mathcal {H}^{k}_{2})$, $\gamma(\mathcal {H}^{k-2}_{5})= \gamma(\mathcal {H}^{k}_{2})-1$.
\end{theorem}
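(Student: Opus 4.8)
The plan is to turn every assertion into an explicit computation of a domination number, obtained by isolating a forced block of $p$-dominators and reducing what is left to the domination of a single path; the five parts then separate into one closed-form evaluation followed by routine comparisons.

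First I would apply Lemma \ref{le0,08} to $\mathcal{H}^{k}_{2}$: there is a minimal dominating set $D$ containing every $p$-dominator and no pendant vertex. By construction the $p$-dominators of $\mathcal{H}^{k}_{2}$ are exactly the $k+1$ consecutive vertices $v_{\varepsilon-1-k},v_{\varepsilon-k},\ldots,v_{\varepsilon-1}$ (the $k$ transferred ones together with the special vertex $v_{\varepsilon-1}$, which dominates the path end $v_{\varepsilon}$), so $D$ must contain all of them. This block already dominates $v_{\varepsilon-2-k},v_{\varepsilon-1-k},\ldots,v_{\varepsilon}$ and every pendant, so the only vertices still needing domination are $\{v_{1},\ldots,v_{\varepsilon-3-k}\}$, inducing the triangle $v_1v_2v_3$ with the path $v_3v_4\cdots v_{\varepsilon-3-k}$ attached at $v_3$. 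I would observe that adjoining $v_{\varepsilon-2-k}$ to help cover the endpoint $v_{\varepsilon-3-k}$ is never cheaper than using $v_{\varepsilon-3-k}$ itself, so this block-plus-remainder split is optimal.

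Next I would evaluate the leftover cost. The key reduction is that dominating the triangle-plus-path $\{v_1,\ldots,v_{\varepsilon-3-k}\}$ is equivalent to dominating the path $P=v_2v_3\cdots v_{\varepsilon-3-k}$ on $\varepsilon-k-4$ vertices: any dominating set of $P$ must contain $v_2$ or $v_3$ in order to cover the endpoint $v_2$, and either choice also covers $v_1$, while conversely any dominating set of the triangle-plus-path pushes to a no-larger dominating set of $P$ by shifting a use of $v_1$ to $v_2$. Hence by $\gamma(P_n)=\lceil n/3\rceil$ (Lemma \ref{le0,09}) the leftover cost is $\lceil\frac{\varepsilon-k-4}{3}\rceil$, giving $\gamma(\mathcal{H}^{k}_{2})=(k+1)+\lceil\frac{\varepsilon-k-4}{3}\rceil$, which is part (ii) under $\varepsilon-k-1\ge 3$; the reduction $\gamma(C^{*}_{3,k})=\gamma(P_{k+3})$ (Lemma \ref{le0,11}) is available as an alternative route and serves as a consistency check. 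Part (i) is the degenerate regime $\varepsilon-k-1\le 2$, where the block reaches into the triangle so that the leftover set is empty or a single vertex dominated by the block, and one reads off $\gamma(\mathcal{H}^{k}_{2})=k+1$.

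For the remaining parts I would apply the same scheme to the perturbed graphs. Moving one pendant off the leftmost block vertex shrinks the forced block to $k$ and lengthens the leftover path by one, so $\gamma(\mathcal{H}^{k-1}_{4})=k+\lceil\frac{\varepsilon-3-k}{3}\rceil$, and moving a second gives $\gamma(\mathcal{H}^{k-2}_{5})=(k-1)+\lceil\frac{\varepsilon-2-k}{3}\rceil$. Writing $N=\varepsilon-3-k$, parts (iv) and (v) follow from $\gamma(\mathcal{H}^{k}_{2})-\gamma(\mathcal{H}^{k-1}_{4})=1+\lceil\frac{N-1}{3}\rceil-\lceil\frac{N}{3}\rceil$, which is $0$ exactly when $N\equiv 1\ (\mathrm{mod}\ 3)$, i.e.\ when $\frac{\varepsilon-k-4}{3}$ is a nonnegative integer, and $1$ otherwise, with the extra drop in (v) obtained by the same arithmetic on $\mathcal{H}^{k-2}_{5}$. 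For (iii) I would note that in $\mathcal{H}^{k}_{3}$ the vertex $v_{\omega_1}$ becomes a new forced $p$-dominator (by Lemma \ref{le0,08}) while $v_{\varepsilon-1-k}$ ceases to be one, so the forced block again has size $k+1$ but the leftover path is one vertex longer, whence $\gamma(\mathcal{H}^{k}_{3})=(k+1)+\lceil\frac{\varepsilon-3-k}{3}\rceil\ge\gamma(\mathcal{H}^{k}_{2})$.

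The hard part will be making the ``forced block plus independent remainder'' split fully rigorous rather than heuristic: one must use Lemma \ref{le0,08} to guarantee the whole block of $p$-dominators lies in some minimal dominating set, verify that the boundary vertex $v_{\varepsilon-2-k}$ is already dominated and never profitably added, and prove both inequalities in the reduction of the triangle-plus-path to the path $P$ (the subtlety being that $v_3$ alone dominates four vertices, so the naive count $\lceil m/3\rceil$ is wrong and the correct value is $\lceil\frac{m-1}{3}\rceil$). The small-$\varepsilon$ degenerate cases behind part (i), where the block overlaps the triangle, also require separate checking. Once these are settled, the comparisons in (iii)--(v) are exactly the ceiling-function manipulations of the type $\lceil\frac{a+b+2}{3}\rceil\ge\lceil\frac{a}{3}\rceil+\lceil\frac{b}{3}\rceil$ already used in the proof of Lemma \ref{le03,03}.
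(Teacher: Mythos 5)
Your proposal is correct and takes essentially the same route as the paper: the paper's entire proof is a deferral to the technique of Lemma \ref{le03,03} (a minimal dominating set containing all $p$-dominators and no pendant vertex via Lemma \ref{le0,08}, followed by path-domination counts via Lemma \ref{le0,09}), which is precisely the forced-block-plus-path reduction you spell out, and the paper likewise gets (iii)--(v) by applying the formulas of (i)--(ii) to the perturbed graphs $\mathcal{H}^{k-1}_{4}$, $\mathcal{H}^{k-2}_{5}$, $\mathcal{H}^{k}_{3}$. One small caution: your parenthetical ``consistency check'' via Lemma \ref{le0,11} would misfire if applied literally to the bare lollipop (that lemma presupposes pendant vertices at the far end of the path), but this is harmless since your main computation gives the correct value $\lceil\frac{m-1}{3}\rceil$, exactly the subtlety you flag in your final paragraph.
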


\begin{proof}
As the proof of Lemma \ref{le03,03}, it is proved that (i) and (ii) hold. (iii)--(v) follow from (i) and (ii).
\ \ \ \ \ $\Box$
\end{proof}

\begin{corollary}\label{cl03,06} 
\

$\mathrm{(i)}$ $\gamma(\mathscr{H}_{3,0})=1$;

$\mathrm{(ii)}$ If $\alpha\geq 1$ and $n-2\alpha\leq 2$, then $\gamma(\mathscr{H}_{3,\alpha})= \alpha$;

$\mathrm{(iii)}$ If $\alpha\geq 1$ and $n-2\alpha\geq 3$, then $\gamma(\mathscr{H}_{3,\alpha})=\lceil\frac{n-2\alpha-2}{3}\rceil+\alpha$.
\end{corollary}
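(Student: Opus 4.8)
The plan is to derive all three parts directly from Theorem~\ref{th03,05} by translating the parameters $(\varepsilon,k)$ of $\mathcal{H}^{k}_{2}$ into the parameters $(n,\alpha)$ of $\mathscr{H}_{3,\alpha}$. Part (i) is immediate: by definition $\mathscr{H}_{3,0}=C_{3}$, so Lemma~\ref{le0,09}(ii) gives $\gamma(\mathscr{H}_{3,0})=\lceil 3/3\rceil=1$. For $\alpha\geq 1$, I would first record that $\mathscr{H}_{3,\alpha}$ is exactly the graph $\mathcal{H}^{\alpha-1}_{2}$, so that $k=\alpha-1$ and the total number of $p$-dominators is $k+1=\alpha$.

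The crucial step is to pin down $\varepsilon$ in terms of $n$ and $\alpha$. Here I would stress that the path-end $v_{\varepsilon}$ of the underlying lollipop $L_{3,\varepsilon-3}$ is itself a pendant vertex, so $v_{\varepsilon-1}$ is automatically a $p$-dominator; the phrase ``$v_{\varepsilon-1}$ has only one pendant vertex'' therefore means that $v_{\varepsilon}$ is its sole pendant neighbour and that no extra pendant vertex is attached at $v_{\varepsilon-1}$. Consequently the only \emph{attached} pendant vertices are the $\alpha-1$ that sit on the moved supports $v_{\varepsilon-\alpha},\ldots,v_{\varepsilon-2}$. Counting all vertices, $\varepsilon+(\alpha-1)=n$, so $\varepsilon=n-\alpha+1$. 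From this I obtain the two quantities that drive Theorem~\ref{th03,05}, namely $\varepsilon-k-1=n-2\alpha+1$ and $\varepsilon-k-4=n-2\alpha-2$.

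It then remains to feed these into Theorem~\ref{th03,05}. If $n-2\alpha\leq 2$, then $\varepsilon-k-1\leq 3$; when $n-2\alpha\leq 1$ one has $\varepsilon-k-1\leq 2$ and Theorem~\ref{th03,05}(i) gives $\gamma(\mathscr{H}_{3,\alpha})=k+1=\alpha$, while the borderline case $n-2\alpha=2$ makes $\varepsilon-k-1=3$, so Theorem~\ref{th03,05}(ii) gives $\lceil 0/3\rceil+(k+1)=\alpha$; either way part (ii) holds. If instead $n-2\alpha\geq 3$, then $\varepsilon-k-1\geq 4\geq 3$, and Theorem~\ref{th03,05}(ii) yields $\gamma(\mathscr{H}_{3,\alpha})=\lceil\frac{\varepsilon-k-4}{3}\rceil+(k+1)=\lceil\frac{n-2\alpha-2}{3}\rceil+\alpha$, which is part (iii).

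The computational content is negligible; the one place that needs care --- and the only genuine obstacle --- is the vertex count that establishes $\varepsilon=n-\alpha+1$. If one forgets that $v_{\varepsilon}$ already serves as the pendant neighbour of $v_{\varepsilon-1}$ and instead counts an additional attached pendant there, one gets $\varepsilon=n-\alpha$, and the numerators in (ii) and (iii) shift by one, producing the wrong thresholds. Getting this bookkeeping right is exactly what aligns the case split $\{\varepsilon-k-1\leq 2,\ \varepsilon-k-1\geq 3\}$ of Theorem~\ref{th03,05} with the case split $\{n-2\alpha\leq 2,\ n-2\alpha\geq 3\}$ of the corollary.
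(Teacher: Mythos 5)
Your proposal is correct and follows essentially the same route as the paper: the paper's proof simply sets $k=\alpha-1$ and invokes Theorem~\ref{th03,05}, which is exactly what you do, only with the parameter translation $\varepsilon=n-\alpha+1$ (hence $\varepsilon-k-1=n-2\alpha+1$, $\varepsilon-k-4=n-2\alpha-2$) made explicit. Your observation that the borderline case $n-2\alpha=2$ falls under Theorem~\ref{th03,05}(ii) with numerator $0$ rather than under part (i) is a detail the paper leaves implicit, and you handle it correctly.
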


\begin{proof}
It is easy to check that $\gamma(\mathscr{H}_{3,0})=1$. Then (i) follows. If $\alpha\geq 1$, let $k=\alpha- 1$. Then (ii) and (iii) follow from Theorem \ref{th03,05}.
\ \ \ \ \ $\Box$
\end{proof}

Let $G^{\ast}$ be a sunlike graph of order $n$ and with both girth $g$ and $k$ $p$-dominators $v_{1}$, $v_{2}$, $\ldots$, $v_{k}$ on $\mathbb{C}$. Same as Theorems \ref{th03,04} and \ref{th03,05}, we get the following Lemma \ref{le03,07}.

\begin{lemma}\label{le03,07} 
Let $G$ be a sunlike graph of order $n$ and with both girth $g$ and $k$ $p$-dominators on $\mathbb{C}$. Then $\gamma(G)\leq \gamma(G^{\ast})$, where $\gamma(G^{\ast})=k+\lceil\frac{g-k-2}{3}\rceil$.
\end{lemma}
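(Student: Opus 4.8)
The plan is to prove the two directions separately: first evaluate $\gamma(G^{\ast})$ exactly, and then show that $G^{\ast}$ maximizes the domination number among all sunlike graphs sharing the parameters $n$, $g$, $k$, so that $\gamma(G)\leq\gamma(G^{\ast})$ for every such $G$. Throughout I would relabel the cycle of $G^{\ast}$ so that the $k$ $p$-dominators (including the distinguished pendant-path support of $L_{g,1}$) occupy the consecutive positions $v_{1},v_{2},\ldots,v_{k}$ on $\mathbb{C}$.

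First I would compute $\gamma(G^{\ast})$. By Lemma \ref{le0,08} there is a minimal dominating set of $G^{\ast}$ containing all $p$-dominators but no pendant vertex; since $v_{1},v_{2},\ldots,v_{k}$ are consecutive on $\mathbb{C}$, they dominate the contiguous arc $v_{g},v_{1},\ldots,v_{k},v_{k+1}$ together with every pendant vertex, and the only vertices still to be dominated are those of the path $P=v_{k+2}v_{k+3}\cdots v_{g-1}$ on $g-k-2$ vertices. Decomposing the dominating set along this path exactly as in the proof of Lemma \ref{le03,03}, and invoking Lemma \ref{le0,09} together with Lemma \ref{le0,11} to handle the way $P$ is attached to the dominated block, I would obtain $\gamma(G^{\ast})=k+\gamma(P_{g-k-2})=k+\lceil\frac{g-k-2}{3}\rceil$.

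For the inequality $\gamma(G)\leq\gamma(G^{\ast})$ I would run the clustering argument of Theorems \ref{th03,04} and \ref{th03,05}. Fix an arbitrary sunlike graph $G$ with its $k$ $p$-dominators placed arbitrarily around $\mathbb{C}$. Again by Lemma \ref{le0,08} a minimal dominating set contains all $k$ support vertices and no pendant, so the cost beyond these $k$ vertices decomposes as a sum of path-domination numbers, one for each gap (maximal run of non-dominated cycle vertices) between two cyclically consecutive supports. I would then relocate the $p$-dominators one at a time toward a single consecutive block: each relocation removes a support lying between two gaps of sizes $a$ and $b$ and merges them into one gap of size $a+b+2$, and the elementary inequality $\lceil\frac{a+b+2}{3}\rceil\geq\lceil\frac{a}{3}\rceil+\lceil\frac{b}{3}\rceil$ (already used in the proof of Theorem \ref{th03,04}) shows that this step does not decrease $\gamma$. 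After finitely many steps the supports occupy consecutive positions, the graph becomes $G^{\ast}$, and hence $\gamma(G)\leq\gamma(G^{\ast})$.

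The main obstacle is the bookkeeping in the degenerate cases, precisely the case analysis already performed in Lemma \ref{le03,03} and Theorem \ref{th03,04}: when two supports are adjacent or lie within distance two their covered arcs overlap, so the gap sizes and the ``$+2$'' accounting must be verified separately and the ceiling inequality applied in each of its boundary forms. A secondary point requiring care is to treat the pendant-path support of $L_{g,1}$ on the same footing as the ordinary supports and to confirm that every relocation again produces a sunlike graph of the same order $n$, girth $g$ and number $k$ of $p$-dominators, so that the whole transformation stays inside the family under consideration.
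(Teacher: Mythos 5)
Your proposal is correct and takes essentially the same route as the paper: the paper obtains Lemma \ref{le03,07} exactly by the two-step argument of Theorems \ref{th03,04} and \ref{th03,05}, i.e., relocating the $p$-dominators one at a time so that two gaps of sizes $a$ and $b$ merge into one of size $a+b+2$ (using $\lceil\frac{a+b+2}{3}\rceil\geq\lceil\frac{a}{3}\rceil+\lceil\frac{b}{3}\rceil$ to see that $\gamma$ does not decrease), and then evaluating the consecutive configuration by the path-decomposition technique of Lemma \ref{le03,03} together with Lemmas \ref{le0,08}, \ref{le0,09} and \ref{le0,11}. Your clustering step and exact computation of $\gamma(G^{\ast})=k+\lceil\frac{g-k-2}{3}\rceil$ are precisely what the paper invokes.
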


\begin{lemma}\label{le03,08} 
Suppose that $\mathcal {G}$ is a $\mathcal {F}_{g, l}$-graph with $\gamma(\mathcal {G}) =\frac{n-1}{2}$, $g\geq 5$ and order $n\geq g+1$, and suppose there are exactly $f$ vertices of the unique cycle $\mathbb{C}$ such that none of them is $p$-dominator. Then we get

$\mathrm{(i)}$ if $f=g$, then $g=5$;

$\mathrm{(ii)}$ if $f\neq g$, then $f\leq 3$ and $f\neq2$;

$\mathrm{(iii)}$ if $f= 3$, then the three vertices are consecutive on $\mathbb{C}$, i.e., they are $v_{i-1}$, $v_{i}$, $v_{i+1}$ for some $1\leq i< g$, and each in $(V(\mathbb{C})\setminus \{v_{i-1}$, $v_{i}$, $v_{i+1}\})\cup V(\mathbb{P}-v_{g+l})$ is $p$-dominator (if $i=1$, then $v_{i-1}=v_{g}$).
\end{lemma}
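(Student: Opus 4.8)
The plan is to pin down the structure by computing $\gamma(\mathcal{G})$ \emph{exactly} through the same gap--decomposition used in the proof of Lemma \ref{le03,03}, and then read off the constraints forced by $\gamma(\mathcal{G})=\frac{n-1}{2}$ (which already forces $n$ odd). Write $M=g+l-1$ for the number of spine vertices $v_1,\dots,v_{g+l-1}$, let $y\ge 1$ be the number of pendant vertices hanging at $v_{g+l-1}$, and call a maximal run of consecutive non-$p$-dominator spine vertices a \emph{gap}. By Lemma \ref{le0,08} there is a minimal dominating set containing every $p$-dominator and no pendant vertex, so the $p$-dominators (there are $M-F$ of them, where $F$ is the total number of non-$p$-dominator spine vertices) are forced into the set; arguing exactly as in Lemma \ref{le03,03} (splitting off each gap as an induced subpath and invoking Lemma \ref{le0,09}) gives $\gamma(\mathcal{G})=(M-F)+S$, where $S=\sum_j c_j$ and $c_j$ is the number of extra dominators needed inside the $j$-th gap.

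First I would record two counting identities. Since every $p$-dominator except $v_{g+l-1}$ carries exactly one pendant while $v_{g+l-1}$ carries $y$, the number of pendants is $(M-F)+y-1$, whence $n=2M-F+y-1$; combined with $\gamma(\mathcal{G})=M-F+S$, the hypothesis $\gamma(\mathcal{G})=\frac{n-1}{2}$ (i.e.\ $2\gamma(\mathcal{G})-n=-1$) collapses to the single identity
\[
\sum_{j}\phi_j=y-2,\qquad \phi_j:=2c_j-\ell_j,
\]
where $\ell_j$ is the length of the $j$-th gap.

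The heart of the matter is the estimate $\phi_j\le -1$ for every gap, with a characterization of equality. For a gap not meeting the junction $v_g$ one has $c_j=\lceil(\ell_j-2)/3\rceil$, and a direct check gives $2\lceil(\ell-2)/3\rceil\le \ell-1$ with equality exactly for $\ell\in\{1,3\}$; for a gap straddling $v_g$ the junction is dominated from outside so $c_j$ can only drop, hence $\phi_j\le -1$ persists, and the only length-$3$ straddling gap still attaining $\phi_j=-1$ is the one whose \emph{middle} vertex is not $v_g$ (the junction-centred triple has all three vertices dominated from outside, giving $c_j=0$ and $\phi_j=-3$). Because $y\ge 1$ forces the right-hand side $y-2\ge-1$, there is \emph{at most one} gap. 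If there is none, then $y=2$ and $F=0$, so $f=0$; if there is exactly one, then $y=1$ and that gap has $\phi=-1$, i.e.\ length $1$, or length $3$ with middle vertex $\ne v_g$.

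Finally I would translate these alternatives into (i)--(iii). A unique gap lying on $\mathbb{C}$ contributes $\ell\in\{1,3\}$ non-$p$-dominator cycle vertices, and $\ell=3$ forces them consecutive with middle $v_i$, $1\le i<g$ (the excluded case $i=g$ being the junction-centred triple), every other spine vertex being a $p$-dominator; a unique gap meeting the path contributes $f\in\{0,1\}$. Hence $f\in\{0,1,3\}$, which is (ii), and the length-$3$ cycle gap is exactly (iii). The case $f=g$ must be handled separately, since then $\mathbb{C}$ carries no $p$-dominator and is a single \emph{cyclic} gap not bounded by cycle $p$-dominators: dominating $\mathbb{C}$ costs at most $\lceil(g-1)/3\rceil$ (with $v_g$ dominated along the path), so $2\gamma(\mathcal{G})-n\le 2\lceil(g-1)/3\rceil-g-y+1$, and since this must equal $-1$ with $y\ge1$ one needs $2\lceil(g-1)/3\rceil\ge g-1$, which for $g\ge5$ holds only at $g=5$; this is (i). The main obstacle is the rigorous \emph{exact} evaluation of $\gamma(\mathcal{G})$ via the decomposition --- in particular the lower bound supplied by Lemma \ref{le0,08} and the careful bookkeeping at the junction $v_g$, where straddling gaps dominate differently from interior ones --- together with the separate cyclic-gap count that isolates $g=5$ in part (i).
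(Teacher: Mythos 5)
Your proposal is correct in its overall strategy and reaches all three conclusions, but by a route genuinely different from the paper's. The paper never evaluates $\gamma(\mathcal{G})$ exactly: it splits $V(\mathcal{G})$ into the set $A$ (cycle plus its pendants) and $A'=V(\mathcal{G})\setminus A$, uses the subadditive bound $\gamma(\mathcal{G})\le\gamma(\mathcal{G}[A])+\gamma(\mathcal{G}[A'])$ together with Ore's bound $\gamma(\mathcal{G}[A'])\le\frac{\|A'\|}{2}$ and the sunlike-graph estimate of Lemma \ref{le03,07}, and then, in the borderline cases ($f=7$ in (i), $f=2$ in (ii), non-consecutive or junction-placed triples in (iii)), invokes the corona characterization of Lemma \ref{le0,10} to pin down $\mathcal{G}[A']$ and exhibits an explicit dominating set of size $<\frac{n-1}{2}$, a contradiction; each value of $f$ and each placement of the triple gets its own argument. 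You instead compute $\gamma(\mathcal{G})$ exactly through the gap decomposition (justified by Lemmas \ref{le0,08} and \ref{le0,09}, as in Lemma \ref{le03,03}), compress the hypothesis $\gamma=\frac{n-1}{2}$ into the single identity $\sum_j(2c_j-\ell_j)=y-2$, and read off ``at most one gap, of length $1$, or of length $3$ with middle vertex $\ne v_g$'' from the uniform estimate $2c_j-\ell_j\le-1$ and its equality cases. This buys a unified explanation of why $f=2$, $f\in\{4,\dots\}$, non-consecutive triples, and the junction-centred triple are all impossible (their $\phi$-values are too negative), facts the paper obtains only as outcomes of separate ad hoc constructions; it also yields slightly more than the lemma asserts, namely $f\in\{0,1,3\}$ together with the pendant count $y$.

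Two steps in your sketch are stated with justifications that do not hold as written, though both are repairable inside your framework. First, for straddling gaps your reason ``the junction is dominated from outside'' is false precisely when all three legs at $v_g$ are nonempty, since then no neighbour of $v_g$ is a $p$-dominator; that case needs its own (easy) bound, e.g.\ placing $v_g$ in the dominating set and covering the remaining interior of each leg of length $x$ at cost $\lceil(x-2)/3\rceil$, which gives $\phi_j\le 1-(\mbox{number of legs})\le-2$. For two-legged gaps the cleanest repair is the comparison you implicitly want: such a gap is a path gap of the same length with one fewer target ($v_g$ is excused), so $\phi_j\le-1$ with the only candidate for equality being $\ell_j=3$, and that candidate is the junction-centred triple, which you correctly compute to have $c_j=0$, $\phi_j=-3$. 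Second, in the case $f=g$ your bound $c\le\lceil(g-1)/3\rceil$ tacitly assumes the first $p$-dominator on $\mathbb{P}$ is $v_{g+1}$; if it is $v_{g+s}$ with $s\ge2$ the cyclic gap also contains $v_{g+1},\dots,v_{g+s-1}$, and one should either dominate the unrolled path $v_1\cdots v_{g+s-2}$ jointly (cost $\lceil(g+s-2)/3\rceil$) or use the cruder $\lceil(g-1)/3\rceil+\lceil(s-1)/3\rceil$; with either correction every odd $g\ge7$ still forces $\phi<-1$, so (i) survives, but the inequality you wrote is not the one the argument needs.
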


\begin{proof}
Denote by $A$ the set of vertices of $\mathbb{C}$ and the pendant vertices attached to $\mathbb{C}$. Suppose $\parallel A \parallel=z$. Let $A^{'}=V(\mathcal {G})\setminus A$. Then $\gamma(\mathcal {G})\leq \gamma(\mathcal {G}[A])+\gamma(\mathcal {G}[A^{'}])$. Note that $A^{'}=\emptyset$, or $\mathcal {G}[A^{'}]$ is connected with at least $2$ vertices. Suppose $f\geq 4$.

(i) $f=g$. Then $z-f=0$. This means that there is not $p$-dominator on $\mathbb{C}$. So, $\mathcal {G}[A^{'}]$ is connected with at least $2$ vertices. Thus, if $f\geq 9$, $\gamma(\mathcal {G})\leq \lceil\frac{f}{3}\rceil+\gamma(\mathcal {G}[A^{'}])\leq \frac{n-f}{2}+\frac{f+2}{3}<\frac{n-1}{2}$. Then $f\leq 7$.

If $\gamma(\mathcal {G}[A^{'}])<\frac{n-f}{2}$, then $\gamma(\mathcal {G})<\frac{n-1}{2}$. Hence, it follows that $\gamma(\mathcal {G}[A^{'}])=\frac{n-f}{2}$. Combined with Lemma \ref{le0,10}, it follows that $\mathcal {G}[A^{'}]=P_{\frac{n-f}{2}}\circ K_{1}$. Here, suppose $P_{\frac{n-f}{2}}=v_{a_{1}}v_{a_{2}}\cdots v_{a_{t}}$ with $t=\frac{n-f}{2}$, and suppose $v_{\tau_{1}}$ is the unique pendant vertex attached to $v_{a_{1}}$. By Lemma \ref{le0,08}, $V(P_{\frac{n-f}{2}})$ is a minimal dominating set of $\mathcal {G}[A^{'}]$.

Assume that $f=7$. Note that $\mathcal {G}$ is a $\mathcal {F}_{g, l}$-graph. If $\mathcal {G}=\mathbb{C}+v_{g}v_{a_{1}}+\mathcal {G}[A^{'}]$, then $V(P_{\frac{n-f}{2}})\cup \{v_{2}, v_{5}\}$ is a dominating set of $\mathcal {G}$; if $\mathcal {G}=\mathbb{C}+v_{g}v_{\tau_{1}}+\mathcal {G}[A^{'}]$, then $(V(P_{\frac{n-f}{2}})\backslash \{v_{a_{1}}\})\cup \{v_{2}, v_{5}, v_{\tau_{1}}\}$ is a dominating set of $\mathcal {G}$. This implies that $\gamma(\mathcal {G})\leq \frac{n-7}{2}+2<\frac{n-1}{2}$ which contradicts $\gamma(\mathcal {G}) =\frac{n-1}{2}$. Thus, it follows that $g=5$.

(ii) $f\neq g$. Note that there is no the case that $z-f=1$. Then $z-f\geq2$.
By Lemma \ref{le03,07}, $\gamma(\mathcal {G}[A])\leq \gamma(\mathcal {G}^{\ast}[A])=g-f+\lceil\frac{f-2}{3}\rceil\leq \frac{z-f}{2}+\lceil\frac{f-2}{3}\rceil$, where $\mathcal {G}^{\ast}[A]$ is a sunlike graph with vertex set $A$, $\mathbb{C}$ contained in it and $g-f$ $p$-dominators $v_{1}$, $v_{2}$, $\ldots$, $v_{g-f}$ (defined as $\mathcal {G}^{\ast}$ in Lemma \ref{le03,07}). Thus, if $f\geq 4$, then $\gamma(\mathcal {G})\leq \frac{z-f}{2}+\lceil\frac{f-2}{3}\rceil+\gamma(\mathcal {G}[A^{'}])\leq \frac{n-f}{2}+\lceil\frac{f-2}{3}\rceil\leq \frac{n-f}{2}+\frac{f}{3}<\frac{n-1}{2}$. This contradicts that $\gamma(\mathcal {G}) =\frac{n-1}{2}$. Consequently, $f\leq 3$.

Suppose $f=2$ and suppose that $v_{j}$, $v_{k}$ of $\mathbb{C}$ are the exact $2$ vertices such that neither of them is $p$-dominator. Note that by Lemma \ref{le0,08}, there is a minimal dominating set $D$ of $\mathcal {G}-v_{j}-v_{k}$ which contains all $p$-dominators but no any pendant vertex. Note that the vertices of $\mathbb{C}$ other than $v_{j}$, $v_{k}$ are all $p$-dominators in both $\mathcal {G}-v_{j}-v_{k}$ and $\mathcal {G}$. Thus, each of $v_{j}$, $v_{k}$ is adjacent to at least one $p$-dominator on $\mathbb{C}$. So, $D$ is also a dominating set of $\mathcal {G}$. Note that there is no isolated vertex in $\mathcal {G}-v_{j}-v_{k}$. Then $\gamma(\mathcal {G}-v_{j}-v_{k})\leq\frac{n-2}{2}$, and then $\gamma(\mathcal {G})\leq\frac{n-2}{2}$, which contradicts $\gamma(\mathcal {G}) =\frac{n-1}{2}$. Then (ii) follows.

(iii) Suppose $v_{a}$, $v_{b}$, $v_{c}$ are the exact $3$ vertices of $\mathbb{C}$ such that none of them is $p$-dominator. If the $3$ vertices $v_{a}$, $v_{b}$, $v_{c}$ are not
consecutive, then each of them can be dominated by its adjacent $p$-dominator. Note that by Lemma \ref{le0,08}, there are a minimal dominating set $D$ of $\mathcal {G}-v_{a}-v_{b}-v_{c}$ which contains all $p$-dominators but no any pendant vertex. Thus such $D$ is also a dominating set of $\mathcal {G}$. Note that there is no isolated vertex in $\mathcal {G}-v_{a}-v_{b}-v_{c}$. So, $\gamma(\mathcal {G})\leq \parallel D \parallel=\gamma (\mathcal {G}-v_{a}-v_{b}-v_{c})\leq \frac{n-3}{2}$, which contradicts $\gamma(\mathcal {G}) =\frac{n-1}{2}$. Therefore, the $3$ vertices $v_{a}$, $v_{b}$, $v_{c}$ are
consecutive.

Suppose that the $3$ vertices are $v_{i-1}$, $v_{i}$, $v_{i+1}$ for some $1\leq i\leq g$ (here, if $i=g$, we let $v_{i+1}=v_{1}$; if $i=1$, we let $v_{i-1}=v_{g}$). Note that there is no isolated vertex in $\mathcal {G}-v_{i-1}-v_{i}-v_{i+1}$. Thus, $\gamma(\mathcal {G}-v_{i-1}-v_{i}-v_{i+1})\leq\frac{n-3}{2}$. Next, we claim that $\gamma(\mathcal {G}-v_{i-1}-v_{i}-v_{i+1})=\frac{n-3}{2}$.

{\bf Claim 1} $\gamma(\mathcal {G}-v_{i-1}-v_{i}-v_{i+1})=\frac{n-3}{2}$. Otherwise, suppose $\gamma(\mathcal {G}-v_{i-1}-v_{i}-v_{i+1})<\frac{n-3}{2}$, and suppose $D$ is a minimal dominating set of $\mathcal {G}-v_{i-1}-v_{i}-v_{i+1}$. Then $D\cup\{v_{i}\}$ is a dominating set $D$ of $\mathcal {G}$. Thus, $\mathcal {G}< 1+\frac{n-3}{2}<\frac{n-1}{2}$, which contradicts $\gamma(\mathcal {G}) =\frac{n-1}{2}$. Then the claim holds.

By Lemma \ref{le0,10}, $\mathcal {G}-v_{i-1}-v_{i}-v_{i+1}=H\circ K_{1}$ for some acyclic graph $H$ of order $\frac{n-3}{2}$.

{\bf Claim 2} For any minimal dominating set $D$ of $\mathcal {G}-v_{i-1}-v_{i}-v_{i+1}$, in $\mathcal {G}$, at least one of $v_{i-1}$, $v_{i}$, $v_{i+1}$ can not be dominated by $D$. Otherwise, $D$ is a dominating set of $\mathcal {G}$ too. Hence, $\gamma(\mathcal {G})\leq\frac{n-3}{2}$, which contradicts $\gamma(\mathcal {G}) =\frac{n-1}{2}$. Then the claim holds.

If $i=g$, then $H=H_{1}\cup H_{2}$, $H_{1}=\mathcal {G}[A]-v_{g-1}-v_{g}-v_{1}$, $H_{2}=\mathcal {G}[A^{'}]=P_{\frac{n-z}{2}}\circ K_{1}$ (if $n=z$, then $H_{2}=K_{1}$). Here, suppose $P_{\frac{n-z}{2}}=v_{a_{1}}v_{a_{2}}\cdots v_{a_{t}}$ with $t=\frac{n-z}{2}$, and suppose $v_{\tau_{1}}$ is the unique pendant vertex attached to $v_{a_{1}}$. Then $\mathcal {G}=\mathcal {G}[A]+v_{g}v_{a_{1}}+H_{2}$ or $\mathcal {G}=\mathcal {G}[A]+v_{g}v_{\tau_{1}}+H_{2}$. Note that the vertices in $(\mathbb{C}\setminus \{v_{g-1}, v_{g}, v_{1}\})\cup V(P_{\frac{n-z}{2}})$ are all $p$-dominators.  As (i), we get that $\gamma(\mathcal {G})\leq \frac{n-3}{2}<\frac{n-1}{2}$ which contradicts $\gamma(\mathcal {G}) =\frac{n-1}{2}$. This implies $i\neq g$.

If $i\neq 1, g-1$, then $H$ is connected. Combing Lemma \ref{le0,08}, we get that each in $(V(\mathbb{C})\setminus \{v_{i-1}$, $v_{i}$, $v_{i+1}\})\cup V(\mathbb{P}-v_{g+l})$ is a $p$-dominator,  where $\mathbb{P}=v_{g}v_{g+1}\cdots v_{g+l}$.

If $i= 1$, then $H=H_{1}\cup H_{2}$, $H_{1}=\mathcal {G}[A]-v_{g}-v_{1}-v_{2}$, $H_{2}=\mathcal {G}[A^{'}]=P_{\frac{n-z}{2}}\circ K_{1}$. Here, suppose $P_{\frac{n-z}{2}}=v_{a_{1}}v_{a_{2}}\cdots v_{a_{t}}$ with $t=\frac{n-z}{2}$, and suppose $v_{\tau_{1}}$ is the unique pendant vertex attached to $v_{a_{1}}$. Then $\mathcal {G}=\mathcal {G}[A]+v_{g}v_{a_{1}}+H_{2}$ or $\mathcal {G}=\mathcal {G}[A]+v_{g}v_{\tau_{1}}+H_{2}$. We say that $\mathcal {G}\neq\mathcal {G}[A]+v_{g}v_{\tau_{1}}+H_{2}$. Otherwise, suppose $\mathcal {G}=\mathcal {G}[A]+v_{g}v_{\tau_{1}}+H_{2}$. Note that $n-z$ is even now and $\mathcal {G}-\{v_{2}, v_{1}, v_{g}, v_{a_{1}}, v_{\tau_{1}}\}$ has no isolated vertex. Then for $\mathcal {G}-\{v_{2}, v_{1}, v_{g}, v_{a_{1}}, v_{\tau_{1}}\}$, it has a dominating set $\mathbb{D}$ with $\parallel\mathbb{D}\parallel\leq\frac{n-5}{2}$. Then $\mathbb{D}\cup \{v_{1}, v_{\tau_{1}}\}$ is a dominating set of $\mathcal {G}$, which contradicts $\gamma(\mathcal {G}) =\frac{n-1}{2}$. This implies that $\mathcal {G}=\mathcal {G}[A]+v_{g}v_{a_{1}}+H_{2}$. It follows that each one in $(V(\mathbb{C})\setminus \{v_{g}$, $v_{1}$, $v_{2}\})\cup V(\mathbb{P}-v_{g+l})$ is a $p$-dominator. Similarly, for $i= g-1$, we get that each one in $(V(\mathbb{C})\setminus \{v_{g-2}$, $v_{g-1}$, $v_{g}\})\cup V(\mathbb{P}-v_{g+l})$ is a $p$-dominator.
Then (iii) follows.
 \ \ \ \ \ $\Box$
\end{proof}

\begin{theorem}\label{th03,09} 
Let $\mathscr{C}=v_{1}v_{2}v_{3}\cdots v_{n-1}v_{n}v_{1}$ be an odd cycle of $n$.

$\mathrm{(i)}$ If $n\equiv 1$ (mod 3), then let $$\mathscr{K}=\mathscr{C}-
v_{\lceil\frac{n}{2}\rceil}v_{\lceil\frac{n+2}{2}\rceil}+
v_{\lceil\frac{n}{2}\rceil}v_{\lfloor\frac{n-2}{2}\rfloor}-v_{\lceil\frac{n+2}{2}\rceil}v_{\lceil\frac{n+4}{2}\rceil}+
v_{\lceil\frac{n+2}{2}\rceil}v_{\lceil\frac{n+8}{2}\rceil}$$ where if $\lceil\frac{n+8}{2}\rceil> n$, then $v_{\lceil\frac{n+8}{2}\rceil}=v_{\lceil k\rceil}$ ($k\equiv \lceil\frac{n+8}{2}\rceil$ (mod n), $1\leq k\leq n-1$. see Fig. 3.7). We have $\gamma(\mathscr{C})=\gamma(\mathscr{K})$.

$\mathrm{(ii)}$ If $n\not\equiv 1$ (mod 3), then let $\mathscr{K}=\mathscr{C}-v_{\lceil\frac{n}{2}\rceil}v_{\lceil\frac{n+2}{2}\rceil}+
v_{\lceil\frac{n}{2}\rceil}v_{\lfloor\frac{n-2}{2}\rfloor}.$ We have $\gamma(\mathscr{C})=\gamma(\mathscr{K})$.
\end{theorem}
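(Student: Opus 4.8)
The plan is to compute $\gamma(\mathscr{K})$ in each part and check that it equals $\gamma(\mathscr{C})=\gamma(C_n)=\lceil\frac{n}{3}\rceil$ (Lemma \ref{le0,09}(ii)). First I would pin down the structure of $\mathscr{K}$. Writing $m=\lceil\frac{n}{2}\rceil=\frac{n+1}{2}$, in both parts the swap $-\,v_{m}v_{m+1}+v_{m}v_{m-2}$ deletes a cycle edge and inserts a distance-$2$ chord; this closes $v_{m-2},v_{m-1},v_{m}$ into a triangle and unrolls the remaining $n-3$ vertices into a pendant path rooted at $v_{m-2}$. Hence in part (ii) $\mathscr{K}$ is exactly the lollipop $L_{3,\,n-3}$. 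In part (i) the extra swap $-\,v_{m+1}v_{m+2}+v_{m+1}v_{m+4}$ detaches $v_{m+1}$ from the path and reattaches it as a pendant at the path-vertex $v_{m+4}$, so $\mathscr{K}$ is the same lollipop $L_{3,\,n-3}$ carrying one extra pendant (an $\mathcal{F}_{3,\,n-3}$-graph). The index wrap-around for $v_{m+4}$ only relabels which path-vertex receives the pendant and does not affect the counts.

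For part (ii) I would evaluate $\gamma(L_{3,\,n-3})$ directly. For the upper bound, put the attachment vertex $v_{m-2}$ into the dominating set (covering the two private triangle vertices and the next path-vertex) and dominate the remaining $P_{n-4}$ optimally, giving $1+\lceil\frac{n-4}{3}\rceil=\lceil\frac{n}{3}\rceil$ whenever $n\not\equiv 1$ (mod $3$). For the lower bound I would take a minimal dominating set $D$ and split on whether $v_{m-2}$ lies in $D$; after normalising $D$ by Lemma \ref{le0,08} (no pendant end-vertex, support vertex included), in each subcase the two private triangle vertices force one vertex beyond what the path alone requires, and $\gamma(P_m)=\lceil\frac{m}{3}\rceil$ (Lemma \ref{le0,09}(i)) then yields $|D|\ge\lceil\frac{n}{3}\rceil$. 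The hypothesis $n\not\equiv 1$ (mod $3$) is exactly what makes the two bounds coincide: for $n\equiv 1$ a plain lollipop satisfies $\gamma(L_{3,\,n-3})=\lceil\frac{n}{3}\rceil-1$, which is the whole reason part (i) needs a different gadget.

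For part (i) I would use the decomposition method from the proof of Lemma \ref{le03,03}. Normalise a minimal dominating set by Lemma \ref{le0,08} so that it contains the support vertex $v_{m+4}$ of the pendant $v_{m+1}$ and the triangle-attachment $v_{m-2}$, then cut $\mathscr{K}$ into the triangle-core at $v_{m-2}$, the pendant gadget at $v_{m+4}$, the short end-segment $\{v_{m+2}\}$, and the middle path $v_{m+6}\cdots v_{m-4}\cong P_{n-9}$. Evaluating each block through $\gamma(P_m)=\lceil\frac{m}{3}\rceil$ and summing gives $1+1+1+\lceil\frac{n-9}{3}\rceil=3+\lceil\frac{n-9}{3}\rceil=\frac{n+2}{3}=\lceil\frac{n}{3}\rceil$ for $n\equiv 1$ (mod $3$), with the matching lower bound obtained exactly as in Lemma \ref{le03,03} from the additivity of the block counts and the same ceiling inequalities used there.

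The hard part will be the lower bounds: proving that the normalised minimal dominating set really does decompose additively across the path blocks and the triangle/pendant core with no ``saving'' at the junctions. This is where Lemma \ref{le0,08} (to force supports in and pendants out, and to clear the boundary vertices of each segment) together with the ceiling (sub)additivity facts of Lemma \ref{le03,03} must be invoked carefully; and in part (i) one must verify that the relocated pendant genuinely costs one extra vertex rather than being absorbed into the path count, which is precisely where the residue $n\equiv 1$ (mod $3$) is used.
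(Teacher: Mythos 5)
Your proposal is correct in substance and follows the same underlying strategy as the paper: identify what $\mathscr{K}$ actually is, compute $\gamma(\mathscr{K})$ by path-domination counts, and compare with $\gamma(\mathscr{C})=\lceil n/3\rceil$ from Lemma \ref{le0,09}. The difference is in execution. The paper's proof is essentially two citations: for (ii) it views $\mathscr{K}$ as the pendant-free lollipop $C^{*}_{3,n-3}$ and quotes Lemma \ref{le0,11} to get $\gamma(\mathscr{K})=\gamma(P_n)$, and for (i) it views $\mathscr{K}\cong\mathscr{H}_{3,2}$ and quotes Corollary \ref{cl03,06}(iii) to get $\gamma(\mathscr{K})=\lceil\frac{n-6}{3}\rceil+2$. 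You re-derive these counts from scratch: your two-case lower bound in (ii) (splitting on whether the attachment vertex lies in $D$) is sound and, usefully, makes explicit exactly where $n\not\equiv 1\pmod 3$ enters -- indeed Lemma \ref{le0,11}, applied as the paper applies it to the lollipop with no extra pendants, is only valid for those residues, since $\gamma(L_{3,n-3})=\lceil n/3\rceil-1$ when $n\equiv 1\pmod 3$; your computation surfaces this boundary, which the paper's citation glosses over. In (i), your block count $3+\lceil\frac{n-9}{3}\rceil$ agrees with the paper's $2+\lceil\frac{n-6}{3}\rceil$, and your deferred lower bound (normalize by Lemma \ref{le0,08}, then additive decomposition as in Lemma \ref{le03,03}) is precisely the technique that Corollary \ref{cl03,06} packages via Theorem \ref{th03,05}; so in effect you are re-proving the $\alpha=2$ case of that corollary, which is legitimate but is the one place your write-up remains a sketch rather than a proof. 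Two small patches are needed: first, in (i) the graph is $L_{3,n-4}$ plus one pendant (not $L_{3,n-3}$ plus a pendant), i.e.\ detaching $v_{m+1}$ shortens the path by one; second, your four disjoint blocks do not exist when $n=7$, where the wrap-around gives $v_{m+4}=v_1$, the ``middle path'' $P_{n-9}$ is empty, and the triangle-core and pendant-gadget blocks collide -- the arithmetic coincidentally still gives $3=\lceil 7/3\rceil$, and one can check $\gamma(\mathscr{K})=3$ directly, but the case $n=7$ should be stated and handled separately (the paper's route avoids this because Corollary \ref{cl03,06}(iii) applies uniformly for $n-2\alpha\geq 3$, i.e.\ $n\geq 7$).
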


\setlength{\unitlength}{0.7pt}
\begin{center}
\begin{picture}(358,313)
\put(6,295){\circle*{4}}
\put(6,218){\circle*{4}}
\qbezier(6,295)(6,257)(6,218)
\put(52,257){\circle*{4}}
\qbezier(6,295)(29,276)(52,257)
\qbezier(6,218)(29,238)(52,257)
\put(190,257){\circle*{4}}
\put(114,257){\circle*{4}}
\put(212,257){\circle*{4}}
\put(201,257){\circle*{4}}
\put(229,257){\circle*{4}}
\put(291,257){\circle*{4}}
\put(173,257){\circle*{4}}
\put(350,257){\circle*{4}}
\put(5,133){\circle*{4}}
\put(5,57){\circle*{4}}
\put(47,95){\circle*{4}}
\put(115,95){\circle*{4}}
\put(212,95){\circle*{4}}
\put(228,95){\circle*{4}}
\put(189,95){\circle*{4}}
\put(201,95){\circle*{4}}
\qbezier(5,133)(5,95)(5,57)
\qbezier(5,133)(26,114)(47,95)
\qbezier(5,57)(26,76)(47,95)
\put(2,304){$v_{\lfloor\frac{n}{2}\rfloor}$}
\put(50,244){$v_{\lfloor\frac{n-2}{2}\rfloor}$}
\put(105,244){$v_{\lfloor\frac{n-4}{2}\rfloor}$}
\put(4,205){$v_{\lceil\frac{n}{2}\rceil}$}
\put(1,145){$v_{\lfloor\frac{n}{2}\rfloor}$}
\put(47,82){$v_{\lfloor\frac{n-2}{2}\rfloor}$}
\put(1,43){$v_{\lceil\frac{n}{2}\rceil}$}
\put(142,-9){Fig. 3.7. $\mathscr{K}$}
\put(172,182){$\mathscr{K}$}
\put(170,18){$\mathscr{K}$}
\put(359,282){$v_{\lceil\frac{n+4}{2}\rceil}$}
\put(338,242){$v_{\lceil\frac{n+6}{2}\rceil}$}
\put(350,95){\circle*{4}}
\put(291,95){\circle*{4}}
\put(357,93){$v_{\lceil\frac{n+2}{2}\rceil}$}
\qbezier(6,218)(6,257)(6,295)
\qbezier(6,218)(6,257)(6,295)
\qbezier(6,218)(6,257)(6,295)
\qbezier(6,218)(6,257)(6,295)
\put(275,80){$v_{\lceil\frac{n+4}{2}\rceil}$}
\put(350,284){\circle*{4}}
\qbezier(350,257)(350,271)(350,284)
\qbezier(291,257)(320,257)(350,257)
\qbezier(291,257)(260,257)(229,257)
\put(291,287){\circle*{4}}
\qbezier(291,257)(291,272)(291,287)
\qbezier(114,257)(143,257)(173,257)
\qbezier(114,257)(83,257)(52,257)
\qbezier(291,95)(320,95)(350,95)
\qbezier(228,95)(259,95)(291,95)
\put(280,244){$v_{\lceil\frac{n+8}{2}\rceil}$}
\put(280,300){$v_{\lceil\frac{n+2}{2}\rceil}$}
\qbezier(47,95)(81,95)(115,95)
\put(174,95){\circle*{4}}
\qbezier(115,95)(144,95)(174,95)
\put(111,82){$v_{\lfloor\frac{n-4}{2}\rfloor}$}
\end{picture}
\end{center}

\begin{proof}
(i) Because $n$ is odd and $n\equiv 1$ (mod 3), then $n\geq 7$. Suppose $n=3k+1$ where $k\geq 2$. By Lemma \ref{le0,09} and Corollary \ref{cl03,06}, it follows that $\gamma(\mathscr{K})=\lceil\frac{n-6}{3}\rceil+2=\lceil\frac{n}{3}\rceil=\gamma(\mathscr{C})$.

(ii) follows from Lemmas \ref{le0,09}, \ref{le0,11}.
\ \ \ \ \ $\Box$
\end{proof}

\section{The $q_{min}$ among uncyclic graphs}

\begin{lemma}\label{le04,01} 
Let $G$ be a nonbipartite unicyclic graph of order $n$ and with the odd cycle $\mathcal {C}=v_1v_2\cdots v_gv_1$ in it. There is a
unit eigenvector corresponding to $q_{min}(G)$
$X=(\,x_1$, $x_2$, $\ldots$, $x_g$, $x_{g+1}$, $x_{g+2}$, $\ldots$, $x_{n-1}$, $x_{n}\,)^T$, in which suppose $|x_1|=\min\{|x_1|$, $|x_2|$, $\ldots$, $|x_g|\}$,
$|x_s|=\max\{|x_1|$, $|x_2|$, $\ldots$, $|x_g|\}$ where $s\geq 2$, satisfying that

$\mathrm{(i)}$ $|x_1|< |x_s|$;

$\mathrm{(ii)}$ $|x_1|=0$ if and only if $x_{g}=-x_{2}\neq 0$, and for $1\leq i\leq g-1$, if $x_ix_{i+1}\neq 0$, then $x_ix_{i+1}< 0$; moreover, $sgn(x_{j})=(-1)^{d_{H}(v_{1}, v_{j})}$ where $H=G-v_1v_g$.

$\mathrm{(iii)}$ if $|x_1|>0$, then

$\mathrm{(1)}$  if $3\leq s\leq g-1$, then $|x_2|<\cdots<|x_{s-2}|< |x_{s-1}|\leq |x_s|$ and $|x_g|<|x_{g-1}|<\cdots<|x_{s+2}|< |x_{s+1}|\leq |x_s|$;

$\mathrm{(2)}$ if $|x_2|> |x_g|$, then $x_1x_{g}> 0$; for $1\leq i\leq g-1$, $x_ix_{i+1}< 0$; $|x_1|\leq |x_g|$;

$\mathrm{(3)}$ if $|x_2|<|x_g|$,
then $x_1x_{2}> 0$; for $2\leq i\leq g-1$, $x_ix_{i+1}< 0$; $x_gx_{1}< 0$; $|x_1|\leq|x_2|$;

$\mathrm{(4)}$ if $|x_2|=|x_g|$, then $|x_1|\leq|x_2|$, and exactly one of $x_1x_{g}> 0$ and $x_1x_{2}> 0$ holds, where

\ \ \ \ \ $\mathrm{(4.1)}$ if $x_1x_{g}> 0$, then for $1\leq i\leq g-1$, $x_ix_{i+1}< 0$;

\ \ \ \ \ $\mathrm{(4.2)}$ if $x_1x_{2}> 0$, then $x_ix_{i+1}< 0$ for $2\leq i\leq g-1$ and $x_gx_{1}< 0$;

$\mathrm{(5)}$ at least one of  $|x(v_{s+1})|$ and $|x_{s-1}|$ is less than $|x_{s}|$.
\end{lemma}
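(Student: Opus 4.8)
The plan is to work with an arbitrary unit eigenvector $X=(x_1,\dots,x_n)^T$ for $q:=q_{min}(G)$, using the identity $X^TQ(G)X=\sum_{v_iv_j\in E(G)}(x_i+x_j)^2$ and two consequences of the minimality of $q$. Since $G$ is connected and nonbipartite, $q>0$, and by Lemma~\ref{le0,06} we have $q<\delta\le d_i$ for every vertex, so in the eigenequation $(d_i-q)x_i=-\sum_{v_j\sim v_i}x_j$ the coefficient $d_i-q$ is positive. I first record the \emph{master inequality}: flipping the signs of $X$ on any vertex set $S$ gives another unit vector, whose Rayleigh quotient is $\ge q$; since this flip changes the energy by $-4\sum_{v_iv_j\in\partial S}x_ix_j$, we obtain $\sum_{v_iv_j\in\partial S}x_ix_j\le 0$ for every $S$. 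Because each tree hanging off the cycle is a bipartite branch rooted at a cycle vertex, Lemma~\ref{le02,02} slaves the signs (and the vanishing) of its entries to the root value, and Lemma~\ref{le02,03} gives $|x_w|>|x_j|$ whenever $v_w$ is a neighbour of a cycle vertex $v_j$ in a nonzero hanging tree. Thus everything reduces to the entries on the cycle.

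Call a cycle edge $v_iv_{i+1}$ a \emph{defect} if $x_ix_{i+1}>0$. The first key step is that there is exactly one defect when no cycle entry vanishes. Since $G$ is unicyclic, for any two cycle edges $e,f$ the set $S$ consisting of the cycle vertices strictly on one arc between them together with their hanging trees has $\partial S=\{e,f\}$; the master inequality then forces the sum of the two edge-products to be $\le 0$, so at most one cycle edge can be a defect. On the other hand an odd cycle cannot be properly $2$-coloured, so a positive number of its edges must be defects; hence exactly one defect edge remains. When some cycle entry is $0$, two adjacent cycle zeros would force, through the eigenequation (the tree at a zero vertex being a zero branch), the whole vector to vanish; so there is at most one cycle zero, it sits at a defect endpoint, and it absorbs the mandatory defect. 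The eigenequation there, together with $\sum_w x_w=0$, then yields $x_g=-x_2\ne0$ and the fully alternating pattern $\mathrm{sgn}(x_j)=(-1)^{d_H(v_1,v_j)}$ of (ii).

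The second key step is concavity of the absolute values along the alternating arc. At an interior vertex $v_j$ of that arc all cycle and tree neighbours have sign opposite to $x_j$, so the eigenequation reads $(d_j-q)|x_j|=|x_{j-1}|+|x_{j+1}|+\sum_{w}|x_w|$; writing $d_j=2+t_j$ where $t_j$ counts the tree-neighbours of $v_j$, and using $\sum_w|x_w|\ge t_j|x_j|$ from Lemma~\ref{le02,03}, this collapses to $|x_{j-1}|+|x_{j+1}|\le(2-q)|x_j|<2|x_j|$. Hence the absolute values on the alternating arc form a strictly concave, therefore strictly unimodal, sequence: its maximum is attained at an interior vertex $v_s$ (giving (i) and (iii)(5), since both neighbours of $v_s$ cannot equal $|x_s|$), and its minimum is attained at an endpoint of the arc, i.e.\ at an endpoint of the defect edge. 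I label that minimizing endpoint $v_1$, which makes $v_1$ the required cycle-minimum incident to the defect and yields the monotonicity chains of (iii)(1) together with the trivial inequalities $|x_1|\le|x_2|,|x_g|$.

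Finally I determine the defect location from $|x_2|$ and $|x_g|$. Flipping the single branch $S_1=\{v_1\}\cup V(T_1)$ has $\partial S_1=\{v_1v_2,v_1v_g\}$, so the master inequality gives $x_1(x_2+x_g)\le0$; expanding this under each possible defect location shows that a defect on $v_1v_g$ forces $|x_2|\ge|x_g|$ and a defect on $v_1v_2$ forces $|x_g|\ge|x_2|$. Reading this backwards produces cases (2)--(4): if $|x_2|>|x_g|$ the defect is $v_1v_g$ with all remaining edges alternating, if $|x_2|<|x_g|$ the defect is $v_1v_2$, and if $|x_2|=|x_g|$ both placements are compatible with minimality, which is case (4). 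The main obstacle throughout is the bookkeeping at the distinguished vertex $v_1$: the hanging trees must be folded into the cycle equations uniformly via Lemmas~\ref{le02,02} and~\ref{le02,03}, the endpoint equation at $v_1$ behaves differently from the concave interior, and the borderline configurations $|x_2|=|x_g|$ and $x_1=0$ must be disentangled so that (ii) (the genuinely defect-free, hence fully alternating, regime) and (iii)(4) (which still carries a defect incident to $v_1$) are stated consistently rather than in conflict.
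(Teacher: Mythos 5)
Your proposal is correct in substance and reaches every claim of the lemma, but it is organized genuinely differently from the paper's proof. The paper argues locally: it gets the sign structure from the eigenequation at the minimal vertex $v_1$ (using Lemmas \ref{le02,02}, \ref{le02,03} and $q_{min}(G)>0$ to force one of $x_2,x_g$ to share the sign of $x_1$), proves the chains in (iii)(1) by induction, applying the eigenequation at $v_{s-1},v_{s-2},\dots$ one vertex at a time, and settles the defect location in (iii)(2)--(4) by directly comparing the energies $(|x_1|-|x_2|)^2+(|x_1|+|x_g|)^2$ and $(|x_1|+|x_2|)^2+(|x_1|-|x_g|)^2$ of the two possible sign placements at $v_1$. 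You instead install three global tools: the cut (``master'') inequality $\sum_{v_iv_j\in\partial S}x_ix_j\le 0$, the defect count (exactly one positive-product cycle edge, via the cut inequality on arcs plus the parity of an odd cycle), and the concavity estimate $|x_{j-1}|+|x_{j+1}|\le(2-q)|x_j|$ along the alternating arc. These subsume the paper's steps: your cut inequality at $\{v_1\}\cup V(T_{v_1})$ is precisely the paper's two-edge energy comparison, and your concavity bound is the same computation the paper performs vertex-by-vertex in its induction, but packaged so that unimodality, (i), (iii)(1), (iii)(5), and the fact that the defect edge is incident to the minimal vertex $v_1$ (minimum of a strictly concave sequence sits at an arc endpoint) all fall out at once; the paper obtains this last fact more opaquely from the eigenequation at $v_1$. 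Two assertions in your write-up are overclaims, though harmless ones: first, strict concavity alone does not place the maximum at an \emph{interior} arc vertex --- it could a priori sit at the defect endpoint opposite $v_1$ --- but (i) needs only non-constancy of the concave sequence, (iii)(5) then follows from $|x_1|<|x_s|$, and your own cut inequality at $\{v_1\}\cup V(T_{v_1})$ shows the maximum is in that case also attained at an interior vertex; second, your zero-propagation argument excludes only \emph{adjacent} cycle zeros, not two non-adjacent ones, but statement (ii) as written only requires $x_2=-x_g\neq 0$ and alternation of the nonzero products, both of which you do establish (and the paper itself is no more precise on this point).
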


\begin{proof}
Because $G$ is nonbipartite, it follows that $q_{min}(G)> 0$. Denoted by $T_{v_{1}}$ the tree attached to vertex $v_1$, where $T_{v_{1}}$ is trivial possibly. Denote by $N_{T_{v_{1}}}(v_{1})$ the neighbor set of vertex $v_{1}$ in $T_{v_{1}}$. Suppose
 $N_{T_{v_{1}}}(v_{1})=\{v_{\omega_{1}}$, $v_{\omega_{2}}$, $\ldots$, $v_{\omega_{a}}\}$.

Suppose $|x_1|= 0$. Note that $X$ is an eigenvector. Then $|x_1|< |x_s|$ follows directly. By Lemma \ref{le02,02}, $T_{v_{1}}$ is a zerobranch. If $x_2=0$, then $q_{min}(G)x_1=d_{G}(v_{1})x_1+x_2+x_g+\sum\limits^{a}_{i=1}x_{\omega_{i}}=x_g$. Thus, $x_g=0$. Proceeding on like this, we get that $x_{j}=0$ for any $v_{j}\in V(G)$. So, $X=\mathbf{0}^{T}$. This contradicts $X$ is an eigenvector. Thus, $x_2\neq0$. Similarly, $x_g\neq0$. By $q_{min}(G)x_1=d_{G}(v_{1})x_1+x_2+x_g+\sum\limits^{a}_{i=1}x_{\omega_{i}}=x_2+x_g=0$, $x_2=-x_g$ follows.
Let $H=G-v_1v_g$. If there exists $1\leq t\leq g-1$ such that $x(v_t)x(v_{t+1})> 0$, then we let $Y=(\,y_1$, $y_2$, $\ldots$, $y_g$, $y_{g+1}$, $y_{g+2}$, $\ldots$, $y_{n-1}$, $y_{n}\,)^T$ be a vector satisfying that for any vertex $v_{j}\in V(G)$, $y_{j}=(-1)^{d_{H}(v_{1}, v_{j})}|x_{j}|$. Note that $(x_t+x_{t+1})^{2}> (y_t+y_{t+1})^{2}$. Then $Y^{T}Q(G)Y<X^{T}Q(G)X$, which contradicts that $X$ corresponds to $q_{min}(G)$. This means that for $1\leq i\leq g-1$, if $x(v_i)x(v_{i+1})\neq 0$, then $x_ix_{i+1}< 0$. Therefore, for such defined vector $Y$ as above, we have $Y^{T}Q(G)Y=X^{T}Q(G)X$. Thus, $Y$ is an eigenvector corresponding to $q_{min}(G)$. We can let $X=Y$. Then (ii) follows.

Suppose $|x_1|>0$ next. By Lemmas \ref{le02,02} and
 \ref{le02,03}, we know that for $i=1$, $2$, $\ldots$, $a$, $x_{\omega_{i}}x_1< 0$ and $|x_1|<|x_{\omega_{i}}|$.
 Noting the minimality of $|x_1|$ and $q_{min}(G)> 0$, from $q_{min}(G)x_1=d_{G}(v_1)x_1+x_2+x_g+
 \sum\limits^{a}_{i=1}x_{\omega_{i}}$, we get that one of $x_2$ and $x_g$, say $x_z$ ($z=2$ or $z=g$) for convenience, satisfies that
 $x_zx_1>0$.
As the proof for (ii), we can prove that for any other edge $v_{t}v_{t+1}$ on $\mathcal {C}$ (here, if $t=g$, then we let
$v_{t+1}=v_{1}$)
other than $v_{z}v_{1}$, we have $x_{t}x_{t+1}< 0$.

If $|x_1|= |x_s|$, then $|x_1|= |x_2|=\cdots=|x_g|$. Without loss of generality, suppose that $x_2x_1>0$. Then $x_1x_g<0$ and $x_{g-1}x_g<0$. Denote by $T_{v_{g}}$ the tree attached to vertex $v_{g}$, where $T_{v_{g}}$ is trivial
possibly. Suppose $N_{T_{v_{g}}}(v_{g})=\{v_{o_{1}}$, $v_{o_{2}}$, $\ldots$, $v_{o_{r}}\}$. By Lemmas \ref{le02,02} and \ref{le02,03}, it follows that for $i=1$, $2$, $\ldots$, $r$,
$x_{o_{i}}x_{g}< 0$ and $|x_{g}|<|x_{o_{i}}|$. From $q_{min}(G)x_{g}=d_{G}(v_{g})(x_{g})+
x_{g-1}+x_{1}+\sum^{r}_{i=1}x_{o_{i}}$, we get $q_{min}(G)\leq 0$, which contradicts that $q_{min}(G)> 0$. Therefore, $|x_1|< |x_s|$.

Combining with above narration, we get that $|x_1|< |x_s|$ no matter $|x_1|=0$ or $|x_1|>0$. Then (i) follows. Next, we proceed to prove (iii) with the assumption $|x_1|>0$.

Suppose $4\leq s\leq g-2$. Denote by $T_{v_{s-1}}$ the tree attached to vertex $v_{s-1}$, where $T_{v_{s-1}}$ is trivial
possibly. Suppose $N_{T_{v_{s-1}}}(v_{s-1})=\{v_{\eta_{1}}$, $v_{\eta_{2}}$, $\ldots$, $v_{\eta_{b}}\}$. By Lemmas \ref{le02,02} and \ref{le02,03}, it follows that for $i=1$, $2$, $\ldots$, $b$,
$x_{\eta_{i}}x_{s-1}< 0$ and $|x_{s-1}|<|x_{\eta_{i}}|$. From above discussions, it follows that
$x_{s}x_{s-1}< 0$, $x_{s-1}x_{s-2}< 0$. Noting $q_{min}(G)> 0$, from $q_{min}(G)x_{s-1}=d_{G}(v_{s-1})(x_{s-1})+
x_{s-2}+x_{s}+\sum^{b}_{i=1}x_{\eta_{i}}$, we get $|x_{s-2}|< |x_{s-1}|$. By induction, we get
$|x_1|<|x_2|<\cdots<|x_{s-2}|< |x_{s-1}|\leq |x_s|$. Similarly, we can prove that
$|x_g|<|x_{g-1}|<\cdots<|x_{s+2}|< |x_{s+1}|\leq |x_s|$. For $g=3, 4$, (1) holds clearly. Then (1) follows.

Note that $X$ corresponds to $q_{min}(G)$. If $|x_2|> |x_g|$, because $(|x_1|-|x_2|)^{2}+(|x_1|+|x_g|)^{2}-
(|x_1|+|x_2|)^{2}-(|x_1|-|x_g|)^{2}
=2|x_1|\cdot|x_g|-2|x_1|\cdot|x_2|< 0$, then $x_1x_{g}> 0$. Then (2) follows.

In a same way, it is proved that (3)-(5) hold.
\ \ \ \ \ $\Box$
\end{proof}

\begin{theorem}\label{th04,02} 
If $\mathcal {G}$ is a nonbipartite $\mathcal {F}^{\circ}_{g, l}$-graph with $g\geq 5$, $n\geq g+1$, then there is a graph $\mathbb{H}$ with girth $3$ and order $n$ such that $\gamma(\mathcal {G})\leq\gamma(\mathbb{H})$ and $q_{min}(\mathbb{H})<q_{min}(\mathcal {G})$.
\end{theorem}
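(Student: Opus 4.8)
The plan is to turn the long odd cycle $\mathbb{C}$ into a triangle by one of the edge-rotations $\mathcal{G}_1,\mathcal{G}_2,\mathscr{X}_i,\mathscr{G}_i$ already analysed in Lemma \ref{le03,03}, choosing the rotation so that the domination number cannot drop while the least $Q$-eigenvalue strictly drops. First I would invoke Lemma \ref{le04,01} to fix a unit eigenvector $X$ of $q_{min}(\mathcal{G})$ whose restriction to $\mathbb{C}$ is fully understood: $|x_1|$ is the minimum and $|x_s|$ the maximum of $\{|x_1|,\dots,|x_g|\}$, the frustration of the odd cycle is concentrated near $v_1$ (on the edge joining $v_1$ to its smaller cycle–neighbour $v_z$, $z\in\{2,g\}$, or, when $|x_1|=0$, in the degenerate form $x_g=-x_2$ given by part (ii)), and the magnitudes increase monotonically along each of the two arcs from $v_1$ to $v_s$ by part (iii)(1). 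Since $\mathcal{G}$ is a $\mathcal{F}^{\circ}_{g,l}$-graph, $v_g$ is a $p$-dominator, and Lemma \ref{le04,01}(i),(ii) force $|x_g|>0$, so $\mathbb{C}$ is a nonzero branch.

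The girth reduction I would use builds the triangle on the three smallest cycle entries $v_1,v_2,v_g$: add the chord $v_2v_g$ and delete one cycle edge to keep a single cycle, so that the unique cycle becomes the triangle $v_1v_2v_g$ and the remaining arc(s) become pendant paths. Keeping the frustrated triangle edge equal to $\mathcal{G}$'s frustrated edge $v_1v_z$ leaves its contribution $(|x_1|+|x_z|)^2$ unchanged, while the new edge $v_2v_g$ joins two vertices that already carry opposite signs and so contributes only $(|x_2|-|x_g|)^2$. To also push the eigenvalue strictly down I would simultaneously relocate the pendant structure sitting on the heavy side (the pendant path $\mathbb{P}$ and a pendant at $v_g$) onto a cycle vertex of strictly larger weight; this is precisely what $\mathscr{X}_i$ and $\mathscr{G}_i$ do, and by the monotonicity in Lemma \ref{le04,01}(iii)(1) such a target is available (for instance $v_3$, for which $|x_3|>|x_g|$ whenever $|x_2|>|x_g|$). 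The resulting graph $\mathbb{H}$ then has girth $3$ and order $n$.

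For the domination inequality I would read $\gamma(\mathcal{G})\le\gamma(\mathbb{H})$ straight off Lemma \ref{le03,03}: up to the labelling fixed before that lemma, $\mathbb{H}$ is one of $\mathcal{G}_1,\mathcal{G}_2,\mathscr{X}_i,\mathscr{G}_i$, and Lemma \ref{le03,03} gives $\gamma(\mathcal{G})\le\gamma(\mathbb{H})$ (with equality in the $\mathscr{X}_i,\mathscr{G}_i$ cases). For the eigenvalue I would build a test vector $Y$ on $\mathbb{H}$ from $X$: on the triangle and on all unchanged edges take $|y_i|=|x_i|$ with the sign pattern above, and on the relocated branch use the telescoping device from the proof of Theorem \ref{th03,02} (shift every entry of the moved branch by the positive weight-gap between its new and old attachment vertices). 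This keeps $Y^{T}Q(\mathbb{H})Y=X^{T}Q(\mathcal{G})X=q_{min}(\mathcal{G})$ while forcing $Y^{T}Y>X^{T}X=1$; alternatively, in the cases where the swap already lowers the quadratic form, one takes $|y_i|=|x_i|$ and obtains $Y^{T}Q(\mathbb{H})Y<q_{min}(\mathcal{G})$ with $Y^{T}Y=1$. In either case the Rayleigh quotient gives $q_{min}(\mathbb{H})\le Y^{T}Q(\mathbb{H})Y/Y^{T}Y<q_{min}(\mathcal{G})$. The relocation step can equivalently be phrased through Lemma \ref{le0,04} applied to an intermediate girth-$3$ graph.

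The hard part will be the bookkeeping of cases rather than any single estimate. I must run the argument through every branch of Lemma \ref{le04,01}(iii) (which neighbour of $v_1$ carries the frustration, and where $s$ lies relative to $v_2$ and $v_g$) and through the degenerate configurations of a $\mathcal{F}^{\circ}_{g,l}$-graph ($l=1$ versus $l\ge 2$, small $r_1$, whether $v_2$ or $v_3$ is itself a $p$-dominator), and in each of them check two things at once: that the chosen rotation is one covered by Lemma \ref{le03,03}, so domination cannot decrease, and that the relocation target really has strictly larger weight, so that $Y^{T}Y$ strictly increases (equivalently, that the hypothesis $|x_{\text{new}}|>|x_{\text{old}}|$ of Lemma \ref{le0,04} is met). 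Guaranteeing the strictness of the norm increase, and handling the borderline configurations where the natural target has weight only equal to $|x_g|$, is where the monotonicity and the strict inequalities of Lemma \ref{le04,01} have to be used with care.
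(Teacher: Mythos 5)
Your scaffolding (rotate the odd cycle into a triangle, use Lemma~\ref{le03,03} for the domination number, use a Rayleigh-quotient computation with the eigenvector $X$ of $\mathcal{G}$ for the eigenvalue) is indeed the paper's scaffolding, but the rotation you commit to is not the paper's, and the eigenvalue step fails for it. You build the triangle $v_1v_2v_g$ by adding the chord $v_2v_g$ between the two neighbours of the minimum-entry vertex $v_1$ and deleting some other cycle edge. The paper instead keeps the minimum vertex at a \emph{corner} of the new triangle: writing the minimum as $x_{a+1}$, with the frustrated edge $v_av_{a+1}$ going to the smaller neighbour and $|x_{a-1}|\leq|x_{a+2}|$ assumed (else the mirrored rotation), it takes $\mathcal{G}_1=\mathcal{G}-v_{a+1}v_{a+2}+v_{a+1}v_{a-1}$; then both the deleted and the added edge have opposite-sign endpoints and the form comparison is the trivial inequality $(|x_{a-1}|-|x_{a+1}|)^2\leq(|x_{a+2}|-|x_{a+1}|)^2$, valid because $|x_{a+1}|\leq|x_{a-1}|\leq|x_{a+2}|$. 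Your chord admits no such comparison: its contribution is $(|x_2|-|x_g|)^2$, and Lemma~\ref{le04,01} does not force $|x_2|$ and $|x_g|$ to be close. Concretely, Lemma~\ref{le04,01} allows $s=3$ with, say, $|x_1|=0.05$, $|x_2|=0.9$, $|x_3|=1$, and $|x_g|=0.1<|x_{g-1}|=0.2<\cdots<|x_4|=0.95\leq|x_3|$. Then the new edge contributes $(0.9-0.1)^2=0.64$, whereas \emph{every} candidate deleted edge contributes only one consecutive gap, e.g.\ $(|x_3|-|x_2|)^2=(|x_{g-1}|-|x_g|)^2=0.01$, and all gaps on the long arc can stay far below $0.64$ since they only telescope to $|x_3|-|x_g|$ in total. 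So whichever cycle edge you delete, $X$ itself certifies $X^TQ(\mathbb{H})X>X^TQ(\mathcal{G})X$. The relocation/telescoping compensation cannot repair this: exactly as in the proof of Theorem~\ref{th03,02}, telescoping keeps the contributions of the relocated edges \emph{equal} while increasing the norm, so you end with $Y^TQ(\mathbb{H})Y=q_{min}(\mathcal{G})+\Delta$ and $Y^TY=1+\delta$ with $\Delta>0$, and the desired conclusion would need $\Delta<q_{min}(\mathcal{G})\,\delta$, an inequality nothing in your argument (or in Lemmas~\ref{le0,04}, \ref{le04,01}) provides. (Your premise that $v_1,v_2,v_g$ carry the three smallest cycle entries also fails in this configuration: here $|x_{g-1}|<|x_2|$.)

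There is a second, independent gap on the domination side. For interior positions of the minimum, Lemma~\ref{le03,03}(ii) gives only $\gamma(\mathcal{G})\leq\max\{\gamma(\mathcal{G}_1),\gamma(\mathcal{G}_2)\}$; it does not certify $\gamma(\mathcal{G})\leq\gamma(\mathbb{H})$ for a single graph fixed in advance. This is precisely why the paper proves the strict spectral inequality for \emph{both} $\mathcal{G}_1$ and $\mathcal{G}_2$ (the latter via a sign-flipped test vector on a component of $\mathcal{G}_2-v_{a-1}$ together with an eigenvalue-equation argument) and only afterwards lets $\mathbb{H}$ be whichever of the two realizes the maximum; your plan fixes one graph and reads off an inequality the lemma does not supply. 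Finally, note that when the rotation changes the form non-strictly and nothing is relocated (so there is no norm gain), strictness cannot come from the Rayleigh quotient alone; the paper extracts it from the eigenvalue equation at $v_{a+2}$, whose degree drops in $\mathcal{G}_1$, forcing $|x_{a+1}|=|x_{a+2}|$ if equality held, a contradiction. Your proposal, which assumes either a strict form decrease or a strict norm increase is always available, has no substitute for this step in the boundary case $|x_{a-1}|=|x_{a+2}|$.
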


\begin{proof}
Because $\mathcal {G}$ is nonbipartite, $g$ is odd. Because $n\geq g+1$, there are pendant vertices in $\mathcal {G}$. This means $\delta =1$. By Lemma \ref{le0,06}, it follows that $q_{min}(\mathcal {G})<1$. Let $X=(x_1$, $x_2$, $x_3$, $\ldots$, $x_n)^T$ be a unit
eigenvector corresponding to $q_{min}(\mathcal {G})$. Among the vertices of $\mathbb{C}$, suppose $x_{a+1}=\min\{|x(v_1)|$, $|x(v_2)|$, $\ldots$, $|x(v_g)|\}$, and without loss of generality, suppose $1\leq a\leq r_{1}$ and  $x_{a+1}\geq 0$. By Lemma \ref{le04,01}, we see that $x_{a}x_{a+1}\geq 0$ or $x_{a+2}x_{a+1}\geq 0$; without loss of generality, suppose $|x_{a}|\leq |x_{a+2}|$, $x_{a}x_{a+1}\geq 0$. Using Lemma \ref{le04,01} again follows that $x_{a+2}\neq 0$ and $|x_{a+2}|>|x_{a+1}|$. Suppose that $|x_{a-1}|\leq |x_{a+2}|$. Let $H=G-v_av_{a+1}$. By Lemma \ref{le04,01}, suppose that for any $j\neq a, a+1$, $\mathrm{sgn}(x_{j})=(-1)^{d_{H}(v_{a+1}, v_{j})}$. It follows that $x_{a-1}\leq 0$, $x_{a+2}< 0$. In $\mathcal {G}$, denote by $T_{v_{a+2}}$ attached to $v_{a+2}$, and let $S=V(T_{v_{a+2}})\setminus \{v_{a+2}\}$ (Here, if $T_{v_{a+2}}$ is trivial, then $S=\emptyset$). Let $\mathcal {G}_{1}=\mathcal {G}-v_{a+1}v_{a+2}+v_{a+1}v_{a-1}$, $\mathcal {G}_{2}=\mathcal {G}_{1}-v_{a-1}v_{a-2}+v_{a-1}v_{a+2}$ (if $|x_{a-1}|> |x_{a+2}|$, then we let $\mathcal {G}_{1}=\mathcal {G}-v_{a}v_{a-1}+v_{a}v_{a+2}$, $\mathcal {G}_{2}=\mathcal {G}_{1}-v_{a+2}v_{a+3}+v_{a+2}v_{a-1}$).

Firstly, we prove that $q_{min}(\mathcal {G}_{1})<q_{min}(\mathcal {G})$. Note that $(x_{a+1}+x_{a-1})^{2}\leq (x_{a+1}+x_{a+2})^{2}$. Then $X^{T}Q(\mathcal {G}_{1})X\leq X^{T}Q(\mathcal {G})X$. This means that $q_{min}(\mathcal {G}_{1})\leq q_{min}(\mathcal {G})$. If $q_{min}(\mathcal {G}_{1})= q_{min}(\mathcal {G})$, then $X^{T}Q(\mathcal {G}_{1})X= X^{T}Q(\mathcal {G})X$, and $X$ is also an eigenvector of $\mathcal {G}_{1}$. Note that $q_{min}(\mathcal {G}_{1})x_{a+2}=d_{\mathcal {G}_{1}}(v_{a+2})x_{a+2}+x_{a+3}+\sum\limits_{v_{j}\sim v_{a+2}, v_{j}\in S} x_{j}$, $q_{min}(\mathcal {G})x_{a+2}=d_{\mathcal {G}}(v_{a+2})x_{a+2}+x_{a+1}+x_{a+3}+\sum\limits_{v_{j}\sim v_{a+2}, v_{j}\in S} x_{j}$, $d_{\mathcal {G}_{1}}(v_{a+2})=d_{\mathcal {G}}(v_{a+2})-1$ and $|x_{a+1}|< |x_{a+2}|$. If $q_{min}(\mathcal {G}_{1})x_{a+2}= q_{min}(\mathcal {G})x_{a+2}$, then $|x_{a+1}|= |x_{a+2}|$. This contradicts $|x_{a+2}|>|x_{a+1}|$. This implies that  $q_{min}(\mathcal {G}_{1})x_{a+2}\neq q_{min}(\mathcal {G})x_{a+2}$. Then it follows that $q_{min}(\mathcal {G}_{1})<q_{min}(\mathcal {G})$.

Secondly, we prove that $q_{min}(\mathcal {G}_{2})<q_{min}(\mathcal {G})$. In $\mathcal {G}_{2}-v_{a-1}$, Denote by $G_{1}$ the component containing $v_{a}$ and denote by $G_{2}$ the component containing $v_{a+2}$. Let vector $Y$ satisfy  $$\left\{\begin{array}{cc}
                                y_{i}=x_{i},  &  v_{i}\in V(G_{1}); \\
                                y_{a-1}=x_{a-1}; \\
                                y_{i}=-x_{i}, & v_{i}\in V(G_{2}).
                              \end{array}\right.$$
Note that $|x_{a+1}|\leq |x_{a-1}|\leq |x_{a+2}|$, $y_{a+1}y_{a-1}\leq 0$, $y_{a+2}y_{a-1}\leq 0$. Then $(y_{a+1}+y_{a-1})^{2}+(y_{a+2}+y_{a-1})^{2}\leq (x_{a+1}+x_{a+2})^{2}$. So, $Y^{T}Q(\mathcal {G}_{2})Y\leq X^{T}Q(\mathcal {G})X$, and $q_{min}(\mathcal {G}_{2})\leq q_{min}(\mathcal {G})$. If $q_{min}(\mathcal {G}_{2})= q_{min}(\mathcal {G})$, then $Y^{T}Q(\mathcal {G}_{2})Y= X^{T}Q(\mathcal {G})X$, and $Y$ is an eigenvector of $\mathcal {G}_{2}$. Note that $q_{min}(\mathcal {G}_{2})y_{a+2}=-q_{min}(\mathcal {G}_{2})x_{a+2}=-(d_{\mathcal {G}_{2}}(v_{a+2})x_{a+2}-x_{a-1}+x_{a+3}+\sum\limits_{v_{j}\sim v_{a+2}, v_{j}\in S} x_{j})$, $q_{min}(\mathcal {G})x_{a+2}=d_{\mathcal {G}}(v_{a+2})x_{a+2}+x_{a+1}+x_{a+3}+\sum\limits_{v_{j}\sim v_{a+2}, v_{j}\in S} x_{j}$, and $d_{\mathcal {G}_{2}}(v_{a+2})=d_{\mathcal {G}}(v_{a+2})$. If $q_{min}(\mathcal {G}_{2})= q_{min}(\mathcal {G})$, then $q_{min}(\mathcal {G}_{2})x_{a+2}=q_{min}(\mathcal {G})x_{a+2}$, and then
$-x_{a-1}=x_{a+1}$. This means that $y_{a+1}=-y_{a-1}$. Note that $Y$ is an eigenvector of $\mathcal {G}_{2}$ now. If $x_{a+1}=-x_{a-1}=0$, then $y_{a+1}=y_{a-1}=0$, which contradicts Lemma \ref{le04,01}. So, $x_{a+1}\neq 0$ now. If $|x_{a}|>|x_{a+1}|$, then by Lemma \ref{le04,01}, it follows that $y_{a+1}y_{a-1}>0$, which contradicts $y_{a+1}=-y_{a-1}$; if $|x_{a}|=|x_{a+1}|$, then $|y_{a}|=|y_{a+1}|=|y_{a-1}|$ which contradicts (1) of (iii) in Lemma \ref{le04,01}. Therefore, $x_{a+1}\neq -x_{a-1}$ (in fact, $|x_{a+1}|<|x_{a-1}|$), $q_{min}(\mathcal {G}_{2})x_{a+2}\neq q_{min}(\mathcal {G})x_{a+2}$, and then $q_{min}(\mathcal {G}_{2})\neq q_{min}(\mathcal {G})$. So, $q_{min}(\mathcal {G}_{2})< q_{min}(\mathcal {G})$.

Thirdly, noting that $x_{a+2}\neq 0$ and $|x_{a+2}|>|x_{a+1}|$, as proven that $q_{min}(\mathcal {G}_{1})<q_{min}(\mathcal {G})$ above, we can prove a case that if $r_{1}\geq 4$, $a+1= r_{1}$, then $q_{min}(\mathscr{G}_{1})<q_{min}(\mathcal {G})$ where $\mathscr{G}_{1}$ is expressed in section 3.

Finally, if $a=1$ and $r_{1}\geq 4$, noting that $x_{3}\neq 0$, as above proof, we can prove that then $q_{min}(\mathscr{X}_{t})<q_{min}(\mathcal {G})$ where $\mathscr{X}_{t}$ is expressed in section 3.

For the above cases, by Lemma \ref{le03,03}, we get that $\gamma(\mathcal {G})\leq\gamma(\mathcal {G}_{1})$, $\gamma(\mathcal {G})\leq\gamma(\mathcal {G}_{2})$, $\gamma(\mathcal {G})\leq\gamma(\mathscr{G}_{1})$ if $r_{1}\geq 4$, $a+1= r_{1}$, and $\gamma(\mathcal {G})=\gamma(\mathscr{X}_{t})$ if $a=1$ and $r_{1}\geq 4$, respectively.

Similarly, for other cases, we get that there is a graph $\mathbb{H}$ with girth $3$ such that $\gamma(\mathcal {G})\leq\gamma(\mathbb{H})$ and $q_{min}(\mathbb{H})<q_{min}(\mathcal {G})$. This completes the proof. \ \ \ \ \ $\Box$
\end{proof}

\begin{lemma}\label{le04,03} 
Suppose that $G$ is a nonbipartite $\mathcal {F}_{3, l}$-graph of order $n$ where $\mathbb{C}=v_{1}v_{2}v_{3}v_{1}$. $X=(\,x_{1}$, $x_{2}$, $\ldots$, $x_{n}\,)^T$ is a unit eigenvector corresponding to $q_{min}(G)$. Then $|x_{3}|=\max\{|x_{1}|$, $|x_{2}|$, $|x_{3}|\}$.
\end{lemma}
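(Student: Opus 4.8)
The plan is to work with the eigenvalue equations of $Q(G)$ at the three triangle vertices and to eliminate the trees hanging off them. First I would record the basic bounds: since $G$ is nonbipartite, $q:=q_{min}(G)>0$, and since $n\geq g+1$ forces a pendant vertex, $\delta=1$, so Lemma \ref{le0,06} gives $q<1$. In a $\mathcal F_{3,l}$-graph the vertices $v_1,v_2$ are never $v_{g+l-1}$, hence each carries at most one pendant (a \emph{light} branch, trivial or a single edge), while $v_3$ carries the pendant path $\mathbb P$ together with possibly one more pendant (the \emph{heavy} branch). Each branch $T_i$ rooted at $v_i$ is a tree, hence bipartite, so by Lemmas \ref{le02,02} and \ref{le02,03} the sum of the entries of the tree-neighbours of $v_i$ equals $\phi_i x_i$ for a weight $\phi_i=\phi_i(q)\le 0$ determined intrinsically by the rooted tree $T_i$ and $q$ (a continued fraction; $\phi_i<0$ when $T_i$ is nonzero).

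Writing the eigen-equations at $v_1,v_2,v_3$ and substituting $\sigma_i=\phi_i x_i$ gives $x_j+x_k=-t_ix_i$ with $t_i=d_i+\phi_i-q$, whence, putting $\Sigma:=x_1+x_2+x_3$ and $h_i:=1-t_i$, the identity $h_ix_i=\Sigma$ for $i=1,2,3$. Away from the degenerate case $\Sigma=0$ this yields $x_i=\Sigma/h_i$ and the normalisation $1/h_1+1/h_2+1/h_3=1$, so the assertion $|x_3|=\max\{|x_1|,|x_2|,|x_3|\}$ is equivalent to $|h_3|\le |h_1|,|h_2|$. The key step I would then isolate is the determination of the \emph{signs} of the $h_i$: I claim that at $q=q_{min}$ one has $h_1,h_2<0<h_3$. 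Granting this, the proof finishes cleanly, since $1/h_3=1+1/|h_1|+1/|h_2|>1/|h_1|,1/|h_2|$ forces $0<h_3<|h_1|,|h_2|$, i.e. $|x_3|>|x_1|,|x_2|$.

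To obtain the signs I would compute $h_i$ explicitly: a trivial branch gives $h_i=q-1<0$, a single pendant gives $h_i=q-2+1/(1-q)$, and the heavy branch at $v_3$ gives $h_3$ through the continued fraction of $\mathbb P$. The claim to verify is that, \emph{at the least eigenvalue} $q$, the heavy branch produces $h_3>0$ while the two light branches produce $h_1,h_2\le 0$; equivalently, by Lemma \ref{le04,01} (whose cases (2)--(4) show that for a triangle the unique edge with positive endpoint-product is ``frustrated'' and the opposite vertex, of minority sign, is the cycle-maximum), that the frustrated edge is the light-light edge $v_1v_2$ and $v_3$ is that opposite vertex. This sign/branch-weight comparison is the main obstacle: the naive pointwise inequality $|h_3|\le |h_i|$ is \emph{false} for a generic value of $q$ (for instance two equal single-pendant branches give $h=0$ only at $q=(3-\sqrt5)/2$), and it is only the minimality of $q$ that pins $h_3$ on the positive side and the light weights on the negative side. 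I expect to establish it from the monotonicity of the continued-fraction weights $\phi_i(q)$ in $q$ on $(0,1)$ together with the fact that $T_3$ contains, hence dominates, a single-pendant branch, so that $q_{min}$ is forced into the range where the signs separate.

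Finally I would treat the degenerate case $\Sigma=0$, which is exactly where $h_ix_i=\Sigma$ degenerates and, in particular, where a triangle entry can vanish (note $x_3=0$ forces $\Sigma=0$). Here Lemma \ref{le04,01}(ii) applies: the cycle-minimum entry is $0$ and its two cycle-neighbours are equal and opposite. Using that this zero entry occurs at a \emph{light} vertex---argued once more from the heaviness of $T_3$, so that the vanishing vertex is $v_1$ or $v_2$ and not $v_3$---together with Lemma \ref{le0,05} (which guarantees that not all of $x_1,x_2,x_3$ vanish), one obtains that $|x_3|$ equals the nonzero entry of the other light vertex and is positive, so $v_3$ still attains the maximum. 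Assembling the generic and degenerate cases gives $|x_3|=\max\{|x_1|,|x_2|,|x_3|\}$.
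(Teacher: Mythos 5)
Your strategy is genuinely different from the paper's. The paper argues by contradiction with an exchange argument: assuming $|x_2|>|x_3|$, it forms $\mathcal H$ from $G$ by swapping the whole branch at $v_3$ with the single pendant edge at $v_2$ (so $\mathcal H\cong G$), builds from $X$ a test vector $Y$ that agrees with $X$ except that the entries on the transplanted branch are shifted up by $a=|x_2|-|x_3|$ and the entry on the transplanted pendant is shifted down by $a$, checks $Y^TQ(\mathcal H)Y=X^TQ(G)X$ while $Y^TY\geq X^TX$, and finally rules out equality in the Rayleigh quotient via the eigen-equation at the far end of the pendant path, so that $q_{min}(\mathcal H)<q_{min}(G)$, contradicting $\mathcal H\cong G$. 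Your branch-elimination scheme is instead a direct algebraic reduction, and that part of it is sound: the identities $h_ix_i=\Sigma$ and, when $\Sigma\neq 0$, $1/h_1+1/h_2+1/h_3=1$ are correct, and the sign pattern $h_1,h_2<0<h_3$ would indeed force $|x_3|>|x_1|,|x_2|$.

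The genuine gap is that this sign separation, which is the entire content of the lemma in your formulation, is never proved; you defer it to a hoped-for monotonicity argument resting on ``$T_3$ contains, hence dominates, a single-pendant branch.'' This cannot be pushed through, because the claim fails inside the allowed class. Take $G=C_3\circ K_1$ of order $6$: it is a nonbipartite $\mathcal F_{3,1}$-graph ($l=1$, one pendant at each of $v_1$, $v_2$), and all three branches are single pendants, so $h_1=h_2=h_3=q-2+\frac{1}{1-q}$. Its $Q$-spectrum is $3\pm\sqrt5$ together with $\frac{3\pm\sqrt5}{2}$, each of the latter with multiplicity two; hence $q_{min}(G)=\frac{3-\sqrt5}{2}$, which is exactly the root you yourself single out as the value where the single-pendant weight vanishes. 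Thus at $q_{min}$ all $h_i=0$ and $\Sigma=0$, no sign separation is possible, and the $q_{min}$-eigenspace is two-dimensional: it consists of all vectors whose triangle entries $(x_1,x_2,x_3)$ sum to $0$, with pendant entries $x_i/(q-1)$. In particular the unit eigenvector with $(x_1,x_2,x_3)\propto(1,-1,0)$ has $|x_3|=0<|x_1|$, so your degenerate-case claim that a vanishing entry must occur at $v_1$ or $v_2$ is also false, and indeed the lemma itself, read as a statement about every unit $q_{min}$-eigenvector, fails for this graph. Consequently no refinement of your monotonicity argument can close the gap: a correct proof must either impose a nondegeneracy hypothesis (e.g.\ $T_3$ strictly heavier than the branches at $v_1,v_2$) or weaken the conclusion to the existence of some eigenvector with $|x_3|$ maximal. (For what it is worth, the paper's own proof breaks at the same example: it uses $\mathrm{sgn}(x_3)$ and asserts $|x_{l+2}|\geq|x_2|>0$, both of which fail when $x_3=0$; so the obstruction you ran into is intrinsic to the statement, not an artifact of your route.)
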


\begin{proof}
Because $G$ is nonbipartite, $g$ is odd. Without loss of generality, suppose $v_{2}$ is a $p$-dominator and $v_{2}v_{u}$ is the only pendant edge. If $|x_{2}|>|x_{3}|$. Let $\mathcal {H}=G-\sum\limits_{v_{j}\sim v_{3}, j\neq 1, 2} v_{3}v_{j}+\sum\limits_{v_{j}\sim v_{3}, j\neq 1, 2} v_{2}v_{j}-v_{2}v_{u}+v_{3}v_{u}$.
Let $a=|x_{2}|-|x_{3}|$, let $S$ be the set of vertices $v_{1}$, $v_{2}$, $v_{3}$ and the pendant vertices attached to $v_{1}$, $v_{2}$ in $G$. Let $Y=(\,y_{1}$, $y_{2}$, $\ldots$, $y_{n-1}$, $y_{n}\,)^T$ satisfy that $$\left\{\begin{array}{cc}
                                y_{i}=x_{i},  & v_{i}\in (S\setminus \{v_{u}\}); \\
                                y_{u}=-sgn(x_{3})(|x_{u}|-a),  & v_{u}\\
                                y_{i}=(-)^{d_{\mathcal {H}}(v_{i}, v_{3})}sgn(x_{2})(|x_{i}|+a), & others.
                              \end{array}\right.$$
By  Theorem \ref{th03,02}, we know that $|x_{l+2}|\geq|x_{2}|>0$. Note that both $v_{l+3}$ and $v_{u}$ are pendant vertices. From $x_{l+3}=\displaystyle\frac{x_{l+2}}{q_{min}(G)-1}$ and $x_{u}=\displaystyle\frac{x_{2}}{q_{min}(G)-1}$, it follows that $|x_{l+3}|\geq|x_{u}|$ and $|y_{l+3}|^{2}-|x_{l+3}|^{2}=(|x_{l+3}|+a)^{2}-|x_{l+3}|^{2}\geq|x_{u}|^{2}-|y_{u}|^{2}=|x_{u}|^{2}-(|x_{u}|-a)^{2}$. This leads to that $Y^{T}Y\geq X^{T}X$. Note that $Y^{T}Q(\mathcal {H})Y=X^{T}Q(G)X$. This means that $q_{min}(\mathcal {H})\leq q_{min}(G)$. If $q_{min}(\mathcal {H})= q_{min}(G)$, then $Y$ is an eigenvalue corresponding to $q_{min}(\mathcal {H})$. Combined with Lemmas \ref{le02,02}, \ref{le02,03}, it follows that $y_{l+3}y_{l+2}<0$ and $|y_{l+3}|>|y_{l+2}|>0$. Noting that $|y_{l+3}|=|x_{l+3}|+a$ and $|y_{l+2}|=|x_{l+2}|+a$, combining with Lemmas \ref{le02,02}, \ref{le02,03} again, we get that $x_{l+3}x_{l+2}<0$ and $|x_{l+3}|>|x_{l+2}|>0$. From $q_{min}(\mathcal {H})y_{l+3}=y_{l+3}+y_{l+2}$, it follows that $q_{min}(\mathcal {H})=\frac{|x_{l+3}|-|x_{l+2}|}{|x_{l+3}|+a}<\frac{|x_{l+3}|-|x_{l+2}|}{|x_{l+3}|}=q_{min}(G)$, which contradicts $q_{min}(\mathcal {H})= q_{min}(G)$. Consequently, $q_{min}(\mathcal {H})< q_{min}(G)$. It is a contradiction because $\mathcal {H}\cong G$. It follows that $|x_{2}|\leq|x_{3}|$. Similarly, it is proved that $|x_{1}|\leq|x_{3}|$. The result follows.
\ \ \ \ \ $\Box$
\end{proof}

\begin{theorem}\label{th04,04} 
Among all nonbipartite unicyclic graphs of order $n\geq 4$ with girth $3$ and domination number at least $\frac{n+1}{3}<\gamma\leq \frac{n}{2}$, we have

$\mathrm{(i)}$ if $n=4$, the $q_{min}$ attains the minimum uniquely at $\mathscr{H}_{3,1}$;

$\mathrm{(ii)}$ if $n\geq 5$, $\gamma=\frac{n-1}{2}$, then the $q_{min}$ attains the minimum uniquely at $\mathscr{H}_{3,\frac{n-3}{2}}$;

$\mathrm{(iii)}$ if $n\geq 6$, $\gamma=\frac{n}{2}$, then the $q_{min}$ attains the minimum uniquely at $\mathscr{H}_{3,\frac{n}{2}}$.

$\mathrm{(iv)}$ if $n\geq 5$ and $n-2\gamma\geq 2$, then the $q_{min}$ attains the minimum uniquely at a $\mathscr{H}_{3,\alpha}$ where $\alpha\leq\frac{n-3}{2}$ is the least integer such that $\lceil\frac{n-2\alpha-2}{3}\rceil+\alpha=\gamma$;

\end{theorem}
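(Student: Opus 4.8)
The plan is to use the structural reduction already available and then to concentrate the pendant vertices at the heavy end of the pendant path. Since we are restricting to girth $g=3$, Theorem~\ref{th03,02} immediately tells us that the minimum of $q_{min}$ over all nonbipartite unicyclic graphs of order $n$ with girth $3$ and domination number $\gamma$ is attained at some $\mathcal{F}_{3,l}$-graph $G$, and that a unit eigenvector $X$ of $q_{min}(G)$ satisfies $|x_3|>0$ together with $|x_{l+2}|=\max\{|x_i|\mid v_i\text{ is a }p\text{-dominator}\}$. Lemma~\ref{le04,03} then gives $|x_3|=\max\{|x_1|,|x_2|,|x_3|\}$, so on the triangle the weight is concentrated at the root $v_3$ of the path, while Lemma~\ref{le02,03} forces $|x_i|$ to increase as one moves outward along the path and its attached pendants. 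Hence the largest entries sit at the far end of the path, near $v_{l+2}$.

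Next I would relocate branches toward vertices carrying strictly larger entries by repeated application of Lemma~\ref{le0,04}: whenever a $p$-dominator lies strictly closer to the triangle than the heavy end, detaching its pendant and reattaching it nearer $v_{l+2}$ yields a graph $G'$ with $q_{min}(G')<q_{min}(G)$. These are precisely the moves realised by the transitions $\mathcal{H}^{k}_{1}\to\mathcal{H}^{k}_{2}$ and $\mathcal{H}^{k}_{2}\to\mathcal{H}^{k}_{3}$ of Section~3, after which the $p$-dominators occupy consecutive vertices adjacent to the end of the path and carry a single pendant each, i.e.\ $G'$ has the shape of some $\mathscr{H}_{3,\alpha}$. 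Provided each such move can be carried out without leaving the domination class $\gamma$, the minimality of $q_{min}(G)$ forces $G$ itself to already be some $\mathscr{H}_{3,\alpha}$. The domination bookkeeping here is supplied by Theorems~\ref{th03,04} and~\ref{th03,05}, which control exactly how $\gamma$ changes under these reattachments.

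It then remains to identify $\alpha$ from the domination number via Corollary~\ref{cl03,06}, which gives $\gamma(\mathscr{H}_{3,\alpha})=\alpha$ when $n-2\alpha\le2$ and $\gamma(\mathscr{H}_{3,\alpha})=\lceil\frac{n-2\alpha-2}{3}\rceil+\alpha$ when $n-2\alpha\ge3$. For $\gamma=\frac n2$ this equation forces $\alpha=\frac n2$ (the corona-type extremal graph), giving (iii); for $\gamma=\frac{n-1}{2}$ it forces $\alpha=\frac{n-3}{2}$, giving (ii); and the small case $n=4$ is checked by hand to land at $\mathscr{H}_{3,1}$, giving (i). For the range $n-2\gamma\ge2$ of (iv), the ceiling makes the identity $\lceil\frac{n-2\alpha-2}{3}\rceil+\alpha=\gamma$ hold for a contiguous block of indices $\alpha$; among these the least index corresponds to the longest uniform pendant path, and a direct eigenvector comparison between two consecutive admissible indices (again through Lemma~\ref{le0,04}) shows that decreasing $\alpha$ strictly decreases $q_{min}$. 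Hence the minimiser is the least admissible $\alpha$, and since every step is strict the minimiser is unique.

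The hardest part will be the simultaneous control of the two quantities in the second step. The reattachments from Lemma~\ref{le0,04} are tailored to strictly lower $q_{min}$, but they generically disturb the domination number, whereas the clean domination identities of Corollary~\ref{cl03,06} apply only to the fully canonical graphs $\mathscr{H}_{3,\alpha}$. Certifying that each $q_{min}$-decreasing move stays inside the prescribed class $\gamma$, and that the final comparison across admissible indices $\alpha$ is strict in the direction claimed, is exactly where the detailed accounting of Theorems~\ref{th03,04} and~\ref{th03,05} must be deployed, and is the most delicate portion of the argument.
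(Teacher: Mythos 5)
Your proposal is correct and follows essentially the same route as the paper: the paper's own proof is just a citation of Lemmas \ref{le0,04}, \ref{le0,10}, \ref{le04,03}, Theorems \ref{th03,02}, \ref{th03,04}, \ref{th03,05} and Corollary \ref{cl03,06}, and your argument assembles exactly these ingredients in the intended order (reduce to a $\mathcal{F}_{3,l}$-graph, concentrate pendants at the heavy end via Lemma \ref{le0,04}, track $\gamma$ through Theorems \ref{th03,04}--\ref{th03,05}, then identify $\alpha$ via Corollary \ref{cl03,06}). The ``delicate portion'' you flag at the end --- keeping the domination number fixed while performing the $q_{min}$-decreasing reattachments --- is precisely what those two Section~3 theorems are designed to supply, so your outline matches the paper's proof in both structure and substance.
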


\begin{proof}
If $n=4$, then the graph is isomorphic to $S^{+}_{4}=\mathscr{H}_{3,1}$, and then $\mathrm{(ii)}$ follows.
For $n\geq 5$, the $\mathrm{(ii)}$, $\mathrm{(iii)}$ and $\mathrm{(iv)}$ follow from Lemmas \ref{le0,04}, \ref{le0,10}, Theorems \ref{th03,02}, \ref{th03,04}, \ref{th03,05}, Corollary \ref{cl03,06} and Lemma \ref{le04,03}.
 \ \ \ \ \ $\Box$
\end{proof}

Let $\mathcal {K}=\{G|\,\ G\ \mathrm{be}\ \mathrm{a}\ \mathrm{nonbipartite}\ \mathcal {F}^{\circ}_{g, l}$-graph of order $n\geq 4$ and domination number at least $\frac{n+1}{3}<\gamma\leq \frac{n}{2}$, where $g$ is any odd number at least $3$ and $l$ is any positive integral number$\}$ and $q_{\mathcal {K}}=\min\{q_{min}(G)|\,\ G\in \mathcal {K}\}$.

\begin{theorem}\label{th04,05} 
\

$\mathrm{(i)}$ If $n=4$, the $q_{\mathcal {K}}$ attains uniquely at $\mathscr{H}_{3,1}$;

$\mathrm{(ii)}$ If $n\geq 5$ and $n-2\gamma\geq 2$, then the least $q_{\mathcal {K}}> q_{min}(\mathscr{H}_{3,\alpha})$ where $\alpha\leq\frac{n-3}{2}$ is the least integer such that $\lceil\frac{n-2\alpha-2}{3}\rceil+\alpha=\gamma$.
\end{theorem}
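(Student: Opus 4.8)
The plan is to deduce Theorem \ref{th04,05} from two facts proved earlier: every nonbipartite $\mathcal {F}^{\circ}_{g,l}$-graph with $g\ge 5$ can be replaced by a girth-$3$ graph that is no worse for us (Theorem \ref{th04,02}), and among nonbipartite unicyclic girth-$3$ graphs with a prescribed domination number the unique minimizer of $q_{min}$ is a specific $\mathscr{H}_{3,\alpha}$ (Theorem \ref{th04,04}(iv)). Part (i) is immediate: for $n=4$ the range $\frac{5}{3}<\gamma\le 2$ forces $\gamma=2$ and, since the graph must be a $\mathcal {F}^{\circ}_{g,l}$-graph of order $n\ge g+1$ with $g$ odd, it forces $g=3$, $l=1$ and $n-g-l=0$; hence the only member of $\mathcal {K}$ is $C_3$ with one pendant, i.e. $S^{+}_{4}=\mathscr{H}_{3,1}$, so $q_{\mathcal {K}}=q_{min}(\mathscr{H}_{3,1})$ is attained uniquely there. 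For part (ii) I would first record the arithmetic that pins $\alpha$ down. The least integer with $\lceil\frac{n-2\alpha-2}{3}\rceil+\alpha=\gamma$ must satisfy $n-2\alpha\equiv 0\pmod 3$ (otherwise $\alpha-1$ already attains $\gamma$), whence $\alpha=3\gamma-n$ and $n-2\alpha=3(n-2\gamma)\ge 6$. Writing $\varepsilon$ for the length of the backbone of $\mathscr{H}_{3,\alpha}$, we get $\varepsilon-\alpha=n-2\alpha+1\ge 7$, so the first $p$-dominator along the pendant path sits at index $\ge 7$ and the junction vertex $v_3$ carries no pendant. Thus $\mathscr{H}_{3,\alpha}$ is \emph{not} a $\mathcal {F}^{\circ}$-graph and $\mathscr{H}_{3,\alpha}\notin\mathcal {K}$, which is exactly what will make the final inequality strict.

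Next I would set $\phi(\gamma)$ to be the minimum of $q_{min}$ over all nonbipartite unicyclic girth-$3$ graphs of order $n$ with domination number $\gamma$; by Theorem \ref{th04,04} we have $\phi(\gamma)=q_{min}(\mathscr{H}_{3,\alpha(\gamma)})$ with $\alpha(\gamma)$ the least admissible integer. Take an arbitrary $G\in\mathcal {K}$ of girth $g$ and domination number $\gamma$. If $g\ge 5$, Theorem \ref{th04,02} yields a girth-$3$ graph $\mathbb{H}$ of order $n$ with $\gamma(\mathbb{H})\ge\gamma$ and $q_{min}(\mathbb{H})<q_{min}(G)$; it stays unicyclic because the switchings there preserve connectedness and the edge count $|E|=|V|=n$. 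Then $q_{min}(G)>q_{min}(\mathbb{H})\ge\phi(\gamma(\mathbb{H}))$, and once $\phi$ is known to be non-decreasing in $\gamma$ (see below) this gives $q_{min}(G)>\phi(\gamma)=q_{min}(\mathscr{H}_{3,\alpha})$. If instead $g=3$, then $G$ is itself a nonbipartite unicyclic girth-$3$ graph with domination number $\gamma$, so $q_{min}(G)\ge\phi(\gamma)$ by Theorem \ref{th04,04}(iv); since $G$ is a $\mathcal {F}^{\circ}$-graph while $\mathscr{H}_{3,\alpha}$ is not, we have $G\neq\mathscr{H}_{3,\alpha}$, and the uniqueness clause of Theorem \ref{th04,04}(iv) upgrades this to the strict $q_{min}(G)>\phi(\gamma)=q_{min}(\mathscr{H}_{3,\alpha})$. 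As $\mathcal {K}$ is finite its minimum is attained, so taking the minimum over $G\in\mathcal {K}$ yields $q_{\mathcal {K}}>q_{min}(\mathscr{H}_{3,\alpha})$.

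The step I expect to be the real obstacle is the monotonicity of $\phi$, i.e. that $q_{min}(\mathscr{H}_{3,\alpha})$ increases with $\alpha$ (equivalently $\phi(\gamma_1)\le\phi(\gamma_2)$ for $\gamma_1<\gamma_2$, using that $\alpha=3\gamma-n$ is increasing in $\gamma$). I would prove this with Lemma \ref{le0,04}: lowering $\alpha$ by one amounts to detaching a pendant from an interior vertex of the comb and relocating it farther out along the pendant path. By Lemma \ref{le02,03} the entries of the minimizing eigenvector grow as one moves away from the cycle toward the path end, so the hypothesis $|x_1|>|x_2|$ (or $|x_1|=|x_2|>0$) of Lemma \ref{le0,04} holds at each relocation and each move strictly lowers $q_{min}$. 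The delicate part will be to present the passage from $\mathscr{H}_{3,\alpha+1}$ to $\mathscr{H}_{3,\alpha}$ as an honest finite sequence of such coalescence moves (the comb must be ``unrolled'' into a longer path) while keeping every intermediate graph of an admissible form; this is the same bookkeeping already done for the auxiliary graphs $\mathcal {H}^{k}_{3}$, $\mathcal {H}^{k-1}_{4}$, $\mathcal {H}^{k-2}_{5}$ in Theorem \ref{th03,05} and in the proof of Theorem \ref{th04,04}, so I would reuse that machinery. Finally, for the boundary values $\gamma(\mathbb{H})\in\{\frac{n-1}{2},\frac{n}{2}\}$, where $n-2\gamma(\mathbb{H})<2$ and Theorem \ref{th04,04}(iv) no longer applies, the girth-$3$ minimizer is instead supplied by Theorem \ref{th04,04}(ii)--(iii); these correspond to still larger $\alpha$, so the same monotonicity closes the argument.
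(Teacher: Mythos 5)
Your proposal is correct and takes essentially the same route as the paper: the paper's entire proof of this theorem is a citation of Theorems \ref{th04,02} and \ref{th04,04}, which is exactly the skeleton you flesh out. You are in fact more careful than the paper, which silently skips both points you isolate --- the strictness (via $\mathscr{H}_{3,\alpha}\notin\mathcal{K}$, since $v_{3}$ carries no pendant once $n-2\alpha\geq 6$) and the monotonicity of $q_{min}(\mathscr{H}_{3,\alpha})$ in $\alpha$, needed because Theorem \ref{th04,02} only guarantees $\gamma(\mathbb{H})\geq\gamma$ --- and your plan of realizing the passage $\mathscr{H}_{3,\alpha+1}\to\mathscr{H}_{3,\alpha}$ as a finite sequence of Lemma \ref{le0,04} pendant relocations (each strictly decreasing $q_{min}$ by Lemma \ref{le02,03}) does go through.
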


\begin{proof}
As Theorem \ref{th04,04}, the theorem holds for $n=4$.
For $n\geq 5$, $\mathrm{(ii)}$  follows Theorems \ref{th04,02}, \ref{th04,04}. \ \ \ \ \ $\Box$
\end{proof}

\begin{lemma}\label{le04,09} 
For  a nonbipartite $\mathcal {F}_{g, l}$-graph graph $G$ of order $n\geq 6$, if $g =5$, then there exists a graph $\mathbb{H}$ such that $g(\mathbb{H})=3$, $\gamma(G)\leq \gamma(\mathbb{H})$ and $q_{min}(\mathbb{H})< q_{min}(G)$.
\end{lemma}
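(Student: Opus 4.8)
The plan is to push everything back onto Theorem~\ref{th04,02}, which already produces the required girth-$3$ graph whenever the input is an $\mathcal{F}^{\circ}_{g,l}$-graph. Since $G$ is nonbipartite its girth $g=5$ is odd, and since $n\geq 6=g+1$ there is at least one vertex off the cycle, so the pendant path $\mathbb{P}=v_{5}v_{6}\cdots v_{5+l}$ is genuinely present. First I would record that $v_{5+l-1}$ is always a $p$-dominator, since it dominates the leaf $v_{5+l}$; hence if $l=1$ then $v_{5}$ itself is a $p$-dominator and $G$ is an $\mathcal{F}^{\circ}_{5,1}$-graph. More generally, whenever $v_{5}$ is a $p$-dominator (the case $s=0$), $G$ is an $\mathcal{F}^{\circ}_{5,l}$-graph and Theorem~\ref{th04,02} applies verbatim with $g=5$, yielding $\mathbb{H}$ of girth $3$ with $\gamma(G)\leq\gamma(\mathbb{H})$ and $q_{min}(\mathbb{H})<q_{min}(G)$. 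Thus the only remaining case is $s\geq 1$, i.e. $v_{5}$ carries no pendant, which forces $l\geq 2$.

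In that case I would run the same local switch that drives Theorem~\ref{th04,02}. Take a unit eigenvector $X$ for $q_{min}(G)$ normalized as in Lemma~\ref{le04,01}, let $v_{a+1}$ be the vertex of $\mathbb{C}$ with $|x_{a+1}|$ minimal, and fix the signs along the cycle as that lemma permits; the normalization lets us assume $x_{a}x_{a+1}\geq 0$ and $|x_{a-1}|\leq|x_{a+2}|$. Form $\mathcal{G}_{1}=G-v_{a+1}v_{a+2}+v_{a+1}v_{a-1}$ (and, when appropriate, the mirror switch or the further switch giving $\mathcal{G}_{2}$, exactly as defined in Section 3). Since $v_{a-1}v_{a}$, $v_{a}v_{a+1}$ and the new edge $v_{a+1}v_{a-1}$ form a triangle, the resulting graph has girth $3$. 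The eigenvalue half is then the computation already carried out in Theorem~\ref{th04,02}: the normalization gives $(x_{a+1}+x_{a-1})^{2}\leq(x_{a+1}+x_{a+2})^{2}$, whence $X^{T}Q(\mathcal{G}_{1})X\leq X^{T}Q(G)X$ and $q_{min}(\mathcal{G}_{1})\leq q_{min}(G)$; strictness follows by comparing the eigenvalue equations at $v_{a+2}$ and using $|x_{a+2}|>|x_{a+1}|$, word for word as there. So $q_{min}(\mathcal{G}_{1})<q_{min}(G)$ comes for free, and this part does not use the $\mathcal{F}^{\circ}$ assumption at all.

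The genuinely new point is the domination bound $\gamma(G)\leq\gamma(\mathcal{G}_{1})$ (or $\gamma(G)\leq\max\{\gamma(\mathcal{G}_{1}),\gamma(\mathcal{G}_{2})\}$), because Lemma~\ref{le03,03} is proved only for $\mathcal{F}^{\circ}$-graphs, whereas here $v_{5}$ is not a $p$-dominator. I would establish it by repeating the dominating-set bookkeeping of Lemma~\ref{le03,03}, exploiting that $g=5$ keeps the cycle contribution to any $\gamma$-count equal to $\lceil 5/3\rceil=2$. Concretely: using Lemma~\ref{le0,08} choose a minimal dominating set of $G$ containing every $p$-dominator and no pendant; peel off the pendant path $\mathbb{P}$ first, whose domination cost is a fixed ceiling term untouched by a switch confined to the cycle; then compare the ceiling contributions of the five-vertex cycle-with-pendants before and after the switch. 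The five positions of $v_{a+1}$ on $\mathbb{C}$, together with whether $v_{5}$ is among the switched vertices, give only finitely many subcases, each closed by the elementary inequality $\lceil\frac{u+w}{3}\rceil\leq\max\{\lceil\frac{u-1}{3}\rceil+\lceil\frac{w}{3}\rceil,\ \lceil\frac{u+1}{3}\rceil+\lceil\frac{w-2}{3}\rceil\}$ already used in Lemma~\ref{le03,03}. With both $\gamma(G)\leq\gamma(\mathcal{G}_{i})$ and $q_{min}(\mathcal{G}_{i})<q_{min}(G)$ in hand, $\mathbb{H}=\mathcal{G}_{i}$ is the desired graph.

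I expect this last domination comparison to be the main obstacle: the switch destroys the $\mathcal{F}_{g,l}$-shape near $v_{a+2}$ and reattaches a path there, and because $v_{5}$ bears the path rather than a pendant, the interaction of $\mathbb{P}$ with the switched cycle must be checked directly rather than quoted from Lemma~\ref{le03,03}. The way I would keep this tractable is to always factor out $\mathbb{P}$ before touching the cycle, so that each subcase reduces to a $\gamma$-comparison on a graph of order at most $5$ plus its attached pendants, where the bound is immediate.
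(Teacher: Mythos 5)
Your reduction of the case $s=0$ (in particular $l=1$) to Theorem \ref{th04,02} is correct, and so is your observation that the eigenvalue half of the rotation argument — the weak inequality from $(x_{a+1}+x_{a-1})^{2}\le(x_{a+1}+x_{a+2})^{2}$ and strictness via the eigenequation at $v_{a+2}$ — never uses the $\mathcal{F}^{\circ}$ hypothesis. The gap is in the domination half, exactly where you predicted it, but it is not a matter of unfinished bookkeeping: the method you propose for that half would fail.

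First, the decomposition ``factor out $\mathbb{P}$, whose domination cost is a fixed ceiling term, then compare ceiling contributions of the five-vertex cycle-with-pendants'' is false precisely in the case where you need it, namely when $v_{5}$ is not a $p$-dominator. Domination does not split additively at the cut vertex $v_{5}$: already for the lollipop $L_{5,4}$ (path $v_{5}v_{6}v_{7}v_{8}v_{9}$, an $\mathcal{F}_{5,4}$-graph with $s=3$) one has $\gamma(L_{5,4})=3$, attained by $\{v_{2},v_{5},v_{8}\}$, while $\gamma(C_{5})+\gamma(P_{4})=2+2=4$; the optimal set uses the cycle vertex $v_{5}$ to cover $v_{6}$, so the path and the cycle do not decouple. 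A scheme that can overestimate $\gamma(G)$ by one cannot establish $\gamma(G)\le\gamma(\mathcal{G}_{i})$ even when that inequality is true; eliminating this slack at $v_{5}$ is exactly what Lemma \ref{le03,03} uses the hypothesis $r_{t}=g$ for, so ``repeating its bookkeeping'' is not available here.

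Second, the inequality $\gamma(G)\le\gamma(\mathcal{G}_{1})$ is not a combinatorial property of rotations, so no eigenvector-free counting over the positions of $v_{a+1}$ can prove it. Take the $\mathcal{F}_{5,2}$-graph $G$ with one pendant $u$ at $v_{2}$ and path $v_{5}v_{6}v_{7}$: then $\gamma(G)=3$, yet the rotation $G-v_{4}v_{5}+v_{4}v_{2}$, which is exactly of the allowed form $\mathcal{G}-v_{a+1}v_{a+2}+v_{a+1}v_{a-1}$ (with $a+1=4$), has domination number $2$, since $\{v_{2},v_{6}\}$ dominates it. Hence whether the required inequality holds depends on which rotation the eigenvector actually dictates — one must rule out, by spectral arguments, the rotations whose new edge lands on a $p$-dominator, or replace them by the pendant-relocating switches $\mathscr{G}_{i}$, $\mathscr{X}_{t}$ as Theorem \ref{th04,02} does in the $\mathcal{F}^{\circ}$ setting. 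Your domination step uses only the position $a$ and never this interaction, so it attempts to prove statements that are false in general. This is also why the paper's own proof takes a different route: it enumerates the configurations of $p$-dominators on the $5$-cycle ($G_{1}$--$G_{19}$ of Fig.~4.1), symmetrizes the eigenvector in the symmetric cases, chooses a configuration-specific switch, and proves $\gamma(G)\le\gamma(\mathbb{H})$ not by ceiling counting but by exhibiting, via Lemma \ref{le0,08}, a minimum dominating set of $\mathbb{H}$ that is simultaneously a dominating set of $G$.
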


\begin{proof}
Because $G$ is nonbipartite, $g$ is odd. Note that $\delta(G)=1$. By Lemma \ref{le0,06}, we get that $q_{min}(G)<1$.

{\bf Case 1} There is no $p$-dominator on $\mathbb{C}$. Then $G$ is like $G_{1}$ (see $G_{1}$ in Fig. 4.1). Suppose $X=(\,x_1$, $x_2$, $\ldots$, $x_k$, $x_{k+1}$, $x_{k+2}$, $\ldots$, $x_{n-1}$, $x_{n}\,)^T$ is a unit eigenvector corresponding to $q_{min}(G)$. Note that by Lemmas \ref{le0,05}, \ref{th03,02}, it follows that $|x_5|=\max\{|x_1|$, $|x_2|$, $|x_3|$, $|x_4|$, $|x_5|\}> 0$. Let $Y=(\,y_1$, $y_2$, $\ldots$, $y_k$, $y_{k+1}$, $y_{k+2}$, $\ldots$, $y_{n-1}$, $y_{n}\,)^T$ satisfy that $$\left\{\begin{array}{cc}
                                y_{1}=x_{4},  \\
                                y_{2}=x_{3},  \\
                                y_{4}=x_{1},  \\
                                y_{3}=x_{2},  \\
                                y_{i}=x_{i}, & \ \ others.
                              \end{array}\right.$$
Let $Z=X+Y=(\,z_1$, $z_2$, $\ldots$, $z_k$, $z_{k+1}$, $z_{k+2}$, $\ldots$, $z_{n-1}$, $z_{n}\,)^T$. Then $Z=X+Y$ is an eigenvector corresponding to $q_{min}(G)$ that $z_{1}=z_{4}$, $z_{2}=z_{3}$. Also by Lemmas \ref{le0,05}, \ref{th03,02}, it follows that $|z_5|=\max\{|z_1|$, $|z_2|$, $|z_3|$, $|z_4|$, $|z_5|\}> 0$. By Lemma \ref{le04,01}, we get that $|z_2|>0$, $|z_2|< |z_1|$ and $z_2 z_1< 0$. Let $\mathbb{H}=G-v_{3}v_{4}+v_{3}v_{1}$. Then $Z^{T}Q(\mathbb{H})Z\leq Z^{T}Q(G)Z$. This means $q_{min}(\mathbb{H})\leq q_{min}(G)$. As Theorem \ref{th04,02}, we get that $q_{min}(\mathbb{H})< q_{min}(G)$. Let $B_{1}=\mathbb{H}[v_{1}$, $v_{2}$, $v_{3}]$, $B_{2}=\mathbb{H}- \{v_{1}$, $v_{2}$, $v_{3}\}$. As Lemma \ref{le03,03}, we can get a minimal dominating set $D$ of $\mathbb{H}$
which contains all $p$-dominators but no any pendant vertex and no $v_{1}$ that $D=\{v_{2}\}\cup D_{2}$, where $\{v_{2}\}$ is a dominating set of $B_{1}$, $D_{2}$ is a dominating set of $B_{2}$. Note that $D$ is also a dominating set of $G$. So, $\gamma(G)\leq \gamma(\mathbb{H})$.

{\bf Case 2} There is only $1$ $p$-dominator on $\mathbb{C}$ (see $G_{2}-G_{4}$ in Fig. 4.1).

{\bf Subcase 2.1} For $G_{2}$, let $\mathbb{H}=G_{2}-v_{3}v_{4}+v_{3}v_{1}$. As Case 1, it is proved that $\gamma(G_{2})\leq \gamma(\mathbb{H})$ and $q_{min}(\mathbb{H})< q_{min}(G_{2})$.

{\bf Subcase 2.2} For $G_{3}$, suppose $X=(\,x_1$, $x_2$, $\ldots$, $x_{n-1}$, $x_{n}\,)^T$ is a unit eigenvector corresponding to $q_{min}(G_{3})$.

{\bf Claim} $|x_{4}|>|x_{1}|$, $|x_{5}|>|x_{3}|$. Denote $v_{k}$ the pendant vertex attached to $v_{4}$. Suppose $0<|x_{4}|\leq|x_{1}|$. Let $G^{'}_{3}=G_{3}-v_{4}v_{k}+v_{1}v_{k}$. By Lemma \ref{le0,04}, then $q_{min}(G^{'}_{3})<q_{min}(G_{3})$. This is a contradiction because $G^{'}_{3}\cong G_{3}$. Suppose $|x_{4}|=|x_{1}|=0$. By Lemma \ref{le04,01}, we get that $x_{2}\neq 0$, $x_{3}\neq 0$. By $q_{min}(G_{3})x_{2}=2x_{2}+x_{3}$, $q_{min}(G_{3})x_{3}=2x_{3}+x_{2}$, we get $x^{2}_{2}=x^{2}_{3}$. Suppose $x_{2}>0$. Then we get $q_{min}(G_{3})x_{2}=2x_{2}+x_{3}\geq x_{2}$. This means that $q_{min}(G_{3})\geq 1$ which contradicts $q_{min}(G_{3})<1$. Thus, $|x_{4}|>|x_{1}|$. Similarly, we get $|x_{5}|>|x_{3}|$. Then the claim holds.

Suppose $|x_{1}|=\min\{|x_{1}|$, $|x_{2}|$, $|x_{3}|\}$ and $x_{1}\geq 0$. If $|x_{2}|> |x_{5}|$, by Lemma \ref{le04,01}, suppose $x_1x_5\geq 0$. Let $H=G_{3}-v_{1}v_{5}$. Also by Lemma \ref{le04,01}, suppose for any $j\neq 1, 5$, $\mathrm{sgn}x_j= (-1)^{d_{H}(v_{j}, v_1)}$. Let  $\mathbb{H}=G_{3}-v_{1}v_{5}+v_{3}v_{1}$. Because $|x_{5}|>|x_{3}|$, it follows that $q_{min}(\mathbb{H})\leq X^{T}Q(\mathbb{H})X<X^{T}Q(G_{3})X =q_{min}(G_{3})$. Let $B_{1}=\mathbb{H}[v_{1}, v_{2}]$, $B_{2}=\mathbb{H}- \{v_{1}, v_{2}\}$. As Lemma \ref{le03,03}, we can get a minimal dominating set $D$ of $\mathbb{H}$ which contains all $p$-dominators but no any pendant vertex and no $v_{3}$ that $D=\{v_{1}\}\cup D_{2}$, where $D_{2}$ is a dominating set of $B_{2}$. Note that $D$ is also a dominating set of $G_{3}$. So, $\gamma(G_{3})\leq \gamma(\mathbb{H})$. If $|x_{2}|< |x_{5}|$, by Lemma \ref{le04,01}, $x_{1}x_{2}\geq 0$. Let $H=G_{3}-v_{1}v_{2}$. Also by Lemma \ref{le04,01}, suppose for any $j\neq 1, 2$, $\mathrm{sgn}x_j= (-1)^{d_{H}(v_{j}, v_1)}$. Let  $\mathbb{H}=G_{3}-v_{1}v_{5}+v_{3}v_{1}$. Because $|x_{5}|>|x_{3}|$, it follows that $q_{min}(\mathbb{H})< q_{min}(G_{3})$ similarly. As $|x_{2}|> |x_{5}|$, it is proved that $\gamma(G_{3})\leq \gamma(\mathbb{H})$. If $|x_{2}|= |x_{5}|$, by Lemma \ref{le04,01}, suppose $x_{1}x_{5}\geq 0$. Let  $\mathbb{H}=G_{3}-v_{1}v_{5}+v_{3}v_{1}$. As $|x_{2}|> |x_{5}|$, it is proved that $q_{min}(\mathbb{H})< q_{min}(G_{3})$, $\gamma(G_{3})\leq \gamma(\mathbb{H})$.

\setlength{\unitlength}{0.7pt}
\begin{center}
\begin{picture}(636,599)
\put(38,577){\circle*{4}}
\put(19,541){\circle*{4}}
\qbezier(38,577)(28,559)(19,541)
\put(79,546){\circle*{4}}
\qbezier(38,577)(58,562)(79,546)
\put(19,507){\circle*{4}}
\qbezier(19,541)(19,524)(19,507)
\put(79,507){\circle*{4}}
\qbezier(79,546)(79,527)(79,507)
\qbezier(19,507)(49,507)(79,507)
\put(86,577){\circle*{4}}
\qbezier(38,577)(62,577)(86,577)
\put(127,577){\circle*{4}}
\qbezier(86,577)(106,577)(127,577)
\put(86,598){\circle*{4}}
\qbezier(86,577)(86,588)(86,598)
\put(127,598){\circle*{4}}
\qbezier(127,577)(127,588)(127,598)
\put(1,540){$v_{1}$}
\put(3,500){$v_{2}$}
\put(85,502){$v_{3}$}
\put(85,543){$v_{4}$}
\put(20,582){$v_{5}$}
\put(78,565){$v_{5+s}$}
\put(36,473){$G_{1}$}
\put(253,577){\circle*{4}}
\put(270,544){\circle*{4}}
\qbezier(253,577)(261,561)(270,544)
\put(209,541){\circle*{4}}
\qbezier(253,577)(231,559)(209,541)
\put(209,507){\circle*{4}}
\qbezier(209,541)(209,524)(209,507)
\put(270,507){\circle*{4}}
\qbezier(270,544)(270,526)(270,507)
\qbezier(209,507)(239,507)(270,507)
\put(163,577){\circle*{4}}
\qbezier(253,577)(208,577)(163,577)
\put(163,598){\circle*{4}}
\qbezier(163,577)(163,588)(163,598)
\put(205,577){\circle*{4}}
\put(205,598){\circle*{4}}
\qbezier(205,577)(205,588)(205,598)
\put(253,598){\circle*{4}}
\qbezier(253,577)(253,588)(253,598)
\put(193,541){$v_{1}$}
\put(190,504){$v_{2}$}
\put(275,504){$v_{3}$}
\put(275,544){$v_{4}$}
\put(258,576){$v_{5}$}
\put(230,473){$G_{2}$}
\put(363,577){\circle*{4}}
\put(349,549){\circle*{4}}
\qbezier(363,577)(356,563)(349,549)
\put(405,549){\circle*{4}}
\qbezier(363,577)(384,563)(405,549)
\put(349,508){\circle*{4}}
\qbezier(349,549)(349,529)(349,508)
\put(405,508){\circle*{4}}
\qbezier(405,549)(405,529)(405,508)
\qbezier(349,508)(377,508)(405,508)
\put(456,577){\circle*{4}}
\qbezier(363,577)(409,577)(456,577)
\put(456,598){\circle*{4}}
\qbezier(456,577)(456,588)(456,598)
\put(414,577){\circle*{4}}
\put(414,598){\circle*{4}}
\qbezier(414,577)(414,588)(414,598)
\put(442,549){\circle*{4}}
\qbezier(405,549)(423,549)(442,549)
\put(331,547){$v_{1}$}
\put(331,504){$v_{2}$}
\put(409,504){$v_{3}$}
\put(389,541){$v_{4}$}
\put(353,585){$v_{5}$}
\put(402,566){$v_{5+s}$}
\put(366,473){$G_{3}$}
\put(592,577){\circle*{4}}
\put(608,548){\circle*{4}}
\qbezier(592,577)(600,563)(608,548)
\put(548,546){\circle*{4}}
\qbezier(592,577)(570,562)(548,546)
\put(548,505){\circle*{4}}
\qbezier(548,546)(548,526)(548,505)
\put(608,505){\circle*{4}}
\qbezier(608,548)(608,527)(608,505)
\qbezier(548,505)(578,505)(608,505)
\put(497,577){\circle*{4}}
\qbezier(497,577)(544,577)(592,577)
\put(497,597){\circle*{4}}
\qbezier(497,577)(497,587)(497,597)
\put(541,577){\circle*{4}}
\put(541,598){\circle*{4}}
\qbezier(541,577)(541,588)(541,598)
\put(512,505){\circle*{4}}
\qbezier(548,505)(530,505)(512,505)
\put(530,545){$v_{1}$}
\put(550,510){$v_{2}$}
\put(613,502){$v_{3}$}
\put(614,548){$v_{4}$}
\put(592,582){$v_{5}$}
\put(523,567){$v_{s+5}$}
\put(568,473){$G_{4}$}
\put(35,415){\circle*{4}}
\put(18,381){\circle*{4}}
\qbezier(35,415)(26,398)(18,381)
\put(73,385){\circle*{4}}
\qbezier(35,415)(54,400)(73,385)
\put(18,343){\circle*{4}}
\qbezier(18,381)(18,362)(18,343)
\put(73,343){\circle*{4}}
\qbezier(73,385)(73,364)(73,343)
\qbezier(18,343)(45,343)(73,343)
\put(101,415){\circle*{4}}
\qbezier(35,415)(68,415)(101,415)
\put(101,441){\circle*{4}}
\qbezier(101,415)(101,428)(101,441)
\put(71,415){\circle*{4}}
\put(71,440){\circle*{4}}
\qbezier(71,415)(71,428)(71,440)
\put(35,440){\circle*{4}}
\qbezier(35,415)(35,428)(35,440)
\put(1,381){$v_{1}$}
\put(8,331){$v_{2}$}
\put(78,338){$v_{3}$}
\put(56,381){$v_{4}$}
\put(16,417){$v_{5}$}
\put(102,385){\circle*{4}}
\qbezier(73,385)(87,385)(102,385)
\put(40,306){$G_{5}$}
\put(190,415){\circle*{4}}
\put(207,383){\circle*{4}}
\qbezier(190,415)(198,399)(207,383)
\put(151,381){\circle*{4}}
\qbezier(190,415)(170,398)(151,381)
\put(151,343){\circle*{4}}
\qbezier(151,381)(151,362)(151,343)
\put(207,343){\circle*{4}}
\qbezier(151,343)(179,343)(207,343)
\qbezier(207,383)(207,363)(207,343)
\put(125,415){\circle*{4}}
\qbezier(190,415)(157,415)(125,415)
\put(125,440){\circle*{4}}
\qbezier(125,415)(125,428)(125,440)
\put(153,415){\circle*{4}}
\put(153,441){\circle*{4}}
\qbezier(153,415)(153,428)(153,441)
\put(190,441){\circle*{4}}
\qbezier(190,415)(190,428)(190,441)
\put(123,343){\circle*{4}}
\qbezier(151,343)(137,343)(123,343)
\put(155,375){$v_{1}$}
\put(140,332){$v_{2}$}
\put(212,340){$v_{3}$}
\put(212,382){$v_{4}$}
\put(194,415){$v_{5}$}
\put(160,306){$G_{6}$}
\put(276,413){\circle*{4}}
\put(261,386){\circle*{4}}
\qbezier(276,413)(268,400)(261,386)
\put(261,341){\circle*{4}}
\qbezier(261,386)(261,364)(261,341)
\put(314,383){\circle*{4}}
\qbezier(276,413)(295,398)(314,383)
\put(314,341){\circle*{4}}
\qbezier(314,383)(314,362)(314,341)
\qbezier(261,341)(287,341)(314,341)
\put(344,413){\circle*{4}}
\qbezier(276,413)(310,413)(344,413)
\put(344,440){\circle*{4}}
\qbezier(344,413)(344,427)(344,440)
\put(314,413){\circle*{4}}
\put(314,440){\circle*{4}}
\qbezier(314,413)(314,427)(314,440)
\put(342,383){\circle*{4}}
\qbezier(314,383)(328,383)(342,383)
\put(343,341){\circle*{4}}
\qbezier(314,341)(328,341)(343,341)
\put(242,383){$v_{1}$}
\put(242,338){$v_{2}$}
\put(312,329){$v_{3}$}
\put(296,378){$v_{4}$}
\put(258,416){$v_{5}$}
\put(301,402){$v_{s+5}$}
\put(283,306){$G_{7}$}
\put(441,414){\circle*{4}}
\put(402,381){\circle*{4}}
\qbezier(441,414)(421,398)(402,381)
\put(456,383){\circle*{4}}
\qbezier(441,414)(448,399)(456,383)
\put(402,343){\circle*{4}}
\qbezier(402,381)(402,362)(402,343)
\put(456,343){\circle*{4}}
\qbezier(456,383)(456,363)(456,343)
\qbezier(402,343)(429,343)(456,343)
\put(372,414){\circle*{4}}
\qbezier(441,414)(406,414)(372,414)
\put(372,441){\circle*{4}}
\qbezier(372,414)(372,428)(372,441)
\put(403,414){\circle*{4}}
\put(403,442){\circle*{4}}
\qbezier(403,414)(403,428)(403,442)
\put(375,381){\circle*{4}}
\qbezier(402,381)(388,381)(375,381)
\put(484,343){\circle*{4}}
\qbezier(456,343)(470,343)(484,343)
\put(407,375){$v_{1}$}
\put(384,338){$v_{2}$}
\put(449,332){$v_{3}$}
\put(460,380){$v_{4}$}
\put(440,418){$v_{5}$}
\put(390,403){$v_{s+5}$}
\put(423,306){$G_{8}$}
\put(554,415){\circle*{4}}
\put(538,383){\circle*{4}}
\qbezier(554,415)(546,399)(538,383)
\put(590,384){\circle*{4}}
\qbezier(554,415)(572,400)(590,384)
\put(538,343){\circle*{4}}
\qbezier(538,383)(538,363)(538,343)
\put(590,343){\circle*{4}}
\qbezier(590,384)(590,364)(590,343)
\qbezier(538,343)(564,343)(590,343)
\put(627,415){\circle*{4}}
\qbezier(554,415)(590,415)(627,415)
\put(627,444){\circle*{4}}
\qbezier(627,415)(627,430)(627,444)
\put(594,415){\circle*{4}}
\put(594,443){\circle*{4}}
\qbezier(594,415)(594,429)(594,443)
\put(512,383){\circle*{4}}
\qbezier(538,383)(525,383)(512,383)
\put(617,384){\circle*{4}}
\qbezier(590,384)(603,384)(617,384)
\put(521,373){$v_{1}$}
\put(520,341){$v_{2}$}
\put(594,343){$v_{3}$}
\put(572,378){$v_{4}$}
\put(536,418){$v_{5}$}
\put(585,404){$v_{s+5}$}
\put(556,306){$G_{9}$}
\put(72,253){\circle*{4}}
\put(38,219){\circle*{4}}
\qbezier(72,253)(55,236)(38,219)
\put(92,222){\circle*{4}}
\qbezier(72,253)(82,238)(92,222)
\put(38,181){\circle*{4}}
\qbezier(38,219)(38,200)(38,181)
\put(92,181){\circle*{4}}
\qbezier(92,222)(92,202)(92,181)
\qbezier(38,181)(65,181)(92,181)
\put(8,253){\circle*{4}}
\qbezier(72,253)(40,253)(8,253)
\put(8,282){\circle*{4}}
\qbezier(8,253)(8,268)(8,282)
\put(37,253){\circle*{4}}
\put(37,282){\circle*{4}}
\qbezier(37,253)(37,268)(37,282)
\put(14,181){\circle*{4}}
\qbezier(38,181)(26,181)(14,181)
\put(119,181){\circle*{4}}
\qbezier(92,181)(105,181)(119,181)
\put(22,219){$v_{1}$}
\put(30,169){$v_{2}$}
\put(83,170){$v_{3}$}
\put(96,219){$v_{4}$}
\put(75,255){$v_{5}$}
\put(46,144){$G_{10}$}
\put(22,243){$v_{s+5}$}
\put(167,253){\circle*{4}}
\put(148,221){\circle*{4}}
\qbezier(167,253)(157,237)(148,221)
\put(148,181){\circle*{4}}
\qbezier(148,221)(148,201)(148,181)
\put(200,220){\circle*{4}}
\qbezier(167,253)(183,237)(200,220)
\put(200,181){\circle*{4}}
\qbezier(200,220)(200,201)(200,181)
\qbezier(148,181)(174,181)(200,181)
\put(236,253){\circle*{4}}
\qbezier(167,253)(201,253)(236,253)
\put(236,282){\circle*{4}}
\qbezier(236,253)(236,268)(236,282)
\put(208,253){\circle*{4}}
\put(208,283){\circle*{4}}
\qbezier(208,253)(208,268)(208,283)
\put(135,253){\circle*{4}}
\qbezier(167,253)(151,253)(135,253)
\put(229,220){\circle*{4}}
\qbezier(200,220)(214,220)(229,220)
\put(230,181){\circle*{4}}
\qbezier(200,181)(215,181)(230,181)
\put(131,219){$v_{1}$}
\put(141,168){$v_{2}$}
\put(198,168){$v_{3}$}
\put(184,216){$v_{4}$}
\put(159,260){$v_{5}$}
\put(163,144){$G_{11}$}
\put(335,253){\circle*{4}}
\put(295,219){\circle*{4}}
\qbezier(335,253)(315,236)(295,219)
\put(295,181){\circle*{4}}
\qbezier(295,219)(295,200)(295,181)
\put(349,220){\circle*{4}}
\qbezier(335,253)(342,237)(349,220)
\put(349,181){\circle*{4}}
\qbezier(349,220)(349,201)(349,181)
\qbezier(295,181)(322,181)(349,181)
\put(261,253){\circle*{4}}
\qbezier(335,253)(298,253)(261,253)
\put(261,280){\circle*{4}}
\qbezier(261,253)(261,267)(261,280)
\put(296,253){\circle*{4}}
\put(296,281){\circle*{4}}
\qbezier(296,253)(296,267)(296,281)
\put(368,253){\circle*{4}}
\qbezier(335,253)(351,253)(368,253)
\put(269,219){\circle*{4}}
\qbezier(295,219)(282,219)(269,219)
\put(376,220){\circle*{4}}
\qbezier(349,220)(362,220)(376,220)
\put(282,225){$v_{1}$}
\put(278,178){$v_{2}$}
\put(353,177){$v_{3}$}
\put(331,219){$v_{4}$}
\put(329,258){$v_{5}$}
\put(310,144){$G_{12}$}
\put(428,255){\circle*{4}}
\put(411,221){\circle*{4}}
\qbezier(428,255)(419,238)(411,221)
\put(467,222){\circle*{4}}
\qbezier(428,255)(447,239)(467,222)
\put(411,181){\circle*{4}}
\qbezier(411,221)(411,201)(411,181)
\put(467,181){\circle*{4}}
\qbezier(467,222)(467,202)(467,181)
\qbezier(411,181)(439,181)(467,181)
\put(498,255){\circle*{4}}
\qbezier(428,255)(463,255)(498,255)
\put(469,255){\circle*{4}}
\put(469,278){\circle*{4}}
\qbezier(469,255)(469,267)(469,278)
\put(498,279){\circle*{4}}
\qbezier(498,255)(498,267)(498,279)
\put(496,222){\circle*{4}}
\qbezier(467,222)(481,222)(496,222)
\put(496,181){\circle*{4}}
\qbezier(467,181)(481,181)(496,181)
\put(399,245){\circle*{4}}
\qbezier(411,221)(405,233)(399,245)
\put(424,144){$G_{13}$}
\put(414,212){$v_{1}$}
\put(394,176){$v_{2}$}
\put(463,170){$v_{3}$}
\put(450,216){$v_{4}$}
\put(421,261){$v_{5}$}
\put(458,244){$v_{5+s}$}
\put(590,255){\circle*{4}}
\put(551,221){\circle*{4}}
\qbezier(590,255)(570,238)(551,221)
\put(606,222){\circle*{4}}
\qbezier(590,255)(598,239)(606,222)
\put(551,181){\circle*{4}}
\qbezier(551,221)(551,201)(551,181)
\put(606,181){\circle*{4}}
\qbezier(606,222)(606,202)(606,181)
\qbezier(551,181)(578,181)(606,181)
\put(524,181){\circle*{4}}
\qbezier(551,181)(537,181)(524,181)
\put(524,221){\circle*{4}}
\qbezier(551,221)(537,221)(524,221)
\put(636,181){\circle*{4}}
\qbezier(606,181)(621,181)(636,181)
\put(522,255){\circle*{4}}
\qbezier(590,255)(556,255)(522,255)
\put(522,279){\circle*{4}}
\qbezier(522,255)(522,267)(522,279)
\put(555,255){\circle*{4}}
\put(555,280){\circle*{4}}
\qbezier(555,255)(555,268)(555,280)
\put(554,214){$v_{1}$}
\put(543,170){$v_{2}$}
\put(603,169){$v_{3}$}
\put(610,219){$v_{4}$}
\put(587,260){$v_{5}$}
\put(540,244){$v_{5+s}$}
\put(567,144){$G_{14}$}
\put(47,97){\circle*{4}}
\put(25,64){\circle*{4}}
\qbezier(47,97)(36,81)(25,64)
\put(80,66){\circle*{4}}
\qbezier(47,97)(63,82)(80,66)
\put(25,23){\circle*{4}}
\qbezier(25,64)(25,44)(25,23)
\put(80,23){\circle*{4}}
\qbezier(80,66)(80,45)(80,23)
\qbezier(25,23)(52,23)(80,23)
\put(118,97){\circle*{4}}
\qbezier(47,97)(82,97)(118,97)
\put(118,122){\circle*{4}}
\qbezier(118,97)(118,110)(118,122)
\put(90,97){\circle*{4}}
\put(90,122){\circle*{4}}
\qbezier(90,97)(90,110)(90,122)
\put(17,97){\circle*{4}}
\qbezier(47,97)(32,97)(17,97)
\put(111,66){\circle*{4}}
\qbezier(80,66)(95,66)(111,66)
\put(111,23){\circle*{4}}
\qbezier(80,23)(95,23)(111,23)
\put(9,34){\circle*{4}}
\qbezier(25,23)(17,29)(9,34)
\put(29,58){$v_{1}$}
\put(18,12){$v_{2}$}
\put(78,12){$v_{3}$}
\put(62,60){$v_{4}$}
\put(42,103){$v_{5}$}
\put(43,-12){$G_{15}$}
\put(176,97){\circle*{4}}
\put(154,65){\circle*{4}}
\qbezier(176,97)(165,81)(154,65)
\put(154,23){\circle*{4}}
\qbezier(154,65)(154,44)(154,23)
\put(205,66){\circle*{4}}
\qbezier(176,97)(190,82)(205,66)
\put(205,23){\circle*{4}}
\qbezier(205,66)(205,45)(205,23)
\qbezier(154,23)(179,23)(205,23)
\put(129,34){\circle*{4}}
\qbezier(154,23)(141,29)(129,34)
\put(129,55){\circle*{4}}
\qbezier(154,65)(141,60)(129,55)
\put(243,97){\circle*{4}}
\qbezier(176,97)(209,97)(243,97)
\put(243,122){\circle*{4}}
\qbezier(243,97)(243,110)(243,122)
\put(215,97){\circle*{4}}
\put(215,121){\circle*{4}}
\qbezier(215,97)(215,109)(215,121)
\put(232,66){\circle*{4}}
\qbezier(205,66)(218,66)(232,66)
\put(232,23){\circle*{4}}
\qbezier(205,23)(218,23)(232,23)
\put(140,70){$v_{1}$}
\put(143,12){$v_{2}$}
\put(197,12){$v_{3}$}
\put(187,62){$v_{4}$}
\put(169,104){$v_{5}$}
\put(166,-12){$G_{16}$}
\put(200,86){$v_{s+5}$}
\put(565,97){\circle*{4}}
\put(546,66){\circle*{4}}
\qbezier(565,97)(555,82)(546,66)
\put(600,68){\circle*{4}}
\qbezier(565,97)(582,83)(600,68)
\put(546,24){\circle*{4}}
\qbezier(546,66)(546,45)(546,24)
\put(600,24){\circle*{4}}
\qbezier(600,68)(600,46)(600,24)
\qbezier(546,24)(573,24)(600,24)
\put(527,80){\circle*{4}}
\qbezier(546,66)(536,73)(527,80)
\put(526,44){\circle*{4}}
\qbezier(546,24)(536,34)(526,44)
\put(628,36){\circle*{4}}
\qbezier(600,24)(614,30)(628,36)
\put(628,75){\circle*{4}}
\qbezier(600,68)(614,72)(628,75)
\put(537,97){\circle*{4}}
\qbezier(565,97)(551,97)(537,97)
\put(633,97){\circle*{4}}
\qbezier(565,97)(599,97)(633,97)
\put(633,122){\circle*{4}}
\qbezier(633,97)(633,110)(633,122)
\put(605,97){\circle*{4}}
\put(605,121){\circle*{4}}
\qbezier(605,97)(605,109)(605,121)
\put(550,59){$v_{1}$}
\put(543,13){$v_{2}$}
\put(591,13){$v_{3}$}
\put(585,61){$v_{4}$}
\put(558,104){$v_{5}$}
\put(561,-12){$G_{19}$}
\put(251,-36){Fig. 4.1. $G_{1}-G_{19}$}
\put(304,97){\circle*{4}}
\put(282,63){\circle*{4}}
\qbezier(304,97)(293,80)(282,63)
\put(336,65){\circle*{4}}
\qbezier(304,97)(320,81)(336,65)
\put(282,22){\circle*{4}}
\qbezier(282,63)(282,43)(282,22)
\put(336,22){\circle*{4}}
\qbezier(336,65)(336,44)(336,22)
\qbezier(282,22)(309,22)(336,22)
\put(367,97){\circle*{4}}
\qbezier(304,97)(335,97)(367,97)
\put(340,97){\circle*{4}}
\put(340,119){\circle*{4}}
\qbezier(340,97)(340,108)(340,119)
\put(367,119){\circle*{4}}
\qbezier(367,97)(367,108)(367,119)
\put(276,97){\circle*{4}}
\qbezier(304,97)(290,97)(276,97)
\put(258,63){\circle*{4}}
\qbezier(282,63)(270,63)(258,63)
\put(257,22){\circle*{4}}
\qbezier(282,22)(269,22)(257,22)
\put(362,22){\circle*{4}}
\qbezier(336,22)(349,22)(362,22)
\put(286,58){$v_{1}$}
\put(277,10){$v_{2}$}
\put(333,10){$v_{3}$}
\put(341,61){$v_{4}$}
\put(300,102){$v_{5}$}
\put(298,-12){$G_{17}$}
\put(435,97){\circle*{4}}
\put(410,66){\circle*{4}}
\qbezier(435,97)(422,82)(410,66)
\put(410,22){\circle*{4}}
\qbezier(410,66)(410,44)(410,22)
\put(466,66){\circle*{4}}
\qbezier(435,97)(450,82)(466,66)
\put(466,22){\circle*{4}}
\qbezier(410,22)(438,22)(466,22)
\qbezier(466,66)(466,44)(466,22)
\put(385,22){\circle*{4}}
\qbezier(410,22)(397,22)(385,22)
\put(386,66){\circle*{4}}
\qbezier(410,66)(398,66)(386,66)
\put(408,97){\circle*{4}}
\qbezier(435,97)(421,97)(408,97)
\put(506,97){\circle*{4}}
\qbezier(435,97)(470,97)(506,97)
\put(473,97){\circle*{4}}
\put(473,119){\circle*{4}}
\qbezier(473,97)(473,108)(473,119)
\put(506,119){\circle*{4}}
\qbezier(506,97)(506,108)(506,119)
\put(497,66){\circle*{4}}
\qbezier(466,66)(481,66)(497,66)
\put(414,59){$v_{1}$}
\put(406,11){$v_{2}$}
\put(469,14){$v_{3}$}
\put(449,59){$v_{4}$}
\put(430,102){$v_{5}$}
\put(425,-12){$G_{18}$}
\end{picture}
\end{center}

\

For the both cases that $|x_{2}|=\min\{|x_{1}|$, $|x_{2}|$, $|x_{3}|\}$ and $|x_{3}|=\min\{|x_{1}|$, $|x_{2}|$, $|x_{3}|\}$. As the case that $|x_{1}|=\min\{|x_{1}|$, $|x_{2}|$, $|x_{3}|\}$, it is proved that there exists a graph $\mathbb{H}$ such that $g(\mathbb{H})=3$, $\gamma(G_{3})\leq \gamma(\mathbb{H})$ and $q_{min}(\mathbb{H})< q_{min}(G_{3})$.

In a same way, for $G_{4}$, it is proved that there exists a graph $\mathbb{H}$ such that $g(\mathbb{H})=3$, $\gamma(G_{4})\leq \gamma(\mathbb{H})$ and $q_{min}(\mathbb{H})< q_{min}(G_{4})$.

For the cases that {\bf Case 3} there is exactly $2$ $p$-dominators on $\mathbb{C}$ (see $G_{5}-G_{10}$ in Fig. 4.1); {\bf Case 4} there is exactly $3$ $p$-dominators on $\mathbb{C}$ (see $G_{11}-G_{15}$ in Fig. 4.1); {\bf Case 5} there is exactly $4$ $p$-dominators on $\mathbb{C}$ (see $G_{16}-G_{18}$ in Fig. 4.1); {\bf Case 6} there is exactly $5$ $p$-dominators on $\mathbb{C}$ (see $G_{19}$ in Fig. 4.1), in a same way, it is proved that the exists a a graph $\mathbb{H}$ such that $g(\mathbb{H})=3$, $\gamma(G)\leq \gamma(\mathbb{H})$ and $q_{min}(\mathbb{H})\leq q_{min}(G)$.
Thus, the result follows as desired. \ \ \ \ \ $\Box$
\end{proof}

\begin{lemma}\label{le04,11} 
For odd cycle $\mathscr{C}=v_{1}v_{2}v_{3}\cdots v_{n-1}v_{n}v_{1}$, $\gamma(\mathscr{K})=\gamma(\mathscr{C})$, and
$q_{min}(\mathscr{K})\leq q_{min}(\mathscr{C})$ with equality if and only if $n=3$ ($\mathscr{K}$ is shown in Fig. 3.7).
\end{lemma}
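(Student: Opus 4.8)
The plan is to treat the two assertions separately. The equality $\gamma(\mathscr{K})=\gamma(\mathscr{C})$ is already established: it is exactly Theorem~\ref{th03,09}, so I would simply cite it. I would also record that when $n=3$ the defining edge rotations degenerate (the index $\lfloor\frac{n-2}{2}\rfloor$ wraps back onto the deleted edge), so $\mathscr{K}=\mathscr{C}=C_3$ and the eigenvalue equality is then trivial. Hence the entire content is the strict inequality $q_{min}(\mathscr{K})<q_{min}(\mathscr{C})$ for odd $n\ge 5$, and I would obtain it by exhibiting a test vector $Y$ on $\mathscr{K}$ whose Rayleigh quotient $Y^{T}Q(\mathscr{K})Y/Y^{T}Y$ lies strictly below $q_{min}(\mathscr{C})$.

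First I would fix a convenient eigenvector of the cycle. Write $a=\lceil\frac{n}{2}\rceil$, so that the triangle of $\mathscr{K}$ is $v_{a-2}v_{a-1}v_{a}$. Since $q_{min}(\mathscr{C})=q_{min}(C_{n})=2-2\cos(\pi/n)$ is a double eigenvalue of $Q(C_{n})$, its eigenspace is two dimensional, and I may choose a unit eigenvector $X=(x_{1},\dots,x_{n})^{T}$ of $\mathscr{C}$ that vanishes at the apex $v_{a-1}$. Evaluating the eigen-equation of $\mathscr{C}$ at $v_{a-1}$ gives $q_{min}(\mathscr{C})x_{a-1}=2x_{a-1}+x_{a-2}+x_{a}$, whence $x_{a-2}+x_{a}=0$: the chord $v_{a}v_{a-2}$ that closes the triangle of $\mathscr{K}$ carries value $0$ under $X$. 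By Lemma~\ref{le04,01}(ii) all remaining entries are nonzero and adjacent ones alternate in sign, and from the eigen-equations along the cycle (using $2\cos(\pi/n)>1$ for $n\ge5$) the magnitudes $|x_{j}|$ increase as $v_{j}$ moves away from $v_{a-1}$ along the arc; in particular $x_{a}x_{a+1}<0$ with $|x_{a}|<|x_{a+1}|$, and $|x_{a+1}|\le|x_{a+4}|$.

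Next I would split into the two congruence cases. If $n\not\equiv1\ (\mathrm{mod}\ 3)$ then $\mathscr{K}=\mathscr{C}-v_{a}v_{a+1}+v_{a}v_{a-2}$ (the lollipop $L_{3,\,n-3}$), and I take $Y=X$; only one edge pair changes, so $Y^{T}Q(\mathscr{K})Y=q_{min}(\mathscr{C})-(x_{a}+x_{a+1})^{2}+(x_{a}+x_{a-2})^{2}=q_{min}(\mathscr{C})-(x_{a}+x_{a+1})^{2}$, and since $x_{a}+x_{a+1}\ne0$ this is $<q_{min}(\mathscr{C})$. If $n\equiv1\ (\mathrm{mod}\ 3)$ then $\mathscr{K}$ additionally detaches $v_{a+1}$ from $v_{a+2}$ and reattaches it as a pendant at $v_{a+4}$; here I keep $y_{i}=x_{i}$ for $i\ne a+1$ and set $y_{a+1}=-x_{a+4}$, so the new pendant edge also contributes $(y_{a+1}+x_{a+4})^{2}=0$. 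Then the four altered edges give $Y^{T}Q(\mathscr{K})Y=q_{min}(\mathscr{C})-\big[(x_{a}+x_{a+1})^{2}+(x_{a+1}+x_{a+2})^{2}\big]$, while $Y^{T}Y=1-x_{a+1}^{2}+x_{a+4}^{2}\ge1$ by the magnitude comparison above; as the bracket is positive, the quotient is again strictly below $q_{min}(\mathscr{C})$, so $q_{min}(\mathscr{K})<q_{min}(\mathscr{C})$.

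Finally, tracing the inequalities, equality $q_{min}(\mathscr{K})=q_{min}(\mathscr{C})$ could hold only if $(x_{a}+x_{a+1})^{2}=0$, i.e. $|x_{a}|=|x_{a+1}|$, which the strict magnitude growth away from $v_{a-1}$ forbids for $n\ge5$, leaving only the degenerate case $n=3$. I expect the main technical point to be the magnitude bookkeeping in the $n\equiv1$ case, namely the inequality $|x_{a+4}|\ge|x_{a+1}|$ that guarantees $Y^{T}Y\ge1$: this relies on $v_{a+1}$ and $v_{a+4}$ lying on the same monotone arc issuing from $v_{a-1}$, i.e. on $5\le\frac{n-1}{2}$, which holds for $n\ge13$. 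The one residual value $n=7$ (where $v_{a+4}$ wraps past the antipode) I would settle by the reflection symmetry of $X$, for which $|x_{a+4}|=|x_{a+1}|$, so that $Y^{T}Y=1$ and the bracket still produces a strict drop.
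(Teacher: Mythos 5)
Your proof is correct, and it takes a genuinely different route from the paper's. The paper's own proof has the same skeleton (cite Theorem \ref{th03,09} for the domination number, dispose of $n=3$ trivially, then run a Rayleigh-quotient comparison for odd $n\geq 5$), but its eigenvector choice is the opposite of yours: it \emph{symmetrizes} a least eigenvector of $\mathscr{C}$ so that $x_{i}=x_{n-i}$ (maximum at $v_{n}$, minimum at the two adjacent mid-cycle vertices, nowhere zero), and then, in the $n\equiv 1$ (mod $3$) case, perturbs only the entry of the re-attached vertex $v_{\lceil\frac{n+2}{2}\rceil}$ so that every altered edge of $\mathscr{K}$ contributes exactly what the deleted edges contributed in $\mathscr{C}$; the gain then comes entirely from the strictly larger norm $Z^{T}Z>X^{T}X$, and strictness of the eigenvalue drop needs a separate remark (``as Theorem \ref{th04,02}''). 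You instead exploit the fact that $q_{min}(C_{n})=2-2\cos(\pi/n)$ has multiplicity two to pick the antisymmetric eigenvector vanishing at the apex $v_{a-1}$, which makes the new chord $v_{a}v_{a-2}$ cost zero, so the deleted-edge contributions $(x_{a}+x_{a+1})^{2}$ and $(x_{a+1}+x_{a+2})^{2}$ are pure loss in the numerator while the denominator does not decrease; this buys a cleaner, self-contained strictness argument, at the price of invoking the explicit spectral theory of the cycle (the paper's averaging $X=Y+Y'$ never needs the multiplicity). One point you should firm up: your justification of the magnitude growth ``from the eigen-equations using $2\cos(\pi/n)>1$'' is loose as stated, since the recursion $|x_{j+1}|=2\cos(\pi/n)|x_{j}|-|x_{j-1}|$ does not support a naive induction and in fact the last two magnitudes on each arc coincide (e.g. $|x_{a+1}|=|x_{a+2}|$ when $n=5$), so ``strictly increasing along the arc'' is not literally true; however, the comparisons you actually use ($|x_{a}|<|x_{a+1}|$, and $|x_{a+1}|\leq|x_{a+4}|$ with equality exactly at $n=7$) all follow immediately from the closed form $x_{j}=c\,(-1)^{j}\sin\bigl(\pi(j-(a-1))/n\bigr)$, which is forced for you because the vanishing condition cuts the two-dimensional eigenspace down to a line. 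With that substitution for the monotonicity step, the argument is complete.
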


\begin{proof}
The theorem holds for $n=3$ clearly. Next, suppose that $n\geq 5$.

{\bf Claim} For $\mathscr{C}$, there is an eigenvector $X=(\,x_1$, $x_2$, $\ldots$, $x_{n-1}$, $x_{n}\,)^{T}$ such that $x_{i}=x_{n-i}$ for $1\leq i\leq \lfloor\frac{n}{2}\rfloor$, $|x_{n}|=\max\{|x_{i}||\, 1\leq i\leq n\}$. Let $Y=(\,y_1$, $y_2$, $\ldots$,  $y_{n-1}$, $y_{n}\,)^{T}$ be an eigenvector corresponding to $q_{min}(\mathscr{C})$ and $|y_{n}|=\max\{|y_{i}||\, 1\leq i\leq n\}$. Clearly, $|y_{n}|> 0$. Let $Y^{'}=(\,y^{'}_1$, $y^{'}_2$, $\ldots$,  $y^{'}_{n-1}$, $y^{'}_{n}\,)^{T}$ satisfy that $y^{'}_{n}=y_{n}$, and satisfy that for $1\leq i\leq \lfloor\frac{n}{2}\rfloor$, $y^{'}_{i}=y_{n-i}$, $y^{'}_{n-i}=y_{i}$. Then $Y^{'}$ is also an eigenvector corresponding to $q_{min}(\mathscr{C})$. Let $X=Y+Y^{'}$. Then $X$ makes the claim holding.

For such $X$, $x_{\lfloor\frac{n}{2}\rfloor}=x_{n-\lfloor\frac{n}{2}\rfloor}$. Then by Lemma \ref{le04,01} it follows that $x_{\lfloor\frac{n}{2}\rfloor}\neq0$, and moreover, $|x_{\lfloor\frac{n}{2}\rfloor}|=\min\{x_{i}|\, 1\leq i\leq n\}$. Note that $q_{min}(\mathscr{C})x_{n}=2x_{n}+x_{1}+x_{n-1}$. Combined with Lemma \ref{le04,01}, it follows that $|x_{1}|<|x_{n}|$. Using Lemma \ref{le04,01} again gets that $|x_{\lfloor\frac{n}{2}\rfloor}|<|x_{\lfloor\frac{n-2}{2}\rfloor}|<\cdots<|x_{2}|<|x_{1}|<|x_{n}|$.

(i) $n\equiv 1$ (mod 3). Then $n\geq 7$. Suppose $n=3k+1$ where $k\geq 2$. Because $n\geq 7$, it follows that if $\lceil\frac{n+8}{2}\rceil\leq n$, then $|x_{\lceil\frac{n+8}{2}\rceil}|> |x_{\lceil\frac{n+4}{2}\rceil}|$; if $\lceil\frac{n+8}{2}\rceil> n$, then $|x_{t}|\geq|x_{\lceil\frac{n+4}{2}\rceil}|$ ($t\equiv \lceil\frac{n+8}{2}\rceil$ (mod n), $1\leq t\leq n-1$). Let $Z=(\,z_1$, $z_2$, $\ldots$, $z_{n-1}$, $z_{n}\,)^{T}$ satisfy that if $\lceil\frac{n+8}{2}\rceil\leq n$, then
$z_{\lceil\frac{n+2}{2}\rceil}|=-sgn(z_{\lceil\frac{n+8}{2}\rceil})(|x_{\lceil\frac{n+2}{2}\rceil}|+
|x_{\lceil\frac{n+8}{2}\rceil}|-|x_{\lceil\frac{n+4}{2}\rceil}|)$, $z_{i}=x_{i}$ for $i\neq \lceil\frac{n+2}{2}\rceil$; if $\lceil\frac{n+8}{2}\rceil> n$, then
$z_{\lceil\frac{n+2}{2}\rceil}|=-sgn(z_{t})(|x_{\lceil\frac{n+2}{2}\rceil}|+
|x_{t}|-|x_{\lceil\frac{n+4}{2}\rceil}|)$, $z_{i}=x_{i}$ for $i\neq \lceil\frac{n+2}{2}\rceil$. Thus $Z^{T}Z\geq X^{T}X$ and $\displaystyle q_{min}(\mathscr{K})\leq \frac{Z^{T}Q(\mathscr{K})Z}{Z^{T}Z}\leq \frac{X^{T}Q(\mathscr{C})X}{X^{T}X}=q_{min}(\mathscr{C})$. As Theorem \ref{th04,02}, it is proved that $q_{min}(\mathscr{K})<q_{min}(\mathscr{C})$. Combined with Theorem \ref{th03,09}, the theorem holds for this case.

(ii) $n\not\equiv 1$ (mod 3) and $n\geq 5$. As (i), it is proved that the theorem holds.
\ \ \ \ \ $\Box$
\end{proof}

\setlength{\unitlength}{0.7pt}
\begin{center}
\begin{picture}(390,313)
\put(38,295){\circle*{4}}
\put(38,218){\circle*{4}}
\qbezier(38,295)(38,257)(38,218)
\put(84,257){\circle*{4}}
\qbezier(38,295)(61,276)(84,257)
\qbezier(38,218)(61,238)(84,257)
\put(222,257){\circle*{4}}
\put(146,257){\circle*{4}}
\put(244,257){\circle*{4}}
\put(233,257){\circle*{4}}
\put(261,257){\circle*{4}}
\put(261,287){\circle*{4}}
\put(323,257){\circle*{4}}
\qbezier(261,257)(261,272)(261,287)
\put(205,257){\circle*{4}}
\put(382,257){\circle*{4}}
\put(37,133){\circle*{4}}
\put(37,57){\circle*{4}}
\put(79,95){\circle*{4}}
\put(147,95){\circle*{4}}
\put(244,95){\circle*{4}}
\put(260,95){\circle*{4}}
\put(221,95){\circle*{4}}
\put(233,95){\circle*{4}}
\qbezier(37,133)(37,95)(37,57)
\qbezier(37,133)(58,114)(79,95)
\qbezier(37,57)(58,76)(79,95)
\put(36,302){$v_{2}$}
\put(82,244){$v_{3}$}
\put(137,244){$v_{4}$}
\put(36,205){$v_{1}$}
\put(33,141){$v_{2}$}
\put(79,80){$v_{3}$}
\put(33,43){$v_{1}$}
\put(130,-9){Fig. 4.2. $\mathscr{H}_{3,\frac{n}{2}}$, $\mathscr{H}_{3,\frac{n-3}{2}}$}
\put(204,182){$\mathscr{H}_{3,\frac{n}{2}}$}
\put(202,23){$\mathscr{H}_{3,\frac{n-3}{2}}$}
\put(389,285){$v_{\frac{n+2}{2}}$}
\put(389,252){$v_{\frac{n}{2}}$}
\put(382,95){\circle*{4}}
\put(323,95){\circle*{4}}
\put(374,79){$v_{\frac{n+3}{2}}$}
\put(389,126){$v_{\frac{n+5}{2}}$}
\qbezier(38,218)(38,257)(38,295)
\qbezier(38,218)(38,257)(38,295)
\qbezier(38,218)(38,257)(38,295)
\qbezier(38,218)(38,257)(38,295)
\put(382,127){\circle*{4}}
\qbezier(382,95)(382,111)(382,127)
\put(323,127){\circle*{4}}
\qbezier(323,95)(323,111)(323,127)
\put(260,126){\circle*{4}}
\qbezier(260,95)(260,111)(260,126)
\put(307,80){$v_{\frac{n+1}{2}}$}
\put(382,284){\circle*{4}}
\qbezier(382,257)(382,271)(382,284)
\qbezier(323,257)(352,257)(382,257)
\qbezier(323,257)(292,257)(261,257)
\put(323,287){\circle*{4}}
\qbezier(323,257)(323,272)(323,287)
\put(84,287){\circle*{4}}
\qbezier(84,257)(84,272)(84,287)
\put(146,286){\circle*{4}}
\qbezier(146,257)(146,272)(146,286)
\qbezier(146,257)(175,257)(205,257)
\qbezier(146,257)(115,257)(84,257)
\put(205,286){\circle*{4}}
\qbezier(205,257)(205,272)(205,286)
\put(1,218){\circle*{4}}
\qbezier(38,218)(19,218)(1,218)
\put(0,295){\circle*{4}}
\qbezier(38,295)(19,295)(0,295)
\qbezier(323,95)(352,95)(382,95)
\qbezier(260,95)(291,95)(323,95)
\put(312,243){$v_{\frac{n-2}{2}}$}
\put(206,128){\circle*{4}}
\qbezier(79,95)(113,95)(147,95)
\put(147,127){\circle*{4}}
\qbezier(147,95)(147,111)(147,127)
\put(206,95){\circle*{4}}
\qbezier(147,95)(176,95)(206,95)
\qbezier(206,95)(206,112)(206,128)
\put(143,79){$v_{4}$}
\end{picture}
\end{center}

\begin{lemma}\label{le04,06} 
Let $G$ be a nonbipartite $\mathcal {F}_{g, l}$-graph of order $n$ for some $l$ and with domination number $\frac{n-1}{2}$. Then $q_{min}(G)\geq q_{min}(\mathscr{H}_{3,\frac{n-3}{2}})$ with equality if and only if $G\cong \mathscr{H}_{3,\frac{n-3}{2}}$ (see Fig. 4.2).
\end{lemma}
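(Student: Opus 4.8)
The plan is to reduce the comparison to graphs of girth three, where Theorem~\ref{th04,04}(ii) already singles out $\mathscr{H}_{3,\frac{n-3}{2}}$ as the unique minimiser of $q_{min}$. Since $G$ is nonbipartite, its girth $g$ is odd, and I would split the argument into the cases $g=3$, $g=5$ and $g\geq 7$. Throughout I would lean on one arithmetic observation that makes every reduction land in the right class: each girth-reducing operation below yields a \emph{connected} graph $\mathbb{H}$ of the same order $n$ with $\gamma(G)\leq\gamma(\mathbb{H})$; since $\mathbb{H}$ has no isolated vertex, Ore's bound \cite{ORE} gives $\gamma(\mathbb{H})\leq n/2$, and because $n$ is odd (forced by $\gamma(G)=\frac{n-1}{2}\in\mathbb{Z}$) and $\gamma(\mathbb{H})$ is an integer, this squeezes $\gamma(\mathbb{H})=\frac{n-1}{2}=\gamma(G)$. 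Thus any such $\mathbb{H}$ is itself a nonbipartite unicyclic graph of girth three with domination number $\frac{n-1}{2}$, i.e. exactly the class governed by Theorem~\ref{th04,04}(ii).

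For $g=3$ there is nothing to reduce: $G$ is already a nonbipartite unicyclic graph of girth $3$ with $\gamma(G)=\frac{n-1}{2}$ and $n\geq 5$, so Theorem~\ref{th04,04}(ii) delivers both $q_{min}(G)\geq q_{min}(\mathscr{H}_{3,\frac{n-3}{2}})$ and the equality characterisation $G\cong\mathscr{H}_{3,\frac{n-3}{2}}$. For $g=5$ (where $n\geq g+1=6$, so $n\geq 7$), Lemma~\ref{le04,09} supplies a girth-three, hence nonbipartite, unicyclic graph $\mathbb{H}$ of order $n$ with $\gamma(G)\leq\gamma(\mathbb{H})$ and $q_{min}(\mathbb{H})<q_{min}(G)$. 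By the Ore observation $\gamma(\mathbb{H})=\frac{n-1}{2}$, so Theorem~\ref{th04,04}(ii) gives $q_{min}(\mathbb{H})\geq q_{min}(\mathscr{H}_{3,\frac{n-3}{2}})$, and chaining the inequalities yields $q_{min}(G)>q_{min}(\mathscr{H}_{3,\frac{n-3}{2}})$ strictly, consistent with equality being impossible away from girth three.

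For $g\geq 7$ I would first invoke the structural constraints of Lemma~\ref{le03,08}. As $g\geq 7$ we have $f\neq g$, whence $f\in\{0,1,3\}$, the non-$p$-dominators on $\mathbb{C}$ form a short consecutive block, and every interior vertex of $\mathbb{P}$ is a $p$-dominator. In particular $v_g$ is a $p$-dominator in all but a few boundary configurations, so $G$ is a $\mathcal{F}^{\circ}_{g,l}$-graph and Theorem~\ref{th04,02} applies directly, producing a girth-three unicyclic $\mathbb{H}$ with $\gamma(G)\leq\gamma(\mathbb{H})$ and $q_{min}(\mathbb{H})<q_{min}(G)$; the same Ore argument together with Theorem~\ref{th04,04}(ii) then forces $q_{min}(G)>q_{min}(\mathscr{H}_{3,\frac{n-3}{2}})$.

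The hard part will be the residual $g\geq 7$ subcases in which $v_g$ fails to be a $p$-dominator (the $f=1$ configuration with $v_g$ the lone exceptional cycle vertex, and the boundary $f=3$ configurations), since then $G$ is \emph{not} a $\mathcal{F}^{\circ}_{g,l}$-graph and Theorem~\ref{th04,02} cannot be quoted verbatim. For these I would imitate the perturbation underlying Theorem~\ref{th04,02} and Lemma~\ref{le04,09}: choose the cycle vertex $v_{a+1}$ of smallest eigenvector modulus via Lemma~\ref{le04,01}, contract the $g$-cycle to a triangle on $v_{a-1},v_a,v_{a+1}$ by the edge-swap $\mathcal{G}-v_{a+1}v_{a+2}+v_{a+1}v_{a-1}$ (positioned to avoid the non-$p$-dominator block), then use the sign and magnitude information of Lemma~\ref{le04,01} to verify that $q_{min}$ strictly drops, while the dominating-set description in Lemma~\ref{le03,08} keeps $\gamma$ pinned at $\frac{n-1}{2}$. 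Assembling the three cases gives $q_{min}(G)\geq q_{min}(\mathscr{H}_{3,\frac{n-3}{2}})$ unconditionally; and since strict inequality holds whenever $g\geq 5$, equality forces $g=3$ and hence, by the uniqueness in Theorem~\ref{th04,04}(ii), $G\cong\mathscr{H}_{3,\frac{n-3}{2}}$.
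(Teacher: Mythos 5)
Your proposal is correct and follows essentially the same route as the paper's own proof: the identical case split into $g=3$ (Theorem \ref{th04,04}), $g=5$ (Lemma \ref{le04,09}), the $\mathcal {F}^{\circ}_{g, l}$ case (Theorem \ref{th04,02}), and the residual $g\geq 7$ configurations isolated by Lemma \ref{le03,08}, which the paper likewise dispatches with the Lemma \ref{le04,01}-guided edge swaps you sketch (note that its tailored swaps in the boundary subcases also relocate pendant vertices, a detail your generic swap must absorb). The only notable difference is that you make the Ore-bound squeeze $\gamma(\mathbb{H})\leq\lfloor n/2\rfloor=\frac{n-1}{2}$ explicit, which the paper uses implicitly when asserting $\gamma(G_{1})=\frac{n-1}{2}$.
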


\begin{proof}
Because $G$ is nonbipartite, $g$ is odd. If $G$ is a $\mathcal {F}^{\circ}_{g, l}$-graph, then the theorem follows from Theorem \ref{th04,05}. If $g=3$, then the theorem follows from Theorem \ref{th04,04}. For $g=5$, the theorem follows from Lemmas \ref{le04,09}, \ref{le04,11}. Next we consider the case that $G$ is not a $\mathcal {F}^{\circ}_{g, l}$-graph and suppose $g\geq 7$.

Note that by Lemma \ref{le03,08}, in $G$, there are at most $3$ consecutive vertices of $\mathbb{C}$ such that each of them is not $p$-dominator, and there are $2$ cases as follows to consider.  Let $X=(\,x_{1}$, $x_{2}$, $\ldots$, $x_{n}\,)^T$ is a unit eigenvector corresponding to $q_{min}(G)$. Suppose $x_{a}=\min\{|x_{1}|$, $|x_{2}|$, $\ldots$, $|x_{g}|\}$.

{\bf Case 1} In $G$, there is exactly one vertex of $\mathbb{C}$ which is not $p$-dominator. Note that $G$ is not a $\mathcal {F}^{\circ}_{g, l}$-graph. Then $n\geq g+2$ and $v_{g}$ is the only one vertex which is not $p$-dominator on $\mathbb{C}$. As Lemma \ref{le04,03}, it is proved that $x_{g}=\max\{|x_{1}|$, $|x_{2}|$, $\ldots$, $|x_{g-1}|$, $|x_{g}|\}$. Then we suppose $a\leq g-1$. By Lemma \ref{le04,01}, if $a\leq g-3$, without loss of generality, suppose $x_{a+1}\leq x_{a-1}$, $x_{a+1}x_{a}\geq 0$, $|x_{a-1}|\geq |x_{a+2}|$. Let $G_{1}=G-v_{a}v_{a-1}+v_{a}v_{a+2}$ (if $x_{a-1}\leq x_{a+2}$ and $a\geq 2$, let $G_{1}=G-v_{a+1}v_{a+2}+v_{a+1}v_{a-1}$; if $a=1$, let $G_{1}=G-v_{1}v_{g}+v_{1}v_{3}$). If $a= g-2$, suppose $x_{g-1}\leq x_{g-3}$, $x_{g-1}x_{g-2}\geq 0$, and then let $G_{1}=G-v_{g-1}v_{g}+v_{g-1}v_{g-3}$. If $a= g-1$, because $|x_{g}|\geq |x_{g-2}|$, then suppose $x_{g-1}x_{g-2}\geq 0$. Let $G_{1}=G-v_{g-1}v_{g}+v_{g-1}v_{g-3}$. Note that $\gamma(G_{1})\leq \frac{n-1}{2}$. As Lemma \ref{le03,03}, it is proved that $\gamma(G)\leq \gamma(G_{1})$. It follows that $\gamma(G_{1})= \frac{n-1}{2}$. As Theorem \ref{th04,02}, we get that $q_{min}(G_{1})< q_{min}(G)$. Note that $g(G_{1})=3$. Then the theorem follows from Theorem \ref{th04,04}.

{\bf Case 2} In $G$, there are exactly $3$ consecutive vertices of $\mathbb{C}$ such that each of them is not $p$-dominator.
Note that $G$ is not a $\mathcal {F}^{\circ}_{g, l}$-graph. Combined with Lemma \ref{le03,08}, the $3$ vertices of $\mathbb{C}$ such that each of them is not $p$-dominator are $v_{g-2}$, $v_{g-1}$, $v_{g}$ or $v_{g}$, $v_{1}$, $v_{2}$. Without loss of generality, we suppose the $3$ vertices are $v_{g-2}$, $v_{g-1}$, $v_{g}$. By Lemma \ref{th03,02}, $|x_{g}|>0$. We say that $|x_{g}|>|x_{g-2}|$. Otherwise, suppose $|x_{g}|\leq|x_{g-2}|$. Let $G^{'}=G-v_{g}v_{g+1}+v_{g+1}v_{g-2}$. Then by Lemma \ref{le0,04}, $q_{min}(G^{'})< q_{min}(G)$. This is a contradiction because $G^{'}\cong G$. Hence $|x_{g}|>|x_{g-2}|$. And then $a\leq g-1$.

{\bf Subcase 2.1} $a\leq g-4$. By Lemma \ref{le04,01}, without loss of generality, suppose $x_{a+1}\leq x_{a-1}$, $x_{a+1}x_{a}\geq 0$.
As Case 1, it is proved the theorem holds.

{\bf Subcase 2.2} $a= g-3$. By Lemma \ref{le04,01}, suppose $x_{g-2}\leq x_{g-4}$, $x_{g-2}x_{g-3}\geq 0$; suppose $|x_{g-4}|\geq |x_{g-1}|$. Denote by $v_{\tau_{g-3}}$ the pendant vertex attached to $v_{g-3}$. Let $G_{1}=G-v_{g-3}v_{g-4}+v_{g-3}v_{g-1}-v_{g-3}v_{\tau_{g-3}}+v_{g}v_{\tau_{g-3}}$ (if $x_{g-4}\leq x_{g-1}$, let $G_{1}=G-v_{g-2}v_{g-1}+x_{g-2}x_{g-4}$). As Case 1, it is proved the theorem holds.

{\bf Subcase 2.3} $a= g-2$. By Lemma \ref{le04,01}, suppose $x_{g-1}\leq x_{g-3}$, $x_{g-1}x_{g-2}\geq 0$; suppose $|x_{g-3}|\geq |x_{g}|$. Denote by $v_{\tau_{g-3}}$ the pendant vertex attached to $v_{g-3}$. Let $G_{1}=G-v_{g-2}v_{g-3}+v_{g-2}v_{g}$ (if $x_{g-3}\leq x_{g}$, let $G_{1}=G-v_{g-1}v_{g}+x_{g-1}x_{g-3}-v_{g-3}v_{\tau_{g-3}}+v_{g}v_{\tau_{g-3}}$). As Case 1, it is proved the theorem holds.

{\bf Subcase 2.4} $a= g-1$. Note $|x_{g}|>|x_{g-2}|$. By Lemma \ref{le04,01}, $x_{g-2}x_{g-1}\geq 0$. Without loss of generality, suppose $x_{g-3}\geq x_{g}$, let $G_{1}=G-v_{g-2}v_{g-3}+v_{g-2}v_{g}$ (if $x_{g-3}\leq x_{g}$, let $G_{1}=G-v_{g-1}v_{g}+x_{g-1}x_{g-3}-v_{g}v_{g+1}+v_{g-3}v_{g+1}$). As Case 1, it is proved the theorem holds.
This completes the proof. \ \ \ \ \ $\Box$
\end{proof}

By Theorem \ref{th03,02}, Lemma \ref{le04,06}, we get the following Theorem \ref{th04,07}.

\begin{theorem}\label{th04,07} 
Let $G$ be a nonbipartite connected unicyclic graph of order $n\geq 3$ and with domination number $\frac{n-1}{2}$. Then $q_{min}(G)\geq q_{min}(\mathscr{H}_{3,\frac{n-3}{2}})$ with equality if and only if $G\cong \mathscr{H}_{3,\frac{n-3}{2}}$.
\end{theorem}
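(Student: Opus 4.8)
The plan is to combine the structural reduction of Theorem~\ref{th03,02} with the sharp bound for $\mathcal{F}_{g,l}$-graphs in Lemma~\ref{le04,06}. Since $G$ is nonbipartite its girth $g$ is odd, and since $\gamma(G)=\frac{n-1}{2}$ forces $n$ to be odd, the graph $\mathscr{H}_{3,\frac{n-3}{2}}$ is well defined. I would split on the value of $g$.

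First suppose $g\le n-1$. By Theorem~\ref{th03,02}, among all nonbipartite unicyclic graphs of order $n$ with domination number $\frac{n-1}{2}$ and girth $g$, the minimum of $q_{min}$ is attained at some $\mathcal{F}_{g,l}$-graph $G^{*}$ of girth $g$ and domination number $\frac{n-1}{2}$; in particular $q_{min}(G)\ge q_{min}(G^{*})$. Applying Lemma~\ref{le04,06} to $G^{*}$ gives $q_{min}(G^{*})\ge q_{min}(\mathscr{H}_{3,\frac{n-3}{2}})$, with equality iff $G^{*}\cong\mathscr{H}_{3,\frac{n-3}{2}}$. Chaining the two inequalities yields $q_{min}(G)\ge q_{min}(\mathscr{H}_{3,\frac{n-3}{2}})$.

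It remains to treat $g=n$, i.e.\ $G=C_{n}$. Then $\gamma(C_{n})=\lceil n/3\rceil=\frac{n-1}{2}$ forces $n\in\{3,5,7\}$. For $n=3$ we have $G=C_{3}=\mathscr{H}_{3,0}=\mathscr{H}_{3,\frac{n-3}{2}}$, so equality holds. For $n=5,7$, Lemma~\ref{le04,11} gives $q_{min}(\mathscr{K})<q_{min}(C_{n})$, where $\mathscr{K}$ (from Theorem~\ref{th03,09}) has girth $3$ and, by that theorem, $\gamma(\mathscr{K})=\gamma(C_{n})=\frac{n-1}{2}$; feeding $\mathscr{K}$ into the $g\le n-1$ case already handled gives $q_{min}(\mathscr{K})\ge q_{min}(\mathscr{H}_{3,\frac{n-3}{2}})$, hence $q_{min}(C_{n})>q_{min}(\mathscr{H}_{3,\frac{n-3}{2}})$, strictly.

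The delicate part is the equality characterization. If equality holds in the $g\le n-1$ case, then both links of the chain are equalities, so Lemma~\ref{le04,06} forces $G^{*}\cong\mathscr{H}_{3,\frac{n-3}{2}}$; since $G$ and $G^{*}$ share the girth $g$ and $\mathscr{H}_{3,\frac{n-3}{2}}$ has girth $3$, this pins down $g=3$. I would then invoke the genuine uniqueness established among girth-$3$ graphs (Theorem~\ref{th04,04}(ii)): $\mathscr{H}_{3,\frac{n-3}{2}}$ is the \emph{unique} minimizer of $q_{min}$ among nonbipartite unicyclic graphs of girth $3$ with domination number $\frac{n-1}{2}$, whence $G\cong\mathscr{H}_{3,\frac{n-3}{2}}$. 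The main obstacle is precisely this step: Theorem~\ref{th03,02} only identifies the \emph{minimizer} as an $\mathcal{F}_{g,l}$-graph and does not by itself certify strict inequality for an arbitrary $G$, so the equality analysis must be routed through the uniqueness of Theorem~\ref{th04,04}(ii), handling the small boundary values of $n$ (notably $n=5$, where $\gamma=\frac{n-1}{2}$ coincides with $\frac{n+1}{3}$) separately.
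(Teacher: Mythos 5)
Your proof is correct and follows essentially the same route as the paper: the paper derives Theorem~\ref{th04,07} in one line by citing Theorem~\ref{th03,02} together with Lemma~\ref{le04,06}, which is exactly the chain $q_{min}(G)\geq q_{min}(G^{*})\geq q_{min}(\mathscr{H}_{3,\frac{n-3}{2}})$ you build. Your extra treatment of the case $G=C_{n}$ (not covered by Theorem~\ref{th03,02}, which assumes $g\leq n-1$) via Theorem~\ref{th03,09} and Lemma~\ref{le04,11}, and your routing of the equality characterization through the uniqueness statement of Theorem~\ref{th04,04}(ii), are refinements that the paper's terse justification leaves implicit, so your write-up is if anything more complete than the paper's own.
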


\begin{theorem}\label{th04,08} 
Let $G$ be a nonbipartite unicyclic graph of order $n\geq 6$ with domination number $\frac{n}{2}$. Then $q_{min}(G)\geq q_{min}(\mathscr{H}_{3,\frac{n}{2}})$ with equality if and only if $G\cong \mathscr{H}_{3,\frac{n}{2}}$ (see Fig. 4.2).
\end{theorem}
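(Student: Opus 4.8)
The goal is to identify, among all nonbipartite unicyclic graphs of order $n\geq 6$ with domination number exactly $\frac{n}{2}$, the unique minimizer of $q_{min}$, and to show it is $\mathscr{H}_{3,\frac{n}{2}}$. The strategy mirrors the odd-domination case (Lemma \ref{le04,06} and Theorem \ref{th04,07}): first reduce an arbitrary such graph to a $\mathcal{F}_{g,l}$-graph by the structural machinery already developed, then push the girth down to $3$, and finally invoke the girth-$3$ classification. Concretely, by Theorem \ref{th03,02} the minimum $q_{min}$ over all nonbipartite unicyclic graphs with fixed order, domination number $\frac{n}{2}$, and girth $g$ is attained at some $\mathcal{F}_{g,l}$-graph $G$; so I may assume from the outset that $G$ is a $\mathcal{F}_{g,l}$-graph with $\gamma(G)=\frac{n}{2}$. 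It then suffices to prove $q_{min}(G)\geq q_{min}(\mathscr{H}_{3,\frac{n}{2}})$ with equality only for $G\cong\mathscr{H}_{3,\frac{n}{2}}$.

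\textbf{Reduction of the girth.} First I would dispose of the base cases: if $G$ is already a $\mathcal{F}^{\circ}_{g,l}$-graph, the bound follows from Theorem \ref{th04,05}; if $g=3$, it follows from Theorem \ref{th04,04}(iii), which pins the minimizer at $\mathscr{H}_{3,\frac{n}{2}}$; and if $g=5$, Lemma \ref{le04,09} together with Lemma \ref{le04,11} reduces to a girth-$3$ graph. For the remaining case $g\geq 7$ (and $G$ not a $\mathcal{F}^{\circ}_{g,l}$-graph), I would run the same dichotomy as in the proof of Lemma \ref{le04,06}. The key input is a structural analogue of Lemma \ref{le03,08} for domination number $\frac{n}{2}$: using Lemma \ref{le0,10} (which forces $G$ to be a corona $H\circ K_{1}$ or a $C_{4}$) I expect that the cycle $\mathbb{C}$ can have at most a very short run of consecutive non-$p$-dominator vertices. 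In each resulting case I take a unit eigenvector $X$ corresponding to $q_{min}(G)$, let $v_{a}$ carry the minimum absolute entry on the cycle, and perform an edge-switch (as in Case 1 and the subcases of Lemma \ref{le04,06}) that produces a graph $G_{1}$ with girth $3$ satisfying both $\gamma(G_{1})\leq\gamma(G)$ (hence $\gamma(G_1)=\frac{n}{2}$, since $\frac{n}{2}$ is the maximum possible by Ore's bound) and, via the Rayleigh-quotient arguments of Theorem \ref{th04,02} and Lemma \ref{le04,01}, $q_{min}(G_{1})<q_{min}(G)$. Theorem \ref{th04,04}(iii) then finishes.

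\textbf{The main obstacle.} The hard part will be establishing, for the $g\ge 7$ case, the correct structural restriction on the cycle under the constraint $\gamma(G)=\frac{n}{2}$: the analysis in Lemma \ref{le03,08} is written for $\gamma=\frac{n-1}{2}$, and one must verify that the characterization of Lemma \ref{le0,10} (every component a corona or $C_{4}$) forces the non-$p$-dominator vertices of $\mathbb{C}$ into a configuration that the edge-switch can handle — in particular that each such vertex is still dominated after the switch, so that $\gamma(G_1)$ does not drop below $\frac{n}{2}$. A secondary delicate point is the equality analysis: I must confirm that the strict inequality $q_{min}(G_{1})<q_{min}(G)$ genuinely holds whenever $G\not\cong\mathscr{H}_{3,\frac{n}{2}}$, which rests on the sign and monotonicity conclusions of Lemma \ref{le04,01} (e.g. $|x_{a+2}|>|x_{a+1}|$ and the strictness coming from comparing the eigenvalue equations at $v_{a+2}$). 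Once the girth is reduced to $3$ with domination number preserved at $\frac{n}{2}$, uniqueness of the minimizer is inherited directly from Theorem \ref{th04,04}(iii), completing the proof. $\Box$
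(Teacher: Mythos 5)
Your plan has the right skeleton (Theorem \ref{th03,02} to pass to $\mathcal{F}_{g,l}$-graphs, girth reduction, then the girth-$3$ classification of Theorem \ref{th04,04}), but the step you single out as ``the main obstacle'' is exactly where the proposal stops being a proof: for $g\geq 7$ you only \emph{expect} a structural analogue of Lemma \ref{le03,08} and a Lemma \ref{le04,06}-style case analysis, and you never establish either. The point you miss is that Lemma \ref{le0,10}, which you cite, already kills this case outright. Since $G$ is connected and nonbipartite, $\gamma(G)=\frac{n}{2}$ forces $G=H\circ K_{1}$ (the $C_{4}$ alternative is bipartite), so \emph{every} non-pendant vertex of $G$ --- in particular every vertex of the cycle $\mathbb{C}$, and in particular $v_{g}$ --- supports a pendant, i.e.\ is a $p$-dominator. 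There is no ``short run of non-$p$-dominator vertices'' to control; there are none at all. Hence a $\mathcal{F}_{g,l}$-graph with domination number $\frac{n}{2}$ is automatically a $\mathcal{F}^{\circ}_{g,l}$-graph, the case ``$g\geq 7$ and $G$ not $\mathcal{F}^{\circ}_{g,l}$'' is empty, and for every $g\geq 5$ Theorem \ref{th04,02} applies directly: it yields a girth-$3$ unicyclic (hence nonbipartite) graph $\mathbb{H}$ with $\gamma(\mathbb{H})\geq\gamma(G)=\frac{n}{2}$, so $\gamma(\mathbb{H})=\frac{n}{2}$ by Ore's bound, and $q_{min}(\mathbb{H})<q_{min}(G)$; Theorem \ref{th04,04}(iii) then gives $q_{min}(G)>q_{min}(\mathbb{H})\geq q_{min}(\mathscr{H}_{3,\frac{n}{2}})$, with uniqueness of the equality case following because the inequality is strict whenever $g\geq 5$. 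This is precisely the paper's (very short) proof: Lemma \ref{le0,10} together with Theorems \ref{th04,02} and \ref{th04,04}; even the separate $g=5$ treatment via Lemmas \ref{le04,09} and \ref{le04,11} is unnecessary.

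Two further slips are worth fixing. Your appeal to Theorem \ref{th04,05} for the $\mathcal{F}^{\circ}_{g,l}$ base case is not legitimate here: part (ii) of that theorem assumes $n-2\gamma\geq 2$, while $\gamma=\frac{n}{2}$ gives $n-2\gamma=0$; the correct route for that case is again Theorem \ref{th04,02} followed by Theorem \ref{th04,04}(iii). And in your girth-reduction step the inequality must run the other way: the perturbation lemmas give $\gamma(G)\leq\gamma(G_{1})$, which combined with Ore's bound $\gamma(G_{1})\leq\frac{n}{2}$ forces $\gamma(G_{1})=\frac{n}{2}$; from $\gamma(G_{1})\leq\gamma(G)$, as you wrote it, Ore's bound yields nothing.
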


\begin{proof}
This result follows from Lemma \ref{le0,10}, Theorems \ref{th04,02} and \ref{th04,04}.
 \ \ \ \ \ $\Box$
\end{proof}

By Theorems \ref{th04,04}, \ref{th04,07}, \ref{th04,08} and Lemma \ref{le04,09}, we get the following theorem \ref{th04,10}.

\begin{theorem}\label{th04,10} 
Among all nonbipartite unicyclic graphs of order $n\geq 4$ and with girth $g\leq5$, and domination number at least $\frac{n+1}{3}<\gamma\leq \frac{n}{2}$, we have

$\mathrm{(i)}$ if $n=4$, the least $q_{min}$ attains the minimum uniquely at $\mathscr{H}_{3,1}$;

$\mathrm{(ii)}$ if $n\geq 5$ and $\gamma=\frac{n-1}{2}$, the least $q_{min}$ attains the minimum uniquely at a $\mathscr{H}_{3,\frac{n-3}{2}}$;

$\mathrm{(iii)}$ if $n\geq 6$ and $\gamma=\frac{n}{2}$, the least $q_{min}$ attains the minimum uniquely at a $\mathscr{H}_{3,\frac{n}{2}}$;

$\mathrm{(iv)}$ if $n\geq 5$ and $n-2\gamma\geq 2$, then the least $q_{min}$ attains the minimum uniquely at a $\mathscr{H}_{3,\alpha}$ where $\alpha\leq\frac{n-3}{2}$ is the least integer such that $\lceil\frac{n-2\alpha-2}{3}\rceil+\alpha=\gamma$.

\end{theorem}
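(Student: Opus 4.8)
The plan is to read Theorem~\ref{th04,10} as a synthesis of the girth-$3$ result (Theorem~\ref{th04,04}), the two girth-free results for the boundary domination numbers (Theorems~\ref{th04,07} and \ref{th04,08}), and the girth-reduction Lemma~\ref{le04,09}. First I would invoke Theorem~\ref{th03,02}: among nonbipartite unicyclic graphs of order $n$ with fixed girth $g$ and fixed $\gamma$, the minimum $q_{min}$ is attained at a $\mathcal{F}_{g,l}$-graph, so it suffices to test $\mathcal{F}_{g,l}$-graphs. Since the unique cycle of a nonbipartite graph is odd and $g\leq 5$, the only admissible girths are $g=3$ and $g=5$. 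For $n=4$ a nonbipartite unicyclic graph is forced to have girth $3$, so part (i) is immediate from Theorem~\ref{th04,04}(i).

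For parts (ii) and (iii) I would not even need the girth restriction: Theorem~\ref{th04,07} already bounds $q_{min}$ from below by $q_{min}(\mathscr{H}_{3,\frac{n-3}{2}})$ for every nonbipartite unicyclic graph with $\gamma=\frac{n-1}{2}$, and Theorem~\ref{th04,08} does the same with $\mathscr{H}_{3,\frac{n}{2}}$ for $\gamma=\frac{n}{2}$, each with the stated uniqueness. Hence (ii) and (iii) follow verbatim, and together they dispose of the cases $n-2\gamma\in\{0,1\}$.

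The remaining case $n-2\gamma\geq 2$ is part (iv), and here the girth bound is essential. I would take a minimizer $G^{\ast}$ of $q_{min}$ over girth-$\leq 5$ graphs with $\gamma(G^{\ast})=\gamma$, which by Theorem~\ref{th03,02} is a $\mathcal{F}_{g,l}$-graph with $g\in\{3,5\}$, and argue that $g=3$. If instead $g=5$, then Lemma~\ref{le04,09} produces a girth-$3$ graph $\mathbb{H}$ with $\gamma(\mathbb{H})\geq\gamma$ and $q_{min}(\mathbb{H})<q_{min}(G^{\ast})$. Since $\mathbb{H}$ is connected and hence has no isolated vertex, $\gamma\leq\gamma(\mathbb{H})\leq\frac{n}{2}$ keeps $\mathbb{H}$ inside the admissible range, and Theorems~\ref{th04,04}, \ref{th04,07}, \ref{th04,08} identify the girth-$3$ minimizer at the domination number $\gamma(\mathbb{H})$ as an $\mathscr{H}_{3,\cdot}$-graph. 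Once one knows $q_{min}(\mathbb{H})\geq q_{min}(\mathscr{H}_{3,\alpha})$ for the target $\alpha$ (the least integer with $\lceil\frac{n-2\alpha-2}{3}\rceil+\alpha=\gamma$), the chain $q_{min}(G^{\ast})>q_{min}(\mathbb{H})\geq q_{min}(\mathscr{H}_{3,\alpha})$ shows that girth $5$ is strictly suboptimal, forcing $g=3$; Theorem~\ref{th04,04}(iv) then pins down $G^{\ast}\cong\mathscr{H}_{3,\alpha}$ and yields uniqueness.

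The hard part will be the last inequality $q_{min}(\mathbb{H})\geq q_{min}(\mathscr{H}_{3,\alpha})$ in the situation where Lemma~\ref{le04,09} strictly raises the domination number, i.e. $\gamma(\mathbb{H})=\gamma'>\gamma$. Because Theorem~\ref{th04,04} only identifies the minimizer at the domination number of $\mathbb{H}$ itself, I must control how $q_{min}(\mathscr{H}_{3,\alpha(\gamma')})$ compares with $q_{min}(\mathscr{H}_{3,\alpha(\gamma)})$ as the domination number varies. The natural route is to establish monotonicity of this minimal value in $\gamma$ within the explicit family $\mathscr{H}_{3,\alpha}$ by a branch-transfer argument in the spirit of Lemma~\ref{le0,04}: passing from $\mathscr{H}_{3,\alpha(\gamma')}$ toward $\mathscr{H}_{3,\alpha(\gamma)}$ should reshuffle pendant attachments and path length so as not to increase $q_{min}$ while lowering the domination number back to $\gamma$. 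Verifying that this reshuffling respects the eigenvector-entry inequalities demanded by Lemmas~\ref{le0,04} and \ref{le04,01} is the delicate bookkeeping on which the whole reduction ultimately rests.
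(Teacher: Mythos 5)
Your proposal is not a different route: the paper's entire proof of Theorem~\ref{th04,10} is the one-sentence citation of Theorems~\ref{th04,04}, \ref{th04,07}, \ref{th04,08} and Lemma~\ref{le04,09}, and your synthesis --- (i) from Theorem~\ref{th04,04} since a nonbipartite unicyclic graph on $4$ vertices has girth $3$, (ii)--(iii) verbatim from the girth-free Theorems~\ref{th04,07} and \ref{th04,08}, and (iv) by reducing girth $5$ to girth $3$ via Theorem~\ref{th03,02} and Lemma~\ref{le04,09} --- is exactly that intended argument, spelled out. Parts (i)--(iii) of your write-up are complete and correct.

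In part (iv), however, the step you yourself flag as ``the hard part'' is a genuine gap in the proposal as written (and one that the paper's one-line proof silently skips as well). Since Lemma~\ref{le04,09} only guarantees $\gamma(\mathbb{H})\geq\gamma$, in the case $\gamma(\mathbb{H})=\gamma'>\gamma$ Theorem~\ref{th04,04} identifies the minimizer of the \emph{wrong} class, and your chain $q_{min}(G^{\ast})>q_{min}(\mathbb{H})\geq q_{min}(\mathscr{H}_{3,\alpha(\gamma')})$ must be completed by the cross-$\gamma$ inequality $q_{min}(\mathscr{H}_{3,\alpha(\gamma')})\geq q_{min}(\mathscr{H}_{3,\alpha(\gamma)})$, which you describe as a plan but never prove. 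The claim is true and can be closed essentially as you sketch. First, by Corollary~\ref{cl03,06} the map $\alpha\mapsto\gamma(\mathscr{H}_{3,\alpha})$ is non-decreasing, so $\gamma'>\gamma$ forces $\alpha(\gamma')>\alpha(\gamma)$, and it suffices to prove $q_{min}(\mathscr{H}_{3,\alpha})<q_{min}(\mathscr{H}_{3,\alpha+1})$ for each admissible $\alpha$. For this, note that in $\mathscr{H}_{3,\alpha+1}$ (with $\alpha+1\leq\frac{n-3}{2}$, so all pendants sit on the lollipop path) the tree hanging at $v_{3}$ is a nonzero branch by Lemma~\ref{le0,05}, hence by Lemmas~\ref{le02,02} and \ref{le02,03} the eigenvector entries strictly increase along the path away from the triangle; moving the pendant vertex attached to the innermost $p$-dominator to the end vertex of the path then strictly decreases $q_{min}$ by Lemma~\ref{le0,04}. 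One should be aware that a \emph{single} transfer does not land on $\mathscr{H}_{3,\alpha}$ --- the block of pendant-bearing vertices ends up one position short of where it sits in $\mathscr{H}_{3,\alpha}$ --- so a second application of Lemma~\ref{le0,04} (moving the now-innermost pendant to the second-to-last path vertex) is needed to reach $\mathscr{H}_{3,\alpha}$, each move strictly decreasing $q_{min}$; and when $\gamma'\in\{\frac{n-1}{2},\frac{n}{2}\}$, so that $\mathscr{H}_{3,\alpha(\gamma')}$ also carries pendants on $v_{1},v_{2}$, one first moves those pendants onto the path using Lemma~\ref{le04,03} (which guarantees $|x_{3}|\geq|x_{1}|,|x_{2}|$) together with Lemma~\ref{le0,04}. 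Until this monotonicity lemma is actually written down, your proof of (iv) --- like the paper's --- rests on an unproved assertion.
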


\section{The $q_{min}$ among general graphs}

Let $\mathcal {M}=\{G|\,\ G$ be a nonbipartite graph of order  of order $n\geq 4$ and domination number at least $\frac{n+1}{3}<\gamma\leq \frac{n}{2}$, and $G$ have\ a nonbipartite\ spanning subgraph\ which is a $\mathcal {F}^{\circ}_{g, l}$-graph $\bbbeta$, where $g$ is any odd number at least $3$ and $l$ is any positive integral number$\}$ and $q_{\mathcal {M}}=\min\{q_{min}(G)|\,\ G\in \mathcal {M}\}$.

\begin{theorem}\label{th05,1} 
\

$\mathrm{(i)}$ if $n=4$, $q_{\mathcal {M}}$ attains uniquely at $\mathscr{H}_{3,1}$;

$\mathrm{(ii)}$ if $n\geq 5$ and $n-2\gamma\geq 2$, then $q_{\mathcal {M}}> q_{min}(\mathscr{H}_{3,\alpha})$ where $\alpha\leq\frac{n-3}{2}$ is the least integer such that $\lceil\frac{n-2\alpha-2}{3}\rceil+\alpha=\gamma$.
\end{theorem}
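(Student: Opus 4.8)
The plan is to reduce the general problem over $\mathcal{M}$ to the unicyclic situation already settled in Theorem \ref{th04,05}, by passing to the spanning $\mathcal{F}^{\circ}_{g,l}$-subgraph that membership in $\mathcal{M}$ guarantees. Fix $G\in\mathcal{M}$ with domination number $\gamma$, and let $B$ be a nonbipartite spanning $\mathcal{F}^{\circ}_{g,l}$-subgraph of $G$; note $B$ is unicyclic. Two monotonicity facts drive everything. First, $B$ is obtained from $G$ by deleting edges, so repeated use of Lemma \ref{le02,01} gives $q_{min}(B)\le q_{min}(G)$. Second, every dominating set of $B$ dominates $G$ (each edge of $B$ is an edge of $G$), whence $\gamma(B)\ge\gamma(G)=\gamma$; as $B$ is connected it has no isolated vertex, so Ore's bound gives $\gamma(B)\le\frac{n}{2}$. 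Thus $\frac{n+1}{3}<\gamma\le\gamma(B)\le\frac{n}{2}$, and $B$ is a $\mathcal{F}^{\circ}_{g,l}$-graph to which the unicyclic extremal results apply (for its own domination number $\gamma(B)$).

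For the base case $n=4$ one argues directly. Here $\gamma=2=\frac{n}{2}$, the only $\mathcal{F}^{\circ}_{g,l}$-graph on four vertices is $\mathscr{H}_{3,1}=S_4^+$, so every $G\in\mathcal{M}$ contains $\mathscr{H}_{3,1}$ as a spanning subgraph; Lemma \ref{le02,01} then gives $q_{min}(G)\ge q_{min}(\mathscr{H}_{3,1})$ with equality only for $G\cong\mathscr{H}_{3,1}$, which is (i). For (ii), with $n\ge5$ and $n-2\gamma\ge2$, I split on $\gamma(B)$. If $n-2\gamma(B)\ge2$, Theorem \ref{th04,05} applied to $B$ yields $q_{min}(B)>q_{min}(\mathscr{H}_{3,\beta})$, where $\beta$ is the least integer with $\lceil\frac{n-2\beta-2}{3}\rceil+\beta=\gamma(B)$. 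If instead $\gamma(B)\in\{\frac{n-1}{2},\frac{n}{2}\}$, then since $B$ is unicyclic the extremal Theorems \ref{th04,07} and \ref{th04,08} give $q_{min}(B)\ge q_{min}(\mathscr{H}_{3,\beta})$ with $\beta=\frac{n-3}{2}$ or $\beta=\frac{n}{2}$ respectively. Combining with $q_{min}(B)\le q_{min}(G)$ reduces the whole statement to the comparison $q_{min}(\mathscr{H}_{3,\alpha})\le q_{min}(\mathscr{H}_{3,\beta})$ whenever $\alpha\le\beta$ (same order $n$), with strict inequality when $\alpha<\beta$.

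The hard part will be exactly this monotonicity of $\alpha\mapsto q_{min}(\mathscr{H}_{3,\alpha})$, since it is what absorbs the possible strict gap $\gamma(B)>\gamma$ (note that the function $\alpha\mapsto\lceil\frac{n-2\alpha-2}{3}\rceil+\alpha$ is nondecreasing, so $\gamma(B)>\gamma$ forces $\beta>\alpha$). I plan to establish it stepwise by comparing $\mathscr{H}_{3,\alpha}$ and $\mathscr{H}_{3,\alpha+1}$ on $n$ vertices: these differ only by relocating one pendant from the clustered end of the path toward a vertex nearer the triangle. By Lemmas \ref{le02,02} and \ref{le02,03} the entries of a $q_{min}$-eigenvector strictly increase along the tree as one moves away from the odd cycle, so the pendant is shifted onto a vertex of strictly smaller eigenvector modulus; Lemma \ref{le0,04} (branch shifting) then gives $q_{min}(\mathscr{H}_{3,\alpha})<q_{min}(\mathscr{H}_{3,\alpha+1})$, and iteration yields strict monotonicity. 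With this in hand the chain $q_{min}(G)\ge q_{min}(B)\ge q_{min}(\mathscr{H}_{3,\beta})\ge q_{min}(\mathscr{H}_{3,\alpha})$ is strict at one step in every case — through Theorem \ref{th04,05} when $\gamma(B)=\gamma$, and through strict monotonicity when $\gamma(B)>\gamma$ — so $q_{\mathcal{M}}>q_{min}(\mathscr{H}_{3,\alpha})$.

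The main technical care will be bookkeeping the vertex and edge counts so that the single-pendant relocation genuinely transforms $\mathscr{H}_{3,\alpha+1}$ into $\mathscr{H}_{3,\alpha}$ on the same $n$ vertices, and verifying that the hypothesis $|x_1|\ge|x_2|>0$ of Lemma \ref{le0,04} holds at the two vertices involved; the eigenvector-entry ordering from Lemmas \ref{le02,02}--\ref{le02,03} is precisely what supplies this, so I expect no essential difficulty beyond careful indexing.
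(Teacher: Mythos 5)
Your overall strategy coincides with the paper's: pass to the spanning nonbipartite $\mathcal{F}^{\circ}_{g,l}$-subgraph $B$, get $q_{min}(B)\le q_{min}(G)$ from Lemma \ref{le02,01} and $\gamma\le\gamma(B)\le\frac{n}{2}$ from the spanning-subgraph property and Ore's bound, then invoke the unicyclic theorems; you are also right that the step the paper hides inside its terse citation of Theorems \ref{th03,05} and \ref{th04,05} is the monotonicity of $\alpha\mapsto q_{min}(\mathscr{H}_{3,\alpha})$, which is what absorbs a possible gap $\gamma(B)>\gamma$, and your case split using Theorems \ref{th04,07} and \ref{th04,08} when $\gamma(B)\in\{\frac{n-1}{2},\frac{n}{2}\}$ is a detail the paper needs but does not spell out. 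However, two of your steps fail as written. In (i) you assert that Lemma \ref{le02,01} gives $q_{min}(G)\ge q_{min}(\mathscr{H}_{3,1})$ ``with equality only for $G\cong\mathscr{H}_{3,1}$''. Edge-deletion interlacing is a non-strict inequality and can never deliver uniqueness; that uniqueness is exactly what the paper's entire proof of (i) supplies: if equality held, an eigenvector $Y$ of $q_{min}(G)$ would also be a $q_{min}$-eigenvector of $S^{+}_{4}$, and Lemmas \ref{le02,03} and \ref{le04,03} show that any edge $v_iv_j\in E(G)\setminus E(S^{+}_{4})$ satisfies $y_i+y_j\neq 0$ (the pendant entry strictly exceeds the triangle entries in modulus), whence $Y^{T}Q(S^{+}_{4})Y<Y^{T}Q(G)Y$, a contradiction. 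Without this Rayleigh-quotient argument (or a direct check of the three nonbipartite graphs on four vertices) the uniqueness claim in (i) is unproved; note this defect does not affect your (ii), where strictness comes from Theorem \ref{th04,05} or from monotonicity rather than from interlacing.

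Second, your monotonicity step misdescribes the graphs: $\mathscr{H}_{3,\alpha}$ and $\mathscr{H}_{3,\alpha+1}$ of the same order $n$ do \emph{not} differ by relocating one pendant. In $\mathscr{H}_{3,\alpha+1}$ the pendant path is one edge shorter and its $\alpha+1$ pendants occupy the last $\alpha+1$ path vertices; to reach $\mathscr{H}_{3,\alpha}$ you must first move the innermost pendant onto the terminal pendant vertex (lengthening the path by one, which leaves a gap at the old end vertex) and then shift each of the remaining $\alpha-1$ clustered pendants one step outward --- a cascade of $\alpha$ single-pendant moves. The repair is routine with the tools you already name: in every intermediate graph the unique tree branch at $v_3$ is nonzero (Lemma \ref{le0,05} gives $|x_3|>0$, then Lemma \ref{le02,02}), Lemma \ref{le02,03} gives the strict modulus increase away from the triangle, the old attachment vertex always lies on the path from $v_3$ to the new one, and Lemma \ref{le0,04} then strictly decreases $q_{min}$ at each move; iterating yields $q_{min}(\mathscr{H}_{3,\alpha})<q_{min}(\mathscr{H}_{3,\alpha+1})$. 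With that correction, your chain $q_{min}(G)\ge q_{min}(B)\ge q_{min}(\mathscr{H}_{3,\beta})>q_{min}(\mathscr{H}_{3,\alpha})$ and hence the conclusion of (ii) stand.
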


\begin{proof}

If $G$ is a nonbipartite graph of order $4$, then $G$ contains $S^{+}_{4}$ as a subgraph. Note that $S^{+}_{4}=\mathscr{H}_{3,1}$. Suppose the vertices of $G$ are labeled as $\mathscr{H}_{3,1}$ (see Fig. 3.6). By Lemma \ref{le02,01}, we see that $q_{min}(S^{+}_{4})\leq q_{min}(G)$. For any vector $X$, note that $q_{min}(S^{+}_{4})\leq X^{T}Q(S^{+}_{4})X\leq X^{T}Q(G)X$. Suppose that $Y$ is an eigenvector corresponding to $q_{min}(G)$. If $q_{min}(S^{+}_{4})= q_{min}(G)$, then $q_{min}(S^{+}_{4})= Y^{T}Q(S^{+}_{4})Y= Y^{T}Q(G)Y=q_{min}(G)$, and then $Y$ is an eigenvector corresponding to $q_{min}(S^{+}_{4})$ too. If $G\neq S^{+}_{4}$, by Lemmas \ref{le02,03}, \ref{le04,03}, then for any edge $v_{i}v_{j}\not\in E(S^{+}_{4})$, it follows that $x_{i}+x_{j}\neq 0$, and then $Y^{T}Q(S^{+}_{4})Y< Y^{T}Q(G)Y$, which contradicts $Y^{T}Q(S^{+}_{4})Y= Y^{T}Q(G)Y$. Therefore, if $q_{min}(S^{+}_{4})= q_{min}(G)$ if and only if $G= S^{+}_{4}$. Then (i) follows.

In a same way as proved for (i), noting that $\gamma(G)\leq \gamma(\bbbeta)$, combining with Theorems \ref{th03,05} and \ref{th04,05}, it is proved that (ii) holds.
\ \ \ \ \ $\Box$
\end{proof}

\begin{theorem}\label{th05,2} 
Let $G$ be a nonbipartite connected graph of order $n\geq 6$ with domination number $\frac{n}{2}$. Then $q_{min}(G)\geq q_{min}(\mathscr{H}_{3,\frac{n}{2}})$ with equality if and only if $G\cong \mathscr{H}_{3,\frac{n}{2}}$.
\end{theorem}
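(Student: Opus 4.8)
The plan is to reduce the general case to the unicyclic case already settled in Theorem \ref{th04,08}, and then to upgrade the resulting inequality to an equality characterization by an eigenvector comparison, exactly in the spirit of the $n=4$ argument in Theorem \ref{th05,1}(i). First I would invoke Lemma \ref{le0,07}: since $G$ is nonbipartite with $\gamma(G)=\frac{n}{2}$, it contains a nonbipartite unicyclic spanning subgraph $H$ with $\gamma(H)=\gamma(G)=\frac{n}{2}$. Because $H$ is obtained from $G$ by deleting edges, repeated application of Lemma \ref{le02,01} gives $q_{min}(H)\le q_{min}(G)$. On the other hand, $H$ is a nonbipartite unicyclic graph of order $n\ge 6$ with domination number $\frac{n}{2}$, so Theorem \ref{th04,08} applies and yields $q_{min}(H)\ge q_{min}(\mathscr{H}_{3,\frac{n}{2}})$, with equality if and only if $H\cong\mathscr{H}_{3,\frac{n}{2}}$. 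Chaining the two inequalities gives $q_{min}(G)\ge q_{min}(\mathscr{H}_{3,\frac{n}{2}})$, which is the stated bound.

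For the equality case, suppose $q_{min}(G)=q_{min}(\mathscr{H}_{3,\frac{n}{2}})$. Then the chain $q_{min}(\mathscr{H}_{3,\frac{n}{2}})\le q_{min}(H)\le q_{min}(G)=q_{min}(\mathscr{H}_{3,\frac{n}{2}})$ collapses, forcing $q_{min}(H)=q_{min}(\mathscr{H}_{3,\frac{n}{2}})$, and hence $H\cong\mathscr{H}_{3,\frac{n}{2}}$ by Theorem \ref{th04,08}. Thus $\mathscr{H}_{3,\frac{n}{2}}$ may be taken to be a spanning subgraph of $G$ with $q_{min}(G)=q_{min}(\mathscr{H}_{3,\frac{n}{2}})$. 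Let $Y=(y_1,\dots,y_n)^T$ be a unit eigenvector of $G$ corresponding to $q_{min}(G)$. Since $H$ is a spanning subgraph,
$$q_{min}(H)\le Y^{T}Q(H)Y\le Y^{T}Q(G)Y=q_{min}(G)=q_{min}(H),$$
so both inequalities are equalities. Consequently $Y$ is also an eigenvector of $H=\mathscr{H}_{3,\frac{n}{2}}$ for $q_{min}(H)$, and every edge $v_iv_j\in E(G)\setminus E(H)$ satisfies $y_i+y_j=0$.

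Finally I would show that no such extra edge can exist, which forces $G=H\cong\mathscr{H}_{3,\frac{n}{2}}$ and completes the proof. The idea is to read off the signs and relative magnitudes of the entries of $Y$ from the corona structure of $\mathscr{H}_{3,\frac{n}{2}}$: Lemma \ref{le02,02} identifies the bipartite branches hanging off the triangle and pins down $\mathrm{sgn}(y_p)$ for every vertex $v_p$ in terms of its parity relative to the root, Lemma \ref{le02,03} gives strict decrease of $|y_p|$ along rooted paths, and Lemma \ref{le04,03} locates the maximal entry on the triangle $v_1v_2v_3$. Using these together one checks that for every nonadjacent pair $v_i,v_j$ of $\mathscr{H}_{3,\frac{n}{2}}$ one has $y_i+y_j\neq 0$, contradicting the conclusion of the previous paragraph unless $E(G)\setminus E(H)=\emptyset$.

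The main obstacle is precisely this last step: the verification that $y_i+y_j\neq 0$ for every hypothetical extra edge requires splitting into cases according to where its endpoints sit (both in the same pendant fiber of the corona, in two different fibers, one or both on the triangle, one on the attached path), and confirming in each case that the alternating-sign pattern and the strict monotonicity of the magnitudes prevent cancellation. This is the same mechanism that drives Theorem \ref{th05,1}(i), now carried out for the full corona $\mathscr{H}_{3,\frac{n}{2}}$ rather than for $S^{+}_{4}$; apart from this bookkeeping the argument is a direct assembly of Lemma \ref{le0,07}, Lemma \ref{le02,01} and Theorem \ref{th04,08}.
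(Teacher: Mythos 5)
Your skeleton --- pass to a nonbipartite unicyclic spanning subgraph with the same domination number, apply Theorem \ref{th04,08}, then upgrade to the equality characterization by a Rayleigh-quotient/eigenvector argument --- is the same as the paper's, and your first half (the inequality $q_{min}(G)\geq q_{min}(\mathscr{H}_{3,\frac{n}{2}})$) is correct. The gap is in the equality case, and it is caused by your choice of Lemma \ref{le0,07} instead of Lemma \ref{le0,10}. The paper first invokes Lemma \ref{le0,10}: since $G$ is connected of order $n\geq 6$ with $\gamma(G)=\frac{n}{2}$, $G=H\circ K_{1}$ is a corona, and the unicyclic spanning subgraph is chosen of the form $H'\circ K_{1}$ inside it. Consequently every edge of $E(G)\setminus E(\mathbb{G})$ joins two $p$-dominators of $\mathbb{G}\cong\mathscr{H}_{3,\frac{n}{2}}$, and for any two \emph{nonadjacent} $p$-dominators the magnitude chain $|x_{1}|,|x_{2}|\leq|x_{3}|<|x_{4}|<\cdots$ (Lemmas \ref{le04,03} and \ref{le02,03}) gives $x_{i}+x_{j}\neq 0$ immediately. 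Your route retains no structural information about $G$ itself, so a hypothetical extra edge may join \emph{any} nonadjacent pair, including pairs involving pendant vertices, and you must prove $y_{i}+y_{j}\neq 0$ for all of them.

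That stronger claim is not delivered by the lemmas you cite, and in at least one family of cases it cannot be settled by sign/monotonicity considerations at all. For a pendant $u$ attached to $v_{i}$ the eigen-equation gives $y_{u}=y_{i}/(q_{min}-1)$, so for two pendants attached to $v_{i},v_{j}$ the sum vanishes exactly when $y_{i}+y_{j}=0$; when $v_{i}v_{j}$ is a triangle edge (say $v_{2}v_{3}$), Lemmas \ref{le02,02}, \ref{le02,03}, \ref{le04,03} say nothing about $y_{2}+y_{3}$, and excluding configurations such as $y_{1}=0,\ y_{2}=-y_{3}$ (allowed by Lemma \ref{le04,01}(ii)) or $|y_{2}|=|y_{3}|$ requires explicit eigen-equation work together with Lemma \ref{le04,01}(i) and the strict monotonicity of Lemma \ref{le02,03} --- this is the core difficulty, not bookkeeping. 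Worse, for a pair such as the pendant at $v_{4}$ and the support $v_{6}$, the two entries have opposite signs (both lie in the tree rooted at $v_{3}$, at distances of different parity) and none of your lemmas compares $|y_{4}|/(1-q_{min})$ with $|y_{6}|$, so cancellation simply cannot be excluded by your stated tools. Note that adding such an edge destroys the corona structure and hence forces $\gamma<\frac{n}{2}$ --- precisely the information that Lemma \ref{le0,10} injects into the paper's proof and that your argument never uses at this stage. The fix is to open with Lemma \ref{le0,10}, as the paper does; then only support--support edges can occur and your final step goes through verbatim.
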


\begin{proof}
By Lemma \ref{le0,10}, $G=H\circ K_{1}$ for some connected nonbipartite graph $H$ of order $\frac{n}{2}$. By deleting edges from $H$, we can get a nonbipartite unicyclic spanning subgraph $\mathbb{G}=H^{'}\circ K_{1}$ of $G$, where $H^{'}$ is a connected nonbipartite unicyclic spanning graph of $H$. Combining with  Theorem \ref{th04,08}, we get $q_{min}(G)\geq q_{min}(\mathbb{G})\geq q_{min}(\mathscr{H}_{3,\frac{n}{2}})$ where $q_{min}(\mathbb{G})= q_{min}(\mathscr{H}_{3,\frac{n}{2}})$ if and only if $\mathbb{G}\cong \mathscr{H}_{3,\frac{n}{2}}$.

If $q_{min}(G)= q_{min}(\mathscr{H}_{3,\frac{n}{2}})$, then $\mathbb{G}\cong \mathscr{H}_{3,\frac{n}{2}}$. Suppose $\mathbb{G}= \mathscr{H}_{3,\frac{n}{2}}$ (see Fig. 4.2). Let $X=(\,x_1$, $x_2$, $\ldots$, $x_{n-1}$, $x_{n}\,)^{T}$ be an eigenvector of $Q(G)$ corresponding to $q_{min}(G)$. Then $X$ is also an eigenvector of $Q(\mathbb{G})$ corresponding to $q_{min}(\mathbb{G})$. If $G\neq \mathbb{G}$, then there exists an edge $v_{i}v_{j}\not\in E(\mathbb{G})$ where both $v_{i}$ and $v_{j}$ are $p$-dominators in $\mathbb{G}$. Combined with Lemmas \ref{le02,03}, \ref{le04,03}, as the proof of Theorem \ref{th05,1}, it is proved that $q_{min}(\mathscr{H}_{3,\frac{n}{2}})\leq X^{T}Q(\mathbb{G})X<X^{T}Q(\mathbb{G}+v_{i}v_{j})X\leq X^{T}Q(G)X=q_{min}(G)$ which contradicts the supposition that $q_{min}(G)= q_{min}(\mathscr{H}_{3,\frac{n}{2}})$. Thus, it follows that $G= \mathbb{G}$. Then the theorem follows.
 \ \ \ \ \ $\Box$
\end{proof}

Using Lemmas \ref{le02,03}, \ref{le0,07}, \ref{le04,03} and Theorem \ref{th04,07}, similar to the proof of Theorem \ref{th05,2}, we get the following Theorem \ref{th05,3}.

\begin{theorem}\label{th05,3} 
Let $G$ be a nonbipartite connected graph of order $n$ and with domination number $\frac{n-1}{2}$. Then $q_{min}(G)\geq q_{min}(\mathscr{H}_{3,\frac{n-1}{2}})$ with equality if and only if $G\cong \mathscr{H}_{3,\frac{n-1}{2}}$.
\end{theorem}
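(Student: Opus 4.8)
The plan is to deduce the general-graph statement from the unicyclic case already established in Theorem \ref{th04,07}, running the same template as the proof of Theorem \ref{th05,2} but replacing the structural input. First I would apply Lemma \ref{le0,07} to the nonbipartite connected graph $G$ with $\gamma(G)=\frac{n-1}{2}$: it produces a nonbipartite unicyclic spanning subgraph $\mathbb{G}$ of $G$ with $g_{o}(\mathbb{G})=g_{o}(G)$ and, what matters here, $\gamma(\mathbb{G})=\gamma(G)=\frac{n-1}{2}$. Since $\mathbb{G}$ is obtained from $G$ by deleting edges, iterating Lemma \ref{le02,01} gives $q_{min}(G)\geq q_{min}(\mathbb{G})$. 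Now $\mathbb{G}$ is a nonbipartite connected unicyclic graph of order $n$ with domination number $\frac{n-1}{2}$, so Theorem \ref{th04,07} applies and yields $q_{min}(\mathbb{G})\geq q_{min}(\mathscr{H}_{3,\frac{n-1}{2}})$, with equality if and only if $\mathbb{G}\cong\mathscr{H}_{3,\frac{n-1}{2}}$. Chaining the two bounds gives $q_{min}(G)\geq q_{min}(\mathscr{H}_{3,\frac{n-1}{2}})$, the asserted inequality. This is exactly the reduction used for Theorem \ref{th05,2}, except that there Lemma \ref{le0,10} supplied the spanning unicyclic subgraph for free, whereas here that role is played by Lemma \ref{le0,07}.

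For the equality characterization, suppose $q_{min}(G)=q_{min}(\mathscr{H}_{3,\frac{n-1}{2}})$. Then both inequalities above are forced to be equalities, so on one hand $\mathbb{G}\cong\mathscr{H}_{3,\frac{n-1}{2}}$, and on the other $q_{min}(G)=q_{min}(\mathbb{G})$. Let $X=(x_1,\ldots,x_n)^T$ be a unit eigenvector of $Q(G)$ for $q_{min}(G)$. From $q_{min}(\mathbb{G})\leq X^{T}Q(\mathbb{G})X\leq X^{T}Q(G)X=q_{min}(G)=q_{min}(\mathbb{G})$ it follows that $X$ is simultaneously an eigenvector of $Q(\mathbb{G})$ for $q_{min}(\mathbb{G})$ and that $X^{T}Q(\mathbb{G})X=X^{T}Q(G)X$. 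Writing $X^{T}Q(G)X=X^{T}Q(\mathbb{G})X+\sum_{v_av_b\in E(G)\setminus E(\mathbb{G})}(x_a+x_b)^2$, I would conclude that $x_a+x_b=0$ for every edge $v_av_b$ lying in $G$ but not in $\mathbb{G}$. It therefore suffices to prove that no such cancelling edge exists, forcing $E(G)=E(\mathbb{G})$ and hence $G=\mathbb{G}\cong\mathscr{H}_{3,\frac{n-1}{2}}$.

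The hard part will be exactly this last step: excluding extra edges. The tools are Lemma \ref{le04,03}, which says the triangle entry at the non-$p$-dominator vertex dominates the other two triangle entries, together with Lemma \ref{le02,02} and Lemma \ref{le02,03}, which fix the signs and the strict magnitude ordering of the entries along each (bipartite) pendant branch of $\mathscr{H}_{3,\frac{n-1}{2}}$ rooted on the triangle. Using these, I would run a case analysis on the position of a hypothetical edge $v_iv_j\in E(G)\setminus E(\mathbb{G})$ relative to the triangle and the pendant branches, showing in each case that its endpoints have entries that cannot sum to zero: either they lie in the same part of a bipartite branch and so have nonzero same-sign entries, or the strict magnitude monotonicity of Lemma \ref{le02,03} rules out the equality $x_i=-x_j$. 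The main obstacle, compared with Theorem \ref{th05,2}, is that the rigid corona structure $H\circ K_1$ is no longer available, so the possible locations of a cancelling edge are a priori less constrained and the sign/magnitude bookkeeping must be carried out uniformly over all branch positions. Once the no-cancellation property is established, the contradiction closes the equality case and completes the proof.
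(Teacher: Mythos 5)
Your reduction is exactly the paper's proof: the paper disposes of this theorem in one line, citing Lemmas \ref{le02,03}, \ref{le0,07}, \ref{le04,03} and Theorem \ref{th04,07} and saying ``similar to the proof of Theorem \ref{th05,2}'' --- which is precisely your chain (Lemma \ref{le0,07} for a spanning nonbipartite unicyclic subgraph $\mathbb{G}$ with the same domination number, edge-deletion interlacing for $q_{min}(G)\geq q_{min}(\mathbb{G})$, the unicyclic theorem, and then the no-cancelling-edge eigenvector analysis via Lemmas \ref{le02,02}, \ref{le02,03}, \ref{le04,03} in the equality case). One concrete caution, though: Theorem \ref{th04,07} is stated with extremal graph $\mathscr{H}_{3,\frac{n-3}{2}}$, not $\mathscr{H}_{3,\frac{n-1}{2}}$ as you quote it, so your chain literally establishes $q_{min}(G)\geq q_{min}(\mathscr{H}_{3,\frac{n-3}{2}})$ with equality if and only if $G\cong \mathscr{H}_{3,\frac{n-3}{2}}$. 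This matches Theorem \ref{th05,4}(ii), and the subscript $\frac{n-1}{2}$ in the statement of Theorem \ref{th05,3} appears to be the paper's own typo rather than a different result: by Corollary \ref{cl03,06}, $\gamma(\mathscr{H}_{3,\frac{n-3}{2}})=\frac{n-1}{2}$ as well, and applying Theorem \ref{th04,07} to $G=\mathscr{H}_{3,\frac{n-1}{2}}$ (itself unicyclic with domination number $\frac{n-1}{2}$) gives $q_{min}(\mathscr{H}_{3,\frac{n-1}{2}})>q_{min}(\mathscr{H}_{3,\frac{n-3}{2}})$, so the bound with $\frac{n-1}{2}$ is false as written --- $\mathscr{H}_{3,\frac{n-3}{2}}$ is a counterexample. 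With the subscript corrected, your proposal is complete and faithful to the paper, and your closing case analysis on a hypothetical edge $v_iv_j\in E(G)\setminus E(\mathbb{G})$ is the same sign/magnitude argument the paper carries out explicitly in the proof of Theorem \ref{th05,2}.
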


In a same way, we get the following theorem.

\begin{theorem}\label{th05,4} 
Among all nonbipartite graphs of order $n\geq 4$ and with odd-girth $g_{o}\leq5$, and domination number at least $\frac{n+1}{3}<\gamma\leq \frac{n}{2}$, we have

$\mathrm{(i)}$ if $n=4$, the least $q_{min}$ attains the minimum uniquely at $\mathscr{H}_{3,1}$;

$\mathrm{(ii)}$ if $n\geq 5$ and $\gamma=\frac{n-1}{2}$, the least $q_{min}$ attains the minimum uniquely at a $\mathscr{H}_{3,\frac{n-3}{2}}$;

$\mathrm{(iii)}$ if $n\geq 6$ and $\gamma=\frac{n}{2}$, the least $q_{min}$ attains the minimum uniquely at a $\mathscr{H}_{3,\frac{n}{2}}$;

$\mathrm{(iv)}$ if $n\geq 5$ and $n-2\gamma\geq 2$, then the least $q_{min}$ attains the minimum uniquely at a $\mathscr{H}_{3,\alpha}$ where $\alpha\leq\frac{n-3}{2}$ is the least integer such that $\lceil\frac{n-2\alpha-2}{3}\rceil+\alpha=\gamma$.

\end{theorem}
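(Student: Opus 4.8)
The plan is to reduce the general-graph problem to the unicyclic case already settled in Theorem \ref{th04,10}, and then to recover uniqueness of the extremal graph by a Rayleigh-quotient argument. For $n=4$ the statement is exactly Theorem \ref{th05,1}(i): every nonbipartite graph of order $4$ contains $S_4^+=\mathscr{H}_{3,1}$ as a subgraph, and $q_{min}$ is minimized there uniquely. So I would dispose of (i) by that reference and assume $n\geq 5$ for the remaining cases.

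First I would invoke Lemma \ref{le0,07} to produce a nonbipartite unicyclic spanning subgraph $\mathbb{G}$ of $G$ with $g_o(\mathbb{G})=g_o(G)$ and $\gamma(\mathbb{G})=\gamma(G)$. Since $\mathbb{G}$ is unicyclic and nonbipartite, its unique cycle is odd, so its girth equals $g_o(\mathbb{G})=g_o(G)\leq 5$; thus $\mathbb{G}$ is a nonbipartite unicyclic graph of order $n$, girth at most $5$, and domination number $\gamma$, and Theorem \ref{th04,10} applies to it verbatim. Because $E(\mathbb{G})\subseteq E(G)$, for every vector $X$ we have $X^TQ(\mathbb{G})X=\sum_{v_iv_j\in E(\mathbb{G})}(x_i+x_j)^2\leq\sum_{v_iv_j\in E(G)}(x_i+x_j)^2=X^TQ(G)X$, whence $q_{min}(\mathbb{G})\leq q_{min}(G)$. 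Chaining this with the unicyclic bound $q_{min}(\mathbb{G})\geq q_{min}(\mathscr{H}_{3,\ast})$ from Theorem \ref{th04,10}, where $\mathscr{H}_{3,\ast}$ denotes the extremal graph of the relevant subcase (ii), (iii) or (iv), yields $q_{min}(G)\geq q_{min}(\mathscr{H}_{3,\ast})$, the desired lower bound.

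For the uniqueness I would suppose $q_{min}(G)=q_{min}(\mathscr{H}_{3,\ast})$ and let $X$ be a unit eigenvector of $G$ for $q_{min}(G)$. Then both inequalities above are equalities: the equality clause of Theorem \ref{th04,10} forces $\mathbb{G}\cong\mathscr{H}_{3,\ast}$, and the Rayleigh equality $X^TQ(\mathbb{G})X=X^TQ(G)X=q_{min}(\mathbb{G})$ forces $X$ to be an eigenvector of $\mathbb{G}$ as well. If $G\neq\mathbb{G}$ there is an edge $v_iv_j\in E(G)\setminus E(\mathbb{G})$, and to reach a contradiction I would show $x_i+x_j\neq 0$, since then $X^TQ(G)X\geq X^TQ(\mathbb{G})X+(x_i+x_j)^2>X^TQ(\mathbb{G})X$. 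Here the structure of $\mathscr{H}_{3,\ast}$ drives everything: by Lemma \ref{le04,03} the triangle entry on $v_3$ is maximal on the cycle, and Lemma \ref{le02,03} forces strict monotonicity of $|x|$ along the pendant paths and attached trees, so the only admissible endpoints of an added edge are $p$-dominators whose entries cannot cancel. In the case $\gamma=\frac{n}{2}$ the corona description (Lemma \ref{le0,10}) makes this transparent, exactly as in Theorem \ref{th05,2}: every added edge joins two $p$-dominators. The cases $\gamma=\frac{n-1}{2}$ and $n-2\gamma\geq 2$ follow the patterns of Theorems \ref{th05,3} and \ref{th05,1} respectively, now combining Theorems \ref{th03,05} and \ref{th04,05} for the domination bookkeeping.

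The main obstacle is precisely this last equality analysis. One must enumerate, case by case, which edges can appear in $G$ but not in the unicyclic skeleton $\mathbb{G}\cong\mathscr{H}_{3,\ast}$, and verify for each such edge that the eigenvector entries satisfy $x_i+x_j\neq 0$. The clean corona structure available when $\gamma=\frac{n}{2}$ is absent when $\gamma=\frac{n-1}{2}$ or $n-2\gamma\geq 2$, so the sign-and-magnitude bookkeeping for $X$, combining the triangle-maximality of Lemma \ref{le04,03} with the tree monotonicity of Lemma \ref{le02,03}, must be carried out carefully to exclude cancellation on every admissible added edge.
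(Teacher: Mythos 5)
Your proposal is correct and follows essentially the same route as the paper: the paper proves this theorem "in a same way" as Theorems \ref{th05,1}--\ref{th05,3}, i.e., by extracting a nonbipartite unicyclic spanning subgraph via Lemma \ref{le0,07} (preserving odd-girth and domination number), invoking the unicyclic result (here Theorem \ref{th04,10}), and then handling the equality case with the Rayleigh-quotient argument based on Lemmas \ref{le02,03} and \ref{le04,03} to show no added edge can have cancelling eigenvector entries. Your reconstruction, including the identification of the equality analysis as the delicate step, matches the paper's intended argument.
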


\section{Open problem}

{\bf Question } Let $G$ be a nonbipartite graph of order $n\geq 4$ and with domination number $\frac{n+1}{3}<\gamma\leq \frac{n}{2}$, and Let $\mathbb{S}=\mathscr{H}_{3,\alpha}$ be of order $n$ where $\alpha$ is the least integer such that $\lceil\frac{n-2\alpha-2}{3}\rceil+\alpha=\gamma$. How about the relation between their $q_{min}s$?

{\bf Remark} From Section 5, we know that for such $n\geq 4$ and $\frac{n+1}{3}<\gamma\leq \frac{n}{2}$, for a nonbipartite graph $G$ of order $n$ and with domination number $\gamma$, if $g_{o}(G)\leq 5$ or $\gamma=\frac{n-1}{2}, \frac{n}{2}$, then $q_{min}(\mathbb{S})\leq q_{min}(G)$ with equality if and only if $G\cong \mathbb{S}$. By some computations for some graphs, for a nonbipartite graph $G$ of order $n\geq 4$ and with domination number $\frac{n+1}{3}<\gamma\leq \frac{n}{2}$, it seems that $q_{min}(\mathbb{S})\leq q_{min}(G)$ with equality if and only if $G\cong \mathbb{S}$. So, we conjecture that such $\mathbb{S}$ has the smallest $q_{min}$ among the nonbipartite graphs of order $n\geq 4$ and with domination number $\frac{n+1}{3}<\gamma\leq \frac{n}{2}$.

\small {

}

\end{document}